\newtheorem{theorem}{Theorem}[section]
\newtheorem{lemma}[theorem]{Lemma}
\newtheorem{proposition}[theorem]{Proposition}
\newtheorem{corollary}[theorem]{Corollary}
\newtheorem{note}[theorem]{Note}
\newtheorem{Adjunction formula}[theorem]{Adjunction formula}
\newtheorem{remark}[theorem]{Remark}
\newtheorem{example}[theorem]{Example}
\newtheorem{examples}[theorem]{Examples}
\newtheorem{definition}[theorem]{Definition}
\newtheorem{notation}[theorem]{Notation}
\newtheorem{observation}[theorem]{Observation}
\DeclareMathOperator{\limi}{{lim}}
\newcommand{\ilim}[1]{\,\underset{#1}{\underset{\to}{\limi}}\,}
\newcommand{\plim}[1]{\,\underset{#1}{\underset{\leftarrow}{\limi}}\,}
\DeclareMathOperator{\Hom}{{Hom}}
\DeclareMathOperator{\Spec}{{Spec}}
\DeclareMathOperator{\Coker}{{Coker}}
\DeclareMathOperator{\Ker}{{Ker}}
\DeclareMathOperator{\Ima}{{Im}}
\newcommand{\dosflechasa}[3][]{\xymatrix@1{\ar@<1ex>[r]^-{#2}
\ar@<-1ex>[r]_-{#3} & }}
\begin{document}

\title{Reflexive functors in Algebraic Geometry}

%

\author{Pedro Sancho}
\address[Pedro Sancho]{Departamento de Matemáticas, Universidad de Extremadura,
Avenida de Elvas s/n, 06071 Badajoz, Spain}
\email{sancho@unex.es}
\thanks{Corresponding Author: Pedro Sancho.}

\date{April 12, 2012}

\begin{abstract} Reflexive functors of modules naturally appear in Algebraic Geometry. 
In this paper we define a wide and elementary family of reflexive functors of modules, closed by tensor products and  homomorphisms, in which Algebraic Geometry can be developed.
\end{abstract}

\maketitle

\section{Introduction}

Let $X$ be a  scheme over a field $K$. We can regard $X$ as a covariant functor of
sets from the category of commutative $K$-algebras to the category of sets through its functor of points $X^\cdot$, defined by $X^\cdot(S):=\Hom_{K-sch}(\Spec S,X)$, for all commutative $K$-algebras $S$. If $X=\Spec K[x_1,\ldots,x_n]/(p_1,\ldots,p_m)$ then
$$\aligned X^\cdot(S)& =\Hom_{K-sch}(\Spec S,X)=\Hom_{K-alg}(K[x_1,\ldots,x_n]/(p_1,\ldots,p_m),S)\\ & =\{s\in S^n\colon p_1(s)=\cdots=p_m(s)=0\}.\endaligned$$
It is well known that $\Hom_{K-sch}(X,Y)=\Hom_{funct.}(X^\cdot,Y^\cdot)$, and
$X$ is a group $K$-scheme if and only if $X^\cdot$ is a functor of groups.

We can regard $K$ as functor of rings $\mathcal K$,  by defining   $\mathcal K(S):=S$, for all commutative $K$-algebras $S$.
Let $V$ be a $K$-vector space. We can regard $V$ as a covariant functor of $\mathcal K$-modules, $\mathcal V$, by defining $\mathcal V(S):=V\otimes_K S$. We will say that $\mathcal V$ is the $\mathcal K$-quasi-coherent module associated with $V$.  If
$V=\oplus_IK$ then $\mathcal V(S)=\oplus_I S$. The category of $K$-vector spaces, ${\mathcal C}_{\text{K-vect}}$, is equivalent to the category of quasi-coherent $\mathcal K$-modules, ${\mathcal C}_{\text{qs-coh $\mathcal K$-mod}}$: the functors ${\mathcal C}_{\text{K-vect}} \rightsquigarrow {\mathcal C}_{\text{qs-coh $\mathcal K$-mod}}$, $V\rightsquigarrow \mathcal V$ and ${\mathcal C}_{\text{qs-coh $\mathcal K$-mod}}\rightsquigarrow{\mathcal C}_{\text{K-vect}},$ $\mathcal V\rightsquigarrow \mathcal V(K)$ give the equivalence.

It is well known that the theory of linear representations of a group scheme $G$ can be developed, via their associated functors, as a theory  of an abstract group and its linear representations. Thus, if $G=\Spec A$ is affine, the category of comodules over $A$ is equivalent to the category of quasi-coherent $G^\cdot$-modules.

Given a functor of $\mathcal K$-modules, $\mathbb M$ (that is, a covariant functor from the category of commutative $K$-algebras to the category of abelian groups, with a structure of $\mathcal K$-module), we denote $\mathbb M^*:=\mathbb Hom_{\mathcal K}(\mathbb M,\mathcal K)$ (see \ref{nota2.2}). We say that $\mathbb M$ is a reflexive functor of modules if $\mathbb M=\mathbb M^{**}$.

  If $V$ is an infinite dimensional vector space, $V\not\simeq V^{**}$. Moreover, if $G=\Spec A$ is a group scheme and $V$ is a comodule over $A$, $V^*$ is not a comodule over $A$ (as one would naively think). However, $\mathcal V=\mathcal V^{**}$ (see \cite{Amel} and \ref{reflex}) and $\mathcal V^*$ is a $G^\cdot$-module in the obvious way (see \cite{Amel3}). The category of comodules over $A$ is not equivalent to the category of $A^*$-modules, but the category of comodules over $A$ is equivalent to the category of quasi-coherent $\mathcal A^*$-modules.

 Let $\mathbb X$ be a functor of sets and $\mathbb A_{\mathbb X}:=\mathbb Hom_{funct.}(\mathbb X,\mathcal K)$ the functor of functions of $\mathbb X$. We say that $\mathbb X$ is an affine functor if  $\mathbb X=\Spec \mathbb A_{\mathbb X}:=\mathbb Hom_{\mathcal K-alg}(\mathbb A_{\mathbb X},\mathcal K)$ and $\mathbb A_{\mathbb X}$ is reflexive, see \ref{4.15} and \ref{4.16} for details (we warn the reader that  in the literature affine functors are sometimes defined to be functors of points of affine schemes). The functors of points of affine  schemes and formal schemes 
are affine functors  (see \ref{n3.22}). Let $\mathbb G$ be an affine functor of monoids. $\mathbb A_{\mathbb G}^*$ is a functor of algebras and the category of $\mathbb G$-modules is equivalent
to the category of $\mathbb A_{\mathbb G}^*$-modules. Applications of these results include Cartier duality (see \ref{Cartier}), neutral Tannakian duality for affine group schemes and the equivalence between formal groups and Lie algebras in characteristic zero (see \cite{navarro}).

In summary, functors from the category of commutative algebras to the category of sets (groups, rings, etc.) naturally appear in Algebraic Geometry and many definitions and results are better understood in this language. 
Many results are based in the reflexivity of the considered functors.

Every reflexive functor
is isomorphic to a  functor of $\mathcal K$-submodules  of $\prod_I\mathcal K$ (for some set $I$) and it is isomorphic to an inverse limit of $\mathcal K$-quasi-coherent modules, that is, it is pro-quasicoherent (see \cite{navarro}). Is this family closed under tensor products? 
 Is this family closed under homomorphisms?  We do not know to answer these questions without adding hypothesis. In this paper we define a wide and elementary family of reflexive functors of modules, closed by tensor products and  homomorphisms, in which Algebraic Geometry can be developed.

Now assume $K=R$ is a commutative ring. Let $I$ be a set and $\alpha\subseteq I$, we denote
$\mathcal R^{(\alpha}=\overset\alpha\oplus\mathcal R$ and we have the obvious inclusion
$\mathcal R^{(\alpha}\times \mathcal R^{I-\alpha}\subseteq \mathcal R^{\alpha}\times \mathcal R^{I-\alpha}=\mathcal R^I$. Let $H_\alpha:=\mathcal R^{(\alpha}\oplus \mathcal R^{I-\alpha}$. We say that a functor of 
$\mathcal R$-modules, $\mathbb M$,  is essentially free if there exist a subset $I$ and a subset  of the set of parts of $I$, $P$ such that
$$\mathbb M\simeq \underset{\alpha\in P} \cap H_\alpha$$
Let $P^\circ:=\{\beta\subseteq I\colon |\beta\cap \alpha|<\infty\}$. It is easy to check that
$\underset{\alpha\in P} \cap H_\alpha=\underset{\beta\in P^\circ} \cup \mathcal R^\beta$ (where $\mathcal R^\beta=\mathcal R^\beta\times\{0\}\subseteq \mathcal R^\beta\times \mathcal R^{I-\beta}=\mathcal R^I$).

Let $\mathfrak F$ be the category of essentially free $\mathcal R$-modules.
We prove (see \ref{3.6}):

 \begin{enumerate}

\item Essentially free functors of $\mathcal R$-modules are reflexive.

 \item If $M$ is a free $R$-module, then $\mathcal M$ is essentially free.

\item  Essentially free modules are pro-quasicoherent modules (see \ref{FP2}).

\item  If $\mathbb M,\mathbb M'\in\mathfrak F$, then $\mathbb Hom_{\mathcal R}(\mathbb M,\mathbb M')\in\mathfrak F$ and
$(\mathbb M\otimes_{\mathcal R}\mathbb M')^{**}\in\mathfrak F$, which satisfies
     $$
     \Hom_{\mathcal R}((\mathbb M\otimes_{\mathcal R}\mathbb M')^{**},\mathbb {M''})=\Hom_{\mathcal R}(\mathbb M\otimes_{\mathcal R}\mathbb M',\mathbb {M''}),$$
for every reflexive  functor of $\mathcal R$-modules, $\mathbb {M''}$.

\item If $\mathbb A,\mathbb A'\in\mathfrak F$ are functors of $\mathcal R$-algebras, then $(\mathbb A\otimes_{\mathcal R}\mathbb A')^{**}\in\mathfrak F$ is a functor of $\mathcal R$-algebras and
     $$
     \Hom_{\mathcal R-alg}((\mathbb A\otimes_{\mathcal R}\mathbb A')^{**},\mathbb {A''})=\Hom_{\mathcal R-alg}(\mathbb A\otimes_{\mathcal R}\mathbb A',\mathbb {A''}),$$
for every $\mathcal R$-algebra  $\mathbb {A''}$ which is a reflexive  functor of $\mathcal R$-modules.

\item If $\mathbb A,\mathbb B\in\mathfrak F$ are pro-quasicoherent functors of  algebras, then $(\mathbb A^*\otimes_{\mathcal R} \mathbb B^*)^*\in\mathfrak F$ and it is a functor of pro-quasicoherent   algebras, which satisfies
     $$\Hom_{\mathcal R-alg}((\mathbb A^*\otimes_{\mathcal R}\mathbb B^*)^*,\mathbb C)=
     \Hom_{\mathcal R-alg}(\mathbb A\otimes_{\mathcal R}\mathbb B,\mathbb C),$$
for every pro-quasicoherent functor of   algebras, $\mathbb C$ (see \ref{prodspec}).

\item If $\mathbb M,\mathbb M'\in\mathfrak F$, the natural morphism $\Hom_{\mathcal R}(\mathbb M,\mathbb M')\to \Hom_{R}(\mathbb M(R),\mathbb M'(R))$ is injective.
    If $\mathbb A\in \mathfrak F$ is a functor of $\mathcal R$-algebras and
    $\mathbb M,\mathbb M'\in\mathfrak F$ are functors of $\mathbb A$-modules, then a morphism
    of $\mathcal R$-modules $\mathbb M\to\mathbb M'$ is a morphism of $\mathbb A$-modules if and only if $\mathbb M(R)\to\mathbb M'(R)$ is a morphism of $\mathbb A(R)$-modules. Let $M$ be an $R$-module. If $\mathcal M$ is an $\mathbb A$-module, then the set of all quasi-coherent $\mathbb A$-submodules of $\mathcal M$ is equal to the set of all $\mathbb A(R)$-submodules of $M$ (see \ref{3.10X}, \ref{invqua4} and \ref{5.14}). These results have obvious applications to the theory of linear representations 
of  functors of monoids (see \ref{3.18}).    

\item[(7')] Assume that $K$ is a field and that $\mathfrak F$ is the family of reflexive functors of $\mathcal K$-modules. Then, all the results of $(7)$ are likewise true.
    
    \end{enumerate}

We prove that a functor of $\mathcal R$-modules $\mathbb C$ is  a functor of coalgebras if and only if
$\mathbb C^*$ is a functor of $\mathcal R$-algebras.
In particular, an $R$-module $C$ is a coalgebra if and only if $\mathcal C^*$ is a functor of $\mathcal R$-algebras.

We will say that a reflexive functor $\mathbb B$ of pro-quasicoherent algebras is a functor of bialegbras if  $\mathbb B^*$ is a functor of  algebras and the dual morphisms of the multiplication morphism $\mathbb B^*\otimes_{\mathcal R} \mathbb B^*\to \mathbb B^*$ and the unit morphism $\mathcal R\to \mathbb B^*$ are morphisms of functors of algebras (see \ref{bialgebras}). 

$B$ is an $R$-bialgebra (in the standard sense) if and only if $\mathcal B$ is a  functor of   bialgebras (see Proposition \ref{5.24}).

In the literature, there have been many attempts to obtain a well-behaved duality for non finite dimensional bialgebras (see \cite{Timmerman} and references
therein). One of them, for example, states that the functor that associates with  each bialgebra $A$ over a field $K$ the so-called dual bialgebra $A^\circ:=\ilim{I\in J} (A/I)^*$, where $J$ is the set of bilateral ideals $I\subset A$ such that $\dim_K A/I<\infty$,  is auto-adjoint (see \cite{E} and \ref{cite{E}}).
Another one associates with each bialgebra
$A$ over a pseudocompact ring $R$ the bialgebra $A^*$ endowed with a certain topology (see \cite[Exposé VII$_B$ 2.2.1]{demazure}).

In this paper (see \ref{dualbial}), we prove the following theorem.

\begin{theorem} Let ${\mathcal C}_{\mathfrak F-bialg}$ be the category of  pro-quasicoherent functors $\mathbb B\in\mathfrak F$  of  bialgebras.
The functor ${\mathcal C}_{\mathfrak F-bialg} \rightsquigarrow {\mathcal C}_{\mathfrak F-bialg}$, $ \mathbb B\rightsquigarrow \mathbb B^*$ is a categorical anti-equivalence.
\end{theorem}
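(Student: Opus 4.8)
The plan is to verify three things: that $\mathbb B\rightsquigarrow\mathbb B^*$ carries $\mathcal C_{\mathfrak F-bialg}$ into itself, that it is a contravariant functor, and that it is its own quasi-inverse. The last point will be immediate from reflexivity, so the real work lies in the first. First I would check membership: given $\mathbb B\in\mathcal C_{\mathfrak F-bialg}$, since $\mathbb B\in\mathfrak F$ and $\mathcal R\in\mathfrak F$ (item $(2)$, as $R$ is free over itself), item $(4)$ gives $\mathbb B^*=\mathbb Hom_{\mathcal R}(\mathbb B,\mathcal R)\in\mathfrak F$, whence $\mathbb B^*$ is reflexive and pro-quasicoherent by items $(1)$ and $(3)$. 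That $\mathbb B^*$ is a functor of algebras is already part of the hypothesis that $\mathbb B$ is a bialgebra, so the only remaining point on objects is that $\mathbb B^*$ is itself a bialgebra.

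The substantive step is this self-duality of the bialgebra axiom. Write $m_{\mathbb B},u_{\mathbb B}$ for the multiplication and unit of $\mathbb B$ and $m_{\mathbb B^*},u_{\mathbb B^*}$ for those of $\mathbb B^*$. By definition, $\mathbb B$ being a bialgebra says that the comultiplication $\Delta:=(m_{\mathbb B^*})^*$ and the counit $\epsilon:=(u_{\mathbb B^*})^*$ of $\mathbb B=\mathbb B^{**}$ — formed after identifying $(\mathbb B^*\otimes_{\mathcal R}\mathbb B^*)^*$ with the pro-quasicoherent algebra of item $(6)$ — are morphisms of functors of algebras. To show $\mathbb B^*$ is a bialgebra I must instead check that $(m_{\mathbb B})^*$ and $(u_{\mathbb B})^*$ are morphisms of algebras. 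Here I would exploit the reflexive duality ``$\mathbb C$ is a functor of coalgebras if and only if $\mathbb C^*$ is a functor of algebras'' to move between the two pictures: applying $(-)^*$ to every structure morphism and using the universal properties of items $(5)$ and $(6)$ to identify the duals of the tensor products together with their algebra structures, the assertion that $\Delta$ and $\epsilon$ are algebra morphisms becomes, diagram by diagram, the assertion that $m_{\mathbb B}$ and $u_{\mathbb B}$ are coalgebra morphisms, i.e. that $(m_{\mathbb B})^*$ and $(u_{\mathbb B})^*$ are algebra morphisms, which is exactly the bialgebra condition for $\mathbb B^*$.

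Next I would treat morphisms and conclude. A morphism $f\colon\mathbb B\to\mathbb B'$ of bialgebras dualizes to $f^*\colon(\mathbb B')^*\to\mathbb B^*$; since $f$ intertwines the four structure morphisms and $(-)^*$ is a contravariant functor compatible with the identifications above, $f^*$ intertwines the dual structure morphisms and so is a morphism of bialgebras. Hence $(-)^*$ is a contravariant functor $\mathcal C_{\mathfrak F-bialg}\to\mathcal C_{\mathfrak F-bialg}$. Finally, every object is reflexive, so the canonical isomorphism $\mathbb B\simeq\mathbb B^{**}$ is natural in $\mathbb B$ and carries the structure morphisms of $\mathbb B$ to the double duals of those of $\mathbb B^{**}$; therefore $(-)^*\circ(-)^*\simeq\mathrm{id}$, exhibiting $(-)^*$ as its own quasi-inverse and thus a categorical anti-equivalence.

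The hard part will be the self-duality in the second paragraph: one must verify that under $(-)^*$ the ``$\Delta,\epsilon$ are algebra morphisms'' diagrams turn precisely into the ``$m_{\mathbb B},u_{\mathbb B}$ are coalgebra morphisms'' diagrams. Concretely, this means matching the completed tensor products $(\mathbb B^*\otimes_{\mathcal R}\mathbb B^*)^*$ and $(\mathbb B\otimes_{\mathcal R}\mathbb B)^*$, together with their algebra and coalgebra structures, through the universal properties of items $(5)$ and $(6)$, and checking that associativity dualizes to coassociativity, the unit to the counit, and the bialgebra compatibility square to its transpose. Once these canonical identifications are pinned down, the equivalence of the two formulations of the axiom, and hence the whole statement, is formal.
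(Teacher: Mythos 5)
Your architecture is right and you have correctly isolated the crux, but you then declare that crux to be ``formal'' and leave it unproved, and that is exactly where the paper has to do genuine, non-formal work. The obstruction is that the four relevant tensor products $\mathbb B\otimes\mathbb B$, $\mathbb B\bar\otimes\mathbb B=(\mathbb B\otimes\mathbb B)^{**}$, $\mathbb B\tilde\otimes\mathbb B=(\mathbb B^*\otimes\mathbb B^*)^*$ and their analogues for $\mathbb B^*$ do not simply interchange under $(-)^*$. The hypothesis gives you a commutative compatibility square whose right-hand column lives in $\mathbb B\tilde\otimes\mathbb B$ and $\mathbb B\tilde\otimes\mathbb B\tilde\otimes\mathbb B\tilde\otimes\mathbb B$; if you dualize that square directly you land in $(\mathbb B\tilde\otimes\mathbb B)^*=\mathbb B^*\bar\otimes\mathbb B^*$, not in the object $\mathbb B^*\tilde\otimes\mathbb B^*=(\mathbb B\otimes\mathbb B)^*$ that you need in order to conclude that $m^*\colon\mathbb B^*\to(\mathbb B\otimes\mathbb B)^*$ is a morphism of functors of algebras. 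So the ``diagram by diagram'' transposition you invoke does not go through as stated.

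The paper's proof of Theorem \ref{dualbial} resolves this with two ingredients you never name. First, because $\mathbb B^*$ is a functor of pro-quasicoherent algebras (this is precisely why the theorem is restricted to pro-quasicoherent bialgebras), its multiplication $m'$ factors through $\mathbb B^*\tilde\otimes\mathbb B^*$, so dually the comultiplication $m'^*$ factors as $\mathbb B\to\mathbb B\bar\otimes\mathbb B\to\mathbb B\tilde\otimes\mathbb B$. Second, by Proposition \ref{harto} the map $\mathbb B\bar\otimes\mathbb B\to\mathbb B\tilde\otimes\mathbb B$ is injective, which lets one descend the commutativity of the compatibility square from the $\tilde\otimes$ level (where it holds by hypothesis) to the $\bar\otimes$ level. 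Only then does dualizing work, because $(\mathbb B\bar\otimes\mathbb B)^*=\mathbb B^*\tilde\otimes\mathbb B^*$, and the dual of the $\bar\otimes$-square is exactly the square expressing that $m^*$ is a morphism of algebras. Your membership argument also leans on items $(1)$ and $(3)$ to get pro-quasicoherence of $\mathbb B^*$ as an algebra, but item $(3)$ only gives pro-quasicoherence as a module; in fact pro-quasicoherence of both $\mathbb B$ and $\mathbb B^*$ as algebras is built into the definition of the category, so nothing needs deriving there. The remaining parts of your plan (functoriality on morphisms, involutivity via reflexivity) are fine and match the paper.
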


Finally, let us comment some geometric aspects of this theory. Let $\mathbb A\in \mathfrak F$ be a functor of algebras. We prove that $\Spec \mathbb A$ is
a direct limit of affine schemes. 
Let $\mathbb X$ be a functor of sets and assume $\mathbb A_{\mathbb X}\in\mathfrak F$.  $\mathbb X$ is affine if and only if $\mathbb A_{\mathbb X}$ is a functor of  pro-quasicoherent algebras (see \ref{t1.92}). Classical formal schemes correspond to functors
$\Spec \mathcal C^*$ (where $C$ is a functor of algebras or equivalently $C$ is a coalgebra).
We prove that the category of affine functors of  monoids $\mathbb G$ is anti-equivalent to the category of functors of commutative bialgebras.
In particular, Cartier duality is obtained (see \ref{Cartier}).

\section{Preliminaries}

Let $R$ be a commutative ring (associative with a unit). All functors considered in this paper are covariant functors from the category of commutative $R$-algebras (always assumed to be associative with a unit) to the category of sets. A functor $\mathbb X$ is said to be a functor of sets (resp. groups, rings,  etc.) if $\mathbb X$ is a functor from the category of  commutative $R$-algebras to the category of sets (resp. groups, rings, etc.).

\begin{notation} \label{nota2.1}For simplicity, given a functor of sets $\mathbb X$,
we sometimes use $x \in \mathbb X$  to denote $x \in \mathbb X(S)$. Given $x \in \mathbb X(S)$ and a morphism of commutative $R$-algebras $S \to S'$, we still denote by $x$ its image by the morphism $\mathbb X(S) \to \mathbb X(S')$.\end{notation}

Let $\mathcal R$ be the functor of rings defined by ${\mathcal R}(S):=S$, for all commutative $R$-algebras $S$.
A functor of sets $\mathbb M$ is said to be a functor of $\mathcal R$-modules if we have morphisms of functors of sets, $\mathbb M\times \mathbb M\to \mathbb M$ and ${\mathcal R}\times \mathbb M\to \mathbb M$, so that
$\mathbb M(S)$ is an $S$-module, for every commutative $R$-algebra $S$.

 Let $\mathbb M$ and $\mathbb M'$ be functors of $\mathcal R$-modules.
 A morphism of functors of $\mathcal R$-modules $f\colon \mathbb M\to \mathbb M'$
 is a morphism of functors  such that the defined morphisms $f_S\colon \mathbb M(S)\to
 \mathbb M'(S)$ are morphisms of $S$-modules, for all commutative $R$-algebras $S$.
 We will denote by $\Hom_{\mathcal R}(\mathbb M,\mathbb M')$ the  family of all morphisms of $\mathcal R$-modules from $\mathbb M$ to $\mathbb M'$.

%

Given a commutative $R$-algebra $S$, we denote by $\mathbb M_{|S}$ the functor $\mathbb M$ restricted to the category of commutative $S$-algebras.
We will denote by ${\mathbb Hom}_{\mathcal R}(\mathbb M,\mathbb M')$\footnote{In this paper, we will only  consider well defined functors ${\mathbb Hom}_{\mathcal R}(\mathbb M,\mathbb M')$, that is to say, functors such that $\Hom_{\mathcal S}(\mathbb M_{|S},\mathbb {M'}_{|S})$ is a set, for all $S$.} the functor of $\mathcal R$-modules $${\mathbb Hom}_{\mathcal R}(\mathbb M,\mathbb M')(S):={\rm Hom}_{\mathcal S}(\mathbb M_{|S}, \mathbb M'_{|S}).$$  Obviously,
$$(\mathbb Hom_{\mathcal R}(\mathbb M,\mathbb M'))_{|S}=
\mathbb Hom_{\mathcal S}(\mathbb M_{|S},\mathbb M'_{|S}).$$

\begin{notation} \label{nota2.2} We denote $\mathbb M^*=\mathbb Hom_{\mathcal R}(\mathbb M,\mathcal R)$.\end{notation}


\begin{notation} Tensor products, direct limits, inverse limits, etc., of functors of $\mathcal R$-modules and  kernels, cokernels, images, etc.,  of morphisms of functors of $\mathcal R$-modules are regarded in the category of functors of $\mathcal R$-modules.\end{notation}

We have that
$$\aligned & (\mathbb M\otimes_{\mathcal R} \mathbb M')(S)=\mathbb M(S)\otimes_{S} \mathbb M'(S),\,
(\Ker f)(S)=\Ker f_S,\, (\Coker f)(S)=\Coker f_S,\\
& (\Ima f)(S)=\Ima f_S,\, (\ilim{i\in I} \mathbb M_i)(S)=\ilim{i\in I} (\mathbb M_i(S)),\,
(\plim{i\in I} \mathbb M_i)(S)=\plim{i\in I} (\mathbb M_i(S)).\endaligned $$

\begin{definition} Given an $R$-module $M$ (resp. $N$, etc.), ${\mathcal M}$  (resp. $\mathcal N$, etc.) will denote  the functor of $\mathcal R$-modules  defined by ${\mathcal M}(S) := M \otimes_R S$ (resp. $\mathcal N(S):=N\otimes_R S$, etc.). $\mathcal M$  will be called  quasi-coherent $\mathcal R$-module (associated with $M$).
\end{definition}

\begin{proposition} \cite[1.3]{Amel}\label{tercer}
For every functor of ${\mathcal R}$-modules $\mathbb M$ and every $R$-module $M$, it is satisfied that
$${\rm Hom}_{\mathcal R} ({\mathcal M}, \mathbb M) = {\rm Hom}_R (M, \mathbb M(R)).$$
\end{proposition}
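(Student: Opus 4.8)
The plan is to exhibit the identity as an adjunction between evaluation at $R$ and the quasi-coherent construction, so $\mathcal M$ plays the role of the \emph{free} functor of $\mathcal R$-modules on the $R$-module $M$. First I would define the forward map $\Hom_{\mathcal R}(\mathcal M,\mathbb M)\to \Hom_R(M,\mathbb M(R))$, $f\mapsto f_R$, noting that $\mathcal M(R)=M\otimes_R R=M$, so $f_R\colon M\to\mathbb M(R)$ is indeed an $R$-linear map. The whole content is that this map is a bijection.

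For the inverse, given an $R$-linear $\phi\colon M\to\mathbb M(R)$, I would build a morphism of functors $\tilde\phi\colon\mathcal M\to\mathbb M$ as follows. For a commutative $R$-algebra $S$, write $\phi(m)_S\in\mathbb M(S)$ for the image of $\phi(m)$ under the map $\mathbb M(R)\to\mathbb M(S)$ induced by $R\to S$, and set
$$\tilde\phi_S(m\otimes s):=s\cdot\phi(m)_S,$$
where $s\cdot(-)$ is the $\mathcal R$-module scalar multiplication of $\mathbb M(S)$. The definition is forced by the fact that in $\mathcal M(S)=M\otimes_R S$ one has $m\otimes s=s\cdot(m\otimes 1)$, with $m\otimes 1$ the image of $m\in\mathcal M(R)$ in $\mathcal M(S)$.

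Next I would run the routine verifications. Well-definedness of $\tilde\phi_S$ amounts to checking that $(m,s)\mapsto s\cdot\phi(m)_S$ is $R$-bilinear and $R$-balanced; the balancing identity $\phi(rm)_S=\bar r\cdot\phi(m)_S$ (with $\bar r$ the image of $r$ in $S$) follows because $\phi$ is $R$-linear and the transition map $\mathbb M(R)\to\mathbb M(S)$ is compatible with scalars, which is exactly the assertion that the action $\mathcal R\times\mathbb M\to\mathbb M$ is a morphism of functors. The $S$-linearity of $\tilde\phi_S$ is immediate, and naturality in $S$ follows from functoriality of $\mathbb M$ together with the same scalar-compatibility.

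Finally I would check the two maps are mutually inverse. On one side, $(\tilde\phi)_R(m)=1\cdot\phi(m)=\phi(m)$. On the other side, starting from $f$, naturality gives $(f_R(m))_S=f_S(m\otimes 1)$, whence
$$\widetilde{f_R}_{\,S}(m\otimes s)=s\cdot f_S(m\otimes 1)=f_S(s\cdot(m\otimes 1))=f_S(m\otimes s),$$
so $\widetilde{f_R}=f$. I expect the main obstacle to be purely bookkeeping: keeping track of the transition maps $\mathbb M(R)\to\mathbb M(S)$ and verifying that scalar multiplication commutes with them, since this single compatibility underlies well-definedness, naturality, and the identity $(f_R(m))_S=f_S(m\otimes 1)$ that yields injectivity of $f\mapsto f_R$.
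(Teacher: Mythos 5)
Your proof is correct: the paper itself gives no argument for this proposition (it simply cites \cite[1.3]{Amel}), and your construction of the inverse map $\phi\mapsto\tilde\phi$ with $\tilde\phi_S(m\otimes s)=s\cdot\phi(m)_S$, together with the verification that the two assignments are mutually inverse via naturality, is exactly the standard adjunction argument underlying that reference. Nothing is missing; the one compatibility you flag (scalar multiplication commuting with the transition maps $\mathbb M(R)\to\mathbb M(S)$) is indeed the only substantive point, and it holds by the definition of a functor of $\mathcal R$-modules.
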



The functors $M \rightsquigarrow {\mathcal M}$, ${\mathcal M} \rightsquigarrow {\mathcal M}(R)=M$ establish an equivalence between the category of $R$-modules and the category of quasi-coherent $\mathcal R$-modules (\cite[1.12]{Amel}). In particular, ${\rm Hom}_{\mathcal R} ({\mathcal M},{\mathcal M'}) = {\rm Hom}_R (M,M')$.
 For any pair of $R$-modules $M$ and $N$, the quasi-coherent module associated with $M\otimes_R N$ is $\mathcal M\otimes_{\mathcal R}\mathcal N$. ${\mathcal M}_{\mid S}$ is the quasi-coherent $\mathcal S$-module associated with $M \otimes_R
S$.

%
%
%
%

\begin{proposition}  \cite[1.8]{Amel}\label{prop4} \label{1.8Amel}
Let $M$ and $M'$ be $R$-modules. Then, $${\mathbb Hom}_{\mathcal R} ({\mathcal M^*}, {\mathcal M'}) = {\mathcal M} \otimes_{\mathcal R} {\mathcal M'}.$$
\end{proposition}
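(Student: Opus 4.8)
The plan is to produce an explicit natural morphism
$$\phi\colon \mathcal M\otimes_{\mathcal R}\mathcal M'\longrightarrow \mathbb Hom_{\mathcal R}(\mathcal M^*,\mathcal M')$$
and to show it is an isomorphism of functors of $\mathcal R$-modules. On $S$-points, an element of $(\mathcal M\otimes_{\mathcal R}\mathcal M')(S)=\mathcal M(S)\otimes_S\mathcal M'(S)$ of the form $m\otimes m'$ is sent to the morphism $\mathcal M^*_{|S}\to \mathcal M'_{|S}$ that, over an $S$-algebra $S'$, carries $w\in \mathcal M^*(S')=\Hom_{S'}(\mathcal M(S'),S')$ to $w(m)\cdot m'\in \mathcal M'(S')$ (here $m,m'$ denote their images in $S'$, in the sense of \ref{nota2.1}). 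Bilinearity over $S$ and naturality in $S$ are routine, so $\phi$ is a well-defined morphism of functors of $\mathcal R$-modules; the work is entirely in proving bijectivity.

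First I would reduce the problem to the base ring. Using the identity $(\mathbb Hom_{\mathcal R}(\mathbb M,\mathbb M'))_{|S}=\mathbb Hom_{\mathcal S}(\mathbb M_{|S},\mathbb M'_{|S})$ recorded in the excerpt (with $\mathbb M'=\mathcal R$ and $\mathcal R_{|S}=\mathcal S$) one gets $(\mathcal M^*)_{|S}=(\mathcal M_{|S})^*$, while $\mathcal M_{|S}$ and $\mathcal M'_{|S}$ are the quasi-coherent $\mathcal S$-modules attached to $M\otimes_R S$ and $M'\otimes_R S$. Hence $\phi_S$ is literally the $S'=S$ evaluation of the corresponding morphism built over the base ring $S$ for the modules $M\otimes_R S,\,M'\otimes_R S$. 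Consequently it suffices to prove, for every base ring and every pair $M,M'$, that $\phi$ is bijective on the point $R$, i.e.
$$\Hom_{\mathcal R}(\mathcal M^*,\mathcal M')=M\otimes_R M'.$$

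Next I would reduce to the case of $M$ free. Both members are covariant functors of $M$; the right-hand member is right exact and commutes with arbitrary direct sums, and $\phi$ is an isomorphism for $M=R$ because $\mathcal R^*=\mathcal R$ and $\mathbb Hom_{\mathcal R}(\mathcal R,\mathcal M')=\mathcal M'=\mathcal R\otimes_{\mathcal R}\mathcal M'$. Choosing a free presentation $R^{(J)}\to R^{(I)}\to M\to 0$, applying the quasi-coherent functor, and dualizing gives the left-exact sequence $0\to\mathcal M^*\to\prod_I\mathcal R\to\prod_J\mathcal R$. Granting that $M\mapsto\Hom_{\mathcal R}(\mathcal M^*,\mathcal M')$ is also right exact and additive, the two functors agree everywhere once they agree on free modules. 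I would flag that this last right-exactness is \emph{not} formal: $\mathbb Hom_{\mathcal R}(-,\mathcal M')$ applied to the left-exact sequence above need not stay exact, so this point must be checked by hand, exploiting that the target $\mathcal M'$ is quasi-coherent.

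Finally, for $M=\oplus_I R$ one has $\mathcal M^*=\prod_I\mathcal R$ and $M\otimes_R M'=\oplus_I M'$, so the whole statement collapses to the computation
$$\Hom_{\mathcal R}\Bigl(\prod_I\mathcal R,\ \mathcal M'\Bigr)=\oplus_I M'.$$
Each coordinate inclusion $\mathcal R\hookrightarrow\prod_I\mathcal R$ composed with a morphism $f$ produces $f_i\in\Hom_{\mathcal R}(\mathcal R,\mathcal M')=M'$, hence a family $(f_i)_{i\in I}$; the substance is the finiteness assertion that $(f_i)\in\oplus_I M'$ and that $f$ is recovered from it. This finiteness — that a morphism out of an infinite product of copies of $\mathcal R$ can only detect finitely many factors — is precisely the free-module instance of the reflexivity of quasi-coherent modules (\ref{reflex}) when $\mathcal M'=\mathcal R$, and it demands the same kind of argument (testing on carefully chosen $R$-algebras, such as ones manufactured from $\prod_I R$) to handle a general quasi-coherent target. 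I expect this finiteness statement, and not any of the preceding formal reductions, to be the genuine obstacle in the proof.
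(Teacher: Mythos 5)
The paper does not actually prove this proposition; it is imported from \cite[1.8]{Amel}, so there is no in-paper argument to compare you against. Judged on its own, your proposal is an honest outline that constructs the right candidate map and correctly locates the difficulties, but it closes neither of the two points you yourself flag, and both are essential. The finiteness statement $\Hom_{\mathcal R}(\prod_I\mathcal R,\mathcal M')=\oplus_I M'$ is not a lemma feeding into the theorem: it \emph{is} the theorem for $M$ free, and all the content lives there. Until it is proved, the argument has no engine. Moreover, your reduction from general $M$ to free $M$ requires the sequence $\Hom_{\mathcal R}(\prod_J\mathcal R,\mathcal M')\to\Hom_{\mathcal R}(\prod_I\mathcal R,\mathcal M')\to\Hom_{\mathcal R}(\mathcal M^*,\mathcal M')\to 0$ to be exact; the surjectivity on the right asks that every morphism $\mathcal M^*\to\mathcal M'$ extend along the monomorphism $\mathcal M^*\hookrightarrow\prod_I\mathcal R$, an injectivity property of $\mathcal M'$ relative to this inclusion that is nowhere near formal. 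I do not see how to obtain it short of an argument that would prove the theorem for general $M$ directly, at which point the d\'evissage through free modules buys nothing.

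The device that actually does the work (and the one used in \cite{Amel}) is the generic element, which handles arbitrary $M$ in one stroke and makes both of your reductions unnecessary. Since $\mathcal M^*(S)=\Hom_S(M\otimes_RS,S)=\Hom_R(M,S)=\Hom_{R-alg}(S_R^\bullet M,S)$, the set-valued functor $\mathcal M^*$ is the functor of points of $\Spec S^\bullet_R M$, with universal element $w_{\mathrm{gen}}\in\mathcal M^*(S^\bullet_R M)$ given by the inclusion $M=S^1M\hookrightarrow S^\bullet_R M$. A morphism $\phi\colon\mathcal M^*\to\mathcal M'$ is therefore determined by $\phi(w_{\mathrm{gen}})\in M'\otimes_RS^\bullet_RM=\oplus_n\,(M'\otimes_RS^nM)$. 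Comparing $\mathcal R$-linearity ($\phi(tw_{\mathrm{gen}})=t\phi(w_{\mathrm{gen}})$) with naturality under the algebra endomorphisms of $S^\bullet_RM[t]$ that scale $S^nM$ by $t^n$ forces $\phi(w_{\mathrm{gen}})$ into the degree-one piece $M'\otimes_RM$, and one checks that the resulting assignment inverts your map $\phi$. Your instinct that the crux is ``testing on carefully chosen $R$-algebras'' is exactly right; this is the algebra to choose, and with it the free-presentation scaffolding can be discarded.
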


If $\mathcal M'=\mathcal R$, in the previous proposition, we obtain the following theorem.

\begin{theorem} \cite[1.10]{Amel}\label{reflex}
Let $M$ be an $R$-module. Then $${\mathcal M^{**}} = {\mathcal M}.$$
\end{theorem}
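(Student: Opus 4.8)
The plan is to obtain the theorem as the specialization $\mathcal M'=\mathcal R$ of Proposition \ref{prop4}. First I would observe that $\mathcal R$ is itself a quasi-coherent module, namely the one associated with the $R$-module $R$, since $\mathcal R(S)=S=R\otimes_R S$ for every commutative $R$-algebra $S$. Thus $\mathcal R$ is a legitimate choice for $\mathcal M'$ in the identity $\mathbb Hom_{\mathcal R}(\mathcal M^*,\mathcal M')=\mathcal M\otimes_{\mathcal R}\mathcal M'$, and the hypothesis of Proposition \ref{prop4} is met with $M'=R$.

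Next I would substitute $\mathcal M'=\mathcal R$ and read off both sides. By Notation \ref{nota2.2}, the left-hand side $\mathbb Hom_{\mathcal R}(\mathcal M^*,\mathcal R)$ is exactly $\mathcal M^{**}$. On the right-hand side, $\mathcal M\otimes_{\mathcal R}\mathcal R$ is canonically $\mathcal M$, since at each $S$ one has $(M\otimes_R S)\otimes_S S=M\otimes_R S=\mathcal M(S)$, and these identifications are compatible with algebra morphisms $S\to S'$. Combining the two readings yields $\mathcal M^{**}=\mathcal M$.

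The only point requiring care is to check that the isomorphism furnished by Proposition \ref{prop4}, after this substitution, is the canonical biduality morphism, i.e.\ the map sending $m\in\mathcal M(S)$ to evaluation at $m$ on $\mathcal M^*$; otherwise the equality $\mathcal M^{**}=\mathcal M$ would only assert an abstract isomorphism rather than the naturality one usually wants. I would therefore trace through the construction of the isomorphism in Proposition \ref{prop4} and verify that, under the identification $\mathcal M\otimes_{\mathcal R}\mathcal R=\mathcal M$, it is compatible with the evaluation pairing between $\mathcal M$ and $\mathcal M^*$.

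I do not expect a genuine obstacle here: the entire substance of the statement is already packaged in Proposition \ref{prop4} (indeed in Proposition \ref{prop4} together with Proposition \ref{tercer}, which is what makes the $\mathbb Hom$ computable), and the reduction to the present theorem is purely formal. The conceptual work lies upstream, in establishing $\mathbb Hom_{\mathcal R}(\mathcal M^*,\mathcal M')=\mathcal M\otimes_{\mathcal R}\mathcal M'$; once that is granted, the case $\mathcal M'=\mathcal R$ is immediate.
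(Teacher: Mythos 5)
Your proposal is correct and is exactly the paper's own derivation: the theorem is stated there as the specialization $\mathcal M'=\mathcal R$ of Proposition \ref{prop4}, identifying $\mathbb Hom_{\mathcal R}(\mathcal M^*,\mathcal R)=\mathcal M^{**}$ and $\mathcal M\otimes_{\mathcal R}\mathcal R=\mathcal M$. Your extra remark about checking that the resulting isomorphism is the canonical evaluation map is a sensible refinement, but the paper does not spell it out.
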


 A functor of $\mathcal R$-modules ${\mathcal M}^*$ is a quasi-coherent $\mathcal R$-module if and only if $M$ is a projective finitely generated  $R$-module (see \cite{Amel2}).

\begin{notation} Let $i\colon R\to S$ be a ring homomorphism between commutative rings.
Given a functor of $\mathcal R$-modules, $\mathbb M$, let $i^* \mathbb M$  be the functor of
$\mathcal S$-modules defined by
$(i^* \mathbb M)(S') := \mathbb M(S')$.
Given a functor of $\mathcal S$-modules, $\mathbb M'$, let $i_* \mathbb M'$ be the functor of
$\mathcal R$-modules defined by
$(i_* \mathbb M')(R') := \mathbb M(S \otimes_R R')$.
\end{notation}

\begin{Adjunction formula} \cite[1.12]{Amel} \label{adj}
Let $\mathbb M$ be a functor of ${\mathcal R}$-modules and let $\mathbb M'$ be a functor of  $\mathcal S$-modules. Then, 
$${\rm Hom}_{\mathcal S} (i^*\mathbb M, \mathbb M') = {\rm Hom}_{\mathcal R} (\mathbb M, i_*\mathbb M').$$
\end{Adjunction formula}

\begin{corollary} \label{adj2} Let $\mathbb M$ be a functor of $\mathcal R$-modules. Then
$$\mathbb M^*(S)=\Hom_{\mathcal R}(\mathbb M,\mathcal S),$$
for all commutative $R$-algebras $S$.\end{corollary}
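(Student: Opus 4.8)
The plan is to unwind the definition of $\mathbb M^*$ and then reduce everything to the Adjunction formula \ref{adj}. By Notation \ref{nota2.2} and the definition of $\mathbb Hom_{\mathcal R}$, for every commutative $R$-algebra $S$ we have
$$\mathbb M^*(S)=\mathbb Hom_{\mathcal R}(\mathbb M,\mathcal R)(S)=\Hom_{\mathcal S}(\mathbb M_{|S},\mathcal R_{|S}).$$
Thus the entire content of the corollary is to identify $\Hom_{\mathcal S}(\mathbb M_{|S},\mathcal R_{|S})$ with $\Hom_{\mathcal R}(\mathbb M,\mathcal S)$.

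First I would observe that, writing $i\colon R\to S$ for the structural homomorphism, the restriction $\mathbb M_{|S}$ is by definition exactly $i^*\mathbb M$, and that $\mathcal R_{|S}$ is the functor of rings over $S$: it sends an $S$-algebra $S'$ to $\mathcal R(S')=S'$, so it plays the role of $\mathcal R$ in the category of commutative $S$-algebras. Applying the Adjunction formula \ref{adj} with $\mathbb M'=\mathcal R_{|S}$ then gives
$$\Hom_{\mathcal S}(i^*\mathbb M,\mathcal R_{|S})=\Hom_{\mathcal R}(\mathbb M,i_*(\mathcal R_{|S})).$$

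The one remaining step is to compute $i_*(\mathcal R_{|S})$. By the definition of $i_*$, for every $R$-algebra $R'$ one has $(i_*(\mathcal R_{|S}))(R')=(\mathcal R_{|S})(S\otimes_R R')=S\otimes_R R'$, which is precisely $\mathcal S(R')$; after checking that this identification is compatible with the functorial transition maps and with the $\mathcal R$-module structure, one concludes $i_*(\mathcal R_{|S})=\mathcal S$. Chaining the three displays yields $\mathbb M^*(S)=\Hom_{\mathcal R}(\mathbb M,\mathcal S)$, as desired.

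I do not expect a genuine obstacle in this argument; the only point requiring care is notational bookkeeping, namely keeping straight that $\mathcal R_{|S}$ is the ring functor over $S$ (and not the quasi-coherent module $\mathcal S$) and verifying the equality $i_*(\mathcal R_{|S})=\mathcal S$ as functors of $\mathcal R$-modules rather than merely objectwise. Everything else is a direct invocation of \ref{adj} together with the definitions of $\mathbb Hom_{\mathcal R}$, $i^*$ and $i_*$.
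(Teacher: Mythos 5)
Your argument is correct and is essentially the paper's own proof: the paper likewise writes $\mathbb M^*(S)=\Hom_{\mathcal S}(\mathbb M_{|S},\mathcal S)$ and applies the Adjunction formula \ref{adj}. You have merely made explicit the bookkeeping the paper leaves implicit, namely that $\mathcal R_{|S}=i^*\mathcal R$ is the ring functor over $S$ and that $i_*(\mathcal R_{|S})=\mathcal S$ as a quasi-coherent $\mathcal R$-module.
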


\begin{proof} $\mathbb M^*(S)=\Hom_{\mathcal S}(\mathbb M_{|S},\mathcal S)
\overset{\text{\ref{adj}}}=\Hom_{\mathcal R}(\mathbb M,\mathcal S)$.
\end{proof}

\begin{definition} Let $\mathbb M$ be a functor of $\mathcal R$-modules. We will say that
$\mathbb M^*$ is a dual functor.
We will say that a functor of $\mathcal R$-modules ${\mathbb M}$ is reflexive if ${\mathbb M}={\mathbb M}^{**}$.\end{definition}

\begin{examples}  Quasi-coherent modules and module schemes are reflexive functors of $\mathcal R$-modules.\end{examples}

\begin{proposition} \label{3.2}
Let ${\mathbb M}$ be a functor of $\mathcal R$-modules such that ${\mathbb M}^*$ is a reflexive
functor. The closure of dual functors of $\mathcal R$-modules of ${\mathbb M}$ is
${\mathbb M}^{**}$, that is, we have the functorial equality $${\rm
Hom}_\mathcal R({\mathbb M},{\mathbb M}')={\rm Hom}_\mathcal R({\mathbb M}^{**},{\mathbb M'}),$$ for every dual functor of
$\mathcal R$-modules ${\mathbb M}'$.
\end{proposition}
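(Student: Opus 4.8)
The plan is to realize the asserted equality as precomposition with the canonical biduality morphism and to prove that this precomposition map is bijective precisely when the target is a dual functor. Write the dual functor as $\mathbb M'=\mathbb N^*$ and let $\eta\colon\mathbb M\to\mathbb M^{**}$ be the canonical morphism. Then $\eta$ induces, by precomposition, a natural map
$$\Hom_{\mathcal R}(\mathbb M^{**},\mathbb N^*)\to\Hom_{\mathcal R}(\mathbb M,\mathbb N^*),\qquad g\mapsto g\circ\eta,$$
and the whole statement reduces to showing that this map is a bijection under the hypothesis that $\mathbb M^*$ is reflexive. Since the map is given by precomposition with $\eta$ it is automatically functorial in $\mathbb M'=\mathbb N^*$, so only bijectivity is at stake.

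The key tool is the symmetric duality
$$\Hom_{\mathcal R}(\mathbb P,\mathbb Q^*)=\Hom_{\mathcal R}(\mathbb P\otimes_{\mathcal R}\mathbb Q,\mathcal R)=\Hom_{\mathcal R}(\mathbb Q,\mathbb P^*),$$
valid for all functors $\mathbb P,\mathbb Q$ of $\mathcal R$-modules: a morphism $\mathbb P\to\mathbb Q^*$ is the same datum as a bilinear pairing $\mathbb P\times\mathbb Q\to\mathcal R$ (evaluating on points via Corollary \ref{adj2}), and this datum is manifestly symmetric in $\mathbb P$ and $\mathbb Q$. First I would establish this identification and check that it is natural in both arguments, so that precomposition on one side corresponds to postcomposition by the transposed morphism on the other. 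Applying it with $(\mathbb P,\mathbb Q)=(\mathbb M,\mathbb N)$ and with $(\mathbb P,\mathbb Q)=(\mathbb M^{**},\mathbb N)$ yields
$$\Hom_{\mathcal R}(\mathbb M,\mathbb N^*)=\Hom_{\mathcal R}(\mathbb N,\mathbb M^*),\qquad\Hom_{\mathcal R}(\mathbb M^{**},\mathbb N^*)=\Hom_{\mathcal R}(\mathbb N,\mathbb M^{***}),$$
and under these identifications the map $g\mapsto g\circ\eta$ becomes postcomposition by the transpose $\eta^*\colon\mathbb M^{***}\to\mathbb M^*$, that is, the map $\Hom_{\mathcal R}(\mathbb N,\mathbb M^{***})\to\Hom_{\mathcal R}(\mathbb N,\mathbb M^*)$, $h\mapsto\eta^*\circ h$.

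It then suffices to prove that $\eta^*\colon\mathbb M^{***}\to\mathbb M^*$ is an isomorphism, for then postcomposition by it is bijective and the conclusion follows; indeed both $\Hom$-sets become $\Hom_{\mathcal R}(\mathbb N,\mathbb M^*)$. Here I would invoke the triangle identity for the contravariant dualizing functor, namely that the composite $\mathbb M^*\xrightarrow{\eta_{\mathbb M^*}}\mathbb M^{***}\xrightarrow{\eta^*}\mathbb M^*$ equals $\mathrm{id}_{\mathbb M^*}$. Since $\mathbb M^*$ is reflexive by hypothesis, $\eta_{\mathbb M^*}$ is an isomorphism, and the triangle identity forces $\eta^*=\eta_{\mathbb M^*}^{-1}$ to be an isomorphism as well. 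This produces the desired bijection, realized by $g\mapsto g\circ\eta$, which is exactly the statement that the closure of $\mathbb M$ among dual functors is $\mathbb M^{**}$.

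The main obstacle is bookkeeping rather than conceptual: one must set up the symmetric duality cleanly enough that the canonical morphism $\eta$ is transported to precisely the transpose $\eta^*$ on the dual side, and one must confirm that $\eta$ is the unit of the resulting contravariant self-adjunction, so that the triangle identity $\eta^*\circ\eta_{\mathbb M^*}=\mathrm{id}_{\mathbb M^*}$ is available. Both facts are standard for a contravariant self-duality, but because the functors here take values in $S$-modules varying with the test algebra $S$, I would verify them by evaluating the pairings on points $m\in\mathbb M(S)$ and $\varphi\in\mathbb M^*(S)$ and tracing the identifications of Proposition \ref{tercer} and Corollary \ref{adj2}; once the transpose and the triangle identity are pinned down on points, the reflexivity of $\mathbb M^*$ does the rest.
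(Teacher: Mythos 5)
Your proof is correct and follows essentially the same route as the paper: writing $\mathbb M'=\mathbb N^*$, applying the symmetric adjunction $\Hom_{\mathcal R}(\mathbb P,\mathbb Q^*)=\Hom_{\mathcal R}(\mathbb P\otimes_{\mathcal R}\mathbb Q,\mathcal R)=\Hom_{\mathcal R}(\mathbb Q,\mathbb P^*)$ twice, and using the reflexivity of $\mathbb M^*$ in the middle step $\Hom_{\mathcal R}(\mathbb N,\mathbb M^*)=\Hom_{\mathcal R}(\mathbb N,\mathbb M^{***})$. Your explicit identification of the bijection as precomposition with $\eta$ and the triangle-identity argument that $\eta^*\colon\mathbb M^{***}\to\mathbb M^*$ is an isomorphism merely spell out what the paper's chain of equalities leaves implicit.
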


\begin{proof} Write $\mathbb M'=\mathbb {N}^*$. Then,
${\rm Hom}_{\mathcal R}({\mathbb M},{\mathbb M}')\! =\!{\rm Hom}_{\mathcal R}({\mathbb M}\otimes{\mathbb N},\mathcal R)\!=\!{\rm Hom}_{\mathcal R}({\mathbb N},{\mathbb M}^*)={\rm
Hom}_{\mathcal R}({\mathbb N}\otimes{\mathbb M}^{**},\mathcal R) ={\rm Hom}_{\mathcal R}({\mathbb M}^{**},{\mathbb M}').$
\end{proof}

A functor of rings (associative with a unit), $\mathbb A$,
is said to be a functor of $\mathcal R$-algebras if we have a morphism of functors of rings
$\mathcal R\to \mathbb A$
(and $\mathcal R(S)=S$ commutes with all elements of $\mathbb A(S)$, for every commutative $R$-algebra $S$).

\begin{proposition}\label{2.4}
Let $\mathbb A$ be a functor of $\mathcal R$-algebras such that $\mathbb A^*$ is a
reflexive functor of $\mathcal R$-modules. The closure of dual functors of
$\mathcal R$-algebras of $\mathbb A$ is $\mathbb A^{**}$, that is, we have the functorial
equality $${\rm Hom}_{\mathcal R-alg}(\mathbb A,\mathbb B)={\rm Hom}_{\mathcal R-alg}(\mathbb A^{**},\mathbb B),$$
for every functor of $\mathcal R$-algebras $\mathbb B$, such that $\mathbb B$ is a dual functor of $\mathcal R$-modules.

As a consequence, the category of dual functors of ${\mathbb A}$-modules is equal to the category of dual functors of ${\mathbb A}^{**}$-modules.
\end{proposition}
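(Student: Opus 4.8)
The plan is to bootstrap the module-level closure of Proposition \ref{3.2} to the level of algebras, the point being that the hypothesis ``$\mathbb A^*$ reflexive'' is exactly what makes Proposition \ref{3.2} applicable with $\mathbb M=\mathbb A$. So the first thing I record is the canonical morphism $\mathbb A\to\mathbb A^{**}$, the fact that $\mathbb A^{**}=(\mathbb A^*)^*$ is itself a dual functor, and the functorial equality $\Hom_{\mathcal R}(\mathbb A,\mathbb M')=\Hom_{\mathcal R}(\mathbb A^{**},\mathbb M')$ for every dual functor $\mathbb M'$ (restriction along $\mathbb A\to\mathbb A^{**}$).

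The technical core is an iterated version of this for tensor powers: for every dual functor $\mathbb B$ and every $n\ge 1$, restriction along $\mathbb A^{\otimes n}\to(\mathbb A^{**})^{\otimes n}$ is a bijection
$$\Hom_{\mathcal R}((\mathbb A^{**})^{\otimes n},\mathbb B)=\Hom_{\mathcal R}(\mathbb A^{\otimes n},\mathbb B).$$
For $n=2$ one writes $\mathbb B=\mathbb N^*$ and alternates the adjunction $\Hom_{\mathcal R}(\mathbb X\otimes\mathbb Y,\mathcal R)=\Hom_{\mathcal R}(\mathbb X,\mathbb Y^*)$ (the identity already used inside the proof of Proposition \ref{3.2}) with Proposition \ref{3.2} itself, replacing one $\mathbb A^{**}$ factor by $\mathbb A$ at a time; the intermediate functors $(\mathbb N\otimes\mathbb A^{**})^*$ and $(\mathbb N\otimes\mathbb A)^*$ are duals by construction, so Proposition \ref{3.2} applies at each stage. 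General $n$ is the same computation with $n$ replacement steps. I expect this orchestration to be the main obstacle: the whole statement — well-definedness of the product on $\mathbb A^{**}$, its associativity, and the multiplicativity of the extended maps — reduces to this single lemma, and the care needed is to keep the reflexivity of $\mathbb A^*$ (which alone is granted, nothing being assumed about arbitrary tensor products) in the right place at every step.

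Granting the iterated closure, the algebra structure and the universal property are formal. Taking $n=2$ and $\mathbb B=\mathbb A^{**}$ transports the multiplication $\mathbb A\otimes\mathbb A\xrightarrow{m}\mathbb A\to\mathbb A^{**}$ to a unique $\bar m\colon\mathbb A^{**}\otimes\mathbb A^{**}\to\mathbb A^{**}$, and the unit is $\mathcal R\to\mathbb A\to\mathbb A^{**}$. Associativity and unitality are equalities of morphisms out of $(\mathbb A^{**})^{\otimes 3}$ and $(\mathbb A^{**})^{\otimes 2}$ into the dual functor $\mathbb A^{**}$ that hold after restriction to $\mathbb A^{\otimes 3}$, resp. $\mathbb A^{\otimes 2}$, hence hold by the cases $n=3,2$; this makes $\mathbb A^{**}$ a functor of $\mathcal R$-algebras and $\mathbb A\to\mathbb A^{**}$ a morphism of algebras. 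For the universal property, given an algebra morphism $f\colon\mathbb A\to\mathbb B$ with $\mathbb B$ a dual functor, Proposition \ref{3.2} gives a unique $\mathcal R$-module extension $\tilde f\colon\mathbb A^{**}\to\mathbb B$; the two morphisms $\tilde f\circ\bar m$ and $m_{\mathbb B}\circ(\tilde f\otimes\tilde f)$ from $(\mathbb A^{**})^{\otimes 2}$ to $\mathbb B$ agree on $\mathbb A^{\otimes 2}$ since $f$ is multiplicative, so they agree by the $n=2$ case, and $\tilde f$ preserves the unit because it extends $f$. Thus $\tilde f$ is an algebra morphism, and conversely any algebra morphism $\mathbb A^{**}\to\mathbb B$ restricts to one on $\mathbb A$, yielding the asserted bijection.

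For the concluding statement I would run the same argument one level down. If $\mathbb M'$ is a dual functor carrying an $\mathbb A$-module structure $\mathbb A\otimes\mathbb M'\to\mathbb M'$, then writing $\mathbb M'=\mathbb N'^*$ a single use of the adjunction together with Proposition \ref{3.2} gives $\Hom_{\mathcal R}(\mathbb A^{**}\otimes\mathbb M',\mathbb M')=\Hom_{\mathcal R}(\mathbb A\otimes\mathbb M',\mathbb M')$ by restriction, so the action extends uniquely to $\mathbb A^{**}\otimes\mathbb M'\to\mathbb M'$. The module axioms are again equalities of morphisms out of tensor powers holding after restriction to the $\mathbb A$-factors, so they transport, and the construction is visibly compatible with morphisms of $\mathbb M'$; hence the category of dual $\mathbb A$-modules and the category of dual $\mathbb A^{**}$-modules coincide.
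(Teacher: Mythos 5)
Your proposal is correct and follows essentially the same route as the paper: the same key lemma that restriction along $\mathbb A^{\otimes n}\to(\mathbb A^{**})^{\otimes n}$ induces a bijection on morphisms into any dual functor (proved by iterating Proposition \ref{3.2} through the tensor--hom adjunction, one factor at a time rather than by the paper's explicit induction with internal $\mathbb Hom$, which is the same computation), the same transport of the multiplication and verification of the axioms by restriction, and the same treatment of the module statement. No gaps.
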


\begin{proof}
 Given a dual functor of ${\mathcal R}$-modules $\mathbb M^*$,
 $$\aligned {\mathbb Hom}_{\mathcal R} ({\mathbb A} \otimes \overset n\ldots \otimes {\mathbb A}, \mathbb M^*) &
={\mathbb Hom}_{\mathcal R} ({\mathbb A} \otimes \overset{n-1}\ldots \otimes {\mathbb A}, \mathbb Hom_{\mathcal R}(\mathbb A,\mathbb M^*))
\\ & \overset{\text{\ref{3.2}}} =
{\mathbb Hom}_{\mathcal R} ({\mathbb A} \otimes \overset{n-1}\ldots \otimes {\mathbb A}, \mathbb Hom_{\mathcal R}(\mathbb A^{**},\mathbb M^*))\\
& \overset{\text{Ind.Hyp.}}= {\mathbb Hom}_{\mathcal R} ({\mathbb A}^{**} \otimes \overset{n-1}\ldots \otimes {\mathbb A}^{**}, \mathbb Hom_{\mathcal R}(\mathbb A^{**},\mathbb M^*))
\\ & = {\mathbb
Hom}_{\mathcal R} ({\mathbb A}^{**} \otimes \overset n\ldots \otimes {\mathbb A}^{**}, \mathbb M^*),\endaligned$$
by induction on $n$.
Let $i\colon \mathbb A\to \mathbb A^{**}$ be the natural morphism.
The multiplication morphism $m\colon  \mathbb A\otimes  \mathbb A\to  \mathbb A$
defines a unique morphism
$m'\colon  \mathbb A^{**}\otimes  \mathbb A^{**}\to  \mathbb A^{**}$  such that the diagram
$$\xymatrix{  \mathbb A \otimes  \mathbb A \ar[d]^-m \ar[r]^-{i\otimes i} &  \mathbb A^{**}\otimes  \mathbb A^{**}\ar[d]^-{m'}\\  \mathbb A \ar[r]^-i &  \mathbb A^{**}}$$
is commutative, because $\Hom_{\mathcal R}(\mathbb A\otimes \mathbb A,\mathbb A^{**})
=\Hom_{\mathcal R}(\mathbb A^{**}\otimes \mathbb A^{**},\mathbb A^{**})$.
It follows easily that the algebra structure of
${\mathbb A}$ defines an algebra structure on ${\mathbb A}^{**}$.
Let us only check that $m'$ satisfies the associative property: The morphisms
$m'\circ (m'\otimes {\rm Id})$, $m'\circ ({\rm Id}\otimes m')\colon
\mathbb A^{**}\otimes\mathbb A^{**}\otimes \mathbb A^{**}\to \mathbb A^{**}$ are
equal because
$$\aligned & (m'\circ (m'\otimes {\rm Id}))\circ (i\otimes i\otimes i) =
m'\circ (i\otimes i)\circ (m\otimes {\rm Id}) =
i\circ m\circ (m\otimes {\rm Id})\\ & =i\circ m\circ ({\rm Id}\otimes m)=
m'\circ (i\otimes i)\circ ({\rm Id}\otimes m)=
(m'\circ ({\rm Id}\otimes m'))\circ (i\otimes i\otimes i).\endaligned$$

 The kernel of the morphism
$$\Hom_{\mathcal R}(\mathbb A,\mathbb B)\to \Hom_{\mathcal R}(\mathbb A\otimes_{\mathcal R}\mathbb A,\mathbb B),\,\,f\mapsto f\circ m-m\circ (f\otimes f),$$ coincides with the
kernel of the morphism
$$\Hom_{\mathcal R}(\mathbb A^{**},\mathbb B)\to \Hom_{\mathcal R}(\mathbb A^{**}\otimes_{\mathcal R}\mathbb A^{**},\mathbb B),\,\,f\mapsto f\circ m'-m\circ (f\otimes f).$$
Then, ${\rm Hom}_{{\mathcal R}-alg}({\mathbb A},{\mathbb B})={\rm
Hom}_{{\mathcal R}-alg}({\mathbb A}^{**},{\mathbb B})$.

Finally, given a dual functor of $\mathcal R$-modules $\mathbb M^*$, then $\mathbb End_{\mathcal R}\mathbb M^*=(\mathbb M^*\otimes\mathbb M)^*$ is a dual functor of $\mathcal R$-modules and
$${\rm Hom}_{{\mathcal R}-alg}({\mathbb A},\mathbb End_{\mathcal R}\mathbb M^*)={\rm
Hom}_{{\mathcal R}-alg}({\mathbb A}^{**},\mathbb End_{\mathcal R}\mathbb M^*).$$
Given two $\mathbb
A^{**}$-modules, $\mathbb M'$ and $\mathbb M^*$, and a morphism $f\colon \mathbb M'\to \mathbb M^*$ of $\mathbb A$-modules, then $f$ is a morphism of ${\mathbb A}^{**}$-modules
because given $m'\in\mathbb M'$, the morphism $\mathbb A^{**}\to \mathbb M^*$, $a\mapsto
f(am')-af(m')$ is zero because $\mathbb A\to \mathbb M^*$, $a\mapsto
f(am')-af(m')$ is zero. Then, the category of dual functors of ${\mathbb A}$-modules is equal to the category of  dual functors of ${\mathbb A}^{**}$-modules.

\end{proof}

\begin{notation} Given a functor of sets $\mathbb X$, the functor $\mathbb A_{\mathbb X}:={\mathbb Hom}({\mathbb X}, {\mathcal
R})$ is said to be the functor  of functions of $\mathbb X$.\end{notation}

\begin{example} Let $X=\Spec A$ be an affine $R$-scheme and let $X^\cdot$ be its functor of points, that is,
$$X^\cdot(S):=\Hom_{R-sch}(\Spec S,X)=\Hom_{R-alg}(A,S).$$
Then, $\mathbb A_{X^\cdot}={\mathbb Hom}({X^\cdot }, {\mathcal
R})=\mathbb Hom_{\mathcal R-alg}(\mathcal R[x],\mathcal A)=\mathcal A$.\end{example}

\begin{definition} Let $\mathbb X$ be a functor of sets.
 Let $\mathcal R\mathbb X$ be the functor of $\mathcal R$-modules defined by $$\mathcal R\mathbb X(S):=\oplus_{\mathbb X(S)} S=\{ \mbox{formal finite $S$-linear  combinations of elements of } \mathbb X(S) \}$$\end{definition}

 Obviously, ${\mathbb Hom}(\mathbb X,{\mathbb M})=
{\mathbb Hom}_{\mathcal R}(\mathcal R\mathbb X,{\mathbb M})$, for all functors of $\mathcal R$-modules, $\mathbb M$.

Observe that $\mathbb A_{\mathbb X}={\mathbb Hom}(\mathbb X,{\mathcal R})=(\mathcal R\mathbb X)^*$.

\begin{proposition}\label{2.3} Let $\mathbb X$ be a functor of sets and assume $\mathbb A_{\mathbb X}=\mathbb B^*$. Then,
$${\rm Hom}({\mathbb X}, {\mathbb M^*}) =  {\rm Hom}_{\mathcal R}(\mathbb B, {\mathbb M^*}),$$ for every dual functor of $\mathcal R$-modules ${\mathbb M}^*$. In particular, if $\mathbb A_{\mathbb X}$ is reflexive, then 
$${\rm Hom}({\mathbb X}, {\mathbb M^*}) =  {\rm Hom}_{\mathcal R}(\mathbb A_{\mathbb X}^*, {\mathbb M^*}),$$
\end{proposition}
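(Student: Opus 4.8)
The plan is to reduce the left-hand side to a $\Hom$ out of the free $\mathcal R$-module $\mathcal R\mathbb X$ and then apply the self-duality of the functor $(-)^*$ twice, so that no reflexivity hypothesis is needed for the main assertion. First I would rewrite $\Hom(\mathbb X,\mathbb M^*)$ using the universal property of $\mathcal R\mathbb X$, namely $\Hom(\mathbb X,\mathbb N)=\Hom_{\mathcal R}(\mathcal R\mathbb X,\mathbb N)$ for every functor of $\mathcal R$-modules $\mathbb N$ (stated immediately after the definition of $\mathcal R\mathbb X$), taking $\mathbb N=\mathbb M^*$. This yields $\Hom(\mathbb X,\mathbb M^*)=\Hom_{\mathcal R}(\mathcal R\mathbb X,\mathbb M^*)$.

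The engine of the argument is the tensor--hom symmetry already used in the proof of Proposition \ref{3.2}: for functors of $\mathcal R$-modules $\mathbb P,\mathbb Q$ there are natural identifications
$$\Hom_{\mathcal R}(\mathbb P,\mathbb Q^*)=\Hom_{\mathcal R}(\mathbb P\otimes_{\mathcal R}\mathbb Q,\mathcal R)=\Hom_{\mathcal R}(\mathbb Q,\mathbb P^*).$$
Applying this with $\mathbb P=\mathcal R\mathbb X$ and $\mathbb Q=\mathbb M$ converts $\Hom_{\mathcal R}(\mathcal R\mathbb X,\mathbb M^*)$ into $\Hom_{\mathcal R}(\mathbb M,(\mathcal R\mathbb X)^*)$. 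Next I invoke the identity $(\mathcal R\mathbb X)^*=\mathbb A_{\mathbb X}$ (the observation preceding the statement) together with the hypothesis $\mathbb A_{\mathbb X}=\mathbb B^*$, obtaining $\Hom_{\mathcal R}(\mathbb M,\mathbb B^*)$. A second application of the same symmetry, now with $\mathbb P=\mathbb M$ and $\mathbb Q=\mathbb B$, rewrites this as $\Hom_{\mathcal R}(\mathbb B,\mathbb M^*)$, closing the chain $\Hom(\mathbb X,\mathbb M^*)=\Hom_{\mathcal R}(\mathbb B,\mathbb M^*)$.

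For the ``in particular'' clause I would specialize $\mathbb B:=\mathbb A_{\mathbb X}^*$: the reflexivity hypothesis $\mathbb A_{\mathbb X}=\mathbb A_{\mathbb X}^{**}$ gives $\mathbb B^*=(\mathbb A_{\mathbb X}^*)^*=\mathbb A_{\mathbb X}$, so the hypothesis of the first part holds for this choice of $\mathbb B$ and the conclusion reads $\Hom(\mathbb X,\mathbb M^*)=\Hom_{\mathcal R}(\mathbb A_{\mathbb X}^*,\mathbb M^*)$.

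The whole proof is thus a concatenation of four natural bijections, and I do not expect a genuine obstacle; the point that deserves attention is bookkeeping rather than depth. Specifically, each displayed isomorphism is natural in the argument $\mathbb M^*$ (the tensor--hom adjunction and the evaluation pairing underlying the symmetry of $(-)^*$ being manifestly functorial), so the composite equality holds uniformly over all dual functors $\mathbb M^*$ as the statement requires. The only subtlety worth stressing is that this is precisely why no reflexivity of $\mathbb B$, nor of $\mathbb A_{\mathbb X}$, is needed for the first assertion: the symmetry is applied to $\mathbb B$ and to $\mathcal R\mathbb X$ directly, never to their biduals.
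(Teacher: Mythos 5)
Your argument is exactly the paper's: reduce to $\Hom_{\mathcal R}(\mathcal R\mathbb X,\mathbb M^*)$ via the universal property of $\mathcal R\mathbb X$, then shuttle through the tensor--hom symmetry $\Hom_{\mathcal R}(\mathbb P,\mathbb Q^*)=\Hom_{\mathcal R}(\mathbb P\otimes_{\mathcal R}\mathbb Q,\mathcal R)=\Hom_{\mathcal R}(\mathbb Q,\mathbb P^*)$ using $(\mathcal R\mathbb X)^*=\mathbb A_{\mathbb X}=\mathbb B^*$, and finally specialize $\mathbb B=\mathbb A_{\mathbb X}^*$ under reflexivity for the last claim. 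The proof is correct and coincides with the one in the paper.
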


\begin{proof} We have that
 $$\aligned {\rm Hom}({\mathbb X}, {\mathbb M}^*) & =  {\rm Hom}_{\mathcal R}(\mathcal R\mathbb X, {\mathbb M}^*) ={\rm Hom}_{\mathcal R}(\mathcal R\mathbb X\otimes_{\mathcal R}{\mathbb M},\mathcal R) ={\rm Hom}_{\mathcal R}({\mathbb M}, {\mathbb A}_{\mathbb X})\\ & ={\rm Hom}_{\mathcal R}(\mathbb B\otimes_{\mathcal R} {\mathbb M},\mathcal R)={\rm Hom}_{\mathcal R}(\mathbb B, {\mathbb M}^*).\endaligned$$

\end{proof}

Let $\mathbb G$ be a functor  of monoids. ${\mathcal R}\mathbb G$ is obviously a functor of
${\mathcal R}$-algebras. Given a functor of ${\mathcal R}$-algebras $\mathbb B$, it is
easy to check the equality $${\rm Hom}_{mon} (\mathbb G, {\mathbb B})
= {\rm Hom}_{{\mathcal R}-alg} ({\mathcal R}\mathbb G, \mathbb B).$$

\begin{theorem}\label{2.5}
Let $\mathbb G$ be a functor of monoids with a reflexive functor of functions. Then, the
closure of dual functors of algebras of $\mathbb G$ is
$\mathbb A_{\mathbb G}^*$. That is,

$${\rm Hom}_{mon}(\mathbb G, {\mathbb B}) = {\rm Hom}_{\mathcal R-alg}(\mathcal R\mathbb G,
{\mathbb B}) = {\rm Hom}_{\mathcal R-alg}({\mathbb A_{\mathbb G}^*}, {\mathbb B}),$$
for every dual functor of $\mathcal R$-algebras $\mathbb B$.

The category of quasi-coherent ${\mathbb G}$-modules is equi\-valent to the
category of quasi-coherent $\mathbb A_{\mathbb G}^*$-modules.
Likewise, the category of
dual functors of $\mathbb G$-modules is equivalent to the category of dual
functors of $\mathbb A_{\mathbb G}^*$-modules.
\end{theorem}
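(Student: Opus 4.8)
The plan is to deduce everything from Proposition \ref{2.4} applied to the functor of $\mathcal R$-algebras $\mathbb A:=\mathcal R\mathbb G$. First I would record the identifications $\mathbb A^*=(\mathcal R\mathbb G)^*=\mathbb A_{\mathbb G}$ and, since $\mathbb A_{\mathbb G}$ is reflexive by hypothesis, $\mathbb A^{**}=\mathbb A_{\mathbb G}^*$. The leftmost equality $\Hom_{mon}(\mathbb G,\mathbb B)=\Hom_{\mathcal R-alg}(\mathcal R\mathbb G,\mathbb B)$ is exactly the universal property of the monoid algebra $\mathcal R\mathbb G$ recalled just before the statement, valid for every functor of $\mathcal R$-algebras $\mathbb B$. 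For the rightmost equality I would invoke Proposition \ref{2.4}: its hypothesis is precisely that $\mathbb A^*=\mathbb A_{\mathbb G}$ be reflexive, and its conclusion gives $\Hom_{\mathcal R-alg}(\mathcal R\mathbb G,\mathbb B)=\Hom_{\mathcal R-alg}((\mathcal R\mathbb G)^{**},\mathbb B)=\Hom_{\mathcal R-alg}(\mathbb A_{\mathbb G}^*,\mathbb B)$ for every $\mathbb B$ that is at once a functor of $\mathcal R$-algebras and a dual functor of $\mathcal R$-modules. Chaining the two equalities yields the displayed formula.

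For the assertions on modules, I would first translate a $\mathbb G$-module structure into an $\mathcal R\mathbb G$-module structure. A $\mathbb G$-action on a functor of $\mathcal R$-modules $\mathbb M$ is the same datum as a morphism of monoids $\mathbb G\to\mathbb End_{\mathcal R}\mathbb M$, the endomorphism functor being regarded multiplicatively; when $\mathbb M=\mathbb N^*$ is a dual functor, $\mathbb End_{\mathcal R}\mathbb M=(\mathbb N^*\otimes_{\mathcal R}\mathbb M)^*$ is again a dual functor of $\mathcal R$-algebras, as used in the proof of Proposition \ref{2.4}. By the universal property of $\mathcal R\mathbb G$ this morphism corresponds to a morphism of $\mathcal R$-algebras $\mathcal R\mathbb G\to\mathbb End_{\mathcal R}\mathbb M$, that is, to an $\mathcal R\mathbb G$-module structure on $\mathbb M$. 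Hence the category of dual functors of $\mathbb G$-modules coincides with the category of dual functors of $\mathcal R\mathbb G$-modules.

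Now I would apply the final assertion of Proposition \ref{2.4} to $\mathbb A=\mathcal R\mathbb G$, which identifies the category of dual functors of $\mathcal R\mathbb G$-modules with that of dual functors of $(\mathcal R\mathbb G)^{**}=\mathbb A_{\mathbb G}^*$-modules. Because this identification keeps the underlying functor $\mathbb M$ fixed and only reinterprets the action, and because every quasi-coherent module $\mathcal M=(\mathcal M^*)^*$ is a dual functor by Theorem \ref{reflex}, the equivalence restricts to the full subcategories of quasi-coherent modules, giving the desired equivalence between quasi-coherent $\mathbb G$-modules and quasi-coherent $\mathbb A_{\mathbb G}^*$-modules.

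The step that needs the most care, and the only place where the reflexivity hypothesis is genuinely used beyond Proposition \ref{2.4}, is the back-and-forth between $\mathbb G$-actions and $\mathcal R\mathbb G$-module structures while preserving the dual-functor condition: one must know that $\mathbb End_{\mathcal R}\mathbb M$ is a well-defined dual functor of $\mathcal R$-algebras, so that both the monoid-algebra adjunction and Proposition \ref{2.4} apply to it. This is exactly why the statements are formulated for dual (and hence quasi-coherent) modules rather than for arbitrary functors of $\mathcal R$-modules. Once this is in place, the remainder is a formal composition of adjunctions.
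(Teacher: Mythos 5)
Your proposal is correct and follows exactly the paper's route: the paper's entire proof is the observation that $(\mathcal R\mathbb G)^*=\mathbb A_{\mathbb G}$ is reflexive followed by an appeal to Proposition \ref{2.4} applied to $\mathbb A=\mathcal R\mathbb G$, together with the monoid-algebra adjunction recalled just before the statement. You have simply written out the details (including the identification of $\mathbb G$-modules with $\mathcal R\mathbb G$-modules via $\mathbb End_{\mathcal R}\mathbb M$ and the restriction to quasi-coherent modules) that the paper leaves as ``it is easy to complete the proof.''
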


\begin{proof}
$(\mathcal R\mathbb G)^*=\mathbb A_{\mathbb G}$ is reflexive.
By Proposition \ref{2.4}, it is easy to complete the proof.
\end{proof}

Remark that the structure of functor of algebras of
$\mathbb A_{\mathbb G}^*$ is the only one that makes the morphism ${\mathbb G}\to \mathbb A_{\mathbb G}^*$ a morphism of functors of
monoids.

\begin{example} \label{E2.20} Let $G=\Spec A$ be an $R$-group scheme. The category of $G$-modules (that is to say,  the category of
comodules over $A$)
is equivalent to the category of $\mathcal R$-quasi-coherent $\mathcal R[G^\cdot]$-modules. Then, the category of $G$-modules is equivalent to the category of  $\mathcal R$-quasi-coherent $\mathcal A^*$-modules.

\end{example}

\section{Essentially free $\mathcal R$-modules}

\label{F}

\begin{notation} Let $P$ be a subset of the set of parts of a set $I$.  Denote $P^\circ:=\{\beta\subseteq I\colon \beta\cap\alpha \text{ is a finite set for all } \alpha\in P\}$.
\end{notation}

 If $P\subseteq P_2$ then $P_2^\circ\subseteq P^\circ$.
Obviously, $P\subseteq P^{\circ\circ}$. Then, $ P^{\circ} \subseteq (P^{\circ})^{\circ\circ}=(P^{\circ\circ})^\circ\subseteq
 P^{\circ}$. Hence, $$P^{\circ}=P^{\circ\circ\circ}.$$

Given a set $\alpha$, we denote $\mathcal R^\alpha=\prod_\alpha \mathcal R$ and  $\mathcal R^{(\alpha}=\oplus_\alpha \mathcal R$.
If $\alpha_1\subseteq \alpha_2$, we have the obvious epimorphism $\mathcal R^{(\alpha_2}
\to \mathcal R^{(\alpha_1}$, $(\lambda_i)_{i\in \alpha_2}\mapsto (\lambda_i)_{i\in \alpha_2}$ and the obvious injective morphism $\mathcal R^{\alpha_1}
\to \mathcal R^{\alpha_2}$, $(\lambda_i)_{i\in \alpha_1}\mapsto (\lambda_i)_{i\in \alpha_1}:=(\mu_i)_{i\in \alpha_2}$, where $\mu_i=\lambda_i$ if $i\in\alpha_1$ and $\mu_i=0$ if $i\notin\alpha_1$.

\begin{theorem} \label{teoremon}  Let $P$ be a subset of the set of parts of $I$, such that if
$\alpha,\alpha'\in P$ then $\alpha\cup\alpha'\in P$  and assume, for simplicity,  $\cup_{\alpha\in P} \alpha=I$. Then,

\begin{enumerate}
\item $\plim{\alpha\in P} \mathcal R^{(\alpha}=\ilim{\beta\in P^\circ} \mathcal R^\beta\subseteq \prod_I\mathcal R$. $\plim{\alpha\in P} \mathcal R^{(\alpha}=\underset{\alpha\in P}\cap
\mathcal R^{(\alpha} \times \mathcal R^{I-\alpha}$.

\item  $(\plim{\alpha\in P} \mathcal R^{(\alpha})^*=\plim{\beta\in P^\circ} \mathcal R^{(\beta}$ and $\plim{\alpha\in P} \mathcal R^{(\alpha}$ is reflexive. Then, 

$$\plim{\alpha\in P} \mathcal R^{(\alpha}=\plim{\alpha\in P^{\circ\circ} }\mathcal R^{(\alpha}.$$ 

\end{enumerate}
\end{theorem}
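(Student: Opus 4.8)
The plan is to establish (1) by directly unwinding the inverse system, and then to deduce (2) by dualizing (1) twice, using the already-recorded identity $P^{\circ}=P^{\circ\circ\circ}$ to close the loop.

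For (1), I would begin by noting that for $\alpha_1\subseteq\alpha_2$ in $P$ the transition map $\mathcal R^{(\alpha_2}\to\mathcal R^{(\alpha_1}$ is the coordinate projection. Hence a compatible family in $\plim{\alpha\in P}\mathcal R^{(\alpha}$ is the same datum as a single family $(\lambda_i)_{i\in I}\in\prod_I\mathcal R$ (here I use $\cup_{\alpha\in P}\alpha=I$) whose restriction to each $\alpha\in P$ has finite support. This is exactly the condition cutting out $\underset{\alpha\in P}\cap(\mathcal R^{(\alpha}\times\mathcal R^{I-\alpha})$, which gives the second equality. Writing $\beta:=\{i:\lambda_i\neq 0\}$ for the support, the finiteness condition reads $\beta\cap\alpha$ finite for all $\alpha\in P$, i.e. $\beta\in P^\circ$; therefore the inverse limit equals $\underset{\beta\in P^\circ}\cup\mathcal R^\beta$. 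Since $P^\circ$ is stable under finite unions and under passage to subsets, this union is filtered and equals $\ilim{\beta\in P^\circ}\mathcal R^\beta$, proving the first equality.

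For (2), the pivotal remark is that $P^\circ$ again satisfies the hypotheses of the theorem: it is closed under finite unions, and it contains every singleton $\{i\}$ (as $\{i\}\cap\alpha$ is always finite), so $\cup_{\beta\in P^\circ}\beta=I$; the same applies verbatim to $P^{\circ\circ}$. Set $\mathbb M:=\plim{\alpha\in P}\mathcal R^{(\alpha}=\ilim{\beta\in P^\circ}\mathcal R^\beta$. Since $\Hom_{\mathcal R}(-,\mathcal S)$ carries a colimit to a limit, Corollary \ref{adj2} together with $(\plim{i}\mathbb N_i)(S)=\plim{i}\mathbb N_i(S)$ yields $\mathbb M^*=\plim{\beta\in P^\circ}(\mathcal R^\beta)^*$. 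Now $\mathcal R^{(\beta}$ is the quasi-coherent module attached to $\oplus_\beta R$, so Proposition \ref{tercer} and Corollary \ref{adj2} give $(\mathcal R^{(\beta})^*=\mathcal R^\beta$, and reflexivity of quasi-coherent modules (Theorem \ref{reflex}) gives $(\mathcal R^\beta)^*=(\mathcal R^{(\beta})^{**}=\mathcal R^{(\beta}$; moreover the maps in the dual system are the duals of the inclusions, hence again projections. Thus $\mathbb M^*=\plim{\beta\in P^\circ}\mathcal R^{(\beta}$, the first assertion of (2).

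It remains to conclude reflexivity and the final equality by reapplying (1). Applying (1) to $P^\circ$ gives $\mathbb M^*=\ilim{\gamma\in P^{\circ\circ}}\mathcal R^\gamma$, and dualizing once more exactly as above yields $\mathbb M^{**}=\plim{\gamma\in P^{\circ\circ}}\mathcal R^{(\gamma}$; applying (1) to $P^{\circ\circ}$ turns this into $\ilim{\delta\in P^{\circ\circ\circ}}\mathcal R^\delta$. Since $P^{\circ\circ\circ}=P^\circ$, the right-hand side is $\ilim{\delta\in P^\circ}\mathcal R^\delta=\mathbb M$, so $\mathbb M=\mathbb M^{**}$ is reflexive. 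For the last displayed equality, applying (1) to $P^{\circ\circ}$ directly gives $\plim{\alpha\in P^{\circ\circ}}\mathcal R^{(\alpha}=\ilim{\beta\in P^{\circ\circ\circ}}\mathcal R^\beta=\ilim{\beta\in P^\circ}\mathcal R^\beta=\mathbb M$. The main obstacle I anticipate is purely bookkeeping: correctly matching the direction of the transition maps under dualization (projections dualize to inclusions and back) and checking that $P^\circ$ and $P^{\circ\circ}$ inherit the theorem's hypotheses so that part (1) may be reused; the reflexivity identities for $\mathcal R^\beta$ and $\mathcal R^{(\beta}$ then make everything collapse through $P^\circ=P^{\circ\circ\circ}$.
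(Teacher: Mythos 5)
Your proposal is correct and follows essentially the same route as the paper: unwind the inverse limit as the families in $\prod_I\mathcal R$ with finite support on each $\alpha\in P$, reindex by supports to get the filtered union over $P^\circ$, dualize term by term using $(\mathcal R^{(\beta})^*=\mathcal R^\beta$ and reflexivity of quasi-coherent modules, and close the loop with $P^{\circ\circ\circ}=P^\circ$. Your explicit verification that $P^\circ$ and $P^{\circ\circ}$ again satisfy the hypotheses (closure under finite unions, covering $I$ via singletons) is a detail the paper leaves implicit but uses.
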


\begin{proof} 
1. It is easy to prove that $\plim{\alpha\in P} \mathcal R^{(\alpha}=\{(\lambda_i)\in \mathcal R^I\colon \text{ for each } \alpha\in P,\, \lambda_i=0 \text{ forall } i\in\alpha \text{ but a finite number of } i \}$. Let $m=(\lambda_i)\in\mathcal R^I$ and $\beta=\{i\in I\colon \lambda_i\neq 0\}$. Then, $m\in \plim{\alpha\in P} \mathcal R^{(\alpha}$ if and only if $|\beta\cap\alpha|<\infty$, for all $\alpha\in P$. Hence,  $\plim{\alpha\in P} \mathcal R^{(\alpha}=\ilim{\beta\in P^\circ} \mathcal R^\beta$.

$\plim{\alpha\in P} \mathcal R^{(\alpha}=\{(\lambda_i)\in \mathcal R^I\colon \text{for each } \alpha\in P,\, \lambda_i=0\, \forall\, i\in\alpha \text{ but a finite number of } i \}=\underset{\alpha\in P}\cap \{(\lambda_i)\in \mathcal R^I\colon \lambda_i=0 \text{ forall } i\in\alpha \text{ but a finite number of } i \} =
\underset{\alpha\in P}\cap \mathcal R^{(\alpha} \times \mathcal R^{I-\alpha}$.

2. $(\plim{\alpha\in P} \mathcal R^{(\alpha})^*=(\ilim{\beta\in P^\circ} \mathcal R^\beta)^*=\plim{\beta\in P^\circ}( \mathcal R^\beta)^*=\plim{\beta\in P^\circ} \mathcal R^{(\beta}$. Now,

$$(\plim{\alpha\in P} \mathcal R^{(\alpha})^{**}=\plim{\alpha\in P^{\circ\circ}} \mathcal R^{(\alpha}=\ilim{\alpha\in P^{\circ\circ\circ}} \mathcal R^{\alpha}=
\ilim{\alpha\in P^{\circ}} \mathcal R^{\alpha}=\plim{\alpha\in P} \mathcal R^{(\alpha}.$$
\end{proof}

\begin{observation} Let $P$ be the set of all numerable subsets of $I$. Then $P^\circ$ is the set of all finite subsets of $I$ and $P^{\circ\circ}$ is the set of all subsets of $I$. Hence, $(\ilim{\alpha\in P} \mathcal R^{\alpha})^{**}=
(\plim{\alpha\in P} \mathcal R^{(\alpha})^*=\plim{\alpha\in P^\circ} \mathcal R^{(\alpha}=\ilim{\alpha\in P^{\circ\circ}} \mathcal R^{\alpha}=\prod_I\mathcal R$.
Then, if $I$ is no numerable, $\ilim{\alpha\in P} \mathcal R^{\alpha}\underset\neq\subset (\ilim{\alpha\in P} \mathcal R^{\alpha})^{**}$. Moreover, 
$(\plim{\alpha\in P} \mathcal R^{(\alpha})^*\neq \ilim{\alpha\in P} ( \mathcal R^{(\alpha})^*$, because the latter term is not a reflexive functor of modules.

\end{observation}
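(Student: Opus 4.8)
The plan is to split the \emph{Observation} into its three components: the combinatorial identification of $P^\circ$ and $P^{\circ\circ}$, the chain of equalities computing the bidual, and the two (in)equalities for uncountable $I$. First I would check that $P$, the family of countable (numerable) subsets of $I$, satisfies the hypotheses of Theorem \ref{teoremon}: it is stable under finite unions, since a finite union of countable sets is countable, and $\cup_{\alpha\in P}\alpha = I$ because each singleton is countable. Then I would compute $P^\circ$ directly from the definition. A finite $\beta$ meets every $\alpha$ in a finite set, so finite subsets lie in $P^\circ$; conversely, if $\beta$ is infinite I would choose a countably infinite $\beta_0\subseteq\beta$, which belongs to $P$, and observe $|\beta\cap\beta_0| = |\beta_0| = \infty$, so $\beta\notin P^\circ$. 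Hence $P^\circ$ is exactly the family of finite subsets of $I$. Since every $\gamma\subseteq I$ meets each finite set in a finite set, $(P^\circ)^\circ = P^{\circ\circ}$ is the family of all subsets of $I$.

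Next I would assemble the displayed chain. Using $(\mathcal R^\alpha)^* = \mathcal R^{(\alpha}$, which follows from Theorem \ref{reflex} and Proposition \ref{prop4} because $\mathcal R^\alpha = (\mathcal R^{(\alpha})^*$ is the dual of the quasi-coherent module of the free module $\oplus_\alpha R$, together with the standard fact (already used in the proof of Theorem \ref{teoremon}) that dualizing sends a direct limit to the inverse limit of the duals, I obtain $(\ilim{\alpha\in P}\mathcal R^\alpha)^* = \plim{\alpha\in P}\mathcal R^{(\alpha}$, and hence the first equality $(\ilim{\alpha\in P}\mathcal R^\alpha)^{**} = (\plim{\alpha\in P}\mathcal R^{(\alpha})^*$. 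The second equality $(\plim{\alpha\in P}\mathcal R^{(\alpha})^* = \plim{\beta\in P^\circ}\mathcal R^{(\beta}$ is Theorem \ref{teoremon}(2). For the third I would apply Theorem \ref{teoremon}(1) with $P$ replaced by $P^\circ$, which, being the finite subsets, is again stable under finite unions and covers $I$; this gives $\plim{\beta\in P^\circ}\mathcal R^{(\beta} = \ilim{\gamma\in P^{\circ\circ}}\mathcal R^\gamma$. Finally, since $P^{\circ\circ}$ is the full power set and therefore has $I$ as a top element, the directed colimit of the $\mathcal R^\gamma$ along the extension-by-zero inclusions is attained at $\gamma = I$, yielding $\prod_I\mathcal R$.

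For the last two assertions I would first identify the colimit pointwise: $(\ilim{\alpha\in P}\mathcal R^\alpha)(S) = \cup_{\alpha\in P}S^\alpha$ is the set of families in $S^I$ of countable support. When $I$ is uncountable and $R\neq 0$, the constant family $(1)_{i\in I}\in (\prod_I\mathcal R)(R) = R^I$ has uncountable support, so it does not lie in $\ilim{\alpha\in P}\mathcal R^\alpha$; this exhibits the strict inclusion $\ilim{\alpha\in P}\mathcal R^\alpha \subsetneq (\ilim{\alpha\in P}\mathcal R^\alpha)^{**} = \prod_I\mathcal R$. For the concluding inequality I note that $(\plim{\alpha\in P}\mathcal R^{(\alpha})^* = \prod_I\mathcal R$ is reflexive, being the dual $(\mathcal R^{(I})^*$ of a quasi-coherent module (cf. Theorem \ref{reflex}), whereas $\ilim{\alpha\in P}(\mathcal R^{(\alpha})^* = \ilim{\alpha\in P}\mathcal R^\alpha$ is the countable-support functor just shown to be non-reflexive; two functors, one reflexive and one not, cannot coincide, so $(\plim{\alpha\in P}\mathcal R^{(\alpha})^* \neq \ilim{\alpha\in P}(\mathcal R^{(\alpha})^*$.

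The only genuinely delicate point is the bookkeeping: one must keep the three colimits $\ilim{\beta\in P^\circ}\mathcal R^\beta$ (finite support), $\ilim{\alpha\in P}\mathcal R^\alpha$ (countable support), and $\ilim{\gamma\in P^{\circ\circ}}\mathcal R^\gamma = \prod_I\mathcal R$ (full support) carefully distinct, and justify both that duality commutes with the relevant direct limits and that the colimit over $P^{\circ\circ}$ is computed at its maximum $I$. Once these are in place, everything reduces to direct applications of Theorem \ref{teoremon} and the reflexivity results of the preliminaries.
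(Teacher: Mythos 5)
Your proposal is correct and follows exactly the route the paper intends: the Observation is stated as an immediate consequence of Theorem \ref{teoremon}, and your argument simply makes explicit the identification of $P^\circ$ and $P^{\circ\circ}$, the chain of dualities, and the countable-support description of $\ilim{\alpha\in P}\mathcal R^\alpha$ that witnesses non-reflexivity. No gaps; the only additions beyond what the paper leaves implicit are routine (e.g.\ the aside that $R\neq 0$ for the strictness witness).
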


\begin{theorem}  \label{teoremonb} 
Let $L$ be a set of indices. For each $l\in L$, let $P_l$ be a subset of the set of parts of a set $I_l$, such that if
$\alpha,\alpha'\in P_l$ then $\alpha\cup\alpha'\in P_l$. Let $Q:=\{(\alpha_l)_{l\in L}\in \prod_{l\in L} P_l\colon
\alpha_l=\emptyset$ for all $l\in L$ except for a finite number of $l\in L\}$.
Consider $\alpha=(\alpha_l)_{l\in L}$ as a subset of $\coprod_{l\in L} I_l$ as follows $\alpha=\coprod_{l\in L} \alpha_l$
Denote
$\mathbb M_l:=\plim{\alpha_l\in P_l} \mathcal R^{(\alpha}$ for each $l\in L$. Then, $$\prod_{l\in L} \mathbb M_l=\plim{\alpha\in Q} \mathcal R^{(\alpha}\,\text{ and }\, 
(\prod_{l\in L} \mathbb M_l)^*=\oplus_{l\in L} \mathbb M_l^*.$$
\end{theorem}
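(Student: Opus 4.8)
The plan is to reduce both identities to the explicit descriptions provided by Theorem \ref{teoremon}. Throughout, I identify a family $\alpha=(\alpha_l)_{l\in L}\in\prod_l P_l$ with the subset $\coprod_l\alpha_l$ of $I:=\coprod_l I_l$, and for $j\in I$ I write $l(j)$ for the unique index with $j\in I_{l(j)}$; given $(\mu_j)_{j\in I}$, its $l$-component is the subfamily $(\mu_j)_{j\in I_l}$.

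First I would record that $Q$ is closed under unions: if $\alpha,\alpha'\in Q$ then $(\alpha\cup\alpha')_l=\alpha_l\cup\alpha'_l\in P_l$ (each $P_l$ being union-closed) and only finitely many of these are nonempty, so $\alpha\cup\alpha'\in Q$. Hence Theorem \ref{teoremon}(1) applies to the index set $Q$ and gives $\plim{\alpha\in Q}\mathcal R^{(\alpha}=\{(\mu_j)_{j\in I}\colon \text{for each }\alpha\in Q,\ \mu_j=0\text{ for all but finitely many }j\in\alpha\}$. To identify this with $\prod_l\mathbb M_l$ I would test the condition against the elements of $Q$ supported on a single factor: for fixed $l$ and arbitrary $\alpha_l\in P_l$, the family that is $\alpha_l$ in place $l$ and empty elsewhere lies in $Q$, and the resulting constraint is exactly the defining condition (again via Theorem \ref{teoremon}(1)) for the $l$-component to lie in $\mathbb M_l$. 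Since every $\alpha\in Q$ meets only finitely many factors, no constraint relating distinct factors arises, so the condition is equivalent to requiring each $l$-component to lie in $\mathbb M_l$, i.e. to $(\mu_j)\in\prod_l\mathbb M_l$. This proves the first equality.

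For the second equality I would feed the first one into Theorem \ref{teoremon}(2): $(\prod_l\mathbb M_l)^*=(\plim{\alpha\in Q}\mathcal R^{(\alpha})^*=\plim{\beta\in Q^\circ}\mathcal R^{(\beta}=\ilim{\gamma\in Q^{\circ\circ}}\mathcal R^\gamma$, the last step being Theorem \ref{teoremon}(1) applied to the union-closed family $Q^\circ$. It then remains to compute $Q^\circ$ and $Q^{\circ\circ}$ componentwise. Writing $\beta=\coprod_l\beta_l$ and using $|\beta\cap\alpha|=\sum_l|\beta_l\cap\alpha_l|$, one checks that $\beta\in Q^\circ$ if and only if $\beta_l\in P_l^\circ$ for every $l$, with no restriction on how many $\beta_l$ are nonempty. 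Testing $\gamma=\coprod_l\gamma_l$ against all such $\beta$ then gives that $\gamma\in Q^{\circ\circ}$ if and only if $\gamma_l\in P_l^{\circ\circ}$ for all $l$ and $\gamma_l=\emptyset$ for all but finitely many $l$. Granting this, $\mathcal R^\gamma=\prod_l\mathcal R^{\gamma_l}=\oplus_l\mathcal R^{\gamma_l}$ (a finite sum, since finitely many $\gamma_l$ are nonempty), and passing to the directed union $\ilim{\gamma\in Q^{\circ\circ}}\mathcal R^\gamma=\cup_{\gamma\in Q^{\circ\circ}}\mathcal R^\gamma$ over the downward-closed $Q^{\circ\circ}$ identifies it with the set of $(\nu_j)$ whose $l$-component lies in $\mathbb M_l^*=\ilim{\gamma_l\in P_l^{\circ\circ}}\mathcal R^{\gamma_l}$ for every $l$ and vanishes for all but finitely many $l$; that is precisely $\oplus_l\mathbb M_l^*$.

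I expect the crux to be the computation of $Q^{\circ\circ}$, specifically the emergence of the finiteness condition ``$\gamma_l=\emptyset$ for all but finitely many $l$'', which is absent from $Q^\circ$ and is exactly what turns the dual of a product into a direct sum rather than a product. The key trick is that every singleton $\{j\}\subseteq I_l$ belongs to $P_l^\circ$, so if $\gamma$ met infinitely many factors, choosing one point $j_l\in\gamma_l$ from each nonempty factor would assemble a set $\beta\in Q^\circ$ with $\gamma\cap\beta$ infinite, contradicting $\gamma\in Q^{\circ\circ}$. The reverse inclusions, together with the verification that $Q$ and $Q^\circ$ are union-closed (so that Theorem \ref{teoremon} is applicable, after replacing each ambient index set by the union of its members if needed), are routine.
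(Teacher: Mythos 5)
Your proof is correct. For the second identity you follow essentially the paper's route: apply Theorem \ref{teoremon}(2) to the union-closed family $Q$, compute $Q^\circ=\prod_{l\in L}P_l^\circ$ (with no finite-support condition), and observe that the finite-support condition reappearing at the level of $Q^{\circ\circ}$ is exactly what converts the dual of the product into a direct sum. The one genuine difference there is that the paper first normalizes $P_l=P_l^{\circ\circ}$ (harmless by Theorem \ref{teoremon}(2)) so that it can simply assert $Q^{\circ\circ}=Q$, whereas you compute $Q^{\circ\circ}$ directly without normalizing; your singleton-selection argument supplies the justification for the finiteness condition that the paper leaves as an unproved ``Observe that''. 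For the first identity the routes differ more visibly: the paper writes $\prod_{l\in L}\mathbb M_l=\plim{J\subset L,|J|<\infty}\prod_{j\in J}\mathbb M_j$ and collapses the resulting double inverse limit to $\plim{\alpha\in Q}\mathcal R^{(\alpha}$, while you identify both sides as explicit $\mathcal R$-submodules of $\prod_I\mathcal R$ via the pointwise description in Theorem \ref{teoremon}(1). Both are routine; yours is more concrete and checks the equality $S$-point by $S$-point, the paper's is shorter but hides the same verification inside the interchange of limits. (One caveat, shared with the paper's own statement and proof: testing against elements of $Q$ supported on a single factor tacitly requires $\emptyset\in P_{l'}$ for $l'\neq l$, i.e.\ the $P_l$ should be assumed to contain the empty set for $Q$ to behave as intended when $L$ is infinite.)
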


\begin{proof} 
 $\prod_{l\in L} \mathbb M_l=\plim{J\subset L,|J|<\infty} \prod_{j\in J} \mathbb M_j=
\plim{J\subset L,|J|<\infty} \plim{\alpha\in \prod_{j\in J}P_j} \mathcal R^{(\alpha}
=\plim{\alpha\in Q} \mathcal R^{(\alpha}$.

For the last statement, we can assume $P_l=P_l^{\circ\circ}$, by \ref{teoremon} 2.
Observe that $Q^\circ = \prod_{l\in L} P_l^\circ$ and that $Q^{\circ\circ} = Q$. Then, 
$$(\prod_{l\in L} \mathbb M_l)^*=\plim{\alpha\in Q^\circ} \mathcal R^{(\alpha}\underset{Q=Q^{\circ\circ}}=
\ilim{\alpha\in Q} \mathcal R^{\alpha}=\ilim{\alpha\in Q}(\oplus_{l\in L}\mathcal R^{\alpha_l})=
\oplus_{l\in L} \mathbb M_l^*.$$
\end{proof}

\begin{theorem}  \label{teoremonc} 

Let $P$ be a subset of the set of parts of $I$, such that if
$\alpha,\alpha'\in P$ then $\alpha\cup\alpha'\in P$  and assume, for simplicity,  $\cup_{\alpha\in P} \alpha=I$. Let $Q$ be a subset of the set of parts of $J$, such that if
$\beta,\beta'\in Q$ then $\beta\cup\beta'\in Q$  and assume, for simplicity,  $\cup_{\beta\in Q} \alpha=J$. Then, \begin{enumerate}

\item $\mathbb Hom_{\mathcal R}(\plim{\alpha\in P} \mathcal R^{(\alpha},\plim{\beta\in Q} \mathcal R^{(\beta})=\plim{(\alpha,\beta)\in P^\circ\times Q} \mathcal R^{(\alpha\times\beta}$.

\item  $(\plim{\alpha\in P} \mathcal R^{(\alpha}\otimes_{\mathcal R} \plim{\beta\in Q} \mathcal R^{(\beta})^{*}=\plim{\gamma\in P^\circ\times Q^\circ} \mathcal R^{(\gamma}$.
Hence,  \begin{enumerate}

\item $(\plim{\alpha\in P} \mathcal R^{(\alpha}\otimes_{\mathcal R} \plim{\beta\in Q} \mathcal R^{(\beta})^{**}=\plim{\gamma\in (P^\circ\times Q^\circ)^\circ} \mathcal R^{(\gamma}$.

\item     $((\plim{\alpha\in P} \mathcal R^{(\alpha})^*\otimes_{\mathcal R} (\plim{\beta\in Q} \mathcal R^{(\beta})^*)^{*}=\plim{\gamma\in P^{\circ\circ}\times Q^{\circ\circ}} \mathcal R^{(\gamma}$. \end{enumerate}

\end{enumerate} \end{theorem}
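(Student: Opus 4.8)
The plan is to prove $(1)$ directly and then to read off $(2)$, $(2a)$ and $(2b)$ from it by dualization, using only Theorem~\ref{teoremon}, Proposition~\ref{prop4}, and the adjunction between $\otimes_{\mathcal R}$ and $\mathbb Hom_{\mathcal R}$. Throughout I write $\mathbb M_P:=\plim{\alpha\in P}\mathcal R^{(\alpha}$ and $\mathbb M_Q:=\plim{\beta\in Q}\mathcal R^{(\beta}$, and I use repeatedly that $P^\circ$ and $Q^\circ$ are again union-closed families covering $I$ and $J$ (they contain all singletons), so that Theorem~\ref{teoremon} applies to them as well.

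For $(1)$ I would start from $\mathbb M_P=\ilim{\alpha\in P^\circ}\mathcal R^{\alpha}$ (Theorem~\ref{teoremon}.1). Since $\mathbb Hom_{\mathcal R}(-,\mathbb M_Q)$ turns this direct limit into an inverse limit and commutes with the inverse limit defining $\mathbb M_Q$ in the second variable, one gets $\mathbb Hom_{\mathcal R}(\mathbb M_P,\mathbb M_Q)=\plim{(\alpha,\beta)\in P^\circ\times Q}\mathbb Hom_{\mathcal R}(\mathcal R^{\alpha},\mathcal R^{(\beta})$. The only real computation is the identification $\mathbb Hom_{\mathcal R}(\mathcal R^{\alpha},\mathcal R^{(\beta})=\mathcal R^{(\alpha\times\beta}$: writing $\mathcal R^{\alpha}=(\mathcal R^{(\alpha})^*$ and noting that $\mathcal R^{(\alpha}$ and $\mathcal R^{(\beta}$ are the quasi-coherent modules attached to the free modules $R^{(\alpha)}$ and $R^{(\beta)}$, Proposition~\ref{prop4} gives $\mathbb Hom_{\mathcal R}((\mathcal R^{(\alpha})^*,\mathcal R^{(\beta})=\mathcal R^{(\alpha}\otimes_{\mathcal R}\mathcal R^{(\beta}=\mathcal R^{(\alpha\times\beta}$, the last step because $R^{(\alpha)}\otimes_R R^{(\beta)}=R^{(\alpha\times\beta)}$. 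This proves $(1)$.

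For $(2)$ I would use the tensor--Hom adjunction to write $(\mathbb M_P\otimes_{\mathcal R}\mathbb M_Q)^*=\mathbb Hom_{\mathcal R}(\mathbb M_P,\mathbb M_Q^*)$. By Theorem~\ref{teoremon}.2 we have $\mathbb M_Q^*=\plim{\beta\in Q^\circ}\mathcal R^{(\beta}$, which is of the standard form for the union-closed covering family $Q^\circ$; applying $(1)$ to the pair $(P,Q^\circ)$ then yields $(\mathbb M_P\otimes_{\mathcal R}\mathbb M_Q)^*=\plim{(\alpha,\beta)\in P^\circ\times Q^\circ}\mathcal R^{(\alpha\times\beta}$, which is $(2)$. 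Statement $(2b)$ is now immediate: since $\mathbb M_P^*=\plim{\alpha\in P^\circ}\mathcal R^{(\alpha}$ and $\mathbb M_Q^*=\plim{\beta\in Q^\circ}\mathcal R^{(\beta}$, applying $(2)$ to the pair $(P^\circ,Q^\circ)$ gives $(\mathbb M_P^*\otimes_{\mathcal R}\mathbb M_Q^*)^*=\plim{(\alpha,\beta)\in P^{\circ\circ}\times Q^{\circ\circ}}\mathcal R^{(\alpha\times\beta}$.

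Finally, $(2a)$ is obtained by dualizing $(2)$, and this is where the main subtlety lies. The inverse limit on the right of $(2)$ is indexed by the family of products $\{\alpha\times\beta:(\alpha,\beta)\in P^\circ\times Q^\circ\}$, which is \emph{not} closed under unions; nevertheless the support description used in the proof of Theorem~\ref{teoremon}.1 is valid for an arbitrary family, so this inverse limit equals $\ilim{\gamma\in(P^\circ\times Q^\circ)^\circ}\mathcal R^{\gamma}$ inside $\prod_{I\times J}\mathcal R$ (an element $(\lambda_{(i,j)})$ lies in it exactly when its support meets every $\alpha\times\beta$ in a finite set). Dualizing this direct limit and using $(\mathcal R^{\gamma})^*=\mathcal R^{(\gamma}$ (reflexivity of the free quasi-coherent module, Theorem~\ref{reflex}) gives $(\mathbb M_P\otimes_{\mathcal R}\mathbb M_Q)^{**}=\plim{\gamma\in(P^\circ\times Q^\circ)^\circ}\mathcal R^{(\gamma}$, which is $(2a)$. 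I expect the only delicate point to be precisely this passage through non-union-closed families of products: one must check that the operation $\circ$ is insensitive to taking union-closures (so that the formula of Theorem~\ref{teoremon}.2 applies verbatim), after which everything reduces to the single computation of $(1)$ and to formal properties of limits.
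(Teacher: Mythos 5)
Your proposal is correct and follows essentially the same route as the paper: part (1) via $\mathbb M_P=\ilim{\alpha\in P^\circ}\mathcal R^{\alpha}$, pulling limits out of $\mathbb Hom$, and Proposition \ref{prop4}; part (2) via the tensor--Hom adjunction, $\mathbb M_Q^*=\plim{\beta\in Q^\circ}\mathcal R^{(\beta}$, and an application of (1) to the pair $(P,Q^\circ)$. Your discussion of (2a) is in fact more careful than the paper's, which passes from (2) to (2a) with a bare ``Hence''; your observation that the family $\{\alpha\times\beta\}$ is not union-closed but that the $\circ$ operation and the support description are insensitive to replacing a family by its union-closure (of which it is cofinal) is exactly the point needed to make that step rigorous.
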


\begin{proof}

 $$\aligned 1.\,\, \mathbb Hom_{\mathcal R}(\plim{\alpha\in P} \mathcal R^{(\alpha}&,\plim{\beta\in Q} \mathcal R^{(\beta})  =\mathbb Hom_{\mathcal R}(\ilim{\alpha\in P^\circ} \mathcal R^{\alpha},\plim{\beta\in Q} \mathcal R^{(\beta})\\ &=\plim{(\alpha,\beta)\in P^\circ\times Q} \mathbb Hom_{\mathcal R}(\mathcal R^{\alpha},\mathcal R^{(\beta})=
\plim{(\alpha,\beta)\in P^\circ\times Q} \mathcal R^{(\alpha}\otimes_{\mathcal R} \mathcal R^{(\beta}\\ &=
\plim{(\alpha,\beta)\in P^\circ\times Q} \mathcal R^{(\alpha\times\beta}.\endaligned$$

$$\aligned \!\!\!\!2.\,\, (\plim{\alpha\in P} \mathcal R^{(\alpha}\otimes_{\mathcal R} \plim{\beta\in Q} \mathcal R^{(\beta})^{*} & =\mathbb Hom_{\mathcal R}(\plim{\alpha\in P} \mathcal R^{(\alpha}\otimes_{\mathcal R}\plim{\beta\in Q} \mathcal R^{(\beta},\mathcal R)\\ &=
\mathbb Hom_{\mathcal R}(\plim{\alpha\in P} \mathcal R^{(\alpha},\plim{\beta\in Q^\circ} \mathcal R^{(\beta}) =\plim{\gamma\in P^\circ\times Q^\circ} \mathcal R^{(\gamma}.\endaligned$$

\end{proof}

\begin{definition} \label{defi3.4}We will say that a functor of  $\mathcal R$-modules, $\mathbb M$, is essentially free if there exist a set $I$, a subset $P$ of the set of parts of $I$ (such that if
$\alpha,\alpha'\in P$ then $\alpha\cup\alpha'\in P$) and an isomorphism of functors of  $\mathcal R$-modules
$$\mathbb M\simeq \plim{\alpha\in P} \mathcal R^{(\alpha}.$$
Let $\mathfrak F$ be the category of essentially free $\mathcal R$-modules.
\end{definition}

\begin{examples} \label{Examples5.4}  If $V$ is a free $\mathcal R$-module, $\mathcal V$ and $\mathcal V^*\in\mathfrak F$.

\end{examples}

\begin{theorem}  \label{3.6} \begin{enumerate} 
\item Essentially free modules are reflexive.

\item Let $\mathbb M_l\in \mathfrak F$, for all $l\in L$. Then, $\prod_{l\in L} \mathbb M_L$ and $\oplus_{l\in L} \mathbb M_l\in \mathfrak F$.

\item If $\mathbb M,\mathbb M'\in \mathfrak F$, then $\mathbb M^*,\mathbb Hom_{\mathcal R}(\mathbb M,\mathbb M')\in \mathfrak F$.

\item If $\mathbb M,\mathbb M'\in \mathfrak F$, then $(\mathbb M\otimes_{\mathcal R}\mathbb M')^{*}\in\mathfrak F$ and $(\mathbb M\otimes_{\mathcal R}\mathbb M')^{**}$ satisfies
 $$
     \Hom_{\mathcal R}((\mathbb M\otimes_{\mathcal R}\mathbb M')^{**},\mathbb {M''})\overset{\text{\ref{3.2}}}=\Hom_{\mathcal R}(\mathbb M\otimes_{\mathcal R}\mathbb M',\mathbb {M''}),$$
for every dual  functor of $\mathcal R$-modules, $\mathbb {M''}$.

\item  If $\mathbb A,\mathbb A'\in\mathfrak F$ are functor of $\mathcal R$-algebras, then $(\mathbb A\otimes_{\mathcal R}\mathbb A')^{**}\in\mathfrak F$ is a functor of $\mathcal R$-algebras and
     $$
     \Hom_{\mathcal R-alg}((\mathbb A\otimes_{\mathcal R}\mathbb A')^{**},\mathbb {A''})\overset{\text{\ref{2.4}}}=\Hom_{\mathcal R-alg}(\mathbb A\otimes_{\mathcal R}\mathbb A',\mathbb {A''}),$$
for every $\mathcal R$-algebra  $\mathbb {A''}$ such that it is a dual  functor of $\mathcal R$-modules.

\end{enumerate}

\end{theorem}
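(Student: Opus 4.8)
The plan is to derive each of the five items directly from the explicit inverse-limit computations in Theorems \ref{teoremon}, \ref{teoremonb} and \ref{teoremonc}, supplemented by the general closure Propositions \ref{3.2} and \ref{2.4}. Before anything else I would record one elementary observation on which items (3)--(5) hinge: if a functor is written as $\plim{\gamma\in\mathcal P}\mathcal R^{(\gamma}$ for a family $\mathcal P$ of subsets that is merely \emph{directed} under inclusion, then the limit does not change when $\mathcal P$ is replaced by its closure $\bar{\mathcal P}$ under finite unions. This follows from the support description $\plim{\gamma\in\mathcal P}\mathcal R^{(\gamma}=\{(\lambda_i):|\{i\in\gamma:\lambda_i\neq 0\}|<\infty\ \text{for all}\ \gamma\in\mathcal P\}$ of \ref{teoremon}(1), since directedness forces every member of $\bar{\mathcal P}$ to be contained in some member of $\mathcal P$, so the two support conditions coincide. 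This is exactly what upgrades the rectangular index families $P^\circ\times Q$ of \ref{teoremonc} to presentations that are essentially free in the sense of \ref{defi3.4}.

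Item (1) is then immediate: it is the reflexivity already asserted in \ref{teoremon}(2). For item (2), the product case is \ref{teoremonb}, which gives $\prod_{l\in L}\mathbb M_l=\plim{\alpha\in Q}\mathcal R^{(\alpha}$; I would only check that $Q$ is union-closed, which holds because $\alpha\cup\alpha'=\coprod_{l}(\alpha_l\cup\alpha'_l)$ again has finitely many nonempty components, each lying in the union-closed $P_l$. The direct-sum case I would obtain by dualizing: each $\mathbb N_l\in\mathfrak F$ is reflexive with $\mathbb N_l^*\in\mathfrak F$ by \ref{teoremon}(2), so setting $\mathbb M_l:=\mathbb N_l^*$ gives $\oplus_{l\in L}\mathbb N_l=\oplus_{l\in L}\mathbb M_l^*=(\prod_{l\in L}\mathbb M_l)^*$ by \ref{teoremonb}; since $\prod_{l\in L}\mathbb M_l\in\mathfrak F$ by the product case and duals of essentially free functors are essentially free, the conclusion follows.

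Items (3) and (4) rest on \ref{teoremonc}. The equality $\mathbb M^*\in\mathfrak F$ is once more \ref{teoremon}(2); the identities $\mathbb Hom_{\mathcal R}(\mathbb M,\mathbb M')=\plim{(\alpha,\beta)\in P^\circ\times Q}\mathcal R^{(\alpha\times\beta}$ and $(\mathbb M\otimes_{\mathcal R}\mathbb M')^{*}=\plim{\gamma\in P^\circ\times Q^\circ}\mathcal R^{(\gamma}$ are read off from \ref{teoremonc}(1) and (2), and the preliminary observation turns each into an essentially free functor. In particular $(\mathbb M\otimes_{\mathcal R}\mathbb M')^{*}\in\mathfrak F$ is reflexive by item (1), so $\mathbb M\otimes_{\mathcal R}\mathbb M'$ has reflexive dual and Proposition \ref{3.2} applies verbatim to give $\Hom_{\mathcal R}((\mathbb M\otimes_{\mathcal R}\mathbb M')^{**},\mathbb M'')=\Hom_{\mathcal R}(\mathbb M\otimes_{\mathcal R}\mathbb M',\mathbb M'')$ for every dual $\mathbb M''$.

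Item (5) is the algebra refinement of (4): the tensor product $\mathbb A\otimes_{\mathcal R}\mathbb A'$ carries its natural algebra structure, and $(\mathbb A\otimes_{\mathcal R}\mathbb A')^{*}\in\mathfrak F$ is reflexive by (4) and (1), so the hypothesis of Proposition \ref{2.4} holds with $\mathbb A\otimes_{\mathcal R}\mathbb A'$ in the role of $\mathbb A$; this simultaneously equips $(\mathbb A\otimes_{\mathcal R}\mathbb A')^{**}$ with an algebra structure and delivers the stated equality of $\Hom_{\mathcal R-alg}$ for dual targets. The only genuinely delicate point is the bookkeeping behind the preliminary observation, namely that the various product-type index families are directed and may be closed under unions without altering the limit; once this is in hand, every assertion is a direct invocation of the cited results, and I expect no further obstacle since the hard structural work already lies in Theorems \ref{teoremonb} and \ref{teoremonc}.
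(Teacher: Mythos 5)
Your proof is correct and follows essentially the same route as the paper, whose entire proof of this theorem is the single line ``It is consequence of \ref{teoremon}, \ref{teoremonb} and \ref{teoremonc}'' (together with the appeals to \ref{3.2} and \ref{2.4} already built into the statement). Your preliminary observation that a directed index family may be replaced by its closure under finite unions without changing the inverse limit is a worthwhile detail the paper leaves implicit, and it correctly legitimises reading the families $P^\circ\times Q$ and $P^\circ\times Q^\circ$ of \ref{teoremonc} as presentations in the sense of Definition \ref{defi3.4}.
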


\begin{proof} It is consequence of \ref{teoremon}, \ref{teoremonb} and \ref{teoremonc}.\end{proof}

\begin{lemma} \label{x} Let $\phi\colon \mathcal R^\alpha\to \mathcal M$ be a morphism of $\mathcal R$-modules. Then, $\Ima \phi_R\subseteq M$ is a $R$-module of finite type and $\phi$
 factors uniquely through an epimorphism onto the quasi-coherent module associated with  $\Ima \phi_R$.
\end{lemma}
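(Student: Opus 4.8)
The plan is to exploit that $\mathcal R^\alpha$ is itself the dual of a quasi-coherent module, compute $\Hom_{\mathcal R}(\mathcal R^\alpha,\mathcal M)$ explicitly, and then read off both assertions from the finiteness built into that $\Hom$-group.

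First I would record the identification $\mathcal R^\alpha=(\mathcal R^{(\alpha})^*$, where $\mathcal R^{(\alpha}=\oplus_\alpha\mathcal R$ is the quasi-coherent module attached to the free module $N:=\oplus_\alpha R$; indeed $(\mathcal R^{(\alpha})^*(S)=\Hom_{\mathcal S}(\oplus_\alpha\mathcal S,\mathcal S)=\prod_\alpha S=\mathcal R^\alpha(S)$ by Proposition \ref{tercer}. Then Proposition \ref{prop4} gives $\mathbb Hom_{\mathcal R}(\mathcal R^\alpha,\mathcal M)=\mathbb Hom_{\mathcal R}((\mathcal R^{(\alpha})^*,\mathcal M)=\mathcal R^{(\alpha}\otimes_{\mathcal R}\mathcal M$, which is the quasi-coherent module associated with $N\otimes_R M=\oplus_\alpha M$. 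Evaluating at $R$, $\Hom_{\mathcal R}(\mathcal R^\alpha,\mathcal M)=\oplus_\alpha M$, naturally in $M$, and tracing the identification of Proposition \ref{prop4} the morphism attached to a (necessarily finitely supported) family $(m_i)_{i\in\alpha}\in\oplus_\alpha M$ is $\phi_S((s_i)_i)=\sum_i m_i\otimes s_i$, a finite sum.

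This is the crux and the only genuine obstacle: since $\mathcal R^\alpha(R)=\prod_\alpha R$ is \emph{not} the free module, $R$-linearity alone would not determine $\phi$ from its values $m_i:=\phi_R(e_i)$ on the standard vectors. The $\Hom$-computation repairs this, forcing $\phi$ to come from a family with only finitely many nonzero $m_i$. Hence $\Ima\phi_R=\sum_i R\,m_i$ is generated by those finitely many elements, so it is an $R$-module of finite type; this settles the first assertion. Write $M':=\Ima\phi_R$, let $\mathcal M'$ be its associated quasi-coherent module and $\iota\colon\mathcal M'\to\mathcal M$ the canonical morphism attached to the inclusion $M'\hookrightarrow M$.

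Finally I would produce the factorization. Each $m_i=\phi_R(e_i)$ lies in $M'$, so the same family $(m_i)$ belongs to $\oplus_\alpha M'=\Hom_{\mathcal R}(\mathcal R^\alpha,\mathcal M')$ and defines $\bar\phi\colon\mathcal R^\alpha\to\mathcal M'$; naturality in the module variable yields $\iota\circ\bar\phi=\phi$. The morphism $\bar\phi$ is an epimorphism, because $\bar\phi_S((s_i)_i)=\sum_i m_i\otimes s_i$ runs over the $S$-submodule of $M'\otimes_R S$ generated by the $m_i\otimes 1$, and since the $m_i$ generate $M'$ over $R$ these generate $M'\otimes_R S$ over $S$; thus $\bar\phi_S$ is surjective for every $S$, which is exactly epimorphy since cokernels of functors of $\mathcal R$-modules are computed objectwise. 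Uniqueness is immediate: $\Hom_{\mathcal R}(\mathcal R^\alpha,-)=\oplus_\alpha(-)$ carries the injection $M'\hookrightarrow M$ to the injection $\oplus_\alpha M'\hookrightarrow\oplus_\alpha M$, so at most one element of $\oplus_\alpha M'$ maps to the family representing $\phi$, i.e. $\bar\phi$ is the unique lift of $\phi$ along $\iota$.
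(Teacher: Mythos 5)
Your proposal is correct and follows essentially the same route as the paper: both identify $\Hom_{\mathcal R}(\mathcal R^\alpha,\mathcal M)$ with $R^{(\alpha}\otimes_R M=\oplus_\alpha M$ via Proposition \ref{prop4}, read off that $\phi$ is determined by a finitely supported family $(m_i)$, and then factor through the quasi-coherent module associated with $\langle m_i\rangle=\Ima\phi_R$. Your write-up merely makes explicit the surjectivity of $\bar\phi_S$ for all $S$ and the uniqueness of the lift, which the paper leaves implicit.
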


\begin{proof} $\Hom_{\mathcal R}(\mathcal R^\alpha,\mathcal M)=R^{(\alpha}\otimes_R M$. Let $\{1_i\}_{i\in\alpha}$ be the standard basis of $R^{(\alpha}$. Then, $\phi=1_{i_1}\otimes m_1+\cdots+1_{i_n}\otimes m_n$, $N:=\Ima\phi_R=\langle m_1,\ldots,m_n\rangle$ and $\phi$
 factors uniquely through $1_{i_1}\otimes m_1+\cdots+1_{i_n}\otimes m_n\in \Hom_{\mathcal R}(\mathcal R^\alpha,\mathcal N)=R^{(\alpha}\otimes_R N$.

\end{proof}

\begin{proposition} \label{proposition3.12} $\mathcal M\in  \mathfrak F$  if and only if
$M$ is a free $R$-module. \end{proposition}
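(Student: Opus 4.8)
The plan is to prove the two implications separately. The implication ``$M$ free $\Rightarrow\mathcal M\in\mathfrak F$'' is essentially already recorded in \ref{Examples5.4}: if $M=R^{(I)}$ is free on a set $I$, then $\mathcal M=\oplus_I\mathcal R=\mathcal R^{(I}$, and taking $P=\{I\}$ (which is trivially stable under unions) we get $\mathcal M\simeq\plim{\alpha\in P}\mathcal R^{(\alpha}$, so $\mathcal M\in\mathfrak F$. The real content is the converse, and the strategy is to exploit quasi-coherence of $\mathcal M$ to force the defining set $P$ to be as small as possible.

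So assume $\mathcal M\simeq\plim{\alpha\in P}\mathcal R^{(\alpha}$ with $P$ stable under finite unions. After replacing $I$ by $\cup_{\alpha\in P}\alpha$ (which does not change the functor) I may assume $\cup_{\alpha\in P}\alpha=I$, so that Theorem \ref{teoremon}(1) applies and gives $\mathcal M\simeq\ilim{\beta\in P^\circ}\mathcal R^\beta\subseteq\prod_I\mathcal R$. Here $P^\circ$ is directed by inclusion (it is stable under finite unions) and the transition maps are the extension-by-zero inclusions $\mathcal R^{\beta_1}\hookrightarrow\mathcal R^{\beta_2}$, so $\mathcal M$ is exhibited as a filtered union of the module schemes $\mathcal R^\beta$. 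The goal is to show that this union already collapses to $\oplus_I\mathcal R$, that is, that $P^\circ$ consists only of finite subsets of $I$.

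The key step is to test each structure morphism $j_\beta\colon\mathcal R^\beta\to\mathcal M$ against Lemma \ref{x}. Since filtered colimits of injections are injective and evaluation at $R$ commutes with $\ilim$, the map $(j_\beta)_R$ is the inclusion $R^\beta=\prod_\beta R\hookrightarrow M$, so $\Ima(j_\beta)_R\cong\prod_\beta R$. On the other hand $\mathcal M$ is quasi-coherent, so Lemma \ref{x} forces $\Ima(j_\beta)_R$ to be a finitely generated $R$-module. Hence I must show $\prod_\beta R$ is finitely generated only for finite $\beta$; once this is known, every $\beta\in P^\circ$ is finite, and since $P^\circ$ automatically contains all finite subsets of $I$ we conclude $P^\circ=\{\beta\subseteq I:|\beta|<\infty\}$, whence $\mathcal M\simeq\ilim{\beta\text{ finite}}\mathcal R^{(\beta}=\oplus_I\mathcal R$ and $M=\mathcal M(R)=R^{(I)}$ is free.

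The main obstacle is this last purely module-theoretic point; everything else is bookkeeping with Theorem \ref{teoremon} and Lemma \ref{x}. I would argue it by reducing modulo a maximal ideal $\mathfrak m$ of $R$: the surjection $\prod_\beta R\to\prod_\beta(R/\mathfrak m)$ descends to a surjection $\prod_\beta R\otimes_R(R/\mathfrak m)\to\prod_\beta(R/\mathfrak m)$, and for infinite $\beta$ the target is an infinite-dimensional $R/\mathfrak m$-vector space, so $\prod_\beta R$ cannot be finitely generated. The only case this misses is $R=0$, which I would dispose of at the outset, since then $M=0$ is free and $\mathcal M=0\in\mathfrak F$.
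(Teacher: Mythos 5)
Your proof is correct and follows essentially the same route as the paper's: identify $\mathcal M$ with $\ilim{\beta\in P^\circ}\mathcal R^\beta$ and apply Lemma \ref{x} to the injections $\mathcal R^\beta\to\mathcal M$ to force every $\beta$ to be finite, whence $\mathcal M=\oplus_I\mathcal R$. The only difference is that you spell out the step the paper leaves implicit (that $\prod_\beta R$ finitely generated forces $\beta$ finite, via reduction modulo a maximal ideal), which is a welcome addition rather than a divergence.
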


\begin{proof} If $\mathcal M\in  \mathfrak F$ then $\mathcal M=\ilim{\alpha\in P}\mathcal R^\alpha$. Consider the obvious injective morphism $i\colon \mathcal R^\alpha\to \mathcal M$. By Lemma \ref{x}, $\Ima i_R$ is a $R$-module of finite type, then $\alpha$ is a finite set. Hence, $\mathcal M=\ilim{\alpha\in P}\mathcal R^\alpha=\mathcal R^{(\cup_{\alpha \in P}\alpha}$ and $M$ is a free $R$-module.

\end{proof}

\begin{lemma} \label{invqua} Let $\mathbb M\in\mathfrak F$ and let $M$ be an $R$-module. Then, every morphism of $\mathcal R$-modules $\phi\colon \mathbb M\to \mathcal M$ factors uniquely through an epimorphism onto the quasi-coherent module associated with the $R$-submodule $\Ima \phi_R\subseteq M$.\end{lemma}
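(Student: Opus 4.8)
The plan is to present $\mathbb M$ as a filtered colimit of the functors $\mathcal R^\beta$ and reduce everything to Lemma \ref{x}. By Theorem \ref{teoremon}(1) I may assume $\mathbb M=\ilim{\beta\in P^\circ} \mathcal R^\beta$, and I first record that $P^\circ$ is directed: if $\beta,\beta'\in P^\circ$ then $(\beta\cup\beta')\cap\alpha=(\beta\cap\alpha)\cup(\beta'\cap\alpha)$ is finite for every $\alpha\in P$, so $\beta\cup\beta'\in P^\circ$. For each $\beta\in P^\circ$ let $i_\beta\colon \mathcal R^\beta\to\mathbb M$ be the canonical morphism and put $\phi_\beta:=\phi\circ i_\beta\colon \mathcal R^\beta\to\mathcal M$. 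By Lemma \ref{x}, $N_\beta:=\Ima(\phi_\beta)_R$ is a finitely generated submodule of $M$ and $\phi_\beta$ factors uniquely as $\mathcal R^\beta\overset{\pi_\beta}{\twoheadrightarrow}\mathcal N_\beta\to\mathcal M$, where $\mathcal N_\beta$ is the quasi-coherent module associated with $N_\beta$. Since colimits of functors are computed pointwise, $\mathbb M(R)=\ilim{\beta}R^\beta$, and hence $N:=\Ima\phi_R=\cup_{\beta\in P^\circ}N_\beta$.

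Next I would glue. Composing $\pi_\beta$ with the canonical map $\mathcal N_\beta\to\mathcal N$ coming from $N_\beta\subseteq N$ produces $\tilde\phi_\beta\colon \mathcal R^\beta\to\mathcal N$ satisfying $j\circ\tilde\phi_\beta=\phi_\beta$, where $j\colon\mathcal N\to\mathcal M$ is induced by the inclusion $N\hookrightarrow M$. To obtain $\tilde\phi\colon\mathbb M\to\mathcal N$ from the universal property of the colimit I must verify $\tilde\phi_{\beta'}\circ t_{\beta\beta'}=\tilde\phi_\beta$ for the transition map $t_{\beta\beta'}\colon\mathcal R^\beta\to\mathcal R^{\beta'}$ whenever $\beta\subseteq\beta'$. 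The crucial rigidity used here is that a morphism $\mathcal R^\beta\to\mathcal N$ is determined by its value at $R$: indeed $\Hom_{\mathcal R}(\mathcal R^\beta,\mathcal N)=R^{(\beta}\otimes_R N$ by (the proof of) Lemma \ref{x}, and evaluating $\sum_k 1_{i_k}\otimes n_k$ at the basis vectors $1_{i_k}\in R^{(\beta}\subseteq R^\beta$ recovers the $n_k$, so the evaluation map into $\Hom_R(R^\beta,N)$ is injective. Both $\tilde\phi_{\beta'}\circ t_{\beta\beta'}$ and $\tilde\phi_\beta$ become $\phi_\beta$ after composing with $j$ (note $j_R$ is the inclusion $N\hookrightarrow M$, hence injective), so they agree; this yields $\tilde\phi$ with $j\circ\tilde\phi=\phi$.

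Finally I would check that $\tilde\phi$ is an epimorphism and that the factorization is unique. For the epimorphism claim, at each $S$ the image $\Ima(\tilde\phi_S)$ contains $\cup_\beta\Ima(\mathcal N_\beta(S)\to\mathcal N(S))$; since tensoring commutes with the directed colimit $N=\cup_\beta N_\beta$, one has $N\otimes_R S=\ilim{\beta}(N_\beta\otimes_R S)$, so this union exhausts $\mathcal N(S)=N\otimes_R S$ and $\tilde\phi_S$ is surjective. For uniqueness, if $\pi,\pi'\colon\mathbb M\to\mathcal N$ are epimorphisms with $j\circ\pi=j\circ\pi'=\phi$, then composing with each $i_\beta$ and using injectivity of $j_R$ gives $(\pi\circ i_\beta)_R=(\pi'\circ i_\beta)_R$; by the rigidity noted above this forces $\pi\circ i_\beta=\pi'\circ i_\beta$ for all $\beta$, whence $\pi=\pi'$.

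The only genuinely delicate points are the injectivity of the evaluation $\Hom_{\mathcal R}(\mathcal R^\beta,\mathcal N)\to\Hom_R(R^\beta,N)$, which underlies both the gluing compatibility and the uniqueness, and the interchange of tensor product with the filtered colimit $N=\cup_\beta N_\beta$ needed for surjectivity. Neither is deep once Lemma \ref{x} is in hand, so I expect the main work to be organizational rather than conceptual.
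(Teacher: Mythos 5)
Your proof is correct and follows the same route the paper intends: its entire argument is the one-line reduction ``it is a consequence of Lemma \ref{x} and the equality $\mathbb M=\ilim{\beta\in P^\circ}\mathcal R^\beta$'', and you have simply supplied the gluing, surjectivity and uniqueness details that the author leaves implicit. The two points you flag as delicate (injectivity of evaluation $\Hom_{\mathcal R}(\mathcal R^\beta,\mathcal N)\to\Hom_R(R^\beta,N)$ and commutation of $\otimes$ with the filtered union $N=\cup_\beta N_\beta$) are exactly the right ones, and both are handled correctly.
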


\begin{proof} It is a consequence of Lemma \ref{x} and the equality  $\mathbb M=\ilim{\alpha\in P} \mathcal R^\alpha$.\end{proof}

\begin{theorem} \label{FP2} Let $\{\mathcal M_i\}_{i\in I}$ be the set of all
quasi-coherent quotients of $\mathbb M\in\mathfrak F$. Then,
$\mathbb M^*=\ilim{i\in I}  \mathcal M_i^*$.
Therefore, $$\mathbb M=\plim{i\in I}  \mathcal M_i.$$

\end{theorem}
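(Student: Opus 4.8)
The plan is to prove the first equality $\mathbb M^*=\ilim{i\in I}\mathcal M_i^*$ and then obtain $\mathbb M=\plim{i\in I}\mathcal M_i$ by dualizing and using that $\mathbb M$ is reflexive. Throughout I write $\pi_i\colon\mathbb M\to\mathcal M_i$ for the defining epimorphism of each quasi-coherent quotient.

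First I would organize the family $\{\mathcal M_i\}_{i\in I}$ into a directed inverse system, so that the dual family $\{\mathcal M_i^*\}$ becomes a genuine direct system. Set $i\geq j$ when $\pi_j$ factors through $\pi_i$. Given $i,j$, apply Lemma \ref{invqua} to the morphism $(\pi_i,\pi_j)\colon\mathbb M\to\mathcal M_i\oplus\mathcal M_j$ (the target is quasi-coherent, associated with $M_i\oplus M_j$): it factors through an epimorphism $\rho$ onto the quasi-coherent module attached to $\Ima(\pi_{i,R},\pi_{j,R})$, which is a quasi-coherent quotient $\mathcal M_k$ dominating both $\mathcal M_i$ and $\mathcal M_j$ (compose $\rho$ with the two projections). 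Hence the system is directed. The epimorphisms $\pi_i$ dualize to $\pi_i^*\colon\mathcal M_i^*\to\mathbb M^*$, compatible with the transition maps, and assemble into a morphism $\Phi\colon\ilim{i}\mathcal M_i^*\to\mathbb M^*$.

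Next I would check that $\Phi$ is an isomorphism, testing on each $S$ via $(\ilim{i}\mathcal M_i^*)(S)=\ilim{i}(\mathcal M_i^*(S))$. Each $\pi_i^*$ is injective because $\pi_i$ is an epimorphism (so $\bar\phi\circ\pi_i=0$ forces $\bar\phi=0$), whence $\Phi$ is injective. For surjectivity, an element of $\mathbb M^*(S)=\Hom_{\mathcal R}(\mathbb M,\mathcal S)$ (Corollary \ref{adj2}) is a morphism $\phi\colon\mathbb M\to\mathcal S$; viewing $\mathcal S$ as the quasi-coherent module of the $R$-module $S$ and applying Lemma \ref{invqua}, $\phi$ factors as $\mathbb M\overset{\pi_i}{\to}\mathcal M_i\overset{\bar\phi}{\to}\mathcal S$ through the quasi-coherent quotient associated with $\Ima\phi_R$. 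Thus $\phi=\pi_i^*(\bar\phi)$ lies in the image of $\Phi_S$, and $\mathbb M^*=\ilim{i}\mathcal M_i^*$.

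Finally, dualizing turns this direct limit into an inverse limit, exactly as in the proof of Theorem \ref{teoremon}(2): $\mathbb M^{**}=(\ilim{i}\mathcal M_i^*)^*=\plim{i}(\mathcal M_i^*)^*=\plim{i}\mathcal M_i$, using $\mathcal M_i^{**}=\mathcal M_i$ from Theorem \ref{reflex}. Since $\mathbb M\in\mathfrak F$ is reflexive by Theorem \ref{3.6}(1), $\mathbb M=\mathbb M^{**}=\plim{i}\mathcal M_i$. The main obstacle is the surjectivity step: everything hinges on the factorization Lemma \ref{invqua}, guaranteeing that an arbitrary $\phi\colon\mathbb M\to\mathcal S$ factors through an actual quasi-coherent quotient of $\mathbb M$ rather than merely some abstract quasi-coherent module. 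A secondary point to keep clean is the directedness of the index family (and that it is a set, each quotient being a quotient of the fixed module $M=\mathbb M(R)$), which legitimizes the direct limit.
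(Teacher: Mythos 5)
Your proof is correct and follows essentially the same route as the paper: injectivity of $\ilim{i}\mathcal M_i^*(S)\to\mathbb M^*(S)$, surjectivity via the factorization Lemma \ref{invqua} applied to $\phi\colon\mathbb M\to\mathcal S$, and then dualization using reflexivity of $\mathbb M\in\mathfrak F$. The only difference is that you explicitly verify the directedness of the index family (via the map into $\mathcal M_i\oplus\mathcal M_j$), a point the paper leaves implicit.
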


\begin{proof} Let $S$ be a commutative $R$-algebra.
$\mathbb M^*(S)=\Hom_{\mathcal R}(\mathbb M,\mathcal S)$, by Corollary \ref{adj2}.
The morphism $\ilim{i\in I}  \mathcal M_i^*(S)\to \Hom_{\mathcal R}(\mathbb M,\mathcal S)=\mathbb M^*(S)$ is obviously injective, and it is surjective by  Lemma \ref{invqua}. Hence, $\mathbb M^*=\ilim{i\in I}  \mathcal M_i^*$ and $\mathbb M=\mathbb M^{**}=\plim{i\in I}  \mathcal M_i.$\end{proof}

\begin{remark} \label{Remark} Let $R=K$ be a field. Now assume $\mathfrak F$ is the family of reflexive functors of $\mathcal K$-modules. Lemma \ref{invqua} is true by \cite[2.13]{Amel}. Hence,  Theorem \ref{FP2} is likewise true. Hence, every reflexive module is a direct limit of modules $\mathcal M_i^*$ and it is an inverse limit of quasicoherent modules

\end{remark}

\section{Homomorphisms and applications}

\begin{proposition} \label{3.10X}  Let  $\mathbb M\in\mathfrak F$ and let $\mathbb M'$ be  a dual functor of $\mathcal R$-modules. Then, the morphism
$$\Hom_{\mathcal R}(\mathbb M,\mathbb M')\to \Hom_{R}(\mathbb M(R),\mathbb M'(R)),\,\, \phi\mapsto\phi_R$$
is injective..\end{proposition}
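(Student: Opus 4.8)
The plan is to use the essentially free structure of $\mathbb M$ to reduce to a single factor $\mathcal R^\beta=\prod_\beta\mathcal R$, and then to exploit the fact that the dual of such a product is the \emph{direct sum} $\oplus_\beta\mathcal R$, so that a morphism is detected by its values on the standard vectors $e_i$.

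First I would reduce to a single factor. By Theorem \ref{teoremon}(1) we may write $\mathbb M\simeq\plim{\alpha\in P}\mathcal R^{(\alpha}=\ilim{\beta\in P^\circ}\mathcal R^\beta$, a directed union of the subfunctors $\mathcal R^\beta=\prod_\beta\mathcal R\subseteq\prod_I\mathcal R$. Hence $\Hom_{\mathcal R}(\mathbb M,\mathbb M')=\plim{\beta\in P^\circ}\Hom_{\mathcal R}(\mathcal R^\beta,\mathbb M')$, and if $\phi_R=0$ then, writing $\phi^\beta$ for the restriction of $\phi$ along $\iota_\beta\colon\mathcal R^\beta\hookrightarrow\mathbb M$, we get $(\phi^\beta)_R=\phi_R\circ(\iota_\beta)_R=0$. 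Since $\phi$ is determined by the family $(\phi^\beta)$, it suffices to prove the statement for $\mathbb M=\mathcal R^\beta$, i.e.\ that $\Hom_{\mathcal R}(\mathcal R^\beta,\mathbb M')\to\Hom_R(R^\beta,\mathbb M'(R))$ is injective.

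Next I would dualize. Write $\mathbb M'=\mathbb N^*$. The tensor--hom adjunction used in the proof of \ref{3.2}, together with the identity $(\mathcal R^\beta)^*=\mathcal R^{(\beta}$ coming from Theorem \ref{teoremonb}, gives
$$\Hom_{\mathcal R}(\mathcal R^\beta,\mathbb N^*)=\Hom_{\mathcal R}(\mathcal R^\beta\otimes_{\mathcal R}\mathbb N,\mathcal R)=\Hom_{\mathcal R}(\mathbb N,\mathcal R^{(\beta}).$$
Let $g\colon\mathbb N\to\mathcal R^{(\beta}=\oplus_\beta\mathcal R$ be the morphism corresponding to $\psi\colon\mathcal R^\beta\to\mathbb N^*$. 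Because $g_S(n)$ has finite support for each $n\in\mathbb N(S)$, the intermediate pairing is the finite sum $\psi_S((m_i))(n)=\sum_{i\in\beta}m_i\,(g_S(n))_i$. Taking $m=e_i$, the $i$-th standard vector of $\mathcal R^\beta(R)=R^\beta$, and using naturality in $S$ (since $e_i$ comes from $R$), we obtain $\psi_R(e_i)_S(n)=(g_S(n))_i$; that is, the component $\pi_i\circ g$ equals the functional $\psi_R(e_i)\in\mathbb N^*(R)=\Hom_{\mathcal R}(\mathbb N,\mathcal R)$. Thus if $\psi_R=0$ then every $\psi_R(e_i)=0$, so every $\pi_i\circ g=0$; as $\oplus_\beta\mathcal R\hookrightarrow\prod_\beta\mathcal R$ and the projections separate points, $g=0$ and hence $\psi=0$.

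The only delicate point I anticipate is the adjunction bookkeeping: one must verify carefully that under $\Hom_{\mathcal R}(\mathcal R^\beta,\mathbb N^*)=\Hom_{\mathcal R}(\mathbb N,\mathcal R^{(\beta})$ the coordinate $\pi_i\circ g$ is exactly $\psi_R(e_i)$, and that this identification is natural in $S$. Everything else is formal; it rests on the two structural facts that $\mathbb M$ is a directed union of the $\mathcal R^\beta$ and that $(\mathcal R^\beta)^*$ is the \emph{direct sum} rather than the product. It is precisely the finiteness of supports in $\oplus_\beta\mathcal R$ that collapses the pairing to a finite sum, and this is what allows the values of $\phi_R$ at the vectors $e_i$ to pin down the whole morphism.
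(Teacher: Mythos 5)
Your proposal is correct and follows essentially the same route as the paper: reduce to $\mathbb M=\mathcal R^\beta$ via the directed union $\mathbb M=\ilim{\beta\in P^\circ}\mathcal R^\beta$, then use the adjunction $\Hom_{\mathcal R}(\mathcal R^\beta,\mathbb N^*)=\Hom_{\mathcal R}(\mathbb N,\mathcal R^{(\beta})\subseteq\Hom_{\mathcal R}(\mathbb N,\mathcal R^\beta)=\prod^\beta\mathbb N^*(R)$ and observe that this inclusion is realized by evaluating $\phi_R$ at the standard basis vectors. The paper states this composite identification without the explicit pairing computation you supply, but the argument is the same.
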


\begin{proof} $\mathbb M=\ilim{\alpha\in P}\mathcal R^\alpha$. It is sufficient to prove this proposition when $\mathbb M=\mathcal R^\alpha$. Now,

$$\Hom_{\mathcal R}(\mathcal  R^\alpha,\mathbb N^*)=
\Hom_{\mathcal R}(\mathbb N, \mathcal R^{(\alpha})\subseteq \Hom_{\mathcal R}(\mathbb N, \mathcal R^{\alpha})=\prod^\alpha \mathbb N^*(R).$$
Hence, the composite morphism $\Hom_{\mathcal R}(\mathcal R^\alpha,\mathbb M')
\to \Hom_{R}( R^\alpha,\mathbb M'(R))\to \prod^\alpha \mathbb M'(R)$ is injective. Then, the morphism $\Hom_{\mathcal R}(\mathcal R^\alpha,\mathbb M')
\to \Hom_{R}(R^\alpha,\mathbb M'(R))$ is injective.

\end{proof}

\begin{proposition} \label{5.14} Let $\mathbb A\in\mathfrak F$ be a functor of  $\mathcal R$-algebras, let $\mathbb M\in\mathfrak F$  be a functor of $\mathbb A$-modules and let  $\mathbb M'$ be  a functor of $\mathbb A$-modules and a dual functor of $\mathcal R$-modules.
Let $f\colon \mathbb M\to \mathbb M'$ be a morphism of $\mathcal R$-modules. 
Then, $f$ is a morphism of $\mathbb A$-modules if and only if $f_R$  is a morphism of $\mathbb A(R)$-modules.

\end{proposition}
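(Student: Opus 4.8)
The plan is to treat the two implications separately; the forward one is immediate and the converse carries all the weight. If $f$ is a morphism of $\mathbb A$-modules then $f_S$ is $\mathbb A(S)$-linear for every $S$, and specializing to $S=R$ shows that $f_R$ is $\mathbb A(R)$-linear. For the converse I would rephrase $\mathbb A$-linearity as the vanishing of a single morphism of functors, and then push that vanishing down to $R$ by two successive applications of Proposition \ref{3.10X}.

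Concretely, write $\mathbb M'=\mathbb N^*$ (possible since $\mathbb M'$ is a dual functor) and introduce the morphism of functors of $\mathcal R$-modules
$$\Phi\colon \mathbb A\to \mathbb Hom_{\mathcal R}(\mathbb M,\mathbb M'),\qquad \Phi_S(a):=(m\mapsto f(am)-af(m)).$$
A routine check shows that $\Phi_S(a)$ is $\mathcal S$-linear (because $f$ is already $\mathcal R$-linear) and that $\Phi$ is a morphism of $\mathcal R$-modules; moreover, by construction $f$ is a morphism of $\mathbb A$-modules if and only if $\Phi=0$. The key point is that the target of $\Phi$ is a dual functor: by the tensor-hom adjunction one has $\mathbb Hom_{\mathcal R}(\mathbb M,\mathbb N^*)=(\mathbb M\otimes_{\mathcal R}\mathbb N)^*$. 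Since $\mathbb A\in\mathfrak F$, Proposition \ref{3.10X} applies to $\Phi$ and yields $\Phi=0$ if and only if $\Phi_R=0$.

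It then remains to identify $\Phi_R=0$ with the $\mathbb A(R)$-linearity of $f_R$. For each $a\in\mathbb A(R)$ the element $g_a:=\Phi_R(a)\in\Hom_{\mathcal R}(\mathbb M,\mathbb M')$ is the morphism $m\mapsto f(am)-af(m)$, so $\Phi_R=0$ if and only if $g_a=0$ for every $a\in\mathbb A(R)$. Here I would invoke Proposition \ref{3.10X} a second time, now with source $\mathbb M\in\mathfrak F$ and (dual) target $\mathbb M'$: each $g_a$ vanishes if and only if $(g_a)_R=0$, that is, if and only if $f_R(am)=af_R(m)$ for all $m\in\mathbb M(R)$. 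Letting $a$ range over $\mathbb A(R)$, this is exactly the assertion that $f_R$ is $\mathbb A(R)$-linear. Chaining the equivalences gives the proposition.

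The main obstacle is precisely to arrange, at each stage, a morphism whose \emph{source} lies in $\mathfrak F$ and whose \emph{target} is a dual functor, so that Proposition \ref{3.10X} is applicable. This is what forces the two-step reduction: one cannot instead form $\mathbb A\otimes_{\mathcal R}\mathbb M$ and bidualize, because the natural map $(\mathbb A\otimes_{\mathcal R}\mathbb M)(R)\to(\mathbb A\otimes_{\mathcal R}\mathbb M)^{**}(R)$ need not be surjective, so vanishing of the difference morphism at $R$ would fail to transfer to its reflexive hull. Peeling off the algebra variable first (via $\Phi$, using $\mathbb A\in\mathfrak F$) and the module variable afterwards (via the $g_a$, using $\mathbb M\in\mathfrak F$) is exactly what makes both hypotheses $\mathbb A,\mathbb M\in\mathfrak F$ come into play.
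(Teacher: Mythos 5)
Your argument is correct and is essentially the paper's own proof: the paper encodes $\mathbb A$-linearity as the vanishing of $F\colon\mathbb A\otimes_{\mathcal R}\mathbb M\to\mathbb M'$, $F(a\otimes m)=f(am)-af(m)$, and then chains the two injections $\Hom_{\mathcal R}(\mathbb A,\mathbb Hom_{\mathcal R}(\mathbb M,\mathbb M'))\hookrightarrow\Hom_R(\mathbb A(R),\Hom_{\mathcal R}(\mathbb M,\mathbb M'))\hookrightarrow\Hom_R(\mathbb A(R),\Hom_R(\mathbb M(R),\mathbb M'(R)))$ coming from two applications of Proposition \ref{3.10X}, which is exactly your two-step reduction via $\Phi$ (the adjoint of $F$) and the morphisms $g_a$. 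The only cosmetic difference is that you unravel the composite injection elementwise rather than stating it as a chain of monomorphisms of Hom-sets.
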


\begin{proof} The morphism $f$ is a morphism of $\mathbb A$-modules if and only if $F\colon \mathbb A\otimes \mathbb M\to \mathbb M'$, $F(a\otimes m):=f(am)-af(m)$ is the zero morphism. Likewise, $f_R$ is a morphism of $\mathbb A(R)$-modules if and only if $F_R\colon \mathbb A(R)\otimes \mathbb M(R)\to \mathbb M'(R)$, $F_R(a\otimes m)=f_R(am)-af_R(m)$ is the zero morphism. Now, the proposition is a consequence of the injective morphisms,

$$\aligned &\Hom_{\mathcal R}(\mathbb A\otimes \mathbb M,\mathbb M') =
\Hom_{\mathcal R}(\mathbb A,\mathbb Hom_{\mathcal R}(\mathbb M,\mathbb M'))
\underset{\text{\ref{3.10X}}}\hookrightarrow \Hom_{R}(\mathbb A(R),\Hom_{\mathcal R}(\mathbb M,\mathbb M'))
\\ & \underset{\text{\ref{3.10X}}}\hookrightarrow  \Hom_{R}(\mathbb A(R),\Hom_{R}(\mathbb M(R),\mathbb M'(R)))=
  \Hom_{R}(\mathbb A(R)\otimes \mathbb M(R),\mathbb M'(R)).\endaligned$$

\end{proof}

\begin{proposition} \label{5.14b} Let $\mathbb A\in\mathfrak F$ be a functor of  $\mathcal R$-algebras
and let $\mathcal M,\mathcal M'$ be functors of $\mathbb A$-modules. 
Then, $$\Hom_{\mathbb A}(\mathcal M,\mathcal M')=\Hom_{\mathbb A(R)}(M,M').$$ \end{proposition}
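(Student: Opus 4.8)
The statement is exactly Proposition \ref{5.14} specialized to $\mathbb M=\mathcal M$ and $\mathbb M'=\mathcal M'$, \emph{except} that \ref{5.14} requires $\mathbb M\in\mathfrak F$, whereas a quasi-coherent module $\mathcal M$ lies in $\mathfrak F$ only when $M$ is free (Proposition \ref{proposition3.12}). So the plan is to rerun the argument of \ref{5.14} while using the hypothesis $\mathbb A\in\mathfrak F$ only on the algebra factor. First I would reduce to the $\mathcal R$-linear level: since $\mathcal M,\mathcal M'$ are quasi-coherent, the equivalence of categories gives $\Hom_{\mathcal R}(\mathcal M,\mathcal M')=\Hom_R(M,M')$ (Proposition \ref{tercer}), so every $\mathcal R$-linear $f\colon\mathcal M\to\mathcal M'$ is $\mathcal g$ for a unique $g=f_R\in\Hom_R(M,M')$. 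As in \ref{5.14}, $f$ is a morphism of $\mathbb A$-modules iff the defect morphism $F\colon\mathbb A\otimes_{\mathcal R}\mathcal M\to\mathcal M'$, $F(a\otimes m):=f(am)-af(m)$, vanishes, and $g$ is $\mathbb A(R)$-linear iff $F_R\colon\mathbb A(R)\otimes_R M\to M'$ vanishes. Thus everything comes down to showing that $F\mapsto F_R$, i.e.\ the map
$$\Hom_{\mathcal R}(\mathbb A\otimes_{\mathcal R}\mathcal M,\mathcal M')\to\Hom_R(\mathbb A(R)\otimes_R M,M'),$$
is injective.

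For this I would use the adjunction $\Hom_{\mathcal R}(\mathbb A\otimes_{\mathcal R}\mathcal M,\mathcal M')=\Hom_{\mathcal R}(\mathbb A,\mathbb Hom_{\mathcal R}(\mathcal M,\mathcal M'))$ and the key observation that $\mathbb Hom_{\mathcal R}(\mathcal M,\mathcal M')$ is a \emph{dual} functor. Indeed, $\mathcal M'$ is reflexive (Theorem \ref{reflex}), so $\mathcal M'=(\mathcal M'^{*})^{*}$, and the general identity $\mathbb Hom_{\mathcal R}(\mathbb N,\mathbb L^{*})=(\mathbb N\otimes_{\mathcal R}\mathbb L)^{*}$ (which holds after restriction to every $S$, as in the manipulations of \ref{3.2} and \ref{adj2}) yields $\mathbb Hom_{\mathcal R}(\mathcal M,\mathcal M')=(\mathcal M\otimes_{\mathcal R}\mathcal M'^{*})^{*}$. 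Since $\mathbb A\in\mathfrak F$ and the target is a dual functor, Proposition \ref{3.10X} gives that $\phi\mapsto\phi_R$ is injective on $\Hom_{\mathcal R}(\mathbb A,\mathbb Hom_{\mathcal R}(\mathcal M,\mathcal M'))$. Evaluating at $R$ and using $\mathbb Hom_{\mathcal R}(\mathcal M,\mathcal M')(R)=\Hom_{\mathcal R}(\mathcal M,\mathcal M')=\Hom_R(M,M')$, together with $\Hom_R(\mathbb A(R),\Hom_R(M,M'))=\Hom_R(\mathbb A(R)\otimes_R M,M')$, I would identify this injection with the desired $F\mapsto F_R$.

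Consequently $F_R=0$ forces $F=0$, while the reverse implication is automatic, so $f$ is $\mathbb A$-linear iff $g=f_R$ is $\mathbb A(R)$-linear; intersecting with $\Hom_{\mathcal R}(\mathcal M,\mathcal M')=\Hom_R(M,M')$ gives $\Hom_{\mathbb A}(\mathcal M,\mathcal M')=\Hom_{\mathbb A(R)}(M,M')$. The one genuinely new point, and the place I expect the only real care is needed, is precisely the identification of $\mathbb Hom_{\mathcal R}(\mathcal M,\mathcal M')$ as a dual functor: this is what lets Proposition \ref{3.10X} apply with the burden of membership in $\mathfrak F$ placed entirely on $\mathbb A$, circumventing the failure of $\mathcal M\in\mathfrak F$ when $M$ is not free.
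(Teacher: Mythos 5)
Your proof is correct and follows essentially the same route as the paper, whose own proof simply says ``proceed as in the proof of Proposition \ref{5.14}'': you rerun that argument, replacing the second injection $\Hom_{\mathcal R}(\mathbb M,\mathbb M')\hookrightarrow\Hom_R(\mathbb M(R),\mathbb M'(R))$ (which needed $\mathbb M\in\mathfrak F$) by the equality $\Hom_{\mathcal R}(\mathcal M,\mathcal M')=\Hom_R(M,M')$ from the equivalence of categories, and correctly verify via $\mathbb Hom_{\mathcal R}(\mathcal M,\mathcal M')=(\mathcal M\otimes_{\mathcal R}\mathcal M'^*)^*$ that Proposition \ref{3.10X} still applies with the $\mathfrak F$-hypothesis carried entirely by $\mathbb A$. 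This is exactly the adaptation the paper intends.
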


\begin{proof} Proceed as in  the proof of Proposition \ref{5.14}.\end{proof}

\begin{notation} Let $M$ be an $R$-module and let $M'\subseteq M$ be an $R$-submodule. By abuse of notation we will say that $\mathcal M'$ is a quasi-coherent submodule of $\mathcal M$.\end{notation}

\begin{proposition} \label{invqua4}
Let $\mathbb A\in\mathfrak F$ be a  functor of ${\mathcal R}$-algebras, let $\mathcal M$ be an $\mathbb A$-module and let $M' \subset M$ be an $R$-submodule.  Then, $\mathcal M'$ is a quasi-coherent
$\mathbb A$-submodule of $\mathcal M$ if and only if $M'$ is an
$\mathbb A(R)$-submodule of $M$.
\end{proposition}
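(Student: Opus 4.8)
The plan is to prove the two implications of the biconditional separately; the forward direction is immediate and the reverse direction carries all the content.

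For the forward implication, suppose $\mathcal M'$ is a quasi-coherent $\mathbb A$-submodule of $\mathcal M$. Then the $\mathbb A$-action $\mathbb A\otimes_{\mathcal R}\mathcal M\to\mathcal M$ restricts to a morphism $\mathbb A\otimes_{\mathcal R}\mathcal M'\to\mathcal M'$; evaluating at $S=R$ gives $\mathbb A(R)\otimes_R M'\to M'$, that is $\mathbb A(R)\cdot M'\subseteq M'$, so $M'$ is an $\mathbb A(R)$-submodule of $M$. Concretely, for $a\in\mathbb A(R)$ and $m'\in M'=\mathcal M'(R)$ one has $am'\in\mathcal M'(R)=M'$.

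For the reverse implication, assume $M'$ is an $\mathbb A(R)$-submodule of $M$; I must show the action $\mu\colon\mathbb A\otimes_{\mathcal R}\mathcal M\to\mathcal M$ carries $\mathbb A\otimes_{\mathcal R}\mathcal M'$ into $\mathcal M'$. The key step is to reduce to a single generator: for each $m'\in M'$, consider the morphism of $\mathcal R$-modules $\phi_{m'}\colon\mathbb A\to\mathcal M$ given on each $S$ by $a\mapsto a\cdot(m'\otimes 1)$, which is $\mathcal R$-linear because the action is $S$-bilinear and functorial in $S$. Since $\mathbb A\in\mathfrak F$, Lemma \ref{invqua} applies and $\phi_{m'}$ factors through the quasi-coherent module $\mathcal N$ associated with the $R$-submodule $N:=\Ima(\phi_{m'})_R\subseteq M$. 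Here $(\phi_{m'})_R(a)=am'$, so $N=\mathbb A(R)\cdot m'$, which lies in $M'$ by hypothesis; hence $\mathcal N\subseteq\mathcal M'$ and $\phi_{m'}$ factors through $\mathcal M'$. In other words $a\cdot(m'\otimes 1)\in\mathcal M'(S)$ for every $S$ and every $a\in\mathbb A(S)$.

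To finish I pass from generators to arbitrary elements by $S$-linearity: a general element of $\mathcal M'(S)=M'\otimes_R S$ is a finite sum $\sum_i m'_i\otimes s_i$, and for $a\in\mathbb A(S)$ one has $a\cdot\sum_i(m'_i\otimes s_i)=\sum_i s_i\,\bigl(a\cdot(m'_i\otimes 1)\bigr)\in\mathcal M'(S)$, since each summand lies in the $S$-submodule $\mathcal M'(S)$. Thus $\mu$ restricts to $\mathbb A\otimes_{\mathcal R}\mathcal M'\to\mathcal M'$ and $\mathcal M'$ is a quasi-coherent $\mathbb A$-submodule of $\mathcal M$. The one genuine obstacle is the passage from $\mathbb A(R)$-stability at the single ring $R$ to $\mathbb A(S)$-stability at all $S$; this ``spreading out'' is exactly what essential freeness of $\mathbb A$ provides through Lemma \ref{invqua}, which forces a morphism out of an essentially free module into a quasi-coherent module to be controlled by the image of its $R$-component. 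Alternatively, one can bypass the elementwise reduction by choosing a surjection $\mathcal L\to\mathcal M'$ with $L$ free, noting $\mathbb A\otimes_{\mathcal R}\mathcal L=\oplus_\alpha\mathbb A\in\mathfrak F$ by Theorem \ref{3.6}, applying Lemma \ref{invqua} to the composite $\mathbb A\otimes_{\mathcal R}\mathcal L\to\mathcal M$, and using that $\mathcal L\to\mathcal M'$ is a pointwise epimorphism to transfer the factorization to $\mathbb A\otimes_{\mathcal R}\mathcal M'$.
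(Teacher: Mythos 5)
Your forward direction and your reduction to single generators via Lemma \ref{invqua} match the opening of the paper's proof, but the second half has a genuine gap: you implicitly treat $\mathcal M'(S)=M'\otimes_R S$ as a subset of $\mathcal M(S)=M\otimes_R S$. Over a general ring $R$ the canonical map $M'\otimes_R S\to M\otimes_R S$ need not be injective (e.g. $R=\mathbb Z$, $M'=2\mathbb Z\subset\mathbb Z$, $S=\mathbb Z/2$), which is exactly why the paper phrases ``quasi-coherent submodule'' as an abuse of notation. This breaks your linearity step in two places. First, well-definedness: from the factorizations $\psi_{m'}\colon\mathbb A\to\mathcal M'$ of each $\phi_{m'}$ you want to assemble a single morphism $\mathbb A\otimes_{\mathcal R}\mathcal M'\to\mathcal M'$ by $a\otimes\sum_i m'_i\otimes s_i\mapsto\sum_i s_i\,\psi_{m'_i}(a)$, but you must check this respects the relations among the $m'_i$ in $M'\otimes_R S$; composing with the non-injective map $i\colon\mathcal M'\to\mathcal M$ tells you nothing. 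The paper handles this by choosing a presentation $\oplus_I R\to\oplus_J R\to M'\to 0$, producing the lift $f'$ on $\mathbb A\otimes(\oplus_J\mathcal R)$, and killing the relations with Proposition \ref{3.10X} (injectivity of $\phi\mapsto\phi_R$ for sources in $\mathfrak F$), using that $i_R$ \emph{is} injective. Your argument never invokes \ref{3.10X}, and it is indispensable here.

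Second, even once the action map exists, you must verify that it makes $\mathcal M'$ an $\mathbb A$-module: associativity $a(a'm')=(aa')m'$ and unitality $1\cdot m'=m'$ are identities of morphisms \emph{into $\mathcal M'$}, and since $i$ is not a monomorphism of functors they do not follow from the corresponding identities in $\mathcal M$. The paper proves them by lifting $F\colon\mathbb A\otimes\mathbb A\to\mathcal M'$ to $(\mathbb A\otimes\mathbb A)^{**}\in\mathfrak F$ and applying \ref{3.10X} once more. Your sketched alternative via a free cover $\mathcal L\to\mathcal M'$ is essentially the paper's route, but it has the same two holes as sketched: descending the factorization along $\mathbb A\otimes\mathcal L\to\mathbb A\otimes\mathcal M'$ again requires killing relations through \ref{3.10X}, and the module axioms still need to be checked. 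Note that when $R$ is a field the inclusion $M'\otimes S\hookrightarrow M\otimes S$ is injective and your argument does go through --- the paper makes exactly this remark --- but the proposition is stated for arbitrary $R$.
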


\begin{proof}
Obviously, if $\mathcal M'$ is an $\mathbb A$-submodule of $\mathcal M$ then $M'$ is
an $\mathbb A(R)$-submodule of $M$. Conversely, let us assume  $M'$ is an $
\mathbb A(R)$-submodule of $M$ and let us consider the natural morphism of
multiplication $\mathbb A \otimes_{\mathcal R} {\mathcal M}' \to {\mathcal M}$. By  Lemma \ref{invqua}, the morphisms $\mathbb A\to \mathcal M$, $a\mapsto a\cdot m'$, for each $m'\in M'$, factors uniquely via $\mathcal M'$. Let $\oplus_IR\overset q\to\oplus_JR\overset p\to M'\to 0$ be an exact sequence. Let $i$ be the morphism $\mathcal M'\to \mathcal M$. There exists a (unique) morphism $f'$ such that the diagram
$$\xymatrix{ \mathbb A\otimes_{\mathcal R} (\oplus_I \mathcal R)
\ar[r]^-{Id\otimes q} & \mathbb A\otimes_{\mathcal R} (\oplus_J \mathcal  R) \ar[r]^-{Id\otimes p} \ar[d]^-{f'} & \mathbb A\otimes_{\mathcal R} \mathcal M' \ar[r] \ar[d] & 0\\ & \mathcal M'\ar[r]_-i & \mathcal M & }$$
is commutative. As $i_R\circ f'_R\circ (Id\otimes q)_R=0$, then $f'_R\circ (Id\otimes q)_R=0$. By Proposition \ref{3.10X}, $f'\circ (Id\otimes q)=0$.
Hence, $\mathbb A \otimes_{\mathcal R} {\mathcal M}'\to {\mathcal M}$ factors through $\mathcal M'$.

$F\colon \mathbb A\otimes_{\mathcal R}\mathbb A\to \mathcal M'$, $F(a\otimes a'):=
a(a'm')-(aa')m'$ (for any $m'\in\mathcal M'$) is the zero morphism:
$F$ lifts to a (unique) morphism $\bar F\colon (\mathbb A\otimes_{\mathcal R}\mathbb A)^{**}\to\mathcal M'$. Observe that $i\circ\bar F=0$ because $i\circ F=0$, then $\bar F_R=0$ because
$i_R$ is injective. Finally, $\bar F=0$ because it is determined by $\bar F_R$; and $F=0$.
Likewise, $1\cdot m'=m'$, for all $m'\in\mathcal M'$.

In conclusion, $\mathcal M'$
is a quasi-coherent ${\mathbb A}$-submodule of $\mathcal M$.
\end{proof}

\begin{theorem} \label{3.18}
Let $\mathbb G$ be a functor of monoids and let
$\mathbb M,\mathbb M'$ be  functors of $\mathbb G$-modules.
Assume that $\mathbb A_{\mathbb G}, \mathbb M,\mathbb M'\in\mathfrak F$. 
Let $f\colon \mathbb M\to \mathbb M'$ be a morphism of $\mathcal R$-modules. 
Then, $f$ is a morphism of $\mathbb G$-modules if and only if $f_R$  is a morphism of $\mathbb A_{\mathbb G}^*(R)$-modules.

Let  $\mathcal M$ be a $\mathbb G$-module, then the set of quasi-coherent $\mathbb G$-submodules of $\mathcal M$ is equal to the set of $\mathbb A_{\mathbb G}^*(R)$-submodules of $M$.

\end{theorem}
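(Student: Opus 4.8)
The plan is to reduce both assertions about $\mathbb G$-modules to the corresponding assertions about modules over the functor of algebras $\mathbb A_{\mathbb G}^*$, and then to invoke Propositions \ref{5.14} and \ref{invqua4}. First I would record the structural facts that make this reduction legitimate. Since $\mathbb A_{\mathbb G}\in\mathfrak F$, Theorem \ref{3.6}(1) gives that $\mathbb A_{\mathbb G}$ is reflexive, so by Theorem \ref{2.5} the functor $\mathbb A_{\mathbb G}^*$ is a functor of $\mathcal R$-algebras receiving the universal morphism of monoids $\mathbb G\to \mathbb A_{\mathbb G}^*$; moreover $\mathbb A_{\mathbb G}^*=(\mathbb A_{\mathbb G})^*\in\mathfrak F$ by Theorem \ref{3.6}(3). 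Because $\mathbb M,\mathbb M'\in\mathfrak F$ they are reflexive (Theorem \ref{3.6}(1)), hence each is a dual functor of $\mathcal R$-modules. This places us exactly in the setting where Theorem \ref{2.5} applies.

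The key translation is that, for dual functors, a $\mathbb G$-module structure is the same datum as an $\mathbb A_{\mathbb G}^*$-module structure, and a morphism of $\mathcal R$-modules is $\mathbb G$-linear if and only if it is $\mathbb A_{\mathbb G}^*$-linear. This is precisely the equivalence between the category of dual functors of $\mathbb G$-modules and the category of dual functors of $\mathbb A_{\mathbb G}^*$-modules furnished by Theorem \ref{2.5}, which in turn rests on Proposition \ref{2.4}, where one checks that the two notions of linearity coincide on morphisms. Concretely, the action of $\mathbb A_{\mathbb G}^*$ on $\mathbb M$ (resp. $\mathbb M'$) is the unique algebra morphism $\mathbb A_{\mathbb G}^*\to \mathbb End_{\mathcal R}\mathbb M$ extending $\mathbb G\to \mathbb End_{\mathcal R}\mathbb M$, which exists because $\mathbb End_{\mathcal R}\mathbb M$ is a dual functor of algebras (as observed in the proof of Proposition \ref{2.4}). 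Since $\mathbb A_{\mathbb G}^*$-linearity trivially implies $\mathbb G$-linearity via $\mathbb G\to \mathbb A_{\mathbb G}^*$, the content is the converse, which is exactly what Theorem \ref{2.5} supplies.

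With the translation in hand, the first statement follows from Proposition \ref{5.14} applied to the algebra $\mathbb A:=\mathbb A_{\mathbb G}^*$: its hypotheses hold since $\mathbb A_{\mathbb G}^*\in\mathfrak F$, $\mathbb M\in\mathfrak F$ is a functor of $\mathbb A_{\mathbb G}^*$-modules, and $\mathbb M'$ is a functor of $\mathbb A_{\mathbb G}^*$-modules that is moreover a dual functor of $\mathcal R$-modules. Thus $f$ is $\mathbb A_{\mathbb G}^*$-linear if and only if $f_R$ is $\mathbb A_{\mathbb G}^*(R)$-linear, and combining this with the equivalence of the previous paragraph yields that $f$ is a morphism of $\mathbb G$-modules if and only if $f_R$ is a morphism of $\mathbb A_{\mathbb G}^*(R)$-modules.

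For the second statement I would again pass through $\mathbb A_{\mathbb G}^*$. By the equivalence of the category of quasi-coherent $\mathbb G$-modules with the category of quasi-coherent $\mathbb A_{\mathbb G}^*$-modules (Theorem \ref{2.5}), the quasi-coherent $\mathbb G$-module $\mathcal M$ is a quasi-coherent $\mathbb A_{\mathbb G}^*$-module, and its quasi-coherent $\mathbb G$-submodules are exactly its quasi-coherent $\mathbb A_{\mathbb G}^*$-submodules. Applying Proposition \ref{invqua4} with $\mathbb A=\mathbb A_{\mathbb G}^*$, which requires only $\mathbb A_{\mathbb G}^*\in\mathfrak F$ and not $\mathcal M\in\mathfrak F$, identifies these with the $\mathbb A_{\mathbb G}^*(R)$-submodules of $M$, giving the claim. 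The only delicate point in the whole argument is the second-paragraph equivalence of $\mathbb G$-linearity and $\mathbb A_{\mathbb G}^*$-linearity; everything else is a direct invocation of the cited results once the hypotheses $\mathbb A_{\mathbb G}^*,\mathbb M,\mathbb M'\in\mathfrak F$ have been recorded.
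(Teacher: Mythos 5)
Your proposal is correct and follows exactly the route of the paper, whose proof is the one-line citation of Theorem \ref{2.5}, Proposition \ref{5.14} and Proposition \ref{invqua4}; you have simply filled in the details of how the translation between $\mathbb G$-modules and $\mathbb A_{\mathbb G}^*$-modules is combined with those two propositions. The hypotheses you record ($\mathbb A_{\mathbb G}^*\in\mathfrak F$ via Theorem \ref{3.6}, reflexivity of $\mathbb M,\mathbb M'$ so that they are dual functors) are precisely what is needed to apply the cited results.
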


\begin{proof} It a consequence of Theorem \ref{2.5}, Proposition \ref{5.14} and Proposition \ref{invqua4}.
\end{proof}

\begin{remark} Let $R=K$ be a field. Now assume $\mathfrak F$ is the family of reflexive functors of $\mathcal K$-modules. Recall Remark \ref{Remark}.
All the propositions in this section are likewise true. 
If $M'$ is a $K$-vector subspace, then $M'=\oplus_JK$. This fact simplifies the proof of Proposition \ref{invqua4}.

\end{remark}

\section{Functor of coalgebras}

\begin{notation} \label{notation4.7} Let $\mathbb M_1,\ldots,\mathbb M_n$ be $\mathcal R$-modules, we denote $$\mathbb M_1\tilde\otimes_{\mathcal R}\cdots\tilde\otimes_{\mathcal R}\mathbb M_n:=(\mathbb M_1^*\otimes_{\mathcal R}\cdots\otimes_{\mathcal R}\mathbb M_n^*)^{*}.$$
\end{notation}
Morphisms $\mathbb M_i\to\mathbb N_i$ induce an obvious morphism  $\mathbb M_1\tilde\otimes\cdots\tilde\otimes\mathbb M_n\to \mathbb N_1\tilde\otimes\cdots\tilde\otimes\mathbb N_n$.
Reader can check
\begin{enumerate}
\item $\mathcal M_1\tilde\otimes_{\mathcal R}\cdots\tilde\otimes_{\mathcal R}\mathcal M_n=\mathcal M_1\otimes_{\mathcal R}\cdots\otimes_{\mathcal R}\mathcal M_n$.

\item $\mathcal M_1^*\tilde\otimes_{\mathcal R}\cdots\tilde\otimes_{\mathcal R}\mathcal M_n^*=(\mathcal M_1\otimes_{\mathcal R}\cdots\otimes_{\mathcal R}\mathcal M_n)^*$.

\item  If $\mathbb M_1\tilde\otimes_{\mathcal R}\cdots\tilde\otimes_{\mathcal R}\mathbb M_n$ and 
$\mathbb N_{1}\tilde\otimes_{\mathcal R}\overset{m-n}\cdots\tilde\otimes\mathbb N_m$ are reflexive, then the obvious morphism  $ (\mathbb M_1\tilde\otimes \cdots\tilde\otimes\mathbb M_n)\tilde\otimes(\mathbb N_{1}\tilde\otimes\cdots\tilde\otimes\mathbb N_m)\to 
\mathbb M_1\tilde\otimes\cdots\tilde\otimes\mathbb M_n\tilde\otimes \mathbb N_1\tilde\otimes\cdots\tilde\otimes\mathbb N_m$ is an isomorphism:

$$\aligned (\mathbb M_1\tilde\otimes &\cdots\tilde\otimes\mathbb M_n)\tilde\otimes(\mathbb N_{1}\tilde\otimes\cdots\tilde\otimes\mathbb N_m)\!=\!
\mathbb Hom_{\mathcal R}((\mathbb M_1\tilde\otimes\cdots\tilde\otimes\mathbb M_n)^*\!,(\mathbb N_{1}\tilde\otimes\cdots\tilde\otimes\mathbb N_m)^{**}) \\& =
\mathbb Hom_{\mathcal R}((\mathbb M_1\tilde\otimes\cdots\tilde\otimes\mathbb M_n)^*,\mathbb N_{1}\tilde\otimes\cdots\tilde\otimes\mathbb N_m)\\& =
\mathbb Hom_{\mathcal R}((\mathbb M_1\tilde\otimes\cdots\tilde\otimes\mathbb M_n)^*\otimes\mathbb N_{1}^*\otimes\cdots\otimes\mathbb N_m^*,\mathcal R)\\ &=\mathbb Hom_{\mathcal R}(\mathbb N_{1}^*\otimes\cdots\otimes\mathbb N_m^*,\mathbb M_{1}\tilde\otimes\cdots\tilde\otimes\mathbb M_n)=
\mathbb M_1\tilde\otimes\cdots\tilde\otimes\mathbb M_n\tilde\otimes \mathbb N_1\tilde\otimes\cdots\tilde\otimes\mathbb N_m\endaligned$$
\end{enumerate}

\begin{definition} $\mathbb C$ is said to be a functor of coalgebras if there exist a morphism of $\mathcal R$-modules $m'\colon \mathbb C\to \mathbb C\tilde \otimes_{\mathcal R}\mathbb C$ coassociative (that is, $(m'\tilde \otimes Id)\circ m'=(Id\tilde \otimes m')\circ m'$, where $m'\tilde \otimes Id\colon \mathbb C\tilde \otimes\mathbb C\to
\mathbb C\tilde \otimes\mathbb C\tilde\otimes \mathbb C$ and $Id\tilde \otimes m'$ are the obvious morphisms)
and a counit
(that is, a morphism of $\mathcal R$-modules $u'\colon\mathbb C\to \mathcal R$
such that $(u'\tilde\otimes Id)\circ m'=Id=(Id\tilde\otimes u')\circ m'$, where
$u'\tilde\otimes Id\colon \mathbb C\tilde\otimes_{\mathcal R} \mathbb C\to \mathcal R\tilde\otimes_{\mathcal R}\mathbb C=\mathbb C$ and $Id\tilde\otimes u'$ are the obvious morphisms).

\end{definition}

Let $C$ be an $R$-module.
$C$ is an  $R$-coalgebra if and only if $\mathcal C$ is a functor of  coalgebras. Although we have called functor of coalgebras to $\mathbb C$,
we warn the reader that $\mathbb C(S)$ is not necessarily an $S$-coalgebra

\begin{proposition}  \label{notation4.8}  $\mathbb C$ is  a functor of coalgebras if and only if
$\mathbb C^*$ is a functor of $\mathcal R$-algebras.
In particular, an $R$-module $C$ is a coalgebra if and only if $\mathcal C^*$ is a functor of $\mathcal R$-algebras.
\end{proposition}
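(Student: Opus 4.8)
The plan is to turn the comonoid structure on $\mathbb C$ into a monoid structure on $\mathbb C^*$ by a single adjunction, and to arrange the argument so that every step is reversible, which yields the ``if and only if''. First I would record, for every $n$ and with no reflexivity hypothesis on $\mathbb C$, the natural bijection
$$\Phi_n\colon \Hom_{\mathcal R}(\mathbb C,(\mathbb C^*\otimes_{\mathcal R}\overset{n}\cdots\otimes_{\mathcal R}\mathbb C^*)^*)\overset\sim\longrightarrow \Hom_{\mathcal R}(\mathbb C^*\otimes_{\mathcal R}\overset{n}\cdots\otimes_{\mathcal R}\mathbb C^*,\mathbb C^*),$$
obtained from the chain $\Hom_{\mathcal R}(\mathbb C,(\mathbb C^{*\otimes n})^*)=\Hom_{\mathcal R}(\mathbb C\otimes_{\mathcal R}\mathbb C^{*\otimes n},\mathcal R)=\Hom_{\mathcal R}(\mathbb C^{*\otimes n},\mathbb C^*)$, where the first equality is the defining adjunction of $\mathbb N^*=\mathbb Hom_{\mathcal R}(\mathbb N,\mathcal R)$ with $\mathbb N=\mathbb C^{*\otimes n}$ and the second is that same adjunction applied to the factor $\mathbb C$, together with the symmetry of $\otimes_{\mathcal R}$. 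Since $\mathbb C\tilde\otimes_{\mathcal R}\overset{n}\cdots\tilde\otimes_{\mathcal R}\mathbb C=(\mathbb C^{*\otimes n})^*$ by Notation \ref{notation4.7}, the map $\Phi_2$ turns a comultiplication $m'\colon\mathbb C\to\mathbb C\tilde\otimes\mathbb C$ into $m:=\Phi_2(m')\colon\mathbb C^*\otimes\mathbb C^*\to\mathbb C^*$, and conversely; concretely $m$ is the composite $\mathbb C^*\otimes\mathbb C^*\to(\mathbb C^*\otimes\mathbb C^*)^{**}\xrightarrow{(m')^*}\mathbb C^*$. Likewise the counit $u'\colon\mathbb C\to\mathcal R$ and a unit $u\colon\mathcal R\to\mathbb C^*$ correspond under $\Hom_{\mathcal R}(\mathbb C,\mathcal R)=\mathbb C^*(R)=\Hom_{\mathcal R}(\mathcal R,\mathbb C^*)$ (Corollary \ref{adj2} with $S=R$, and Proposition \ref{tercer} with $M=R$, using $\mathcal R^*=\mathcal R$).

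The heart of the proof is to show that $\Phi_n$ intertwines the two composition laws, so that the coalgebra axioms match the algebra axioms. Concretely I would establish the identities
$$\Phi_3\big((m'\tilde\otimes Id)\circ m'\big)=m\circ(m\otimes Id),\qquad \Phi_3\big((Id\tilde\otimes m')\circ m'\big)=m\circ(Id\otimes m),$$
together with the unit analogues $\Phi_1\big((u'\tilde\otimes Id)\circ m'\big)=m\circ(u\otimes Id)$ and $\Phi_1\big((Id\tilde\otimes u')\circ m'\big)=m\circ(Id\otimes u)$, where $\Phi_1$ is the identification $\Hom_{\mathcal R}(\mathbb C,\mathbb C^{**})=\Hom_{\mathcal R}(\mathbb C^*,\mathbb C^*)$ sending the natural morphism $\mathbb C\to\mathbb C^{**}$ to $Id_{\mathbb C^*}$. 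Here I must note that $\mathcal R\tilde\otimes\mathbb C=(\mathcal R\otimes\mathbb C^*)^*=\mathbb C^{**}$, so the counit axiom is read through the natural map $\mathbb C\to\mathbb C^{**}$ rather than literally as $Id_{\mathbb C}$. Granting the four identities, the bijectivity of $\Phi_3$ converts coassociativity $(m'\tilde\otimes Id)\circ m'=(Id\tilde\otimes m')\circ m'$ into associativity $m\circ(m\otimes Id)=m\circ(Id\otimes m)$, and the bijectivity of $\Phi_1$ converts the two counit axioms into the two unit axioms; since each $\Phi_n$ is a bijection the equivalence runs in both directions, so $\mathbb C$ is a functor of coalgebras exactly when $\mathbb C^*$ is a functor of $\mathcal R$-algebras.

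The main obstacle is precisely this intertwining, because it forces one to unwind the ``obvious morphisms'' of the definition of coalgebra. The morphism $m'\tilde\otimes Id$ is the functoriality map $((m')^*\otimes Id)^*\colon\mathbb C\tilde\otimes\mathbb C\to(\mathbb C\tilde\otimes\mathbb C)\tilde\otimes\mathbb C$ induced by $m'$ and $Id$ (using that a morphism $\mathbb M_i\to\mathbb N_i$ induces a morphism on $\tilde\otimes$), followed by the canonical comparison $(\mathbb C\tilde\otimes\mathbb C)\tilde\otimes\mathbb C\to\mathbb C\tilde\otimes\mathbb C\tilde\otimes\mathbb C$ of the third item listed after Notation \ref{notation4.7}; I cannot assume this comparison is an isomorphism, since $\mathbb C$ need not be reflexive, so I must carry it as a one-directional map throughout. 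I would therefore dualize the defining square of $m=\Phi_2(m')$ and chase the resulting diagram using only the naturality of the canonical maps $\mathbb N^*\otimes\mathbb N'^*\to(\mathbb N\otimes\mathbb N')^*$ and $\mathbb N\to\mathbb N^{**}$ and the functoriality of $\otimes_{\mathcal R}$; at the level of the pairing $\mathbb C\otimes\mathbb C^*\otimes\mathbb C^*\otimes\mathbb C^*\to\mathcal R$ both sides of each identity collapse to the same evaluation, so the chase is routine though bookkeeping-heavy. Finally, for the last assertion I would apply the equivalence just proved to $\mathbb C=\mathcal C$ and combine it with the remark preceding the proposition that an $R$-module $C$ is a coalgebra if and only if $\mathcal C$ is a functor of coalgebras (here $\mathcal C$ is reflexive by Theorem \ref{reflex}), obtaining that $C$ is a coalgebra if and only if $\mathcal C$ is a functor of coalgebras if and only if $\mathcal C^*$ is a functor of $\mathcal R$-algebras.
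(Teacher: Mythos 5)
Your proposal is correct and follows essentially the same route as the paper: the paper's entire proof consists of the displayed adjunction $\Hom_{\mathcal R}(\mathbb C^*\otimes\overset n\cdots\otimes\mathbb C^*,\mathbb C^*)=\Hom_{\mathcal R}(\mathbb C,\mathbb C\tilde\otimes\overset n\cdots\tilde\otimes\mathbb C)$ followed by ``it is easy to complete the proof,'' and your $\Phi_n$ is exactly that bijection. You simply supply the bookkeeping the paper omits (the intertwining of composition laws, and the observation that $\mathcal R\tilde\otimes\mathbb C=\mathbb C^{**}$ so the counit axiom must be read through $\mathbb C\to\mathbb C^{**}$ when $\mathbb C$ is not reflexive), which is a careful and accurate filling-in rather than a different argument.
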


\begin{proof} Observe that
$$\Hom_{\mathcal R}(\mathbb C^*\otimes\overset n\cdots\otimes \mathbb C^*,\mathbb C^*)=
\Hom_{\mathcal R}(\mathbb C,(\mathbb C^*\otimes_{\mathcal R}\overset n\cdots\otimes_{\mathcal R} \mathbb C^*)^*)
=\Hom_{\mathcal R}(\mathbb C,\mathbb C\tilde\otimes_{\mathcal R}\overset n\cdots\tilde\otimes_{\mathcal R} \mathbb C).$$
Now, it is easy to complete the proof.
\end{proof}

\begin{notation} \label{notation4.8} Let $\mathbb M_1,\ldots,\mathbb M_n$ be $\mathcal R$-modules, we denote $$\mathbb M_1\bar\otimes_{\mathcal R}\cdots\bar\otimes_{\mathcal R}\mathbb M_n:=(\mathbb M_1\otimes_{\mathcal R}\cdots\otimes_{\mathcal R}\mathbb M_n)^{**}.$$
\end{notation}

Observe that $$(\mathbb M_1\tilde\otimes_{\mathcal R}\cdots\tilde\otimes_{\mathcal R}\mathbb M_n)^*=\mathbb M_1^*\bar\otimes_{\mathcal R}\cdots\bar\otimes_{\mathcal R}\mathbb M_n^*.$$ If $\mathbb M_i$ is reflexive for all $i$, and $\mathbb M_1^*\tilde\otimes_{\mathcal R}\cdots\tilde\otimes_{\mathcal R}\mathbb M_n^*$ is reflexive then
$$(\mathbb M_1\bar\otimes_{\mathcal R}\cdots\bar\otimes_{\mathcal R}\mathbb M_n)^*=
\mathbb M_1^*\tilde\otimes_{\mathcal R}\cdots\tilde\otimes_{\mathcal R}\mathbb M_n^*.$$

If $\mathbb M_1\bar\otimes_{\mathcal R}\cdots\bar\otimes_{\mathcal R}\mathbb M_n$ and $\mathbb N_1\bar\otimes_{\mathcal R}\cdots\bar\otimes_{\mathcal R}\mathbb N_m$ are reflexive, the natural morphism

$$\mathbb M_1\bar\otimes_{\mathcal R}\cdots\bar\otimes_{\mathcal R}\mathbb M_n\bar\otimes \mathbb N_1\bar\otimes_{\mathcal R}\cdots\bar\otimes_{\mathcal R}\mathbb N_m\to
(\mathbb M_1\bar\otimes_{\mathcal R}\cdots\bar\otimes_{\mathcal R}\mathbb M_n)\bar\otimes(\mathbb N_1\bar\otimes_{\mathcal R}\cdots\bar\otimes_{\mathcal R}\mathbb N_m)$$ is an isomorphism (use Proposition \ref{3.2}).

\begin{proposition} \label{harto} Let $\mathbb M_1,\ldots,\mathbb M_n$ be essentially free $\mathcal R$-modules, and let us consider the natural morphism $\mathbb M_1^*\otimes_{\mathcal R}\cdots\otimes_{\mathcal R}\mathbb M_n^*\to (\mathbb M_1\otimes_{\mathcal R}\cdots\otimes_{\mathcal R}\mathbb M_n)^*$. Then, the dual morphism

$$\mathbb M_1\bar\otimes_{\mathcal R}\cdots\bar\otimes_{\mathcal R}\mathbb M_n\to \mathbb M_1\tilde\otimes_{\mathcal R}\cdots\tilde\otimes_{\mathcal R}\mathbb M_n$$ is injective.\end{proposition}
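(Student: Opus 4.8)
The plan is to put the $\mathbb M_k$ in standard form, realize both the source and the target of the dual morphism as explicit $\mathcal R$-submodules of a single product, and then identify the dual morphism with the resulting inclusion. Write the natural morphism as $\phi\colon\mathbb M_1^*\otimes\cdots\otimes\mathbb M_n^*\to(\mathbb M_1\otimes\cdots\otimes\mathbb M_n)^*$, so that $\phi^*$ is the map in question. By Definition \ref{defi3.4} I may assume $\mathbb M_k=\plim{\alpha\in P_k}\mathcal R^{(\alpha}$ with $P_k$ a union-closed subset of the parts of a set $I_k$. By Theorem \ref{teoremonc}(2) (and its $n$-fold analogue, obtained by the same computation or by induction using the associativity isomorphisms recorded after Notation \ref{notation4.8}),
$$\mathbb M_1\bar\otimes\cdots\bar\otimes\mathbb M_n=(\mathbb M_1\otimes\cdots\otimes\mathbb M_n)^{**}=\plim{\gamma\in(P_1^\circ\times\cdots\times P_n^\circ)^\circ}\mathcal R^{(\gamma},$$
$$\mathbb M_1\tilde\otimes\cdots\tilde\otimes\mathbb M_n=(\mathbb M_1^*\otimes\cdots\otimes\mathbb M_n^*)^{*}=\plim{\gamma\in P_1^{\circ\circ}\times\cdots\times P_n^{\circ\circ}}\mathcal R^{(\gamma},$$
where $\gamma$ runs over the products $\alpha_1\times\cdots\times\alpha_n\subseteq I_1\times\cdots\times I_n$. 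By Theorem \ref{teoremon}(1) these are precisely the submodules of $\prod_{I_1\times\cdots\times I_n}\mathcal R$ consisting of the families supported, respectively, on a set in $(P_1^\circ\times\cdots\times P_n^\circ)^{\circ\circ}$ and on a set in $(P_1^{\circ\circ}\times\cdots\times P_n^{\circ\circ})^\circ$.

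The combinatorial core is the inclusion $(P_1^\circ\times\cdots\times P_n^\circ)^{\circ\circ}\subseteq(P_1^{\circ\circ}\times\cdots\times P_n^{\circ\circ})^\circ$. To obtain it I would first note that if $\alpha_k\in P_k^{\circ\circ}$ for every $k$ then $\alpha_1\times\cdots\times\alpha_n\in(P_1^\circ\times\cdots\times P_n^\circ)^\circ$: for $\alpha_k'\in P_k^\circ$ one has $(\alpha_1\times\cdots\times\alpha_n)\cap(\alpha_1'\times\cdots\times\alpha_n')=(\alpha_1\cap\alpha_1')\times\cdots\times(\alpha_n\cap\alpha_n')$, of cardinality $\prod_k|\alpha_k\cap\alpha_k'|<\infty$. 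Hence a set $\gamma$ meeting every member of $(P_1^\circ\times\cdots\times P_n^\circ)^\circ$ in a finite set meets, in particular, every such $\alpha_1\times\cdots\times\alpha_n$ in a finite set, i.e. $\gamma\in(P_1^{\circ\circ}\times\cdots\times P_n^{\circ\circ})^\circ$. Thus, as submodules of $\prod\mathcal R$, one gets $\mathbb M_1\bar\otimes\cdots\bar\otimes\mathbb M_n\subseteq\mathbb M_1\tilde\otimes\cdots\tilde\otimes\mathbb M_n$, yielding a canonical injective $\iota$.

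Finally I would identify $\phi^*$ with $\iota$. Let $c_1\colon\mathbb M_1\otimes\cdots\otimes\mathbb M_n\to(\mathbb M_1\otimes\cdots\otimes\mathbb M_n)^{**}$ and $c_2\colon\mathbb M_1\otimes\cdots\otimes\mathbb M_n\to(\mathbb M_1^*\otimes\cdots\otimes\mathbb M_n^*)^{*}$ be the canonical maps; a direct check gives $\phi^*\circ c_1=c_2$, and under the realizations above both $c_1$ and $c_2$ send a decomposable tensor $\lambda^{(1)}\otimes\cdots\otimes\lambda^{(n)}$ to the same outer product $(\lambda^{(1)}_{i_1}\cdots\lambda^{(n)}_{i_n})\in\prod\mathcal R$. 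So $\iota\circ c_1$ and $\phi^*$, followed by the realization of its target as a submodule of $\prod\mathcal R$, agree on the image of $c_1$. Since both $\prod_{I_1\times\cdots\times I_n}\mathcal R=(\mathcal R^{(I_1\times\cdots\times I_n})^*$ and $\mathbb M_1\tilde\otimes\cdots\tilde\otimes\mathbb M_n$ are dual functors, Theorem \ref{3.6}(4) (equivalently Proposition \ref{3.2}) shows that a morphism from the double dual $(\mathbb M_1\otimes\cdots\otimes\mathbb M_n)^{**}$ into a dual functor is determined by its composite with $c_1$; hence $\phi^*=\iota$ is injective.

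The step I expect to be the real obstacle is this last one: the support bookkeeping is routine once Theorem \ref{teoremonc} is in hand, but one must argue carefully that the abstractly defined dual morphism $\phi^*$ genuinely coincides with the concrete submodule inclusion. The decisive leverage is the universal property of the double dual against dual functors (Theorem \ref{3.6}(4)), which reduces the comparison of the two maps out of $(\mathbb M_1\otimes\cdots\otimes\mathbb M_n)^{**}$ to the image of the algebraic tensor product, where both are visibly the outer product.
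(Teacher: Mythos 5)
Your proof is correct and follows essentially the same route as the paper, which simply observes that by Theorems \ref{teoremon} and \ref{teoremonc} both $(\mathbb M_1\otimes\cdots\otimes\mathbb M_n)^{**}$ and $(\mathbb M_1^*\otimes\cdots\otimes\mathbb M_n^*)^*$ are realized as $\mathcal R$-submodules of $\mathcal R^{\prod_j I_j}$. You merely make explicit two points the paper leaves implicit: the combinatorial inclusion $(P_1^\circ\times\cdots\times P_n^\circ)^{\circ\circ}\subseteq(P_1^{\circ\circ}\times\cdots\times P_n^{\circ\circ})^\circ$ and the identification of the dual morphism with that submodule inclusion via Proposition \ref{3.2}.
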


\begin{proof} Write $\mathbb M_j=\plim{\alpha\in P_j} \mathcal \mathcal R^\alpha\subseteq \mathcal R^{I_j}$, where
$P_j$ is a subset of the set of parts of $I_j$.  $(\mathbb M_1\otimes\cdots\otimes\mathbb M_n)^{**}$ and $(\mathbb M_1^*\otimes\cdots\otimes\mathbb M_n^*)^*$ are functors of $\mathcal R$-submodules of $\mathcal R^{\prod_{j\in J} I_j}$, by Theorem \ref{teoremon}  and Theorem \ref{teoremonc}.\end{proof}

\section{Functors of pro-quasicoherent  algebras}

\begin{definition} Let $A$ be an $R$-algebra. The associated functor $\mathcal A$ is obviously a functor of $\mathcal R$-algebras. We will say that $\mathcal A$ is a quasi-coherent $\mathcal R$-algebra.\end{definition}

\begin{definition} \label{proquasi} We will say that a functor of $\mathcal R$-algebras  is a functor of pro-quasicoherent   algebras if it is the inverse limit of its quasi-coherent algebra quotients.\end{definition}

\begin{examples} \label{ejemplor} Quasi-coherent algebras are pro-quasicoherent functors of algebras.

Let $R=K$ be a field, $A$ be a commutative $K$-algebra and $I\subseteq A$ be an ideal. Then, $\mathbb B=\plim{n\in\mathbb N}\mathcal A/\mathcal I^n\in\mathfrak F$  and it is a pro-quasicoherent algebra: $\mathbb B\simeq
\prod_n \mathcal I^n/\mathcal I^{n+1}$, then $\mathbb B\mathfrak F$. $\mathbb B^*=\oplus_n (\mathcal I^n/\mathcal I^{n+1})^*=\ilim{n} (\mathcal A/\mathcal I^n)^*$. Therefore, $\mathbb B^*$ is equal to the direct limit of the dual of the quasi-coherent algebra quotients of $\mathbb B$. Dually, $\mathbb B$ is a pro-quasicoherent algebra.

\end{examples}

\begin{proposition} \label{invqua2} Let $\mathbb A\in\mathfrak F$ and $\mathbb B$ be functors of $\mathcal R$-algebras and assume that there exists an injective morphism of $\mathcal R$-modules $\mathbb B\hookrightarrow \mathcal N$. Then, any morphism of $\mathcal R$-algebras $\phi\colon \mathbb A\to \mathbb B$  factors uniquely through an epimorphism of algebras onto the quasi-coherent algebra associated with $\Ima \phi_R$.\end{proposition}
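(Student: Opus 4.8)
```latex
The plan is to reduce the statement to the case of a morphism into a quasi-coherent module, so that Lemma \ref{invqua} applies. The key observation is that $\mathbb{A}\in\mathfrak{F}$, so by Lemma \ref{invqua} every morphism of $\mathcal{R}$-modules from $\mathbb{A}$ into a quasi-coherent module factors through a quasi-coherent quotient associated with the image of the induced map on $R$-points. The hypothesis gives an injection $\mathbb{B}\hookrightarrow\mathcal{N}$, so the composite $\mathbb{A}\xrightarrow{\phi}\mathbb{B}\hookrightarrow\mathcal{N}$ is a morphism of $\mathcal{R}$-modules into a quasi-coherent module.

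First I would apply Lemma \ref{invqua} to this composite. Setting $N':=\Ima(\phi_R)\subseteq N$ (where $N'$ lands inside $\Ima(\text{the injection on }R)$, which we may identify with $\mathbb{B}(R)$'s image), the lemma yields a unique factorization of the composite through an epimorphism $\mathbb{A}\twoheadrightarrow\mathcal{N'}$ followed by the quasi-coherent inclusion $\mathcal{N'}\hookrightarrow\mathcal{N}$. Since the injection $\mathbb{B}\hookrightarrow\mathcal{N}$ is a monomorphism of $\mathcal{R}$-modules, the image of $\phi$ factors set-theoretically through $\mathbb{B}$; one checks that $\mathcal{N'}$ (the quasi-coherent module associated with $\Ima\phi_R$) sits inside $\mathbb{B}$ as a quasi-coherent $\mathcal{R}$-submodule, so that $\phi$ itself factors as $\mathbb{A}\twoheadrightarrow\mathcal{N'}\hookrightarrow\mathbb{B}$.

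Next I would promote this factorization of $\mathcal{R}$-modules to a factorization of $\mathcal{R}$-algebras. The set $\Ima\phi_R$ is an $R$-submodule of $N$ of the appropriate type, and since $\phi$ is a morphism of algebras, $\Ima\phi_R$ is closed under the multiplication of $\mathbb{B}(R)$ and contains the unit; hence it is an $R$-subalgebra, call it $A'$, and $\mathcal{A'}=\mathcal{N'}$ is a quasi-coherent $\mathcal{R}$-algebra. To verify that the factoring maps respect the algebra structures, I would argue as in the proof of Proposition \ref{invqua4}: the obstruction to $\mathbb{A}\twoheadrightarrow\mathcal{A'}$ being multiplicative is a morphism $\mathbb{A}\otimes_{\mathcal{R}}\mathbb{A}\to\mathcal{A'}$ that vanishes on $R$-points (because $\phi_R$ is an algebra map onto $A'$), and by Proposition \ref{3.10X} applied after passing to $(\mathbb{A}\otimes\mathbb{A})^{**}\in\mathfrak{F}$, vanishing on $R$-points forces the morphism to be zero. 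Uniqueness is immediate since the epimorphism is determined on $R$-points and a quasi-coherent target is reflexive.

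The main obstacle I anticipate is the verification that the quasi-coherent module $\mathcal{N'}$ produced by Lemma \ref{invqua} genuinely lands inside $\mathbb{B}$ as an $\mathcal{R}$-submodule (not merely inside $\mathcal{N}$), and that the induced map $\mathbb{A}\to\mathcal{N'}$ is compatible with the algebra structure. The first point uses the injectivity of $\mathbb{B}\hookrightarrow\mathcal{N}$ together with the fact that $\Ima\phi_R$, viewed inside $\mathbb{B}(R)$, generates the quasi-coherent submodule; the second is handled uniformly by the injectivity statement of Proposition \ref{3.10X}, which is exactly the tool that lets one detect algebra morphisms on $R$-points within the family $\mathfrak{F}$.
```
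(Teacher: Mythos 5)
Your proposal follows the paper's own route: apply Lemma \ref{invqua} to the composite $\mathbb A\to\mathbb B\hookrightarrow\mathcal N$ to obtain the module-level factorization through the quasi-coherent module associated with $B'=\Ima\phi_R$ (an $R$-subalgebra of $\mathbb B(R)$), and then verify multiplicativity of the factored map by lifting the relevant morphisms on $\mathbb A\otimes_{\mathcal R}\mathbb A$ to $(\mathbb A\otimes_{\mathcal R}\mathbb A)^{**}\in\mathfrak F$, where they are detected on $R$-points (Proposition \ref{3.10X}) or, equivalently, by composition with the injection into $\mathcal N$ -- exactly the mechanism of the paper's proof. The one point to state carefully is that the obstruction's vanishing should first be checked after composing with $\mathbb B\hookrightarrow\mathcal N$ as a morphism of functors (it is zero there because $\phi$, not merely $\phi_R$, is an algebra morphism), since $(\mathbb A\otimes\mathbb A)(R)\to(\mathbb A\otimes\mathbb A)^{**}(R)$ need not be surjective and so vanishing of $F_R$ alone does not give vanishing of the lift on $R$-points; this is precisely how the proof of Proposition \ref{invqua4}, which you invoke, arranges the argument.
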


\begin{proof} By Lemma \ref{invqua}, the morphism $\phi\colon \mathbb A\to \mathbb B$   factors uniquely through
an epimorphism $\phi'\colon \mathbb A\to \mathcal B'$, where $B':=\Ima\phi_R$. Obviously $B'$ is an $R$-subalgebra of $\mathbb B(R)$. We have to check that $\phi'$ is a morphism of functors of algebras.

Observe that if a morphism $f\colon \mathbb A\otimes \mathbb A\to \mathcal N$ factors  through an epimorphism onto a quasi-coherent  submodule $\mathcal N'$ of $\mathcal N$ then   factors  uniquely through $\mathcal N'$, because $f$ and any morphism to $\mathcal N'$   factors uniquely through
$(\mathbb A\otimes \mathbb A)^{**}\in\mathfrak F$.

Consider the diagram

$$\xymatrix{\mathbb A\otimes\mathbb A \ar[d]^-{m_{\mathbb A}} \ar[r]^-{\phi'\otimes\phi'}
& \mathcal B'\otimes\mathcal B' \ar[d]^-{m_{\mathcal B'}}\ar[r]^-{i\otimes i}
& \mathbb B\otimes\mathbb B \ar[d]^-{m_{\mathbb B}} & \\ \mathbb A \ar[r]^-{\phi'} & \mathcal B' \ar[r]^-i & \mathbb B \ar@{^{(}->}[r] & \mathcal N,}$$
where $m_{\mathbb A}, m_{\mathcal B'}$ and $m_{\mathbb B}$ are the multiplication morphisms and $i$ is the morphism induced by the morphism $B'\to \mathbb B(R)$.
We know $m_{\mathbb B}\circ (i\otimes i)\circ (\phi'\otimes \phi')=i\circ \phi'\circ m_{\mathbb A}$. The morphism $m_{\mathbb B}\circ (i\otimes i)\circ (\phi'\otimes \phi')$  factors uniquely  onto $\mathcal B'$, more concretely, through $m_{\mathcal B'}\circ (\phi'\otimes\phi')$.
The morphism $i\circ \phi'\circ m_{\mathbb A}$   factors uniquely onto $\mathcal B'$, effectively, through $\phi'\circ m_{\mathbb A}$. Then, $m_{\mathcal B'}\circ (\phi'\otimes\phi')=\phi'\circ m_{\mathbb A}$ and $\phi'$ is a morphism of $\mathcal R$-algebras.

\end{proof}

\begin{remark} \label{Remark3} Let $R=K$ be a field. Now assume $\mathfrak F$ is the family of reflexive functors of $\mathcal K$-modules. Recall Remark \ref{Remark}. Proposition \ref{invqua2} is likewise true (in this case $\phi'$ is obviously a morphism of functors of algebras).

\end{remark}

\begin{lemma} \label{cua} Let $\mathbb A$ and $\mathbb B$ be two reflexive    functors of pro-quasicoherent   algebras. Let $\{\mathcal A_i\}_i$ and $\{ \mathcal B_j\}$ be the quasi-coherent algebra quotients of $\mathbb A$ and $\mathbb B$. Then,
$$\mathbb A\tilde\otimes_{\mathcal R}\mathbb B=\plim{i,j} (\mathcal A_i\otimes \mathcal B_j).$$
Then, $\mathbb A\tilde\otimes_{\mathcal R}\mathbb B$ is a functor of algebras and
the natural morphism $\mathbb A\otimes \mathbb B\to \mathbb A\tilde\otimes \mathbb B$
is a morphism of functors of algebras.

\end{lemma}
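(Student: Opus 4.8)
The plan is to reduce the statement to the concrete description of essentially free modules given in Theorem \ref{teoremon} and then to compute both sides directly. Write $\mathbb A=\plim{i}\mathcal A_i$ and $\mathbb B=\plim{j}\mathcal B_j$, where the inverse limits run over the quasi-coherent algebra quotients; this is legitimate because $\mathbb A$ and $\mathbb B$ are reflexive functors of pro-quasicoherent algebras, so by Definition \ref{proquasi} each equals the inverse limit of its quasi-coherent algebra quotients. By Theorem \ref{FP2} the dual functors are the corresponding direct limits, $\mathbb A^*=\ilim{i}\mathcal A_i^*$ and $\mathbb B^*=\ilim{j}\mathcal B_j^*$.

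First I would compute $\mathbb A^*\otimes_{\mathcal R}\mathbb B^*$. Since tensor product commutes with direct limits, $\mathbb A^*\otimes_{\mathcal R}\mathbb B^*=(\ilim{i}\mathcal A_i^*)\otimes_{\mathcal R}(\ilim{j}\mathcal B_j^*)=\ilim{i,j}(\mathcal A_i^*\otimes_{\mathcal R}\mathcal B_j^*)$. Now I would dualize and use the elementary computation rules already recorded in the excerpt: by the identity $\mathcal M_1^*\tilde\otimes_{\mathcal R}\mathcal M_2^*=(\mathcal M_1\otimes_{\mathcal R}\mathcal M_2)^*$ (point (2) following Notation \ref{notation4.7}), together with the definition $\mathbb A\tilde\otimes_{\mathcal R}\mathbb B=(\mathbb A^*\otimes_{\mathcal R}\mathbb B^*)^*$, I get
$$\mathbb A\tilde\otimes_{\mathcal R}\mathbb B=(\mathbb A^*\otimes_{\mathcal R}\mathbb B^*)^*=(\ilim{i,j}(\mathcal A_i^*\otimes_{\mathcal R}\mathcal B_j^*))^*=\plim{i,j}(\mathcal A_i^*\otimes_{\mathcal R}\mathcal B_j^*)^*=\plim{i,j}(\mathcal A_i\otimes_{\mathcal R}\mathcal B_j),$$
where the passage from the dual of a direct limit to the inverse limit of duals is the functorial identity $(\ilim{}\mathbb M_k)^*=\plim{}\mathbb M_k^*$, and the last equality again uses point (2) (now read as $(\mathcal A_i^*\otimes\mathcal B_j^*)^*=\mathcal A_i\tilde\otimes\mathcal B_j=\mathcal A_i\otimes\mathcal B_j$ by point (1) and reflexivity of quasi-coherent modules, Theorem \ref{reflex}). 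This establishes the displayed formula.

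For the remaining two assertions I would argue as follows. Each $\mathcal A_i\otimes_{\mathcal R}\mathcal B_j$ is the quasi-coherent algebra associated with $A_i\otimes_R B_j$, hence is a functor of $\mathcal R$-algebras, and the transition maps of the inverse system are algebra morphisms; therefore $\mathbb A\tilde\otimes_{\mathcal R}\mathbb B=\plim{i,j}(\mathcal A_i\otimes_{\mathcal R}\mathcal B_j)$ carries a natural functor-of-algebras structure as an inverse limit of functors of algebras. Finally, the natural morphism $\mathbb A\otimes_{\mathcal R}\mathbb B\to\mathbb A\tilde\otimes_{\mathcal R}\mathbb B$ is obtained by assembling the projections $\mathbb A\otimes_{\mathcal R}\mathbb B\to\mathcal A_i\otimes_{\mathcal R}\mathcal B_j$ coming from the quotient maps $\mathbb A\to\mathcal A_i$, $\mathbb B\to\mathcal B_j$, each of which is an algebra morphism; by the universal property of the inverse limit the assembled map is then automatically an algebra morphism as well.

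The main obstacle I anticipate is justifying the interchange of dualization with the direct limit over the \emph{double} index set, i.e. that $(\ilim{i,j}(\mathcal A_i^*\otimes\mathcal B_j^*))^*=\plim{i,j}(\mathcal A_i^*\otimes\mathcal B_j^*)^*$ really lands inside $\mathfrak F$ and is reflexive so that no collapse occurs, since the Observation after Theorem \ref{teoremon} shows that for reflexive functors $(\plim{}\mathcal R^{(\alpha})^*\neq\ilim{}(\mathcal R^{(\alpha})^*$ in general. I would control this by the essential-freeness machinery: writing each $\mathcal A_i^*\otimes\mathcal B_j^*$ in the form $\plim{\gamma}\mathcal R^{(\gamma}$ via Theorem \ref{teoremonc}, the direct/inverse limit identities of Theorem \ref{teoremon}(2) apply verbatim to the combined index set, which keeps every term in $\mathfrak F$ and guarantees reflexivity throughout.
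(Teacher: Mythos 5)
There is a genuine gap at the very first step. You set $\mathbb A^*=\ilim{i}\mathcal A_i^*$ and $\mathbb B^*=\ilim{j}\mathcal B_j^*$ ``by Theorem \ref{FP2}'', and everything downstream (the identification $\mathbb A^*\otimes_{\mathcal R}\mathbb B^*=\ilim{i,j}(\mathcal A_i^*\otimes\mathcal B_j^*)$, hence the displayed formula) rests on this. But Theorem \ref{FP2} does not give it: first, it is stated for $\mathbb M\in\mathfrak F$, whereas in this lemma $\mathbb A$ and $\mathbb B$ are only assumed reflexive and pro-quasicoherent; second, and more importantly, \ref{FP2} expresses $\mathbb M^*$ as the direct limit of the duals of \emph{all} quasi-coherent module quotients, while the $\mathcal A_i$ here range only over the quasi-coherent \emph{algebra} quotients, a smaller family. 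What pro-quasicoherence (Definition \ref{proquasi}) actually gives is $\mathbb A=\plim{i}\mathcal A_i$, and dualizing an inverse limit does \emph{not} in general produce the direct limit of the duals: the Observation following Theorem \ref{teoremon} exhibits precisely this failure, $(\plim{\alpha}\mathcal R^{(\alpha})^*\neq\ilim{\alpha}(\mathcal R^{(\alpha})^*$. So the equality $\mathbb A^*=\ilim{i}\mathcal A_i^*$ is not available, and your closing paragraph cannot repair this via the essential-freeness machinery, since $\mathbb A,\mathbb B$ are not assumed essentially free in this lemma (and in any case the dangerous interchange is $(\plim{}{})^*$ versus $\ilim{}{}(-)^*$, upstream of the one you flag, which is the always-valid direction $(\ilim{}{})^*=\plim{}{}(-)^*$).

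The paper circumvents this by never asserting that equality of functors: it computes $(\mathbb A^*\otimes\mathbb B^*)^*=\mathbb Hom_{\mathcal R}(\mathbb A^*,\mathbb B)$ and then applies Proposition \ref{3.2} to $\mathbb M=\ilim{i}\mathcal A_i^*$ --- whose dual is $\plim{i}\mathcal A_i=\mathbb A$, reflexive by hypothesis, and whose double dual is $\mathbb A^*$ --- to replace $\mathbb A^*$ by $\ilim{i}\mathcal A_i^*$ \emph{inside} a $\mathbb Hom$ with values in the dual functor $\mathbb B=\mathbb B^{**}$. After that, $\mathbb Hom_{\mathcal R}(\ilim{i}\mathcal A_i^*,\plim{j}\mathcal B_j)=\plim{i,j}\mathbb Hom_{\mathcal R}(\mathcal A_i^*,\mathcal B_j)=\plim{i,j}(\mathcal A_i\otimes\mathcal B_j)$ by Proposition \ref{prop4}. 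Your treatment of the remaining assertions (the algebra structure on the inverse limit and the fact that $\mathbb A\otimes\mathbb B\to\mathbb A\tilde\otimes\mathbb B$ is a morphism of functors of algebras) is fine, but the main identity needs this $\mathbb Hom$-level argument rather than a termwise identification of $\mathbb A^*\otimes\mathbb B^*$.
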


\begin{proof} We have
$$\aligned(\mathbb A^* \otimes \mathbb B^*)^* & =\mathbb Hom_{\mathcal R}(\mathbb A^*,\mathbb B)\overset{\text{\ref{3.2}}}=
\mathbb Hom_{\mathcal R}(\ilim{i} \mathcal A_i^*,\mathbb B)=
\mathbb Hom_{\mathcal R}(\ilim{i} \mathcal A_i^*,\plim{j} \mathcal B_j)\\ & =\plim{i,j}
\mathbb Hom_{\mathcal R}( \mathcal A_i^*,\mathcal B_j)\overset{\text{\ref{prop4}}}=
\plim{i,j} (\mathcal A_i\otimes \mathcal B_j).\endaligned$$
\end{proof} 

\begin{theorem} \label{prodspec} Let $\mathbb A$ and $\mathbb B\in\mathfrak F$ be two pro-quasicoherent   functors of   algebras. Then,
$\mathbb A\tilde\otimes_{\mathcal R}\mathbb B\in \mathfrak F$ is a functor of pro-quasicoherent   algebras and 
$$\Hom_{\mathcal R-alg}(\mathbb A\otimes_{\mathcal R} \mathbb B,\mathbb C)=\Hom_{\mathcal R-alg}\mathbb A\tilde\otimes_{\mathcal R}\mathbb B,\mathbb C)$$
for every functor of pro-quasicoherent   algebras $\mathbb C$.

\end{theorem}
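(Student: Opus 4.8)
The plan is to prove the three assertions in turn: membership in $\mathfrak F$, pro-quasicoherence, and the universal property. For the first I would simply unwind Notation \ref{notation4.7}: one has $\mathbb A\tilde\otimes_{\mathcal R}\mathbb B=(\mathbb A^*\otimes_{\mathcal R}\mathbb B^*)^*$. Since $\mathbb A,\mathbb B\in\mathfrak F$, Theorem \ref{3.6}(3) gives $\mathbb A^*,\mathbb B^*\in\mathfrak F$, and then Theorem \ref{3.6}(4) gives $(\mathbb A^*\otimes_{\mathcal R}\mathbb B^*)^*\in\mathfrak F$; in particular it is reflexive. Next I would record the concrete presentation: as $\mathbb A,\mathbb B$ are reflexive (Theorem \ref{3.6}(1)) functors of pro-quasicoherent algebras, Lemma \ref{cua} applies and yields $\mathbb A\tilde\otimes_{\mathcal R}\mathbb B=\plim{i,j}(\mathcal A_i\otimes\mathcal B_j)$, where $\{\mathcal A_i\}$, $\{\mathcal B_j\}$ are the quasi-coherent algebra quotients of $\mathbb A,\mathbb B$, together with a morphism of functors of algebras $u\colon\mathbb A\otimes_{\mathcal R}\mathbb B\to\mathbb A\tilde\otimes_{\mathcal R}\mathbb B$. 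Writing $\pi_i\colon\mathbb A\to\mathcal A_i$, $\pi_j\colon\mathbb B\to\mathcal B_j$ for the structure epimorphisms and $p_{ij}$ for the projection out of the inverse limit, I would check the compatibility $p_{ij}\circ u=\pi_i\otimes\pi_j$ and note that $\pi_i\otimes\pi_j$ is an epimorphism. Thus each $\mathcal A_i\otimes\mathcal B_j$, being the quasi-coherent algebra associated with $A_i\otimes_R B_j$ and a target of the epimorphism $p_{ij}\circ u$, is a quasi-coherent algebra quotient of $\mathbb A\tilde\otimes_{\mathcal R}\mathbb B$, and $\{(i,j)\}$ is a filtered index set.

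The heart is the universal property, which I would establish first for a quasi-coherent target $\mathcal C$ and then bootstrap. For surjectivity of $u^*\colon\Hom_{\mathcal R-alg}(\mathbb A\tilde\otimes_{\mathcal R}\mathbb B,\mathcal C)\to\Hom_{\mathcal R-alg}(\mathbb A\otimes_{\mathcal R}\mathbb B,\mathcal C)$, I take $f$ on the right and restrict it to algebra morphisms $\mathbb A\to\mathcal C$ ($a\mapsto f(a\otimes 1)$) and $\mathbb B\to\mathcal C$ ($b\mapsto f(1\otimes b)$); by Proposition \ref{invqua2} these factor through some $\mathcal A_i$ and $\mathcal B_j$, and since the two images commute in $\mathcal C$ (because $a\otimes b=(a\otimes 1)(1\otimes b)=(1\otimes b)(a\otimes 1)$), $f$ factors as $\bar f\circ(\pi_i\otimes\pi_j)$ for an algebra map $\bar f\colon\mathcal A_i\otimes\mathcal B_j\to\mathcal C$; then $g:=\bar f\circ p_{ij}$ satisfies $g\circ u=\bar f\circ(\pi_i\otimes\pi_j)=f$. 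For injectivity, given $g_1,g_2$ with $g_1\circ u=g_2\circ u$, I would apply Proposition \ref{invqua2} to each $g_\ell$ (legitimate since $\mathbb A\tilde\otimes_{\mathcal R}\mathbb B\in\mathfrak F$) and use filteredness to factor both through a common $\mathcal A_i\otimes\mathcal B_j$ via maps $\bar g_\ell$; precomposing the hypothesis with $p_{ij}\circ u=\pi_i\otimes\pi_j$ gives $\bar g_1\circ(\pi_i\otimes\pi_j)=\bar g_2\circ(\pi_i\otimes\pi_j)$, whence $\bar g_1=\bar g_2$ and $g_1=g_2$ because $\pi_i\otimes\pi_j$ is an epimorphism.

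To pass to a general pro-quasicoherent $\mathbb C=\plim{k}\mathcal C_k$, I would use that $\Hom_{\mathcal R-alg}(-,\mathbb C)=\plim{k}\Hom_{\mathcal R-alg}(-,\mathcal C_k)$ and apply the quasi-coherent case termwise, obtaining the stated equality. Finally, pro-quasicoherence of $\mathbb A\tilde\otimes_{\mathcal R}\mathbb B$ follows once the universal property is in hand: for any quasi-coherent algebra quotient $q$ of $\mathbb A\tilde\otimes_{\mathcal R}\mathbb B$, the composite $q\circ u$ factors through some $\mathcal A_i\otimes\mathcal B_j$, and injectivity of $u^*$ into that quasi-coherent target forces $q$ itself to factor through $\mathcal A_i\otimes\mathcal B_j$; hence the filtered system $\{\mathcal A_i\otimes\mathcal B_j\}$ is cofinal among all quasi-coherent quotients, so the inverse limit over it (Lemma \ref{cua}) coincides with the inverse limit over all of them, which is Definition \ref{proquasi}.

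I expect the main obstacle to be precisely this interplay inside the universal property: keeping the factorizations through the finite levels $\mathcal A_i\otimes\mathcal B_j$ compatible with the \emph{algebra} (not merely module) structure, and reconciling the specific inverse-limit presentation furnished by Lemma \ref{cua} with the definition of pro-quasicoherence, which quantifies over \emph{all} quasi-coherent quotients. The cofinality argument in the last paragraph is where the universal property and the structural description must be combined carefully to avoid circularity, so I would make sure the injectivity of $u^*$ is proved before invoking it to establish cofinality.
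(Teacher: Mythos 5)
Your overall architecture --- membership in $\mathfrak F$ via Theorem \ref{3.6}, the presentation $\plim{i,j}(\mathcal A_i\otimes\mathcal B_j)$ from Lemma \ref{cua}, surjectivity of restriction along $u$ by splitting $\phi$ into $\phi_{|\mathbb A\otimes 1}$ and $\phi_{|1\otimes\mathbb B}$ and invoking Proposition \ref{invqua2}, the termwise passage to a general pro-quasicoherent target, and the final cofinality argument for pro-quasicoherence --- coincides with the paper's. The gap is in your injectivity step. You take $g_1,g_2\colon \mathbb A\tilde\otimes_{\mathcal R}\mathbb B\to\mathcal C$ with $g_1\circ u=g_2\circ u$ and propose to ``apply Proposition \ref{invqua2} \ldots{} and use filteredness to factor both through a common $\mathcal A_i\otimes\mathcal B_j$''. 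But Proposition \ref{invqua2} only factors $g_\ell$ through the quasi-coherent algebra associated with $\Ima (g_\ell)_R$, which is \emph{some} quasi-coherent algebra quotient of $\mathbb A\tilde\otimes_{\mathcal R}\mathbb B$; it gives no reason that this quotient is dominated by one of the distinguished quotients $\mathcal A_i\otimes\mathcal B_j$ of the inverse-limit presentation (morphisms out of an inverse limit do not factor through its terms in general --- that is a property of direct limits). The assertion that every algebra morphism $\mathbb A\tilde\otimes_{\mathcal R}\mathbb B\to\mathcal C$ factors through some $\mathcal A_i\otimes\mathcal B_j$ is precisely the cofinality claim of your last paragraph, which you there (correctly) derive \emph{from} injectivity of $u^*$; invoking it inside the proof of injectivity is circular. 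The circularity you flag as a worry at the end is in fact present in the argument as written.

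The paper closes this gap by a different, purely module-level argument: since $\mathcal C$ is reflexive,
$$\Hom_{\mathcal R}(\mathbb A\tilde\otimes_{\mathcal R}\mathbb B,\mathcal C)=\Hom_{\mathcal R}(\mathcal C^*,(\mathbb A^*\otimes\mathbb B^*)^{**})\hookrightarrow\Hom_{\mathcal R}(\mathcal C^*,(\mathbb A\otimes\mathbb B)^*)=\Hom_{\mathcal R}(\mathbb A\otimes\mathbb B,\mathcal C),$$
where the middle injection is $\Hom_{\mathcal R}(\mathcal C^*,-)$ applied to the monomorphism $\mathbb A^*\bar\otimes_{\mathcal R}\mathbb B^*\hookrightarrow\mathbb A^*\tilde\otimes_{\mathcal R}\mathbb B^*$ of Proposition \ref{harto}, which in turn rests on the explicit realisation of essentially free modules as submodules of $\mathcal R^I$ (Theorems \ref{teoremon} and \ref{teoremonc}). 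You need this step, or some substitute that does not presuppose factorisation through the $\mathcal A_i\otimes\mathcal B_j$, before the rest of your proof can proceed; once injectivity is in hand, your surjectivity, limit-passage and cofinality arguments are all sound and agree with the paper's.
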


\begin{proof} $\mathbb A\tilde\otimes_{\mathcal R}\mathbb B=(\mathbb A^*\otimes \mathbb B^*)^*\in \mathfrak F$. Let us follow the notations of Lemma \ref{cua}. 

Given a morphism of functor of $\mathcal R$-algebras $\phi\colon \mathbb A\otimes\mathbb B\to \mathcal C$, let $\phi_1=\phi_{|\mathbb A\otimes 1}$ and $\phi_2=\phi_{|1\otimes\mathbb B}$. Then, $\phi_1$ factors through an epimorphism onto  a quasi-coherent algebra quotient $\mathcal A_i$ of $\mathbb A$, and  $\phi_2$ factors through an epimorphism onto a quasi-coherent algebra quotient $\mathcal B_j$ of $\mathbb B$. Then, $\phi$ factors  through $\mathcal A_i\otimes\mathcal B_j$, and $\phi$ factors through $\mathbb A\tilde\otimes_{\mathcal R}\mathbb B$.
Then, $$\Hom_{\mathcal R-alg}(\mathbb A\tilde\otimes_{\mathcal R}\mathbb B,\mathcal C)\to\Hom_{\mathcal R-alg}(\mathbb A\otimes \mathbb B,\mathcal C)$$
is surjective. It is also injective, because
$$\aligned \Hom_{\mathcal R}(\mathbb A\tilde\otimes_{\mathcal R}\mathbb B,\mathcal C)& =\Hom_{\mathcal R}(\mathcal C^*,(\mathbb A^*\otimes \mathbb B^*)^{**})\\ & \overset{\text{\ref{harto}}}\hookrightarrow \Hom_{\mathcal R}(\mathcal C^*,(\mathbb A\otimes \mathbb B)^*)=\Hom_{\mathcal R}(\mathbb A\otimes \mathbb B,\mathcal C).\endaligned$$

Then, $\Hom_{\mathcal R-alg}(\mathbb A\otimes \mathbb B,\mathbb C)=\Hom_{\mathcal R-alg}(\mathbb A\tilde\otimes_{\mathcal R}\mathbb B,\mathbb C)$
for every pro-quasicoherent algebra $\mathbb C$.

A morphism of functors of algebras $f\colon\mathbb A\tilde\otimes_{\mathcal R}\mathbb B\to\mathcal C$ factors through some $\mathcal A_i\otimes\mathcal B_j$ because
$f_{|\mathbb A\otimes\mathbb B}$ factors through some $\mathcal A_i\otimes\mathcal B_j$. Then, the inverse limit of the quasi-coherent algebra quotients of
$\mathbb A\tilde\otimes_{\mathcal R}\mathbb B$ is equal to $\plim{i,j} (\mathcal A_i\otimes\mathcal B_j)=\mathbb A\tilde\otimes_{\mathcal R}\mathbb B$, that is,
$\mathbb A\tilde\otimes_{\mathcal R}\mathbb B$ is a pro-quasicoherent algebra.

\end{proof}

\begin{remark} Moreover,
let $\mathbb A_1,\ldots,\mathbb A_n\in\mathfrak F$ and $\mathbb C$
be pro-quasicoherent   functors of   $\mathcal R$-algebras,  and let
$\phi\in \prod_i\mathbb Hom_{\mathcal R-alg}(\mathbb A_i,\mathbb C)\subseteq \mathbb Hom_{\mathcal R}(\mathbb A_1\otimes_{\mathcal R}\cdots\otimes_{\mathcal R} \mathbb A_n,\mathbb C)$, then
$\phi$  factors uniquely through $\mathbb A_1\tilde\otimes\cdots\tilde\otimes\mathbb A_n$.
\end{remark}

Let $\mathbb A^*\in\mathfrak F$ be a functor of pro-quasicoherent algebras. The multiplication morphism $m\colon \mathbb A^*\otimes \mathbb A^*\to \mathbb A^*$ factors uniquely through
$ \mathbb A^*\tilde\otimes \mathbb A^*$. Then, the comultiplication morphism $\mathbb A\to 
\mathbb A\tilde\otimes \mathbb A$ factors through $\mathbb A\bar\otimes\mathbb A$.
Taking duals in $$\mathbb A^*\tilde\otimes \mathbb A^*\tilde\otimes\mathbb A^*\dosflechasa{m\otimes Id}{Id\otimes m} \mathbb A^*\tilde\otimes \mathbb A^* \overset{m}\to \mathbb A^*,$$ we have that the morphism $\mathbb A\to \mathbb A\bar\otimes\mathbb A$ is coassociative.

\subsection{Functors of procoherent algebras}

\begin{proposition} \label{5.9} Let $\mathcal C^*\in\mathfrak F$ be a functor of $\mathcal R$-algebras. Then, $\mathcal C^*$ is a functor of pro-quasicoherent   algebras.\end{proposition}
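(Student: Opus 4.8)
The plan is to bootstrap from Theorem \ref{FP2}, which applies because $\mathcal C^*\in\mathfrak F$: it yields $\mathcal C^*=\plim{i} \mathcal M_i$, the inverse limit of the family $\{\mathcal M_i\}$ of \emph{all} quasi-coherent module quotients of $\mathcal C^*$. By Definition \ref{proquasi} it then suffices to show that the quasi-coherent \emph{algebra} quotients form a cofinal subsystem; concretely, that every quasi-coherent module quotient $q\colon \mathcal C^*\to \mathcal M$ is dominated by a quasi-coherent algebra quotient, i.e.\ that $q$ factors as $\mathcal C^*\to \mathcal A\to \mathcal M$ with $\mathcal C^*\to \mathcal A$ a quasi-coherent algebra quotient. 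Once this cofinality is in hand, $\plim{j} \mathcal A_j=\plim{i} \mathcal M_i=\mathcal C^*$, so $\mathcal C^*$ is the inverse limit of its quasi-coherent algebra quotients, which is exactly the assertion.

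To produce the dominating algebra quotient I would pass to the dual, coalgebra side. Since $\mathcal C^*$ is a functor of $\mathcal R$-algebras, $C$ is an $R$-coalgebra by Proposition \ref{notation4.8}; in particular $\mathcal C$ is quasi-coherent and carries a comultiplication. Dualizing $q$ and invoking reflexivity ($\mathcal C^{**}=\mathcal C$, Theorem \ref{reflex}) converts the module quotient into a monomorphism $\mathcal M^*\hookrightarrow \mathcal C$, and under this duality algebra quotients of $\mathcal C^*$ correspond to subcoalgebras of $\mathcal C$. I would therefore let $\mathcal D\subseteq \mathcal C$ be the subcoalgebra generated by $\mathcal M^*$ (built from the comultiplication of $C$) and dualize the inclusion $\mathcal D\hookrightarrow \mathcal C$ to obtain an algebra morphism $\mathcal C^*\to \mathcal D^*$ through which $q$ factors. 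Proposition \ref{invqua2} is the tool meant to guarantee that such an algebra morphism out of $\mathcal C^*\in\mathfrak F$ genuinely factors through a quasi-coherent algebra quotient, namely the one associated with $\Ima$ of its map on $R$-points, and the remaining formalities (that $\mathcal C^*\to \mathcal D^*$ is a ring map and that the factorization is unique) follow from Propositions \ref{3.2} and \ref{invqua2}.

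The hard part will be the local-finiteness step: showing that the subcoalgebra $\mathcal D$ generated by $\mathcal M^*$ is tame enough to yield a bona fide quasi-coherent algebra quotient while still dominating $\mathcal M$. This is the analogue, in the present functorial setting, of the fundamental theorem of coalgebras (that every finite piece of a coalgebra lies in a suitably small subcoalgebra), and it is precisely where the hypotheses $\mathcal C^*\in\mathfrak F$ and the quasi-coherence of $\mathcal C$ must enter: the quasi-coherent quotients furnished by Theorem \ref{FP2} together with the factorization in Lemma \ref{invqua} should supply the control on images needed to keep the generated subcoalgebra within the class whose duals are quasi-coherent algebra quotients. Verifying this control, and the compatibility of the factorizations as $\mathcal M$ ranges over the directed system of module quotients, is the crux of the argument; everything else is formal.
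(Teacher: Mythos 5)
Your overall strategy is sound and is in fact the same one the paper follows in spirit: reduce everything to a cofinality statement, namely that every quasi-coherent (module) quotient of $\mathcal C^*$ is dominated by a quasi-coherent \emph{algebra} quotient, or dually that every finitely generated piece of $C$ sits inside the dual of such a quotient. The problem is that you explicitly park the entire content of the proposition in the step you call ``the hard part'' --- the functorial analogue of the fundamental theorem of coalgebras --- and offer no construction of the ``subcoalgebra $\mathcal D$ generated by $\mathcal M^*$'' and no argument that it is small enough for $\mathcal D^*$ to be a quasi-coherent algebra quotient. That is not a detail to be checked; it is the theorem. There is also a technical slip in the set-up: for a quasi-coherent quotient $\mathcal C^*\to\mathcal M$, the dual $\mathcal M^*$ is a module scheme, not in general a quasi-coherent submodule of $\mathcal C$, so ``the subcoalgebra generated by $\mathcal M^*$'' is not even a well-posed object in the category you need it to live in; the correspondence ``algebra quotients $\leftrightarrow$ subcoalgebras'' you invoke is exactly the delicate point, not something you may assume.

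For comparison, here is how the paper fills the gap, entirely on the module side and without ever forming a generated subcoalgebra. Since $\mathcal C^*$ is a functor of algebras, $\mathcal C$ is a left and right $\mathcal C^*$-module. Given a finitely generated $N=\langle n_1,\ldots,n_r\rangle\subseteq C$, the morphism $\mathcal C^{*r}\to\mathcal C$, $(w_i)\mapsto\sum_i w_i\cdot n_i$ has image of finite type by Lemma \ref{x} (this is where $\mathcal C^*\in\mathfrak F$ enters: $\mathcal C^*$ is a product of copies of $\mathcal R$); call it $N'$, which contains $N$ and is an $\mathcal C^*(R)$-submodule, hence $\mathcal N'$ is a quasi-coherent $\mathcal C^*$-submodule of $\mathcal C$ by Proposition \ref{invqua4}. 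The resulting algebra morphism $\mathcal C^*\to\mathbb Hom_{\mathcal R}(\mathcal N',\mathcal N')$ lands in a functor that embeds into $\oplus^s\mathcal N'$, so Proposition \ref{invqua2} makes it factor through a quasi-coherent algebra quotient $\mathcal B'$; a short duality computation then shows $N'\subseteq B'^*$. This gives $\mathcal C=\ilim{}{\mathcal B'}^*$ and, dualizing, $\mathcal C^*=\plim{}\mathcal B'$. If you want to salvage your write-up, you must supply precisely this local-finiteness argument (or an equivalent one); as it stands the proposal is a correct reduction plus an acknowledged hole where the proof should be.
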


\begin{proof} 1. $\mathcal C^*$ is a left and right $\mathcal C^*$-module, then $\mathcal C$ is a right and left $\mathcal C^*$-module. Given $c\in C$, the dual morphism of the morphism
$\mathcal C^*\to \mathcal C$, $w\mapsto w\cdot c$ is the morphism $\mathcal C^*\to \mathcal C$, $w\mapsto c\cdot w$.

2. $C$ is the direct limit of its finitely generated $R$-submodules. Let $N=\langle n_1,\ldots, n_r\rangle\subset C$ be a finitely generated  $R$-module  and let $f\colon \mathcal C^{*r}\to \mathcal N$ be defined by $f((w_i)):=\sum_i w_i\cdot n_i$.
$N':=\Ima f_R$ is a finitely generated $R$-module, by Lemma \ref{x}.
By Proposition \ref{invqua4}, $\mathcal N'$ is a quasi-coherent $\mathcal C^*$-submodule of $\mathcal C$.
Write $N'=\langle n'_1,\ldots, n'_s\rangle$. The morphism $\mathbb End_{\mathcal R}(\mathcal N')\to \oplus^s\mathcal N'$, $g\mapsto (g(n'_i))_i$ is injective. By Proposition \ref{invqua2}, the morphism of functors of $\mathcal R$-algebras $\mathcal C^*\to \mathbb End_{\mathcal R}(\mathcal N')$ $w\mapsto w\cdot$  factors through an epimorphism onto a
quasi-coherent algebra, $\mathcal B'$. The dual morphism of the composite morphism
$$\xymatrix{\mathcal C^* \ar@{->>}[r] & \mathcal B'\ar[r] & \mathbb End_{\mathcal R}(\mathcal N') \ar@{^{(}->}[r] & \oplus^s \mathcal N' \ar[r] &  \oplus^s \mathcal C\ar[r]^-{\pi_i} \ar[r] &\mathcal C\\ w \ar@{|->}[rrrrr] &&&&& w\cdot n'_i}$$
is $\mathcal C^*\to {\mathcal B'}^* \hookrightarrow \mathcal C$, $w\mapsto n'_i\cdot w$.
Hence, $n'_i\in {B'}^*$, for all $i$, and $N'\subseteq B'^*$.  Therefore, $\mathcal C$ is equal to the direct limit of the dual
functors of the quasi-coherent algebra quotients of $\mathcal C^*$. Dually,
$\mathcal C^*$ is a functor of pro-quasicoherent   algebras.

\end{proof}

\begin{observation} \label{5.9b} Recall that if $\mathcal C^*\to \mathcal A$ is an epimorphism,
$A$ is an $R$-module of finite type, by Lemma \ref{x}.\end{observation}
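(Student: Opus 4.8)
The plan is to reduce the statement to the finiteness already contained in Lemma \ref{x}, by exploiting that a morphism out of a dual functor $\mathcal C^*$ into a quasi-coherent module is described by a \emph{finite} tensor. Write $A=\mathcal A(R)$, so that $\mathcal A$ is the quasi-coherent module associated with the $R$-module $A$, and let $\phi\colon\mathcal C^*\to\mathcal A$ be the given epimorphism of functors of $\mathcal R$-modules.

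First I would identify the group of morphisms. By Proposition \ref{prop4} (applied with $\mathcal M=\mathcal C$ and $\mathcal M'=\mathcal A$) one has $\mathbb Hom_{\mathcal R}(\mathcal C^*,\mathcal A)=\mathcal C\otimes_{\mathcal R}\mathcal A$, the quasi-coherent module associated with $C\otimes_R A$; evaluating at $R$ gives the identification $\Hom_{\mathcal R}(\mathcal C^*,\mathcal A)=C\otimes_R A$. Under this identification $\phi$ corresponds to a tensor $\omega\in C\otimes_R A$, which, being an element of a tensor product, is a \emph{finite} sum $\omega=\sum_{k=1}^n c_k\otimes a_k$. Tracing through the isomorphism, and using Corollary \ref{adj2} in the form $\mathcal C^*(S)=\Hom_R(C,S)$, the morphism attached to a pure tensor $c\otimes a$ is evaluation at $c$ followed by multiplication by $a$; hence for $w\in\mathcal C^*(S)=\Hom_R(C,S)$ one has $\phi_S(w)=\sum_{k=1}^n w(c_k)\,a_k\in A\otimes_R S$.

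Then I would read off the image on $R$-points. For $w\in\Hom_R(C,R)$ the scalars $w(c_k)$ lie in $R$, so $\phi_R(w)=\sum_{k=1}^n w(c_k)a_k$ lies in the submodule $\langle a_1,\dots,a_n\rangle\subseteq A$; hence $\Ima\phi_R\subseteq\langle a_1,\dots,a_n\rangle$. Since $\phi$ is an epimorphism of functors of $\mathcal R$-modules, $\Ima\phi_R=(\Ima\phi)(R)=\mathcal A(R)=A$, and therefore $A=\langle a_1,\dots,a_n\rangle$ is an $R$-module of finite type. This is exactly the mechanism of Lemma \ref{x}: the special case $C=R^{(\alpha}$, for which $\mathcal C^*=\mathcal R^\alpha$, recovers that lemma verbatim. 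The only genuine point to watch is the direction of the identification: $\Hom_{\mathcal R}(\mathcal C^*,\mathcal A)$ must be computed as $C\otimes_R A$ (finite tensors) and not as $\Hom_R(\Hom_R(C,R),A)$, since a bound coming from the latter would not force finite generation. Everything else is a routine unwinding of the evaluation pairing and of the meaning of \emph{epimorphism} in the functor category.
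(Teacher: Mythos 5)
Your proof is correct and is essentially the paper's intended argument: the Observation simply cites Lemma \ref{x}, whose mechanism is exactly the identification $\Hom_{\mathcal R}(\mathcal C^*,\mathcal A)=C\otimes_R A$ (Proposition \ref{prop4}), so that $\phi$ is a finite tensor and $\Ima\phi_R$ lands in a finitely generated submodule, which equals $A$ because epimorphisms are pointwise surjective. Your version is marginally more general in that it does not need $C$ free (the paper's context has $\mathcal C^*\in\mathfrak F$, i.e.\ $C=R^{(\alpha}$, so Lemma \ref{x} applies verbatim), but the argument is the same.
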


\begin{definition} A functor of $\mathcal R$-algebras $\mathcal C^*\in\mathfrak F$ will be called a   functor of procoherent algebras.\end{definition}

From now on, in this subsection, $R=K$ will be a field. 

\begin{definition} \label{Notation}  Let $\mathbb A$ be a reflexive functor of $\mathcal K$-algebras and let $\{\mathcal A_i\}$ be the set of quasi-coherent algebra quotients of $\mathbb A$ such that
$\dim_K A_i<\infty$. We denote $\bar{\mathbb A}:=\plim{i} \mathcal A_i$. \end{definition}

Let $C_i=A_i^*$ and $C=\ilim{i} A_i^*$,  then  $\bar{\mathbb A}=\plim{i} \mathcal A_i=(\ilim{i} \mathcal A_i^*)^*=\mathcal C^*$ is a  functor of procoherent algebras

Let $\mathcal C^*$ be a functor of $\mathcal K$-algebras, then
$\overline{\mathcal C^*}=\mathcal C^*$ (by Proposition \ref{5.9} and Obs. \ref{5.9b}).

\begin{lemma} \label{N6.7}
Let $\{M_i\}_{i\in I}$ be an inverse system of finite dimensional $K$-vector spaces. Then,
$\mathcal M_i^*$ is a quasicoherent functor of $\mathcal K$-modules and
$$\mathbb Hom_{\mathcal K}(\plim{i\in I}\mathcal M_i,\mathcal N)\overset{\text{\ref{prop4}}}=(\ilim{i\in I} \mathcal M_i^*)\otimes \mathcal N=
\ilim{i\in I} (\mathcal M_i^*\otimes \mathcal N)\overset{\text{\ref{prop4}}}=\ilim{i\in I} \mathbb Hom_{\mathcal K}(\mathcal M_i,\mathcal N).$$

\end{lemma}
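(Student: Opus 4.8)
The plan is to establish the string of equalities from left to right, treating the middle step as the only nontrivial point. The statement has two distinct assertions bundled together: first, that each $\mathcal M_i^*$ is quasicoherent, and second, the chain of isomorphisms. For the first assertion, I would invoke the fact recorded just after Theorem \ref{reflex}: a dual functor $\mathcal M_i^*$ is quasi-coherent precisely when $M_i$ is projective and finitely generated. Since $M_i$ is a finite-dimensional $K$-vector space and $K$ is a field, $M_i$ is automatically free of finite rank, hence projective and finitely generated, so $\mathcal M_i^*$ is quasi-coherent, being the quasi-coherent module associated with the dual vector space $M_i^*$.

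For the chain of equalities, I would proceed step by step. The leftmost equality, $\mathbb Hom_{\mathcal K}(\plim{i}\mathcal M_i,\mathcal N)=(\ilim{i}\mathcal M_i^*)\otimes\mathcal N$, is the substantive one. Here I would first rewrite $\plim{i}\mathcal M_i=(\ilim{i}\mathcal M_i^*)^*$ using reflexivity of the $\mathcal M_i$ (Theorem \ref{reflex}) together with the duality $(\ilim{i}\mathbb M_i)^*=\plim{i}\mathbb M_i^*$, so that the inner-Hom becomes $\mathbb Hom_{\mathcal K}((\ilim{i}\mathcal M_i^*)^*,\mathcal N)$. Then Proposition \ref{prop4} applies directly: for quasi-coherent modules $\mathcal P,\mathcal N$ one has $\mathbb Hom_{\mathcal K}(\mathcal P^*,\mathcal N)=\mathcal P\otimes_{\mathcal K}\mathcal N$, with $\mathcal P=\ilim{i}\mathcal M_i^*$. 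This is exactly the invocation of \ref{prop4} flagged over the first equals sign.

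The middle equality, $(\ilim{i}\mathcal M_i^*)\otimes\mathcal N=\ilim{i}(\mathcal M_i^*\otimes\mathcal N)$, is the commutation of tensor product with direct limits. Since direct limits and tensor products of functors of $\mathcal K$-modules are both computed objectwise (as recorded in the preliminaries, where $(\mathbb M\otimes_{\mathcal R}\mathbb M')(S)=\mathbb M(S)\otimes_S\mathbb M'(S)$ and $(\ilim{i}\mathbb M_i)(S)=\ilim{i}\mathbb M_i(S)$), this reduces to the standard fact that tensoring commutes with direct limits over each algebra $S$. The final equality, $\ilim{i}(\mathcal M_i^*\otimes\mathcal N)=\ilim{i}\mathbb Hom_{\mathcal K}(\mathcal M_i,\mathcal N)$, applies Proposition \ref{prop4} again termwise, now reading it as $\mathcal M_i^*\otimes\mathcal N=\mathbb Hom_{\mathcal K}(\mathcal M_i,\mathcal N)$ inside the limit.

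I do not anticipate a serious obstacle, since each step is an instance of a previously established result; the only point requiring care is verifying the hypotheses of Proposition \ref{prop4} at each application, namely that the relevant modules are genuinely quasi-coherent associated modules. The finite-dimensionality of the $M_i$ guarantees this for the individual terms, and reflexivity of $\mathcal M_i$ together with the interchange of $*$ with $\ilim{}$ handles the first application. The one bookkeeping subtlety worth noting is that the first application of \ref{prop4} uses it in the form $\mathbb Hom(\mathcal P^*,\mathcal N)=\mathcal P\otimes\mathcal N$ with $\mathcal P$ the (possibly infinite) direct limit, whereas the last uses it termwise on finite-dimensional pieces; both are legitimate since Proposition \ref{prop4} is stated for arbitrary $R$-modules $M,M'$.
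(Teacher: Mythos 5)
Your proposal is correct and follows exactly the route the paper intends: the lemma's displayed annotations over the equals signs constitute its entire proof, namely quasi-coherence of $\mathcal M_i^*$ from finite-dimensionality, the identification $\plim{i}\mathcal M_i=(\ilim{i}\mathcal M_i^*)^*$ followed by Proposition \ref{prop4}, objectwise commutation of tensor with direct limits, and \ref{prop4} again termwise. Your additional care in checking the hypotheses of \ref{prop4} at each application is sound but does not change the argument.
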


\begin{proposition} \label{a5.9} Let $\mathbb A$ be a reflexive functor of $\mathcal K$-algebras. Then,
$$\Hom_{\mathcal K-alg}(\mathbb A,\mathcal C^*)=
\Hom_{\mathcal K-alg}(\bar{\mathbb A},\mathcal C^*),$$
for all  functors of procoherent algebras $\mathcal C^*$.
\end{proposition}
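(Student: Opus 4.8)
The plan is to reduce the statement to the case where the target is a single finite-dimensional quasi-coherent algebra, and there to identify both Hom-sets with one and the same filtered colimit, intertwined by the natural morphism $\iota\colon\mathbb A\to\bar{\mathbb A}$ (the map induced by the quotients $\mathbb A\to\mathcal A_i$, along which the asserted equality is taken). First I would invoke that $\mathcal C^*$ is procoherent: by Proposition \ref{5.9} it is the inverse limit of its quasi-coherent algebra quotients, and by Observation \ref{5.9b} each such quotient $\mathcal D_k$ is finite-dimensional since $K$ is a field. Writing $\mathcal C^*=\plim{k}\mathcal D_k$ and using the universal property of the inverse limit, $\Hom_{\mathcal K-alg}(-,\plim{k}\mathcal D_k)=\plim{k}\Hom_{\mathcal K-alg}(-,\mathcal D_k)$, it suffices to prove for every finite-dimensional quasi-coherent algebra $\mathcal D$ that $\iota^{*}\colon \Hom_{\mathcal K-alg}(\bar{\mathbb A},\mathcal D)\to\Hom_{\mathcal K-alg}(\mathbb A,\mathcal D)$ is bijective.

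Let $\{\mathcal A_i\}$ be the family of finite-dimensional quasi-coherent algebra quotients of $\mathbb A$ from Definition \ref{Notation}, so that $\bar{\mathbb A}=\plim{i}\mathcal A_i$; this family is filtered, since given two such quotients the image of $\mathbb A$ in their product is again a finite-dimensional quotient dominating both. I would identify each side with the filtered colimit $\ilim{i}\Hom_{\mathcal K-alg}(\mathcal A_i,\mathcal D)$. For $\mathbb A$: by Proposition \ref{invqua2} every algebra morphism $\mathbb A\to\mathcal D$ factors through the quasi-coherent algebra attached to $\Ima$ of its $K$-points, which is finite-dimensional and hence one of the $\mathcal A_i$; for injectivity in the colimit one passes to a common stage $\mathcal A_j$ and uses that the quotient $\mathbb A\to\mathcal A_j$ is an epimorphism of functors of algebras. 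For $\bar{\mathbb A}$: by Lemma \ref{N6.7}, applied to the inverse system $\{A_i\}$ of finite-dimensional spaces, every morphism of $\mathcal K$-modules $\bar{\mathbb A}=\plim{i}\mathcal A_i\to\mathcal D$ already factors through some finite stage $\mathcal A_i$; since the projection $\bar{\mathbb A}\to\mathcal A_i$ is a (surjective, hence epi) morphism of functors of algebras — its precomposition with $\iota$ is the quotient $\mathbb A\to\mathcal A_i$ — such a module factorization of an algebra morphism is automatically a factorization through algebra morphisms, and the same epimorphism property yields injectivity in the colimit.

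Both identifications are effected by precomposition with the projections $\mathbb A\to\mathcal A_i$ and $\bar{\mathbb A}\to\mathcal A_i$, and these are compatible under $\iota$ because $p_i\circ\iota$ equals the quotient $\mathbb A\to\mathcal A_i$. Hence $\iota^{*}$ becomes the identity on the common colimit $\ilim{i}\Hom_{\mathcal K-alg}(\mathcal A_i,\mathcal D)$ and is bijective; reassembling over $k$ through the inverse limit then gives $\Hom_{\mathcal K-alg}(\bar{\mathbb A},\mathcal C^*)=\Hom_{\mathcal K-alg}(\mathbb A,\mathcal C^*)$.

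The main obstacle is the identification on the $\bar{\mathbb A}$ side, namely that an algebra morphism out of the inverse limit $\bar{\mathbb A}=\plim{i}\mathcal A_i$ into a finite-dimensional target must factor through a single stage $\mathcal A_i$ rather than genuinely depend on infinitely many of them. This is precisely the force of Lemma \ref{N6.7} (equivalently of Proposition \ref{prop4}), and it is where the finite-dimensionality of the $A_i$ — and thus the standing hypothesis that $K$ is a field — is indispensable. Everything else (filteredness of $\{\mathcal A_i\}$, promoting module factorizations to algebra factorizations via the epimorphism $\bar{\mathbb A}\to\mathcal A_i$, and commuting $\Hom$ past the inverse limit in $k$) is routine once this finiteness is secured.
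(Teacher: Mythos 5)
Your proposal is correct and follows essentially the same route as the paper: reduce to a finite-dimensional quasi-coherent target by writing $\mathcal C^*$ as an inverse limit of finite-dimensional algebra quotients (via \ref{5.9} and \ref{5.9b}), identify $\Hom_{\mathcal K-alg}(\mathbb A,\mathcal D)$ with $\ilim{i}\Hom_{\mathcal K-alg}(\mathcal A_i,\mathcal D)$ using \ref{invqua2}/\ref{Remark3}, and identify $\Hom_{\mathcal K-alg}(\bar{\mathbb A},\mathcal D)$ with the same colimit using Lemma \ref{N6.7}. The only difference is the order of the two reductions, which is immaterial.
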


\begin{proof} Let $C$ be a finite dimensional $K$-algebra and let $\{\mathcal A_i\}_{i\in I}$ be the set of all quasi-coherent algebra quotients of $\mathbb A$ such that
$\dim_K A_i<\infty$. Then, $$\aligned \Hom_{\mathcal K-alg}(\mathbb A,\mathcal C)& \overset{\text{\ref{Remark3}}}=
\ilim{i} \Hom_{\mathcal K-alg}(\mathcal A_i,\mathcal C)\overset{\text{\ref{N6.7}}}=
\Hom_{\mathcal K-alg}(\plim{i} \mathcal A_i,\mathcal C)\\ & =\Hom_{\mathcal K-alg}(\bar{\mathbb A},\mathcal C).\endaligned$$
Write $\mathcal C^*=\plim{j} \mathcal C_j$, where $\{C_j\}_{j\in J}$ is
 the set of quasi-coherent algebra quotients of $\mathcal C^*$ such that
$\dim_K C_j<\infty$. Then,

$$\aligned \Hom_{\mathcal K-alg}(\mathbb A,\mathcal C^*) & =\plim{j} \Hom_{\mathcal K-alg}(\mathbb A,\mathcal C_j)=
\plim{j} \Hom_{\mathcal K-alg}(\bar{\mathbb A},\mathcal C_j)\\ & =
\Hom_{\mathcal K-alg}(\bar{\mathbb A},\mathcal C^*).\endaligned$$
\end{proof}

In particular, we have that $\bar{\bar{\mathbb A}}=\mathbb A$ and a morphism of functors of $\mathcal K$-algebras $\bar{\mathbb A}\to \mathcal B$, where $\dim_KB<\infty$, is an epimorphism if and only if the obvious morphism ${\mathbb A}\to \mathcal B$ is an epimorphism.

\begin{corollary} Let $\mathbb A$ be a reflexive functor of $\mathcal K$-algebras.
The category of finite $\mathcal K$-dimensional  $\mathbb A$-modules  is equal to
to the category of finite $\mathcal K$-dimensional  $\bar{\mathbb A}$-modules.
\end{corollary}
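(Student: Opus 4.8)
The plan is to reduce the whole statement to the universal property of $\bar{\mathbb A}$ established in Proposition \ref{a5.9}. First I would record the standard dictionary: a finite $K$-dimensional $\mathbb A$-module is the same datum as a finite-dimensional $K$-vector space $M$ together with a morphism of functors of $\mathcal K$-algebras $\rho_M\colon \mathbb A\to \mathbb End_{\mathcal K}(\mathcal M)$. Since $\dim_K M<\infty$ we have $\mathcal M=(\mathcal M^*)^*$, so by Proposition \ref{prop4}
$$\mathbb End_{\mathcal K}(\mathcal M)=\mathbb Hom_{\mathcal K}((\mathcal M^*)^*,\mathcal M)=\mathcal M^*\otimes_{\mathcal K}\mathcal M=\mathcal E,$$
the quasi-coherent algebra associated with the finite-dimensional $K$-algebra $E:=\End_K M$. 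In particular $\mathbb End_{\mathcal K}(\mathcal M)=\mathcal C^*$ for $C=E^*$, hence it is a functor of procoherent algebras and Proposition \ref{a5.9} applies to it.

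For objects I would apply Proposition \ref{a5.9} verbatim, obtaining $\Hom_{\mathcal K-alg}(\mathbb A,\mathbb End_{\mathcal K}(\mathcal M))=\Hom_{\mathcal K-alg}(\bar{\mathbb A},\mathbb End_{\mathcal K}(\mathcal M))$. This yields a bijection between finite-dimensional $\mathbb A$-module structures and finite-dimensional $\bar{\mathbb A}$-module structures on a fixed $M$, and the bijection is precisely restriction of scalars along the natural morphism $\pi\colon \mathbb A\to\bar{\mathbb A}$, since $\rho_M=\bar\rho_M\circ\pi$. For morphisms I would invoke Proposition \ref{5.14b} (available because $\mathbb A$ and $\bar{\mathbb A}$ are reflexive, hence lie in $\mathfrak F$ by Remark \ref{Remark}), which identifies $\Hom_{\mathbb A}(\mathcal M,\mathcal M')=\Hom_{\mathbb A(K)}(M,M')$ and $\Hom_{\bar{\mathbb A}}(\mathcal M,\mathcal M')=\Hom_{\bar{\mathbb A}(K)}(M,M')$ as subsets of $\Hom_K(M,M')$. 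It then suffices to prove that a $K$-linear map is $\mathbb A(K)$-linear if and only if it is $\bar{\mathbb A}(K)$-linear; equivalently, that the images of $\mathbb A(K)$ and of $\bar{\mathbb A}(K)$ in $E=\End_K M$ (and likewise in $\End_K M'$) coincide.

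This last point is the crux, and I would settle it through the factorization of $\rho_M$ across its image. By Lemma \ref{invqua2} (or Lemma \ref{invqua} together with Lemma \ref{x}), $\rho_M$ factors as an epimorphism $\mathbb A\twoheadrightarrow \mathcal B$ onto the quasi-coherent algebra associated with $B:=\Ima(\rho_M)_K$, which is finite-dimensional, followed by $\mathcal B\hookrightarrow\mathcal E$. Thus $\mathcal B$ is one of the finite-dimensional quasi-coherent quotients $\mathcal A_i$ of $\mathbb A$, so $\bar{\mathbb A}=\plim{i}\mathcal A_i$ carries a projection $\bar{\mathbb A}\to\mathcal B\hookrightarrow\mathcal E$ which, by the uniqueness in Proposition \ref{a5.9}, must be $\bar\rho_M$. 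Passing to $K$-points gives $\Ima(\bar\rho_M)_K\subseteq B=\Ima(\rho_M)_K$, while $\rho_M=\bar\rho_M\circ\pi$ forces the reverse inclusion, so the two images agree; the same argument applies to $M'$. A $K$-linear map intertwines the $\mathbb A(K)$-actions exactly when it intertwines these common images, hence exactly when it is $\bar{\mathbb A}(K)$-linear, and the two $\Hom$-sets coincide. The main obstacle is precisely this identification of images, which is what prevents a purely formal argument: it forces one to use that every finite-dimensional quotient of $\mathbb A$ is already a quotient of $\bar{\mathbb A}$ (the content of $\bar{\bar{\mathbb A}}=\mathbb A$).
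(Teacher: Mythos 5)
Your core argument is exactly the paper's own: the paper likewise observes that ${\rm End}_K(V)$ is finite dimensional, so that $\mathbb End_{\mathcal K}(\mathcal V)$ is a functor of procoherent algebras, and applies Proposition \ref{a5.9} to identify $\Hom_{\mathcal K-alg}(\mathbb A,\mathbb End_{\mathcal K}(\mathcal V))$ with $\Hom_{\mathcal K-alg}(\bar{\mathbb A},\mathbb End_{\mathcal K}(\mathcal V))$; that two-line computation is the entire published proof, which treats the agreement of morphisms as immediate. What you add is an explicit verification that the two categories also have the same morphisms, via Proposition \ref{5.14b} and a comparison of the images of $\mathbb A(K)$ and $\bar{\mathbb A}(K)$ in ${\rm End}_K(M)$. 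That extra care is welcome, but as written it has one imprecision: whether $f\colon M\to M'$ intertwines the actions is a condition on the image of the \emph{joint} map $\mathbb A(K)\to {\rm End}_K(M)\times{\rm End}_K(M')$, $a\mapsto(\rho_M(a),\rho_{M'}(a))$, not on the two images separately --- the two projections could agree while the joint image of $\mathbb A(K)$ is a proper subalgebra of that of $\bar{\mathbb A}(K)$. The repair is your own argument run once more: ${\rm End}_K(M)\times{\rm End}_K(M')$ (equivalently, a subalgebra of ${\rm End}_K(M\oplus M')$) is again the quasi-coherent algebra of a finite-dimensional $K$-algebra, so the joint representation factors through a finite-dimensional quasi-coherent algebra quotient $\mathcal A_i$ of $\mathbb A$ by Proposition \ref{invqua2}, and the uniqueness in Proposition \ref{a5.9} then identifies the joint images of $\mathbb A(K)$ and $\bar{\mathbb A}(K)$ as well. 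With that adjustment your proof is complete, and strictly more detailed than the one in the paper.
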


\begin{proof} Let $V$ be a finite dimensional $K$-vector space. Then, $End_K(V)$ is a finite dimensional vector space. By Proposition \ref{a5.9} 

$$\Hom_{\mathcal K-alg}(\mathbb A,\mathbb End_{\mathcal K}(\mathcal V))=
\Hom_{\mathcal K-alg}(\bar{\mathbb A},\mathbb End_{\mathcal K}(\mathcal V)).$$
If $\mathcal V$ is an $\mathbb A$-module, then it is naturally an $\bar{\mathbb A}$-module, and reciprocally.

\end{proof}

\begin{proposition} \label{a5.9b}  Let $\mathbb A_1,\ldots,\mathbb A_n$ be reflexive functors of  $\mathcal K$-algebras. Then, $$\overline{\mathbb A_1\otimes_{\mathcal K} \cdots\otimes_{\mathcal K} \mathbb A_n}=\bar{\mathbb A}_1\tilde\otimes_{\mathcal K} \cdots\tilde\otimes_{\mathcal K} \bar{\mathbb A}_n.$$
\end{proposition}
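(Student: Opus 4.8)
The plan is to prove the identity $\overline{\mathbb A_1\otimes_{\mathcal K}\cdots\otimes_{\mathcal K}\mathbb A_n}=\bar{\mathbb A}_1\tilde\otimes_{\mathcal K}\cdots\tilde\otimes_{\mathcal K}\bar{\mathbb A}_n$ by exploiting the universal (adjunction) property of $\overline{(-)}$ established in Proposition \ref{a5.9}, rather than by manipulating the explicit inverse-limit descriptions. Recall that for any reflexive functor of $\mathcal K$-algebras $\mathbb A$, the functor $\bar{\mathbb A}$ is characterized by the natural bijection $\Hom_{\mathcal K-alg}(\mathbb A,\mathcal C^*)=\Hom_{\mathcal K-alg}(\bar{\mathbb A},\mathcal C^*)$ for every functor of procoherent algebras $\mathcal C^*$. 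Since $\overline{(-)}$ is defined as an inverse limit of finite-dimensional quasi-coherent quotients, and such a limit is always a functor of procoherent algebras, the strategy is to show that $\bar{\mathbb A}_1\tilde\otimes_{\mathcal K}\cdots\tilde\otimes_{\mathcal K}\bar{\mathbb A}_n$ is itself procoherent and satisfies the same universal property as $\overline{\mathbb A_1\otimes\cdots\otimes\mathbb A_n}$, whence the two coincide by the uniqueness (Yoneda) that the universal property encodes.

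First I would reduce to the case $n=2$, since the general case follows by induction once one checks associativity of $\tilde\otimes$ on procoherent algebras (the isomorphism at the end of the remarks following Proposition \ref{3.2}, together with reflexivity guaranteed by Theorem \ref{prodspec}). For $n=2$, I would argue that $\bar{\mathbb A}_1\tilde\otimes_{\mathcal K}\bar{\mathbb A}_2$ is a functor of pro-quasicoherent algebras: each $\bar{\mathbb A}_i$ is procoherent by the discussion following Definition \ref{Notation}, so by Lemma \ref{cua} we have $\bar{\mathbb A}_1\tilde\otimes_{\mathcal K}\bar{\mathbb A}_2=\plim{i,j}(\mathcal A_i\otimes\mathcal B_j)$, where $\{\mathcal A_i\}$ and $\{\mathcal B_j\}$ are the finite-dimensional quasi-coherent quotients of $\bar{\mathbb A}_1$ and $\bar{\mathbb A}_2$ respectively; since each $A_i\otimes_K B_j$ is again finite-dimensional, this realizes $\bar{\mathbb A}_1\tilde\otimes_{\mathcal K}\bar{\mathbb A}_2$ as an inverse limit of finite-dimensional quasi-coherent algebra quotients, so it is procoherent and in $\mathfrak F$ by Theorem \ref{prodspec}.

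Next I would verify the universal property. For any functor of procoherent algebras $\mathcal C^*$, I would compute
$$\aligned \Hom_{\mathcal K-alg}(\bar{\mathbb A}_1\tilde\otimes_{\mathcal K}\bar{\mathbb A}_2,\mathcal C^*)& \overset{\text{\ref{prodspec}}}=\Hom_{\mathcal K-alg}(\bar{\mathbb A}_1\otimes_{\mathcal K}\bar{\mathbb A}_2,\mathcal C^*)\\ & =\Hom_{\mathcal K-alg}(\bar{\mathbb A}_1,\mathcal C^*)\times\Hom_{\mathcal K-alg}(\bar{\mathbb A}_2,\mathcal C^*)\\ & \overset{\text{\ref{a5.9}}}=\Hom_{\mathcal K-alg}(\mathbb A_1,\mathcal C^*)\times\Hom_{\mathcal K-alg}(\mathbb A_2,\mathcal C^*)\\ & =\Hom_{\mathcal K-alg}(\mathbb A_1\otimes_{\mathcal K}\mathbb A_2,\mathcal C^*),\endaligned$$
where the second and last equalities express that a morphism of algebras out of a tensor product over the commutative base $\mathcal K$ corresponds to a commuting pair of algebra morphisms (both factors landing in the commutative-target algebra $\mathcal C^*$, which is what makes the two partial morphisms commute). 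Comparing this with the defining property $\Hom_{\mathcal K-alg}(\mathbb A_1\otimes\mathbb A_2,\mathcal C^*)=\Hom_{\mathcal K-alg}(\overline{\mathbb A_1\otimes\mathbb A_2},\mathcal C^*)$ from Proposition \ref{a5.9}, I conclude that both functors corepresent the same functor on procoherent algebras, and therefore are canonically isomorphic.

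The main obstacle I anticipate is the middle step identifying $\Hom_{\mathcal K-alg}(\mathbb A_1\otimes_{\mathcal K}\mathbb A_2,\mathcal C^*)$ with the product of Hom-sets, because a tensor product of noncommutative functors of algebras need not satisfy the naive coproduct-in-commutative-algebras formula; one must use that $\mathcal C^*$ is the inverse limit of its finite-dimensional quotients and that each $\mathbb A_i$ is reflexive, reducing via Remark \ref{Remark3} to the finite-dimensional quasi-coherent case where the classical identity $\Hom_{K-alg}(A_1\otimes_K A_2,C)=\Hom_{K-alg}(A_1,C)\times\Hom_{K-alg}(A_2,C)$ holds (with the two images commuting inside $C$). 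I would handle this by passing to the limit over finite-dimensional quotients on both the source and target side, exactly as in the two-stage limit argument of Proposition \ref{a5.9}, so that the identity at each finite stage assembles into the claimed bijection; care is needed to check the commutativity-of-images condition is preserved, but this is automatic since it is a condition testable on the finite-dimensional quotients.
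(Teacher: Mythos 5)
Your overall strategy -- compare $\Hom_{\mathcal K-alg}(-,\mathcal C^*)$ for procoherent $\mathcal C^*$ and conclude by the uniqueness encoded in Proposition \ref{a5.9} -- is the same as the paper's. But the central displayed step, $\Hom_{\mathcal K-alg}(\mathbb A_1\otimes_{\mathcal K}\mathbb A_2,\mathcal C^*)=\Hom_{\mathcal K-alg}(\mathbb A_1,\mathcal C^*)\times\Hom_{\mathcal K-alg}(\mathbb A_2,\mathcal C^*)$, is false as stated, and your justification for it (``the commutative-target algebra $\mathcal C^*$'') is wrong: nothing in this subsection restricts $\mathcal C^*$ to be commutative, and the proposition is later applied with genuinely non-commutative targets (e.g.\ $\mathbb End_{\mathcal K}(\mathcal V)$ in the corollary after Proposition \ref{a5.9}, and arbitrary functors of bialgebras in Theorem \ref{T6.9}). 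For a non-commutative target one only gets the subset of pairs with commuting images, and then the whole chain must be re-examined: the bijection $\Hom(\mathbb A_i,\mathcal C^*)=\Hom(\bar{\mathbb A}_i,\mathcal C^*)$ of Proposition \ref{a5.9} is given by restriction along $\mathbb A_i\to\bar{\mathbb A}_i$, which is not an epimorphism of functors pointwise, so it is \emph{not} automatic that a commuting pair $(f_1,f_2)$ on $\mathbb A_1,\mathbb A_2$ extends to a commuting pair on $\bar{\mathbb A}_1,\bar{\mathbb A}_2$. Your closing paragraph correctly identifies this as the obstacle but dismisses it as ``automatic,'' which is precisely where the real content lies.

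What closes the gap -- and what the paper's proof actually does -- is a factorization statement: for $B$ a finite-dimensional $K$-algebra, every morphism of functors of algebras $\mathbb A_1\otimes\cdots\otimes\mathbb A_n\to\mathcal B$ factors through some $\mathcal A_{1i_1}\otimes\cdots\otimes\mathcal A_{ni_n}$, where $\mathcal A_{ij}$ runs over the finite-dimensional quasi-coherent algebra quotients of $\mathbb A_i$ (restrict to each factor, apply Proposition \ref{invqua2}/Remark \ref{Remark3}, and note the image lands in the finite-dimensional $B$); the same holds verbatim for $\bar{\mathbb A}_1\otimes\cdots\otimes\bar{\mathbb A}_n$, since the $\mathcal A_{ij}$ are also the finite-dimensional quotients of $\bar{\mathbb A}_i$. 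This gives directly $\Hom_{\mathcal K-alg}(\mathbb A_1\otimes\cdots\otimes\mathbb A_n,\mathcal C^*)=\Hom_{\mathcal K-alg}(\bar{\mathbb A}_1\otimes\cdots\otimes\bar{\mathbb A}_n,\mathcal C^*)$ by writing $\mathcal C^*$ as an inverse limit of finite quotients, with no need for any product decomposition of Hom-sets and no commutativity hypothesis. Once you have this factorization, your argument collapses into the paper's; without it, the proof is incomplete.
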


\begin{proof} Let $\{\mathcal A_{ij}\}_{j\in J}$ be the set of all quasi-coherent algebra quotients of $\mathbb A_i$ such that
$\dim_K A_{ij}<\infty$.  Let $B$ be a finite $K$-algebra.  Any morphism
$\mathbb A_1\otimes\cdots\otimes\mathbb A_n\to \mathcal B$ of functors of $\mathcal K$-algebras factors through some morphism $\mathcal  A_{1i_1}\otimes\cdots\otimes\mathcal A_{ni_n}\to \mathcal B$. Likewise,
any morphism
$\bar{\mathbb A}_1\otimes\cdots\otimes\bar{\mathbb A}_n\to \mathcal B$ of functors of $\mathcal K$-algebras factors through some morphism $\mathcal  A_{1i_1}\otimes\cdots\otimes\mathcal A_{ni_n}\to \mathcal B$. Then,
$$\aligned \Hom & _{\mathcal K-alg}(\overline{\mathbb A_1\otimes_{\mathcal K} \cdots\otimes_{\mathcal K} \mathbb A_n},\mathcal C^*)=\Hom_{\mathcal K-alg}({\mathbb A_1\otimes_{\mathcal K} \cdots\otimes_{\mathcal K} \mathbb A_n},\mathcal C^*)\\&=\Hom_{\mathcal K-alg}({\bar{\mathbb A}_1\otimes_{\mathcal K} \cdots\otimes_{\mathcal K} \bar{\mathbb A}_n},\mathcal C^*)=
\Hom_{\mathcal K-alg}(\bar{\mathbb A}_1\tilde\otimes_{\mathcal K} \cdots\tilde\otimes_{\mathcal K} \bar{\mathbb A}_n,\mathcal C^*)\endaligned$$

\end{proof}

\begin{remark} Moreover,
let $\mathbb A_1,\ldots,\mathbb A_n\in\mathfrak F$ and $\mathbb C$
be pro-quasicoherent   functors of   $\mathcal R$-algebras,  and let
$\phi\in \prod_i\mathbb Hom_{\mathcal R-alg}(\mathbb A_i,\mathbb C)\subseteq \mathbb Hom_{\mathcal R}(\mathbb A_1\otimes_{\mathcal R}\cdots\otimes_{\mathcal R} \mathbb A_n,\mathbb C)$, then
$\phi$  factors uniquely through $\mathbb A_1\tilde\otimes\cdots\tilde\otimes\mathbb A_n$.
\end{remark}

\section{Applications to Algebraic Geometry}

\begin{definition} \label{4.15}
Given a functor of commutative $\mathcal R$-algebras $\mathbb {A}$, the functor
${\rm Spec}\, \mathbb {A}$, ``spectrum of $\mathbb {A}$'', is defined
to be $$({\rm Spec}\, \mathbb {A})(S) := {\rm Hom}_{\mathcal R-alg}
(\mathbb {A}, {\mathcal S}),$$ for every commutative $R$-algebra $S$.
\end{definition}

\begin{proposition} \label{4.16}
Let $\mathbb A$ be a functor of commutative $\mathcal R$-algebras. Then, $${\rm
Spec}\,{\mathbb A}={\mathbb Hom}_{\mathcal R-alg}({\mathbb A},{\mathcal R}).$$
\end{proposition}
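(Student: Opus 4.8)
The plan is to recognise this as the $\mathcal R$-algebra analogue of Corollary \ref{adj2} and to deduce it from the module-level Adjunction formula \ref{adj}. Unwinding the two sides, Definition \ref{4.15} gives $(\Spec \mathbb A)(S)=\Hom_{\mathcal R-alg}(\mathbb A,\mathcal S)$, whereas the definition of the internal $\Hom$ functor gives ${\mathbb Hom}_{\mathcal R-alg}(\mathbb A,\mathcal R)(S)=\Hom_{\mathcal S-alg}(\mathbb A_{|S},\mathcal R_{|S})=\Hom_{\mathcal S-alg}(\mathbb A_{|S},\mathcal S)$, since $\mathcal R_{|S}=\mathcal S$. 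Thus, exactly as in the proof of \ref{adj2}, it suffices to exhibit a bijection $\Hom_{\mathcal S-alg}(\mathbb A_{|S},\mathcal S)=\Hom_{\mathcal R-alg}(\mathbb A,\mathcal S)$, natural in $S$.

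First I would apply the Adjunction formula \ref{adj} to $i\colon R\to S$, using $i^*\mathbb A=\mathbb A_{|S}$ and $i_*\mathcal S=\mathcal S$ (the computation $(i_*\mathcal S)(R')=S\otimes_R R'$ already invoked in \ref{adj2}). This yields a natural bijection between the underlying morphisms of $\mathcal S$-modules $\mathbb A_{|S}\to\mathcal S$ and the underlying morphisms of $\mathcal R$-modules $\mathbb A\to\mathcal S$. Since a morphism of functors of algebras is in particular a morphism of functors of modules, it then remains only to check that this bijection restricts to the subsets of algebra morphisms on each side.

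The heart of the matter, and the step I expect to need the most care, is the compatibility of the unit and counit of this adjunction with the multiplicative structure. Explicitly, the bijection sends $\phi\colon\mathbb A\to\mathcal S$ to the morphism whose component at a commutative $S$-algebra $S'$ is $\mathbb A(S')\xrightarrow{\phi_{S'}}S\otimes_R S'\xrightarrow{\mu}S'$, where $\mu$ is the multiplication of the $S$-algebra $S'$; conversely, it sends $\psi\colon\mathbb A_{|S}\to\mathcal S$ to the morphism whose component at an $R$-algebra $R'$ is $\mathbb A(R')\to\mathbb A(S\otimes_R R')\xrightarrow{\psi_{S\otimes_R R'}}S\otimes_R R'$, the first arrow being $\mathbb A$ applied to the structural map $R'\to S\otimes_R R'$. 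Both $\mu$ and $R'\to S\otimes_R R'$ are ring homomorphisms, so each construction is a composite of ring homomorphisms; hence if $\phi$ (resp.\ $\psi$) preserves products and the unit, then so does its image, and compatibility with the structural morphisms $\mathcal R\to\mathbb A$ and $\mathcal S\to\mathbb A_{|S}$ follows in the same way. Therefore the module bijection carries $\mathcal R$-algebra morphisms to $\mathcal S$-algebra morphisms and conversely, and it is already natural in $S$ at the module level by \ref{adj}; combining this with the two identifications above gives $\Spec\mathbb A={\mathbb Hom}_{\mathcal R-alg}(\mathbb A,\mathcal R)$.
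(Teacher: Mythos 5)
Your proof is correct and follows essentially the same route as the paper: the paper's proof is precisely ``Adjunction formula \ref{adj} restricted to algebra morphisms,'' yielding $\mathbb{H}om_{\mathcal R-alg}(\mathbb A,\mathcal R)(S)=\Hom_{\mathcal S-alg}(\mathbb A_{|S},\mathcal S)=\Hom_{\mathcal R-alg}(\mathbb A,\mathcal S)=(\Spec\mathbb A)(S)$. You have simply made explicit the verification (via the unit and counit of the adjunction being built from ring homomorphisms) that the module-level bijection restricts to algebra morphisms, which the paper leaves implicit.
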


\begin{proof}
By Adjunction formula (\ref{adj}), restricted to
the morphisms of algebras, we have that
$${\mathbb Hom}_{\mathcal R-alg} ({\mathbb A}, {\mathcal R}) (S)
= {\rm Hom}_{\mathcal S-alg} ({\mathbb A}_{|S}, {\mathcal S})
= {\rm Hom}_{\mathcal R-alg} ({\mathbb A}, {\mathcal S})  = ({\rm Spec}\, {\mathbb A}) (S).$$
\end{proof}

Therefore, ${\rm Spec}\,{\mathbb A}={\mathbb Hom}_{\mathcal R-alg} ({\mathbb A},{\mathcal
R}) \subset {\mathbb Hom}_{\mathcal R}({\mathbb A},{\mathcal R}) = {\mathbb A}^*$.

\begin{proposition} \label{homspe} Let $\mathbb X$ be a functor of sets and let $\mathbb A_{\mathbb X}:=\mathbb Hom(\mathbb X,\mathcal R)$ be its functor of functions. Then,

$${\rm Hom}(\mathbb X,\Spec \mathbb B)={\rm Hom}_{\mathcal R-alg}(\mathbb B,\mathbb A_{\mathbb X}),$$ for every functor of commutative $\mathcal R$-algebras, $\mathbb B$.

\end{proposition}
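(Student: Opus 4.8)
The plan is to read this as the adjunction between $\Spec$ and the functor of functions $\mathbb A_{(-)}$, and to obtain it by upgrading the module-level duality already in hand so that it respects multiplication and unit. First I would use Proposition \ref{4.16} together with the inclusion $\Spec\mathbb B=\mathbb Hom_{\mathcal R-alg}(\mathbb B,\mathcal R)\subseteq \mathbb Hom_{\mathcal R}(\mathbb B,\mathcal R)=\mathbb B^*$ noted right after it. This identifies $\Hom(\mathbb X,\Spec\mathbb B)$ with the set of those $\phi\in\Hom(\mathbb X,\mathbb B^*)$ such that, for every $R$-algebra $S$ and every $x\in\mathbb X(S)$, the section $\phi_S(x)\in\mathbb B^*(S)=\Hom_{\mathcal R}(\mathbb B,\mathcal S)$ (Corollary \ref{adj2}) is in fact a morphism of functors of $\mathcal R$-algebras, i.e.\ is multiplicative and unital.

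Next I would record the underlying module bijection, which is exactly the computation already carried out in the proof of Proposition \ref{2.3}: $\Hom(\mathbb X,\mathbb B^*)=\Hom_{\mathcal R}(\mathcal R\mathbb X,\mathbb B^*)=\Hom_{\mathcal R}(\mathcal R\mathbb X\otimes_{\mathcal R}\mathbb B,\mathcal R)=\Hom_{\mathcal R}(\mathbb B,(\mathcal R\mathbb X)^*)=\Hom_{\mathcal R}(\mathbb B,\mathbb A_{\mathbb X})$, where I use $\Hom(\mathbb X,-)=\Hom_{\mathcal R}(\mathcal R\mathbb X,-)$, the tensor-hom adjunction, and $\mathbb A_{\mathbb X}=(\mathcal R\mathbb X)^*$. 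Tracing $\phi$ through this chain, the associated $\mathcal R$-module morphism $\Phi\colon\mathbb B\to\mathbb A_{\mathbb X}$ is given by $(\Phi_S(b))_T(x)=(\phi_T(x))_T(b_T)$; in words, $\Phi(b)$ is the function $x\mapsto\phi(x)(b)$.

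The heart of the argument is to check that this bijection carries the ``algebra'' subset on each side onto the other. Here I would use that the ring structure of $\mathbb A_{\mathbb X}=\mathbb Hom(\mathbb X,\mathcal R)$ is computed pointwise: for $\alpha,\beta\in\mathbb A_{\mathbb X}(S)$ one has $(\alpha\beta)_T(x)=\alpha_T(x)\,\beta_T(x)$, and the unit is the constant function $1$. Evaluating at an arbitrary $x\in\mathbb X(T)$ then gives $(\Phi_S(bb'))_T(x)=(\phi_T(x))_T(b_Tb'_T)$ while $(\Phi_S(b)\Phi_S(b'))_T(x)=(\phi_T(x))_T(b_T)\,(\phi_T(x))_T(b'_T)$, so $\Phi$ preserves products for all $b,b'$ precisely when every section $\phi_T(x)$ does; the same comparison against the constant function $1$ handles the unit. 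Since $\mathcal R$-linearity is already provided by the module bijection, $\Phi$ is a morphism of $\mathcal R$-algebras if and only if each $\phi_S(x)$ is multiplicative and unital, i.e.\ if and only if $\phi$ lands in $\Spec\mathbb B$. This matches the two subsets of the module bijection and yields the asserted equality.

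The step I expect to be the main obstacle is this last matching: the only genuine work is the bookkeeping needed to verify that the explicit formula $\Phi(b)=(x\mapsto\phi(x)(b))$ is correct and natural in $S$, and to reconcile the two readings of $\mathcal S$ (the quasi-coherent $\mathcal R$-algebra attached to $S$ versus the functor of rings $T\mapsto T$ over $S$-algebras) that are identified by the Adjunction formula \ref{adj} and Corollary \ref{adj2}. Once that dictionary is fixed, the pointwise description of the product on $\mathbb A_{\mathbb X}$ makes the equivalence ``$\Phi$ multiplicative and unital $\Leftrightarrow$ every $\phi(x)$ multiplicative and unital'' immediate, so that no further computation is required.
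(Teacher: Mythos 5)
Your proof is correct, and it lands on exactly the same underlying bijection as the paper --- the argument-swap $\Phi(b)(x)=\phi(x)(b)$ --- but you reach it by a more structured route. The paper's proof simply writes down the two mutually inverse assignments $f\mapsto f^*$, $f^*(b)(x):=f(x)(b)$ and $\phi\mapsto\phi^*$, $\phi^*(x)(b):=\phi(b)(x)$, and asserts $f=f^{**}$, $\phi=\phi^{**}$; it leaves implicit the only nontrivial points, namely that $f^*$ is actually a morphism of functors of $\mathcal R$-algebras and that $\phi^*$ actually lands in $\Spec\mathbb B\subseteq\mathbb B^*$. You instead first establish the linear bijection $\Hom(\mathbb X,\mathbb B^*)=\Hom_{\mathcal R}(\mathcal R\mathbb X,\mathbb B^*)=\Hom_{\mathcal R}(\mathbb B,(\mathcal R\mathbb X)^*)=\Hom_{\mathcal R}(\mathbb B,\mathbb A_{\mathbb X})$ (the same manipulation as in the proof of Proposition \ref{2.3}, valid with no reflexivity hypothesis on $\mathbb B$) and then check that it matches the subset of morphisms factoring through $\Spec\mathbb B$ with the subset of algebra morphisms, using the pointwise description of the product and unit of $\mathbb A_{\mathbb X}$. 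This buys an explicit justification of well-definedness on both sides, at the cost of the quantifier bookkeeping you flag (which does close up via naturality of $\phi$ in $S$); the paper's version is shorter but defers all of that to ``it is easy to check.''
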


\begin{proof} Given $f\colon \mathbb X\to \Spec\mathbb B$, let  $f^*\colon \mathbb B\to \mathbb A_{\mathbb X}$ be defined by $f^*(b)(x):=f(x)(b)$, for every $x\in \mathbb X$. Given $\phi\colon \mathbb B\to \mathbb A_{\mathbb X}$, let
$\phi^*\colon \mathbb X\to \Spec\mathbb B$ be defined by $\phi^*(x)(b):=
\phi(b)(x)$, for all $b\in\mathbb B$. It is easy to check that $f=f^{**}$ and $\phi=\phi^{**}$.

\end{proof}

\begin{example} \label{ejemplo}
If $A$ is a commutative $R$-algebra, then ${\rm Spec}\, {\mathcal A} =
({\rm Spec}\, A)^\cdot$ and $\mathbb A_{\Spec\mathcal A}={\mathbb Hom}(\Spec\mathcal A,\mathcal R)={\mathbb Hom}(\Spec\mathcal A,\Spec \mathcal R[x])=\mathbb Hom_{\mathcal R-alg}(\mathcal R[x],\mathcal A)=\mathcal A$.
\end{example}

\begin{definition} We will say that a functor of sets $\mathbb X$ is affine
when $\mathbb X=\Spec\mathbb A_{\mathbb X}$ and $\mathbb A_{\mathbb X}$ is a reflexive functor of $\mathcal R$-modules.
\end{definition}

Let $\mathbb Y$ be an affine functor. By Proposition \ref{homspe},
\begin{equation} \label{EqHom} \Hom(\mathbb X,\mathbb Y)=\Hom_{\mathcal R-alg}(\mathbb A_{\mathbb Y},\mathbb A_{\mathbb X}).\end{equation}

\begin{example} \label{eje4.5} Affine schemes, $\Spec \mathcal A$, are affine functors, by Example
\ref{ejemplo}.\end{example}

\begin{proposition} \label{3.10} We have that
$\mathbb A_{\ilim{i}\mathbb X_i}=\plim{i}\mathbb A_{\mathbb X_i}$.
\end{proposition}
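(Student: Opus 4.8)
The plan is to reduce the statement to two standard facts: that a left adjoint commutes with direct limits, and that the dualization functor $(-)^*$ turns direct limits into inverse limits. The bridge between $\mathbb X\rightsquigarrow\mathbb A_{\mathbb X}$ and the world of $\mathcal R$-modules is the identity $\mathbb A_{\mathbb X}=(\mathcal R\mathbb X)^*$ recorded just before Proposition \ref{2.3}, together with the adjunction $\Hom(\mathbb X,\mathbb M)=\Hom_{\mathcal R}(\mathcal R\mathbb X,\mathbb M)$. This adjunction exhibits $\mathbb X\rightsquigarrow\mathcal R\mathbb X$ as left adjoint to the forgetful functor from $\mathcal R$-modules to functors of sets, so it preserves all colimits, in particular direct limits.

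First I would note that direct limits of functors are computed pointwise, $(\ilim{i}\mathbb X_i)(S)=\ilim{i}\mathbb X_i(S)$, whence by the adjunction $\mathcal R(\ilim{i}\mathbb X_i)=\ilim{i}\mathcal R\mathbb X_i$ as functors of $\mathcal R$-modules. Next I would dualize, using Corollary \ref{adj2}, which gives $\mathbb M^*(S)=\Hom_{\mathcal R}(\mathbb M,\mathcal S)$: since $\Hom_{\mathcal R}(-,\mathcal S)$ carries a colimit in its first argument to the corresponding limit by the universal property, the functor $(-)^*$ sends direct limits to inverse limits. Stringing these together yields
$$\mathbb A_{\ilim{i}\mathbb X_i}=(\mathcal R(\ilim{i}\mathbb X_i))^*=(\ilim{i}\mathcal R\mathbb X_i)^*=\plim{i}(\mathcal R\mathbb X_i)^*=\plim{i}\mathbb A_{\mathbb X_i},$$
which is exactly the claim. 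Alternatively one can argue entirely pointwise at each $S$: restriction to $\mathcal S$-algebras commutes with the pointwise direct limit, so $(\ilim{i}\mathbb X_i)_{|S}=\ilim{i}\mathbb X_{i|S}$, and then $\Hom(\ilim{i}\mathbb X_{i|S},\mathcal S)=\plim{i}\Hom(\mathbb X_{i|S},\mathcal S)$ gives the value of $\plim{i}\mathbb A_{\mathbb X_i}$ at $S$.

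I do not expect a genuine obstacle here; the content is formal. The only points deserving care are that every isomorphism above is natural in $S$, so that the pointwise identifications assemble into an honest isomorphism of functors of $\mathcal R$-modules rather than a mere family of isomorphisms of $S$-modules, and that restriction $(-)_{|S}$ commutes with the direct limit. Both are immediate once one recalls that (co)limits of functors are formed argumentwise, so the hardest part is simply being careful that the colimit is taken in the category of functors of sets while the dual limit lives in the category of functors of $\mathcal R$-modules.
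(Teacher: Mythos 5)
Your proof is correct and, at bottom, the same as the paper's: the paper simply writes $\mathbb A_{\ilim{i}\mathbb X_i}=\mathbb Hom(\ilim{i}\mathbb X_i,\mathcal R)=\plim{i}\mathbb Hom(\mathbb X_i,\mathcal R)=\plim{i}\mathbb A_{\mathbb X_i}$, which is exactly your ``alternative'' pointwise argument. The detour through $\mathcal R\mathbb X$ and its left-adjointness is harmless but unnecessary.
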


\begin{proof}

$\mathbb A_{\ilim{i}\mathbb X_i} ={\mathbb  Hom}
(\underset{\underset{i}{\longrightarrow}} {\lim}\, \mathbb X_i, \mathcal R)  =
\underset{\underset{i}{\longleftarrow}}{\lim}\,{\mathbb  Hom}
(\mathbb X_i,\mathcal R)
= \plim{i}\mathbb A_{\mathbb X_i}$.
\end{proof}

\begin{theorem} \label{t1.9} \label{t1.92} Let $\mathbb A\in\mathfrak F$ be a functor of commutative algebras.
Let $\{\mathcal A_i\}_i$ be the set of all quasi-coherent algebra quotients of $\mathbb A$. Then,
$$\aligned & 1.\,\, \Spec\mathbb A =   \mathbb Hom_{\mathcal R-alg}(\mathbb A,\mathcal R)\overset{\text{\ref{invqua2}}}=
\ilim{i} \mathbb Hom_{\mathcal R-alg}(\mathcal A_i,\mathcal R)=\ilim{i} \Spec \mathcal A_i.\\ & 2.\,\, \mathbb A_{\Spec\mathbb A}\overset{\text{\ref{3.10}}} = \plim{i}  \mathcal A_i.\endaligned$$
If $\mathbb X$ is an affine functor then
$\mathbb A_{\mathbb X}$ is a pro-quasicoherent algebra.
\end{theorem}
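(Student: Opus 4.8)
The plan is to establish the two displayed identities and then read off the final assertion; since the statement already records which earlier results to invoke, the work is to supply the justifications. For part 1, I would start from Proposition \ref{4.16}, which identifies $\Spec\mathbb A=\mathbb Hom_{\mathcal R-alg}(\mathbb A,\mathcal R)$, so that $(\Spec\mathbb A)(S)=\Hom_{\mathcal R-alg}(\mathbb A,\mathcal S)$ for every commutative $R$-algebra $S$. The key point is that every $\mathcal R$-algebra morphism $\phi\colon\mathbb A\to\mathcal S$ factors through a quasi-coherent algebra quotient of $\mathbb A$. I would obtain this from Proposition \ref{invqua2} applied with $\mathbb B=\mathcal N=\mathcal S$ (the identity being an injective morphism $\mathcal S\hookrightarrow\mathcal S$): it gives a unique factorization of $\phi$ through an epimorphism onto the quasi-coherent algebra attached to $\Ima\phi_R$, which is one of the $\mathcal A_i$. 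As the quasi-coherent algebra quotients form a directed system under the quotient maps (the image of $\mathbb A$ in $\mathcal A_i\times\mathcal A_j$ dominates both), precomposition makes $\{\Hom_{\mathcal R-alg}(\mathcal A_i,\mathcal S)\}_i$ a direct system whose colimit injects into $\Hom_{\mathcal R-alg}(\mathbb A,\mathcal S)$, and the factorization shows this injection is surjective. Passing this through, functorially in $S$, yields $\Spec\mathbb A=\ilim{i}\Spec\mathcal A_i$.

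Next I would apply Proposition \ref{3.10} to the description of $\Spec\mathbb A$ just obtained, so that $\mathbb A_{\Spec\mathbb A}=\mathbb A_{\ilim{i}\Spec\mathcal A_i}=\plim{i}\mathbb A_{\Spec\mathcal A_i}$. Since $\mathbb A_{\Spec\mathcal A_i}=\mathcal A_i$ by Example \ref{ejemplo}, this gives $\mathbb A_{\Spec\mathbb A}=\plim{i}\mathcal A_i$, which is part 2.

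For the last assertion, working under the standing hypothesis that $\mathbb A_{\mathbb X}\in\mathfrak F$, affineness of $\mathbb X$ means $\mathbb X=\Spec\mathbb A_{\mathbb X}$. Applying part 2 with $\mathbb A=\mathbb A_{\mathbb X}$ then gives $\mathbb A_{\mathbb X}=\plim{i}\mathcal A_i$, the inverse limit running over all quasi-coherent algebra quotients of $\mathbb A_{\mathbb X}$; this is precisely the definition of a functor of pro-quasicoherent algebras (Definition \ref{proquasi}). The main obstacle I expect lies in part 1: confirming that the factorizations furnished by Proposition \ref{invqua2} are compatible with the transition maps of the direct system, so that $\Hom_{\mathcal R-alg}(\mathbb A,\mathcal S)=\ilim{i}\Hom_{\mathcal R-alg}(\mathcal A_i,\mathcal S)$ holds naturally in $S$ and not merely as a bijection for each fixed $S$; everything else is a direct application of the cited results.
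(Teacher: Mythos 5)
Your treatment of the two displayed identities is correct and is essentially what the paper intends: the paper's own proof does not write out parts 1 and 2 at all, leaving them to the citations already placed in the statement, namely Proposition \ref{invqua2} (every algebra morphism $\mathbb A\to\mathcal S$ factors through a quasi-coherent algebra quotient, whence $\Spec\mathbb A=\ilim{i}\Spec\mathcal A_i$) and Proposition \ref{3.10} together with Example \ref{ejemplo} (whence $\mathbb A_{\Spec\mathbb A}=\plim{i}\mathbb A_{\Spec\mathcal A_i}=\plim{i}\mathcal A_i$). Your verification that the quotients form a directed system and that the identification is natural in $S$ is a reasonable filling-in of those citations.

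The gap is in the final assertion. You prove it only under the added hypothesis $\mathbb A_{\mathbb X}\in\mathfrak F$, which is not among the hypotheses: in the theorem the affine functor is $\mathbb X=\Spec\mathbb A$ with $\mathbb A\in\mathfrak F$, and its functor of functions $\mathbb A_{\mathbb X}=\mathbb A_{\Spec\mathbb A}=\plim{i}\mathcal A_i$ is only known to be reflexive --- there is no reason for this inverse limit of quasi-coherent algebras to be essentially free, so part 2 cannot be applied to $\mathbb A_{\mathbb X}$ itself, and your argument does not get off the ground in the general case. The paper's proof instead applies part 2 to $\mathbb A$ (which \emph{is} in $\mathfrak F$) and then shows that the quasi-coherent algebra quotients of $\mathbb A_{\Spec\mathbb A}$ coincide with the $\mathcal A_i$: affineness gives $\Hom_{\mathcal R-alg}(\mathbb A_{\Spec\mathbb A},\mathcal B)=\Hom_{\mathcal R-alg}(\mathbb A,\mathcal B)$ for quasi-coherent $\mathcal B$, so any algebra morphism $\mathbb A_{\Spec\mathbb A}\to\mathcal B$ is controlled by its composite with $\mathbb A\to\mathbb A_{\Spec\mathbb A}$, which factors through some $\mathcal A_i$ by Proposition \ref{invqua2}, and hence $\mathbb A_{\Spec\mathbb A}\to\mathcal B$ factors through $\mathcal A_i$ as well; part 2 then exhibits $\mathbb A_{\Spec\mathbb A}$ as the inverse limit of its own quasi-coherent algebra quotients, which is Definition \ref{proquasi}. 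Within your restricted setting the argument is internally consistent (taking $\mathbb A=\mathbb A_{\mathbb X}$ makes the two families of quotients tautologically equal), and it does suffice for the direction of Corollary \ref{t1.9b} where $\mathbb A_{\mathbb X}\in\mathfrak F$ is assumed, but it does not prove the theorem as stated.
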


\begin{proof}
Assume $\Spec \mathbb A$ is affine. Then, $\Hom_{\mathcal R-alg}(\mathbb A_{\Spec\mathbb A},\mathcal B)=\Hom_{\mathcal R-alg}(\mathbb A,\mathcal B)$.  
Given a morphism 
$\mathbb A_{\Spec\mathbb A}\to \mathcal B$ of functors of $\mathcal R$-algebras,  the  composite morphism $\mathbb A\to \mathbb A_{\Spec\mathbb A}\to \mathcal B$
 factors through a quotient $\mathcal A_i$, then $\mathbb A_{\Spec\mathbb A}\to \mathcal B$ factors through $\mathcal A_i$ too.
Hence, $\{\mathcal A_i\}$ is the set of all quasi-coherent algebra quotients of $\mathbb A_{\Spec\mathbb A}$. Then,
$\mathbb A_{\Spec\mathbb A}$ is a pro-quasicoherent algebra, by 2.

\end{proof}

\begin{corollary} \label{t1.9b}
The category of commutative pro-quasicoherent algebras, $\mathbb A\in\mathfrak F$,  is anti-equivalent to the category of affine functors, $\mathbb X$, such that $\mathbb A_{\mathbb X} \in\mathfrak F$. The functors $\mathbb A\rightsquigarrow \Spec\mathbb A$,
$\mathbb X\rightsquigarrow \mathbb A_{\mathbb X}$ establish this anti-equivalence.
\end{corollary}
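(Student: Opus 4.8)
The plan is to check that $\mathbb A\rightsquigarrow\Spec\mathbb A$ and $\mathbb X\rightsquigarrow\mathbb A_{\mathbb X}$ are well-defined contravariant functors between the two stated categories, that they are mutually quasi-inverse on objects, and that $\mathbb X\rightsquigarrow\mathbb A_{\mathbb X}$ is fully faithful. Since a fully faithful, essentially surjective contravariant functor is an anti-equivalence, this will suffice, and almost all of the substantive work is already carried by Theorem \ref{t1.92} and Equation \eqref{EqHom}.

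First I would verify well-definedness on objects. Given a commutative pro-quasicoherent algebra $\mathbb A\in\mathfrak F$, write $\mathbb A=\plim{i}\mathcal A_i$ with $\{\mathcal A_i\}$ its quasi-coherent algebra quotients. Theorem \ref{t1.92}.2 gives $\mathbb A_{\Spec\mathbb A}=\plim{i}\mathcal A_i=\mathbb A$, so $\mathbb A_{\Spec\mathbb A}=\mathbb A\in\mathfrak F$; since essentially free modules are reflexive (\ref{3.6}.1), $\mathbb A_{\Spec\mathbb A}$ is reflexive and $\Spec\mathbb A=\Spec\mathbb A_{\Spec\mathbb A}$ is therefore an affine functor whose functor of functions lies in $\mathfrak F$. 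Conversely, given an affine functor $\mathbb X$ with $\mathbb A_{\mathbb X}\in\mathfrak F$, the functor $\mathbb A_{\mathbb X}$ is a commutative functor of algebras, it lies in $\mathfrak F$ by hypothesis, and it is pro-quasicoherent by the last assertion of Theorem \ref{t1.92}. Thus both assignments land in the correct categories, and functoriality on morphisms is the usual contravariant action $(\mathbb A\to\mathbb A')\mapsto(\Spec\mathbb A'\to\Spec\mathbb A)$ and $(\mathbb X\to\mathbb Y)\mapsto(\mathbb A_{\mathbb Y}\to\mathbb A_{\mathbb X})$.

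Next I would exhibit the natural isomorphisms showing the two functors are quasi-inverse. The computation $\mathbb A_{\Spec\mathbb A}=\mathbb A$ just obtained is the identity $\mathbb A_{\Spec(-)}\simeq\mathrm{Id}$, and the defining property of an affine functor, $\Spec\mathbb A_{\mathbb X}=\mathbb X$, is the identity $\Spec\mathbb A_{(-)}\simeq\mathrm{Id}$; both identifications are the canonical evaluation morphisms and are natural in the evident way. This already gives essential surjectivity of $\mathbb X\rightsquigarrow\mathbb A_{\mathbb X}$, since every pro-quasicoherent $\mathbb A\in\mathfrak F$ equals $\mathbb A_{\Spec\mathbb A}$. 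Full faithfulness is then immediate from Equation \eqref{EqHom}: for affine functors $\mathbb X,\mathbb Y$ with functions in $\mathfrak F$ one has $\Hom(\mathbb X,\mathbb Y)=\Hom_{\mathcal R-alg}(\mathbb A_{\mathbb Y},\mathbb A_{\mathbb X})$, which is exactly the bijection, reversing arrows, on morphism sets.

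I expect the only real content to be bookkeeping: confirming functoriality of the two assignments and that they land in the stated categories. The one point to handle with care is the application of \eqref{EqHom}, which requires the target functor to be affine; here both $\mathbb X$ and $\mathbb Y$ are affine with functions in $\mathfrak F$, so the identification applies to compute $\Hom(\mathbb X,\mathbb Y)$ as an algebra-Hom set, and it dovetails with the object-level identity $\mathbb A=\mathbb A_{\Spec\mathbb A}$ to furnish essential surjectivity. No genuinely hard step remains once Theorem \ref{t1.92} is in hand.
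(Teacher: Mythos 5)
Your proof is correct and follows exactly the route the paper intends: the corollary is stated without proof as an immediate consequence of Theorem \ref{t1.92} (which gives $\mathbb A_{\Spec\mathbb A}=\plim{i}\mathcal A_i=\mathbb A$ for pro-quasicoherent $\mathbb A$ and the pro-quasicoherence of $\mathbb A_{\mathbb X}$ for affine $\mathbb X$) together with Equation \eqref{EqHom} for full faithfulness. Your write-up just makes the implicit bookkeeping explicit.
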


\begin{remark} Let $R=K$ be a field. Now, assume $\mathfrak F$ is the family of reflexive functors of $\mathcal K$-modules. Recall Remark \ref{Remark3}. Theorem \ref{t1.9} is likewise true.
Then, the  category of reflexive functors of commutative pro-quasicoherent algebras,  is anti-equivalent to the category of affine functors.

\end{remark}

\begin{proposition} \label{610} Let $\mathbb X,\mathbb Y$ be functors of sets such that $\mathbb A_{\mathbb X}$ and $\mathbb A_{\mathbb Y}$ are reflexive functors, then
$\mathbb A_{\mathbb X\times \mathbb Y}=\mathbb A_{\mathbb X}\tilde\otimes \mathbb A_{\mathbb Y}$.
\end{proposition}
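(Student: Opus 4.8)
The plan is to unwind the definition of $\tilde\otimes$ and to reduce the right-hand side, through a chain of adjunction isomorphisms, to $\mathbb Hom(\mathbb X\times\mathbb Y,\mathcal R)=\mathbb A_{\mathbb X\times\mathbb Y}$. Recall that $\mathbb A_{\mathbb X}=(\mathcal R\mathbb X)^*$ and $\mathbb A_{\mathbb Y}=(\mathcal R\mathbb Y)^*$, so by Notation \ref{notation4.7} we have $\mathbb A_{\mathbb X}\tilde\otimes\mathbb A_{\mathbb Y}=(\mathbb A_{\mathbb X}^*\otimes_{\mathcal R}\mathbb A_{\mathbb Y}^*)^*$, with $\mathbb A_{\mathbb X}^*=(\mathcal R\mathbb X)^{**}$ and $\mathbb A_{\mathbb Y}^*=(\mathcal R\mathbb Y)^{**}$. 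The reflexivity hypotheses enter twice: once as $\mathbb A_{\mathbb Y}^{**}=\mathbb A_{\mathbb Y}$, and once to license Proposition \ref{3.2} for $\mathcal R\mathbb X$, whose dual $(\mathcal R\mathbb X)^*=\mathbb A_{\mathbb X}$ is reflexive.

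First I would compute, using tensor--hom adjunction and the reflexivity of $\mathbb A_{\mathbb Y}$,
$$(\mathbb A_{\mathbb X}^*\otimes_{\mathcal R}\mathbb A_{\mathbb Y}^*)^*=\mathbb Hom_{\mathcal R}(\mathbb A_{\mathbb X}^*\otimes_{\mathcal R}\mathbb A_{\mathbb Y}^*,\mathcal R)=\mathbb Hom_{\mathcal R}(\mathbb A_{\mathbb X}^*,\mathbb A_{\mathbb Y}^{**})=\mathbb Hom_{\mathcal R}(\mathbb A_{\mathbb X}^*,\mathbb A_{\mathbb Y}).$$
Next, since $\mathbb A_{\mathbb X}^*=(\mathcal R\mathbb X)^{**}$ and $(\mathcal R\mathbb X)^*=\mathbb A_{\mathbb X}$ is reflexive, Proposition \ref{3.2} applied to the dual functor $\mathbb A_{\mathbb Y}$ gives $\mathbb Hom_{\mathcal R}((\mathcal R\mathbb X)^{**},\mathbb A_{\mathbb Y})=\mathbb Hom_{\mathcal R}(\mathcal R\mathbb X,\mathbb A_{\mathbb Y})$. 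Finally, the universal property of $\mathcal R\mathbb X$ together with the exponential law for functors of sets yields
$$\mathbb Hom_{\mathcal R}(\mathcal R\mathbb X,\mathbb A_{\mathbb Y})=\mathbb Hom(\mathbb X,\mathbb A_{\mathbb Y})=\mathbb Hom(\mathbb X,\mathbb Hom(\mathbb Y,\mathcal R))=\mathbb Hom(\mathbb X\times\mathbb Y,\mathcal R)=\mathbb A_{\mathbb X\times\mathbb Y},$$
which is the asserted equality.

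The main obstacle is bookkeeping rather than conceptual: each isomorphism must be read at the level of the internal functors $\mathbb Hom_{\mathcal R}$, that is, over every commutative $R$-algebra $S$, and one must check that the relevant data survive restriction to $S$-algebras. This is harmless, because $(-)^*$ and $\mathbb Hom_{\mathcal R}$ commute with restriction, so that $(\mathbb A_{\mathbb X}^*)_{|S}=((\mathcal R\mathbb X)_{|S})^{**}$, and---crucially---restriction preserves reflexivity: from $\mathbb A_{\mathbb X}=\mathbb A_{\mathbb X}^{**}$ one obtains $(\mathbb A_{\mathbb X})_{|S}=((\mathbb A_{\mathbb X})_{|S})^{**}$, so Proposition \ref{3.2} does apply over each $S$. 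The only genuinely separate point is the exponential law $\mathbb Hom(\mathbb X,\mathbb Hom(\mathbb Y,\mathcal R))=\mathbb Hom(\mathbb X\times\mathbb Y,\mathcal R)$, which is the Cartesian-closedness adjunction in the category of functors of sets and holds functorially in $S$. Alternatively, one can bypass the last steps by observing that $\mathcal R(\mathbb X\times\mathbb Y)=\mathcal R\mathbb X\otimes_{\mathcal R}\mathcal R\mathbb Y$ (both take the value $\oplus_{\mathbb X(S)\times\mathbb Y(S)}S$ at $S$), so that $\mathbb A_{\mathbb X\times\mathbb Y}=(\mathcal R\mathbb X\otimes_{\mathcal R}\mathcal R\mathbb Y)^*$, and then identifying $(\mathcal R\mathbb X\otimes_{\mathcal R}\mathcal R\mathbb Y)^*$ with $((\mathcal R\mathbb X)^{**}\otimes_{\mathcal R}(\mathcal R\mathbb Y)^{**})^*$ directly, again via Proposition \ref{3.2} and the reflexivity of $\mathbb A_{\mathbb Y}=(\mathcal R\mathbb Y)^*$ in the form $(\mathcal R\mathbb Y)^{***}=(\mathcal R\mathbb Y)^*$.
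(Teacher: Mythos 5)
Your proof is correct and is essentially the paper's own argument read in the opposite direction: the paper computes $\mathbb A_{\mathbb X\times\mathbb Y}=\mathbb Hom(\mathbb X,\mathbb Hom(\mathbb Y,\mathcal R))=\mathbb Hom(\mathbb X,\mathbb A_{\mathbb Y})\overset{\ref{2.3}}=\mathbb Hom_{\mathcal R}(\mathbb A_{\mathbb X}^*,\mathbb A_{\mathbb Y})=(\mathbb A_{\mathbb X}^*\otimes\mathbb A_{\mathbb Y}^*)^*$, and your combination of Proposition \ref{3.2} applied to $\mathcal R\mathbb X$ with the universal property of $\mathcal R\mathbb X$ is exactly the content of the ``in particular'' clause of Proposition \ref{2.3} that the paper invokes. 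The bookkeeping remark about reading everything internally over each $S$ is a fair point that the paper leaves implicit, but it does not change the substance.
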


\begin{proof}
$\mathbb Hom(\mathbb X\times \mathbb Y, \mathcal R)=\mathbb Hom(\mathbb X, \mathbb Hom(\mathbb Y, \mathcal R))=\mathbb Hom(\mathbb X,\mathbb A_{\mathbb Y})\overset{\text{\ref{2.3}}}=
\mathbb Hom_{\mathcal R}(\mathbb A_{\mathbb X}^*,\mathbb A_{\mathbb Y})=(\mathbb A_{\mathbb X}^*\otimes \mathbb A_{\mathbb Y}^*)^*=\mathbb A_{\mathbb X}\tilde\otimes \mathbb A_{\mathbb Y}$.\end{proof}

\begin{proposition}
Let  $\mathbb X,\mathbb Y$ be affine functors such that
$\mathbb A_{\mathbb X}, \mathbb A_{\mathbb Y}\in\mathfrak F$, then
$\mathbb X\times\mathbb Y$ is an affine functor and  $\mathbb
A_{\mathbb X\times \mathbb Y}\in\mathfrak F$.\end{proposition}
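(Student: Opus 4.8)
The plan is to identify $\mathbb A_{\mathbb X\times\mathbb Y}$ with the completed tensor product $\mathbb A_{\mathbb X}\tilde\otimes_{\mathcal R}\mathbb A_{\mathbb Y}$ and then to verify that $\mathbb X\times\mathbb Y$ is the spectrum of that algebra. First I would record the hypotheses in usable form: since $\mathbb X$ and $\mathbb Y$ are affine with $\mathbb A_{\mathbb X},\mathbb A_{\mathbb Y}\in\mathfrak F$, Theorem \ref{t1.92} shows that $\mathbb A_{\mathbb X}$ and $\mathbb A_{\mathbb Y}$ are functors of commutative pro-quasicoherent algebras, and being in $\mathfrak F$ they are moreover reflexive by Theorem \ref{3.6}. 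This places us precisely in the hypotheses of Proposition \ref{610} and Theorem \ref{prodspec}.

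Next I would compute the functor of functions of the product. By Proposition \ref{610}, $\mathbb A_{\mathbb X\times\mathbb Y}=\mathbb A_{\mathbb X}\tilde\otimes_{\mathcal R}\mathbb A_{\mathbb Y}$, and by Theorem \ref{prodspec} the right-hand side again lies in $\mathfrak F$ and is a functor of pro-quasicoherent commutative algebras. This already establishes the second assertion $\mathbb A_{\mathbb X\times\mathbb Y}\in\mathfrak F$, and in particular $\mathbb A_{\mathbb X\times\mathbb Y}$ is reflexive.

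It then remains to check $\mathbb X\times\mathbb Y=\Spec\mathbb A_{\mathbb X\times\mathbb Y}$. I would argue pointwise: for each commutative $R$-algebra $S$, using $\mathbb X=\Spec\mathbb A_{\mathbb X}$ and $\mathbb Y=\Spec\mathbb A_{\mathbb Y}$, the coproduct property of $\otimes_{\mathcal R}$ in commutative algebras, and Theorem \ref{prodspec} applied to the (quasi-coherent, hence pro-quasicoherent) algebra $\mathbb C=\mathcal S$, one obtains
\begin{align*}
(\mathbb X\times\mathbb Y)(S) &= \Hom_{\mathcal R-alg}(\mathbb A_{\mathbb X},\mathcal S)\times\Hom_{\mathcal R-alg}(\mathbb A_{\mathbb Y},\mathcal S) \\
&= \Hom_{\mathcal R-alg}(\mathbb A_{\mathbb X}\otimes_{\mathcal R}\mathbb A_{\mathbb Y},\mathcal S) \\
&= \Hom_{\mathcal R-alg}(\mathbb A_{\mathbb X}\tilde\otimes_{\mathcal R}\mathbb A_{\mathbb Y},\mathcal S) \\
&= (\Spec\mathbb A_{\mathbb X\times\mathbb Y})(S),
\end{align*}
all identifications being natural in $S$. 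Since $\mathbb A_{\mathbb X\times\mathbb Y}$ is reflexive, this is exactly the statement that $\mathbb X\times\mathbb Y$ is affine, completing the proof.

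The step I expect to be the main obstacle is the second equality above, namely justifying that $\otimes_{\mathcal R}$ is the coproduct in commutative $\mathcal R$-algebras at the level of morphisms of functors into $\mathcal S$. One must check, level by level over each commutative $R$-algebra $T$, that a morphism of functors of $\mathcal R$-algebras $\mathbb A_{\mathbb X}\otimes_{\mathcal R}\mathbb A_{\mathbb Y}\to\mathcal S$ corresponds to a pair of algebra morphisms via the classical coproduct property of $\otimes_T$, and that this correspondence is functorial in $T$; here the commutativity of $\mathcal S$ is what makes the two one-sided multiplications commute, so that the induced map out of the tensor product is a well-defined algebra homomorphism. Everything else reduces to a direct application of the cited results once the pro-quasicoherence and reflexivity of $\mathbb A_{\mathbb X}$ and $\mathbb A_{\mathbb Y}$ have been verified.
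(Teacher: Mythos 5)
Your proof is correct and follows essentially the same route as the paper: identify $\mathbb A_{\mathbb X\times\mathbb Y}$ with $\mathbb A_{\mathbb X}\tilde\otimes_{\mathcal R}\mathbb A_{\mathbb Y}$ via Proposition \ref{610}, conclude membership in $\mathfrak F$ and pro-quasicoherence from Theorem \ref{prodspec}, and then use \ref{prodspec} with $\mathbb C=\mathcal S$ together with the coproduct property of $\otimes_{\mathcal R}$ to get $\Spec(\mathbb A_{\mathbb X}\tilde\otimes\mathbb A_{\mathbb Y})=\Spec(\mathbb A_{\mathbb X}\otimes\mathbb A_{\mathbb Y})=\mathbb X\times\mathbb Y$. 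Your version merely spells out the pointwise verification and the hypotheses checks that the paper leaves implicit.
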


\begin{proof} $\mathbb
A_{\mathbb X\times \mathbb Y}\overset{\text{\ref{610}}}=\mathbb A_{\mathbb X}\tilde\otimes \mathbb A_{\mathbb Y}\in\mathfrak F$.
$\Spec \mathbb A_{\mathbb X\times \mathbb Y}=
\Spec (\mathbb A_{\mathbb X}\tilde\otimes \mathbb A_{\mathbb Y})\underset{\text{\ref{prodspec}}}=\Spec (\mathbb A_{\mathbb X}\otimes \mathbb A_{\mathbb Y})=\mathbb X\times \mathbb Y.$
\end{proof}

\subsection{Formal schemes}

\begin{definition} Let $\mathcal C^*\in\mathfrak F$ be a functor of commutative algebras.
We will say that
$\Spec\mathcal C^*$ is a formal scheme.
If $\Spec\mathcal C^*$ is a functor of monoids we will say that it is a formal monoid. \end{definition}

Recall that  $\mathcal C^*\in\mathfrak F$ if and only if $C$ is a free $R$-module.

\begin{note} \label{n3.22} By Proposition \ref{5.9} and Corollary \ref{t1.9b}, formal schemes
are affine functors and $\mathbb A_{\Spec \mathcal C^*}=\mathcal C^*$.
 Besides, $\Spec\mathcal C^*$ is a direct limit of finite $R$-schemes (see \ref{t1.9} and \ref{5.9b}). Reciprocally, if $R$ is a field, a direct limit of finite $R$-schemes is a formal scheme, by Theorem \ref{4.4}.
If $R$ is a field, Demazure (\cite{Demazure2}) defines a formal scheme as a functor (from the category of $R$-finite dimensional rings to sets) which is a direct limit of finite $R$-schemes.
\end{note}

The direct product $\Spec\mathcal C_1^*\times \Spec \mathcal C_2^*=\Spec (\mathcal C_1^*\tilde \otimes\mathcal C_2^*)=\Spec (\mathcal C_1 \otimes\mathcal C_2)^* $ of formal schemes is a formal scheme.

\begin{theorem} \label{8} Let $\Spec \mathcal C^*$ be a formal scheme. Every morphism $\Spec \mathcal C^*\to X=\Spec A$ factors uniquely via $\Spec C^*$, that is,

$$\Hom(\Spec {\mathcal C}^*, X^\cdot)=\Hom_{R-sch}(\Spec {C}^*, X).$$ \end{theorem}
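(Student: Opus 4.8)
The plan is to reduce the assertion to the quasi-coherent comparison
$$\Hom_{\mathcal R-alg}(\mathcal A,\mathcal C^*)=\Hom_{R-alg}(A,C^*)$$
and then invoke the anti-equivalence between affine $R$-schemes and $R$-algebras. First I would identify the left-hand side of the claim as a set of algebra morphisms. Since $X^\cdot=\Spec\mathcal A$ by Example \ref{ejemplo} and $\mathcal A$ is a functor of commutative $\mathcal R$-algebras, Proposition \ref{homspe} applied with $\mathbb B=\mathcal A$ and $\mathbb X=\Spec\mathcal C^*$ gives
$$\Hom(\Spec\mathcal C^*,X^\cdot)=\Hom_{\mathcal R-alg}(\mathcal A,\mathbb A_{\Spec\mathcal C^*})=\Hom_{\mathcal R-alg}(\mathcal A,\mathcal C^*),$$
where the last equality uses $\mathbb A_{\Spec\mathcal C^*}=\mathcal C^*$ from Note \ref{n3.22}.

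Next I would compute $\Hom_{\mathcal R-alg}(\mathcal A,\mathcal C^*)$. By Corollary \ref{adj2} and Proposition \ref{tercer}, $\mathcal C^*(R)=\Hom_{\mathcal R}(\mathcal C,\mathcal R)=C^*$, which is an $R$-algebra because $\mathcal C^*$ is a functor of $\mathcal R$-algebras; this is the algebra whose scheme $\Spec C^*$ appears in the statement. As $\mathcal A$ is quasi-coherent, Proposition \ref{tercer} yields a bijection $\Hom_{\mathcal R}(\mathcal A,\mathcal C^*)=\Hom_R(A,C^*)$, $f\mapsto f_R$, so the task is to match algebra morphisms on both sides. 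One direction is immediate by evaluating at $R$. For the converse, the key point is that the ``failure of multiplicativity'' map
$$F\colon \mathcal A\otimes_{\mathcal R}\mathcal A\to\mathcal C^*,\qquad a\otimes a'\mapsto f(aa')-f(a)f(a'),$$
is a morphism of $\mathcal R$-modules (it is $\mathcal R$-bilinear and uses the multiplication of $\mathcal C^*$) whose source $\mathcal A\otimes_{\mathcal R}\mathcal A$ is the quasi-coherent module associated with $A\otimes_R A$. Hence by Proposition \ref{tercer} it is determined by $F_R$, so $F=0$ as soon as $F_R=0$, i.e.\ as soon as $f_R$ is multiplicative. Likewise the unit condition is an equality of two morphisms $\mathcal R\to\mathcal C^*$ out of the quasi-coherent module $\mathcal R$, hence is detected at $R$. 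Therefore $f$ is a morphism of functors of algebras if and only if $f_R$ is a morphism of $R$-algebras, giving $\Hom_{\mathcal R-alg}(\mathcal A,\mathcal C^*)=\Hom_{R-alg}(A,C^*)$.

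Finally, the anti-equivalence of affine $R$-schemes and $R$-algebras gives
$$\Hom_{R-alg}(A,C^*)=\Hom_{R-sch}(\Spec C^*,\Spec A)=\Hom_{R-sch}(\Spec C^*,X),$$
and chaining this with the two displays above proves the equality; the stated uniqueness of the factorization is nothing but the injectivity already contained in these bijections. The main obstacle is the converse in the middle step, namely showing that multiplicativity and unitality of $f$ are detected at $R$-points: this is exactly where the quasi-coherence of the \emph{source} $\mathcal A$ (and of $\mathcal A\otimes_{\mathcal R}\mathcal A$) is essential, since the target $\mathcal C^*$ is a dual functor and need not be quasi-coherent, so one cannot argue symmetrically on the target side.
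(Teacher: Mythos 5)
Your proof is correct and follows essentially the same route as the paper, which chains $\Hom(\Spec\mathcal C^*,\Spec A)=\Hom_{\mathcal R-alg}(\mathcal A,\mathcal C^*)=\Hom_{R-alg}(A,C^*)=\Hom_{R-sch}(\Spec C^*,X)$ via Equation \ref{EqHom}. The only difference is that you spell out the middle equality (detecting multiplicativity and unitality of $f$ at $R$-points via Proposition \ref{tercer} applied to the quasi-coherent source $\mathcal A\otimes_{\mathcal R}\mathcal A$), which the paper asserts without comment; your justification is sound.
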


\begin{proof} By Equation \ref{EqHom},
$$\aligned \Hom(\Spec\mathcal C^*,\Spec A) &=\Hom_{\mathcal R-alg}(\mathcal A, \mathcal C^*)=
\Hom_{R-alg}(A, C^*)\\ & =\Hom_{R-sch}(\Spec C^*,\Spec A).\endaligned$$
\end{proof}

 \label{seccion4}

\begin{theorem} \label{4.4} Let $\{{\Spec \mathcal C_i^*}\}_{i\in I}$ be a direct system of formal schemes, or equivalently let $\{\mathcal C_i^*\in\mathfrak F\}$ be an inverse system of functors of algebras. Write $C=\ilim{i\in I} C_i$, then $\mathcal C^*=\plim{i\in I} \mathcal C_i^*$.
We have that $$\ilim{i} \Spec \mathcal C_i^* = \Spec (\plim{i}\mathcal C_i^*)=
\Spec \mathcal C^*$$
and $\mathbb A_{\Spec \mathcal C^*}=\mathcal C^*$. 
\end{theorem}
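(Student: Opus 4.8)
The plan is to prove the statement in three stages: the module identity $\mathcal C^*=\plim{i}\mathcal C_i^*$, the last assertion $\mathbb A_{\Spec\mathcal C^*}=\mathcal C^*$, and the geometric equality $\ilim{i}\Spec\mathcal C_i^*=\Spec\mathcal C^*$, which carries the real content. For the first stage, since the quasi-coherent functor commutes with direct limits, from $C=\ilim{i}C_i$ I obtain $\mathcal C=\ilim{i}\mathcal C_i$ (on an algebra $S$, $C\otimes_R S=\ilim{i}(C_i\otimes_R S)$). Dualizing and using that $\mathbb Hom_{\mathcal R}(-,\mathcal R)$ sends a direct limit in its first variable to the inverse limit,
$$\mathcal C^*=\mathbb Hom_{\mathcal R}(\ilim{i}\mathcal C_i,\mathcal R)=\plim{i}\mathbb Hom_{\mathcal R}(\mathcal C_i,\mathcal R)=\plim{i}\mathcal C_i^*.$$
The identity $\mathbb A_{\Spec\mathcal C^*}=\mathcal C^*$ is then immediate from Note \ref{n3.22}, since $\mathcal C^*\in\mathfrak F$ is a functor of commutative algebras, so $\Spec\mathcal C^*$ is a formal scheme, hence affine, with functor of functions $\mathcal C^*$.

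Next I would compute the functor of functions of the left-hand side. By Proposition \ref{3.10} and Note \ref{n3.22},
$$\mathbb A_{\ilim{i}\Spec\mathcal C_i^*}=\plim{i}\mathbb A_{\Spec\mathcal C_i^*}=\plim{i}\mathcal C_i^*=\mathcal C^*,$$
so both $\ilim{i}\Spec\mathcal C_i^*$ and $\Spec\mathcal C^*$ have the same (reflexive and, by Proposition \ref{5.9}, pro-quasicoherent) functor of functions $\mathcal C^*$. By Theorem \ref{t1.92} applied to $\mathcal C^*$ and to each $\mathcal C_i^*$, both sides are already exhibited as direct limits of spectra of their quasi-coherent algebra quotients, which are of finite type over $R$ (Lemma \ref{x}, Observation \ref{5.9b}), and finite dimensional when $R=K$ is a field:
$$\Spec\mathcal C^*=\ilim{l}\Spec\mathcal A_l,\qquad \ilim{i}\Spec\mathcal C_i^*=\ilim{(i,k)}\Spec\mathcal A_{ik}.$$
Thus the desired equality reduces to showing that these two directed systems of finite quotients are cofinal.

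This cofinality is the main obstacle, and I would establish it through the coalgebra duality. Dualizing a finite quotient algebra of $\mathcal C^*$ (resp.\ of $\mathcal C_i^*$) yields, by reflexivity of the finite pieces and Theorem \ref{reflex}, a finite subcoalgebra of $\mathcal C=(\mathcal C^*)^*$ (resp.\ of $\mathcal C_i$). Since $C=\ilim{i}C_i$, every finitely generated subcoalgebra of $C$ lies in the image of some structural map $C_{i_0}\to C$ (choose a common index $i_0$ for its finitely many generators), so every finite quotient of $\mathcal C^*$ is dominated by one coming from $\mathcal C_{i_0}^*$; conversely each quotient of $\mathcal C_i^*$ pushes forward along the projection $\mathcal C^*\to\mathcal C_i^*$ to a quotient of $\mathcal C^*$ by Proposition \ref{invqua2}. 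Matching the two systems in this way gives $\ilim{i}\Spec\mathcal C_i^*=\Spec\mathcal C^*$, and the identification $\mathbb A_{\ilim{i}\Spec\mathcal C_i^*}=\mathcal C^*$ above together with the anti-equivalence of Corollary \ref{t1.9b} confirms, consistently, that $\ilim{i}\Spec\mathcal C_i^*$ is affine with functor of functions $\mathcal C^*$.

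The delicate points I expect to spend effort on are two. First, the lift of a finite subcoalgebra of $C$ back to $C_{i_0}$ must be taken compatibly with the comultiplications, so that the dual factorization is a morphism of functors of algebras and not merely of modules; here I would upgrade a module-level factorization to an algebra-level one using Proposition \ref{invqua2} (together with the injectivity of Proposition \ref{3.10X}) and the fact that the projections $\mathcal C^*\to\mathcal C_i^*$ are algebra morphisms. Second, the finiteness argument is smoothest precisely when $R=K$ is a field, since then the finite subcoalgebras are finite dimensional and the needed lifts exist by projectivity; this is also the generality in which Note \ref{n3.22} invokes the present theorem. Granting these, the three stages combine to give all the asserted equalities, including $\mathbb A_{\Spec\mathcal C^*}=\mathcal C^*$.
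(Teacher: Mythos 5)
Your first two stages are fine, but the third stage --- which carries the actual content of the theorem --- takes a genuinely different route from the paper and, as written, has a real gap. The paper never compares directed systems of finite quotients. It computes the functor of points directly: by Proposition \ref{prop4}, $\Hom_{\mathcal R}(\mathcal C^{*}\otimes\cdots\otimes\mathcal C^{*},\mathcal S)=C\otimes\cdots\otimes C\otimes S=\ilim{i}(C_i\otimes\cdots\otimes C_i\otimes S)=\ilim{i}\Hom_{\mathcal R}(\mathcal C_i^{*}\otimes\cdots\otimes\mathcal C_i^{*},\mathcal S)$, and since filtered colimits commute with kernels, the locus of multiplicative morphisms inside $\Hom_{\mathcal R}(\mathcal C^*,\mathcal S)$ is exactly $\ilim{i}\Hom_{\mathcal R-alg}(\mathcal C_i^*,\mathcal S)$. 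This gives $(\Spec\mathcal C^*)(S)=(\ilim{i}\Spec\mathcal C_i^*)(S)$ for every $S$ with no cofinality argument, no field hypothesis, and no appeal to Theorem \ref{t1.92}; the identity $\mathbb A_{\Spec\mathcal C^*}=\mathcal C^*$ is then read off from Proposition \ref{3.10} together with $\mathbb A_{\Spec\mathcal C_i^*}=\mathcal C_i^*$.

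The gap in your cofinality step is concrete. A finite quotient algebra $\mathcal C^*\twoheadrightarrow\mathcal A$ dualizes to a finite submodule $A^*\subseteq C$, and this does land in the image of some $C_{i_0}\to C$; but the transition maps $C_{i_0}\to C$ need not be injective, preimages of subcoalgebras are not subcoalgebras, and a module-level lift $A^*\to C_{i_0}$ only dualizes to a morphism of $\mathcal R$-modules $\mathcal C_{i_0}^*\to\mathcal A$, not to an algebra quotient of $\mathcal C_{i_0}^*$; so $\mathcal A$ is not shown to occur in the system $\{\mathcal A_{ik}\}$. In the other direction, $\mathcal C^*\to\mathcal C_i^*$ need not be surjective, so Proposition \ref{invqua2} only produces a quotient of $\mathcal C^*$ onto a subalgebra of $\mathcal A_{ik}$ that may be proper, and the resulting arrows between the two systems do not obviously assemble into mutually inverse maps of colimits. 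You flag exactly these points as ``delicate'', but the proposal does not resolve them, and I do not see how to resolve the coalgebra-lifting one in general. Separately, applying Theorem \ref{t1.92} and Note \ref{n3.22} to $\mathcal C^*$ itself presupposes $\mathcal C^*\in\mathfrak F$, i.e.\ that $C=\ilim{i}C_i$ is free; this holds over a field but not over a general ring $R$, whereas the theorem and the paper's proof are stated for general $R$. I recommend replacing the whole third stage by the paper's direct computation of $\Hom_{\mathcal R-alg}(\mathcal C^*,\mathcal S)$.
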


\begin{proof} Observe that
$$\aligned {\rm Hom}_{\mathcal R}( {\mathcal C^*\otimes \cdots\otimes \mathcal C^*}, {\mathcal S}) & =C\otimes\cdots\otimes C\otimes S=\ilim{i} (C_i\otimes\cdots\otimes C_i\otimes S)\\ &=
\ilim{i} {\rm Hom}_{\mathcal R}( {\mathcal C_i^*\otimes \cdots\otimes \mathcal C_i^*}, {\mathcal S})\endaligned$$
Then the kernel of the morphism $ {\rm Hom}_{\mathcal R}( {\mathcal C^*}, {\mathcal S})\to {\rm Hom}_{\mathcal R}( {\mathcal C^*\otimes \mathcal C^*}, {\mathcal S})$, $f\mapsto \tilde f$, where $\tilde f(c_1\otimes c_2)=f(c_1c_2)-f(c_1)f(c_2)$ coincides with the kernel of the morphism
$\underset{\underset{i\in I}{\longrightarrow}}{\lim}\,
{\rm Hom}_{\mathcal R}( {\mathcal C_i}^*,{\mathcal S})\to
\underset{\underset{i\in I}{\longrightarrow}}{\lim}\,
{\rm Hom}_{\mathcal R}( {\mathcal C_i}^*\otimes {\mathcal C_i}^*,{\mathcal S})$, $(f_i)\mapsto (\tilde f_i)$.
Then, ${\rm Hom}_{\mathcal R-alg}(  {\mathcal C^*}, {\mathcal S})  =
\underset{\underset{i\in I}{\longrightarrow}}{\lim}\,
{\rm Hom}_{\mathcal R-alg}( {\mathcal C_i}^*,{\mathcal S})$ and

$$(\Spec {\mathcal C^*})
 (S)=(\underset{\underset{i\in I}{\longrightarrow}}{\lim}\,\Spec {\mathcal C_i}^*)(S).$$
Finally, $\mathbb A_{\Spec {\mathcal C^*}}=\plim{i} \mathbb A_{\Spec {\mathcal C_i^*}}=
\plim{i} \mathcal C_i^*=\mathcal C^*$.

\end{proof}

From now on, in this section, we will assume that $R=K$ is a field.

\begin{definition} \label{4.1} Let $X$ be a $K$-scheme and let  $I$ be the set of all finite $K$-subschemes of $X$.
Given $K$-scheme $Y$ write $A_Y :=\mathcal O_Y(Y)$, the ring of (regular) functions of $Y$.
Define $\bar {\mathcal A}_X:=\plim{i\in I} \mathcal A_i$ and $$\bar X:=\Spec \bar{\mathcal A}_X\overset{\text{\ref{4.4}}}=\ilim{i\in I} \Spec \mathcal A_i$$ That is, ``$\bar X$ is the direct limit of the set of all finite subschemes of $X$''.
\end{definition}

$\bar X$ is a formal scheme and we have a natural monomorphism $\bar X\hookrightarrow X^\cdot$.

\begin{theorem} \label{universal} Let $X$ be a $K$-scheme. Then: $$\Hom(\Spec {\mathcal C^*}, X^\cdot)=
\Hom(\Spec {\mathcal C^*}, \bar X),$$ for every formal scheme $\Spec\mathcal C^*$.\end{theorem}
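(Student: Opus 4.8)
The plan is to reduce the problem to the case where the source is a single finite $K$-scheme and the target is a finite subscheme of $X$, where the computation becomes the affine one already available. Since $\bar X\hookrightarrow X^\cdot$ is a monomorphism of functors (noted just after Definition \ref{4.1}), post-composition gives an injection $\Hom(\Spec\mathcal C^*,\bar X)\hookrightarrow\Hom(\Spec\mathcal C^*,X^\cdot)$ for free, so the entire content of the theorem is the surjectivity of this map, i.e. that every morphism $\Spec\mathcal C^*\to X^\cdot$ factors through $\bar X$.

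First I would write $\Spec\mathcal C^*=\ilim{i}(\Spec A_i)^\cdot$, where $\{\mathcal A_i\}$ is the set of quasi-coherent algebra quotients of $\mathcal C^*$; by Theorem \ref{t1.9} together with Observation \ref{5.9b} each $A_i$ is a finite-dimensional $K$-algebra, so each $(\Spec A_i)^\cdot$ is a finite $K$-scheme (this is exactly the description of $\Spec\mathcal C^*$ recorded in Note \ref{n3.22}). Because $\Hom$ turns direct limits in the first variable into inverse limits, it then suffices to prove, for every finite $K$-scheme $Y=(\Spec A_i)^\cdot$, the equality $\Hom(Y,\bar X)=\Hom(Y,X^\cdot)$ naturally in $i$; the theorem follows by passing to $\plim{i}$.

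For a fixed finite $Y=(\Spec A_i)^\cdot$, a morphism of functors $Y\to X^\cdot$ is, by Yoneda, a morphism of schemes $g\colon\Spec A_i\to X$. The key geometric input is a factorization lemma: since $A_i$ is finite dimensional, $\Spec A_i$ has finite support and is quasi-compact, so the scheme-theoretic image of $g$ is a closed subscheme $Z\subseteq X$ whose ring of functions embeds, on each affine chart $\Spec B$ of $X$, into $\mathcal O_{\Spec A_i}(\Spec A_i)=A_i$; hence $Z$ is finite over $K$, i.e. $Z=X_k$ for some $k\in I$, and $g$ factors through $X_k$. Composing with the colimit map $X_k^\cdot\to\bar X$ of $\bar X=\ilim{k}X_k^\cdot$ exhibits $Y\to X^\cdot$ as coming from a morphism $Y\to\bar X$, which is the required surjectivity; combined with the injectivity from the monomorphism $\bar X\hookrightarrow X^\cdot$, this gives $\Hom(Y,\bar X)=\Hom(Y,X^\cdot)$.

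The main obstacle is precisely this factorization lemma, the only genuinely scheme-theoretic (rather than formal) step: one must leave the functor-of-points formalism, cover $X$ by affine opens, use that a finite $K$-scheme is a finite disjoint union of local Artinian pieces each landing in a single chart, and verify that the resulting scheme-theoretic image is finite over $K$. Everything else is bookkeeping of (co)limits plus the monomorphism property. As a consistency check, the affine case needs no lemma at all: when $X=\Spec A$, Equation \ref{EqHom} gives $\Hom(\Spec\mathcal C^*,X^\cdot)=\Hom_{\mathcal K-alg}(\mathcal A,\mathcal C^*)$ and $\Hom(\Spec\mathcal C^*,\bar X)=\Hom_{\mathcal K-alg}(\bar{\mathcal A}_X,\mathcal C^*)$, and these agree by Proposition \ref{a5.9}, since $\bar{\mathcal A}_X=\bar{\mathcal A}$.
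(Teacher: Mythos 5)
Your proof is correct and follows essentially the same route as the paper: both write $\Spec\mathcal C^*=\ilim{i}\Spec\mathcal S_i$ as a direct limit of finite $K$-schemes (via Theorem \ref{4.4}), turn $\Hom$ out of the direct limit into an inverse limit, and reduce to the case of a finite source. The only difference is that the paper simply asserts the key equality $\Hom(\Spec\mathcal S_i,X^\cdot)=\Hom(\Spec\mathcal S_i,\bar X)$ without comment, whereas you supply the justification (scheme-theoretic image of a finite $K$-scheme in $X$ is a finite subscheme), which is a welcome addition rather than a deviation.
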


\begin{proof} ${\mathcal C}^*=\plim{i} \mathcal S_i$, where the algebras $S_i$ are finite $K$-algebras.
Then,
 $$\begin{array}{l} \Hom(\Spec {\mathcal C^*}, X^\cdot)\overset{\text{\ref{4.4}}}=\Hom(\ilim{i} \Spec {\mathcal S_i}, X^\cdot) =
\plim{i}\Hom( \Spec {\mathcal S_i}, X^\cdot)\\= \plim{i} \Hom(\Spec {\mathcal S_i}, \bar X)= \Hom(\ilim{i} \Spec {\mathcal S_i}, \bar X)=
\Hom(\Spec {\mathcal C^*}, \bar X).\end{array} $$\end{proof}

\section{Functors of bialgebras}

\begin{definition} \label{bialgebras} A reflexive functor  $\mathbb B$ of proquasicoherent\footnote{We have assumed $\mathbb B$ proquasi-coherent in order that $\mathbb B\tilde\otimes\mathbb B$ be a  functor of algebras} algebras 
is said to be a functor of bialgebras if 
$\mathbb B^*$ is a  functor of   $\mathcal R$-algebras and the dual morphisms of the multiplication morphism $m\colon \mathbb B^*\otimes \mathbb B^*\to \mathbb B^*$ and the unit morphism $u\colon \mathcal R\to \mathbb B^*$ are morphisms of functors of $\mathcal R$-algebras.

Let $\mathbb B,\mathbb B'$ be two functors of bialgebras. We will say that a morphism of $\mathcal R$-modules, $f\colon \mathbb B\to\mathbb B'$ is a morphism of functors of bialgebras if $f$ and $f^* \colon {\mathbb B'}^*\to\mathbb B^*$ are morphisms of functors of $\mathcal R$-algebras.

\end{definition}

By Proposition \ref{notation4.8}, we can give the following equivalent definition of functor of bialgebras.

\begin{definition} A reflexive functor  $\mathbb B$ of proquasicoherent algebras 
is said to be a functor of bialgebras if it is a functor of coalgebras and the comultiplication morphism $\mathbb B\to \mathbb B\tilde \otimes_{\mathcal R}\mathbb B$ and de counit morphism $\mathbb B\to \mathcal R$  are morphisms of functors of  $\mathcal R$-algebras.
\end{definition}

In the literature, an $R$-algebra $B$ is said to be a bialgebra if it is a coalgebra (with counit) and the comultiplication  $c \colon B\to B\otimes_R B$ and the counit $e\colon B\to R$ are  morphisms of $R$-algebras.

\begin{proposition} \label{5.24} The functors $B \rightsquigarrow \mathcal B$ and $\mathcal B\rightsquigarrow \mathcal B(R)$ establish an equivalence between the category of $R$-bialgebras and the category of functors of $\mathcal R$-bialgebras.\end{proposition}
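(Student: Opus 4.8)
The plan is to establish the equivalence by combining the already-proven module-level equivalence with the structural correspondences for algebras and coalgebras. First I would recall the foundational equivalence: the functors $B\rightsquigarrow\mathcal B$ and $\mathcal M\rightsquigarrow\mathcal M(R)$ give an equivalence between $R$-modules and quasi-coherent $\mathcal R$-modules (stated after Proposition \ref{tercer}), and that this equivalence is compatible with tensor products, namely the quasi-coherent module associated with $M\otimes_R N$ is $\mathcal M\otimes_{\mathcal R}\mathcal N$. So the essential content is to check that the extra bialgebra structure transports correctly under $B\rightsquigarrow\mathcal B$, in both directions.

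Next I would verify the two halves of the structural correspondence separately. For the algebra (coalgebra) side, I would invoke Proposition \ref{notation4.8}: $\mathbb C$ is a functor of coalgebras if and only if $\mathbb C^*$ is a functor of $\mathcal R$-algebras, and in particular an $R$-module $C$ is a coalgebra if and only if $\mathcal C^*$ is a functor of $\mathcal R$-algebras. Since for a free or arbitrary $R$-module the comultiplication $c\colon B\to B\otimes_R B$ corresponds under the equivalence to a morphism $\mathcal B\to\mathcal B\otimes_{\mathcal R}\mathcal B$, and since by item (1) after Notation \ref{notation4.7} we have $\mathcal B\tilde\otimes_{\mathcal R}\mathcal B=\mathcal B\otimes_{\mathcal R}\mathcal B$ for quasi-coherent modules, the coassociativity and counit axioms for $B$ translate into the defining diagrams for $\mathcal B$ as a functor of coalgebras, and conversely. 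The key point enabling the backward direction is that morphisms of quasi-coherent modules are in bijection with morphisms of the underlying $R$-modules, so a coalgebra structure on $\mathcal B$ descends to one on $B=\mathcal B(R)$.

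Then I would treat the compatibility condition that makes a coalgebra into a bialgebra. On the functor side, $\mathcal B$ is a functor of bialgebras precisely when it is both a functor of pro-quasicoherent algebras and a functor of coalgebras, with the comultiplication $\mathcal B\to\mathcal B\tilde\otimes_{\mathcal R}\mathcal B$ and counit $\mathcal B\to\mathcal R$ being morphisms of functors of algebras. Here I would again use $\mathcal B\tilde\otimes_{\mathcal R}\mathcal B=\mathcal B\otimes_{\mathcal R}\mathcal B$ and the fact that, by the module equivalence, a morphism $c\colon B\to B\otimes_R B$ is a morphism of $R$-algebras if and only if the associated morphism $\mathcal B\to\mathcal B\otimes_{\mathcal R}\mathcal B$ is a morphism of functors of $\mathcal R$-algebras; the same applies to the counit $e\colon B\to R$ versus $\mathcal B\to\mathcal R$. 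I would also note that $\mathcal B$ is automatically reflexive (Theorem \ref{reflex}) and pro-quasicoherent (a quasi-coherent algebra is pro-quasicoherent by Example \ref{ejemplor}), so the hypotheses in the definition of functor of bialgebras are met without extra work.

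The main obstacle I anticipate is the bookkeeping needed to confirm that the multiplication structure on $\mathcal B$ (making it a functor of algebras in the ordinary sense) and the comultiplication structure (making it a functor of coalgebras) are matched under the equivalence in a way that respects the bialgebra compatibility, rather than just checking each structure in isolation. Concretely, one must be careful that the identification $\mathcal B\tilde\otimes_{\mathcal R}\mathcal B=\mathcal B\otimes_{\mathcal R}\mathcal B$ intertwines the product map $m_{\mathcal B}\otimes m_{\mathcal B}$ on $(\mathcal B\otimes\mathcal B)$ with the algebra structure used in the definition of functor of bialgebras, so that ``$c$ is an algebra morphism'' on the two sides genuinely coincide. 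Once this diagrammatic compatibility is confirmed to translate faithfully in both directions—which reduces, via the module equivalence and Proposition \ref{notation4.8}, to equalities of morphisms of $R$-modules—the asserted equivalence of categories follows, since the functors $B\rightsquigarrow\mathcal B$ and $\mathcal B\rightsquigarrow\mathcal B(R)$ are already quasi-inverse on the underlying objects and morphisms.
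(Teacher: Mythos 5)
Your proposal is correct and follows essentially the same route as the paper, whose entire proof is the instruction to recall the identity $\mathcal B\tilde\otimes_{\mathcal R}\mathcal B=\mathcal B\otimes_{\mathcal R}\mathcal B$ for quasi-coherent modules and Proposition \ref{notation4.8}; you have simply written out the details (the module-level equivalence, compatibility with tensor products, reflexivity and pro-quasicoherence of $\mathcal B$, and the translation of the compatibility axioms) that the paper leaves to the reader.
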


\begin{proof} Recall \ref{notation4.7} (1) and \ref{notation4.8}.\end{proof}

 If
$\mathcal B$ and $\mathcal B^*\in\mathfrak F$ are functors of $\mathcal R$-algebras, then they are functors of proquasicoherent algebras,  by Proposition \ref{5.9}.

\begin{definition} A functor $\mathbb B$ of bialgebras is said to be a functor of pro-quasicoherent bialgebras  if $\mathbb B^*$ is a  functor of pro-quasicoherent algebras.\end{definition}

\begin{theorem} \label{dualbial} Let ${\mathcal C}_{\mathfrak F-Bialg.}$ be the category of
functors $\mathbb B\in\mathfrak F$ of pro-quasicoherent bialgebras. The functor ${\mathcal C}_{\mathfrak F-Bialg.}\rightsquigarrow{\mathcal C}_{\mathfrak F-Bialg.}$, $\mathbb B \rightsquigarrow {\mathbb B}^*$ is a categorical anti-equivalence.
\end{theorem}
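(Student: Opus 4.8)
The plan is to show that $\mathbb B\rightsquigarrow\mathbb B^*$ is a contravariant endofunctor of ${\mathcal C}_{\mathfrak F-Bialg.}$ that is its own quasi-inverse; the anti-equivalence will then follow formally from reflexivity. Concretely I would check three things: that $\mathbb B^*$ is again a pro-quasicoherent bialgebra in $\mathfrak F$ whenever $\mathbb B$ is, that a morphism of bialgebras $f$ dualizes to a morphism of bialgebras $f^*$, and that $\mathbb B^{**}=\mathbb B$ and $f^{**}=f$.

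First I would observe that all the data defining a pro-quasicoherent bialgebra is symmetric under $(-)^*$. If $\mathbb B\in\mathfrak F$ then $\mathbb B^*\in\mathfrak F$ by Theorem \ref{3.6}, it is reflexive with $(\mathbb B^*)^*=\mathbb B$, and by the definition of a pro-quasicoherent bialgebra both $\mathbb B$ and $\mathbb B^*$ are functors of pro-quasicoherent algebras. By Proposition \ref{notation4.8} the coalgebra structure $c_{\mathbb B}\colon\mathbb B\to\mathbb B\tilde\otimes_{\mathcal R}\mathbb B$ of $\mathbb B$ is exactly the algebra structure of $\mathbb B^*$, while (the product of the pro-quasicoherent algebra $\mathbb B$ factoring through $\mathbb B\tilde\otimes_{\mathcal R}\mathbb B$, a functor of algebras by Lemma \ref{cua}) the product $m_{\mathbb B}$ dualizes to the comultiplication $c_{\mathbb B^*}=m_{\mathbb B}^*$ of $\mathbb B^*$; similarly the unit and counit of $\mathbb B$ dualize to the counit and unit of $\mathbb B^*$.

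The crux, and what I expect to be the main obstacle, is that the bialgebra axiom is self-dual. I would phrase the hypothesis ``$c_{\mathbb B}$ and $e_{\mathbb B}$ are morphisms of $\mathcal R$-algebras'' as the single compatibility
$$c_{\mathbb B}\circ m_{\mathbb B}=(m_{\mathbb B}\tilde\otimes m_{\mathbb B})\circ(\mathrm{Id}\tilde\otimes\tau\tilde\otimes\mathrm{Id})\circ(c_{\mathbb B}\tilde\otimes c_{\mathbb B})$$
together with the unit/counit squares, a diagram that equivalently says that $m_{\mathbb B}$ and $u_{\mathbb B}$ are morphisms of coalgebras. Applying $(-)^*$, using contravariance and the identifications $(\mathbb M\tilde\otimes\mathbb N)^*=\mathbb M^*\bar\otimes\mathbb N^*$ and $(\mathbb M\bar\otimes\mathbb N)^*=\mathbb M^*\tilde\otimes\mathbb N^*$ from the discussion following Notation \ref{notation4.8}, together with $m_{\mathbb B}^*=c_{\mathbb B^*}$ and $c_{\mathbb B}^*=m_{\mathbb B^*}$, I would carry this diagram to the same diagram written for $\mathbb B^*$, thereby showing that $c_{\mathbb B^*}$ and $e_{\mathbb B^*}$ are algebra morphisms and hence $\mathbb B^*\in{\mathcal C}_{\mathfrak F-Bialg.}$. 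The delicate part is the bookkeeping between $\tilde\otimes$ and $\bar\otimes$: the product of $\mathbb B$ lives on $\mathbb B\tilde\otimes\mathbb B$, whereas the comultiplication, though valued in $\mathbb B\tilde\otimes\mathbb B$, factors through $\mathbb B\bar\otimes\mathbb B$, so I must check that the natural injective morphisms $\bar\otimes\to\tilde\otimes$ of Proposition \ref{harto} intertwine the two sides of the compatibility before it can be dualized cleanly.

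For the remaining two points I would argue formally. A morphism of bialgebras is by definition an $\mathcal R$-module map $f$ with both $f$ and $f^*$ algebra morphisms, a condition symmetric in $f$ and $f^*$; hence $f^*\colon(\mathbb B')^*\to\mathbb B^*$ is again a morphism of bialgebras, its own dual being $f^{**}=f$. Since $f\mapsto f^*$ is a bijection on $\mathcal R$-module morphisms by reflexivity (Theorem \ref{3.6}) and preserves the bialgebra-morphism property in both directions, $(-)^*$ is fully faithful; and because $\mathbb B^{**}=\mathbb B$ and $f^{**}=f$, the composite of $(-)^*$ with itself is the identity functor, so $(-)^*$ is a contravariant self-equivalence, i.e. a categorical anti-equivalence, as claimed.
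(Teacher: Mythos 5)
Your proposal is correct and follows essentially the same route as the paper: the paper's proof also reduces to checking that $m^*\colon \mathbb B^*\to\mathbb B^*\tilde\otimes\mathbb B^*$ is a morphism of functors of algebras by writing the bialgebra compatibility as the commutative square relating $c_{\mathbb B}\circ m_{\mathbb B}$ to $(m\tilde\otimes m)\circ(c\tilde\otimes c)$ (suitably permuted), factoring the comultiplication through $\mathbb B\bar\otimes\mathbb B$, invoking the injectivity of $\bar\otimes\to\tilde\otimes$ from Proposition \ref{harto} to transfer commutativity to the $\bar\otimes$ level, and then dualizing. The point you single out as the crux --- the bookkeeping between $\tilde\otimes$ and $\bar\otimes$ mediated by Proposition \ref{harto} --- is exactly the step the paper's argument turns on.
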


\begin{proof}  Let $\{\mathbb B,m,u; \mathbb B^*,m',u'\}$  be a functor of pro-quasicoherent bialgebras. Let us only check that $m^*\colon \mathbb B^*\to (\mathbb B\otimes\mathbb B)^*=\mathbb B^*\tilde\otimes\mathbb B^*$ is a morphism of functors of algebras.
By hypothesis, ${m'}^*\colon \mathbb B\to (\mathbb B^*\otimes \mathbb B^*)^*=\mathbb B\tilde\otimes\mathbb B$ is a morphism of functors of algebras.
We have the commutative square:
$$(*) \qquad\qquad \xymatrix{ \mathbb B \ar[rr]^-{{m'}^*} & &\mathbb B\tilde\otimes \mathbb B \\ \mathbb B \otimes \mathbb B \ar[u]^-m \ar[rr]^-{{m'}^*_{13}\otimes {m'}^*_{24}}& & \mathbb B\tilde \otimes \mathbb B\tilde \otimes \mathbb B\tilde \otimes \mathbb B, \ar[u]_-{m\otimes m}} $$
where $({m'}^*_{13}\otimes {m'}^*_{24})(b_1\otimes b_2):=
\sigma({m'}^*(b_1)\otimes {m'}^*(b_2))$ and $\sigma(b_1\otimes b_2\otimes b_3\otimes b_4):=b_1\otimes b_3\otimes b_2\otimes b_4$, for all $b_i\in\mathbb B$.
$m'\colon \mathbb B^*\otimes \mathbb B^*\to \mathbb B^*$ factors through $\mathbb B^*\tilde\otimes \mathbb B^*$, because $\mathbb B^*$ is a pro-quasicoherent algebra. Dually, we have the obvious morphisms
$$\mathbb B\overset{m'^*}\to  \mathbb B\bar\otimes \mathbb B\overset{I\otimes I}\longrightarrow \mathbb B\tilde\otimes \mathbb B$$
and the diagram

$$\xymatrix{ \mathbb B \ar[rr]^-{m'^*} && \mathbb B\bar\otimes \mathbb B \ar[rr]^-{I\otimes I}  && \mathbb B\tilde\otimes \mathbb B \\ \mathbb B \otimes \mathbb B \ar[u]^-m \ar[rr]^-{{m'}^*_{13}\otimes {m'}^*_{24}} && \mathbb B\bar \otimes \mathbb B\bar \otimes \mathbb B\bar \otimes \mathbb B, \ar[u]_-{m\otimes m} \ar[rr]^-{I\otimes I\otimes I\otimes I} &&\mathbb B\tilde \otimes \mathbb B\tilde \otimes \mathbb B\tilde \otimes \mathbb B, \ar[u]_-{m\otimes m}} $$
The right square is commutative and the left square is commutative because the diagram $(*)$ is commutative and $I\otimes I$ is injective (by Proposition \ref{harto}).

Taking duals, in the left square, we obtain the commutative diagram:

$$\xymatrix{ \mathbb B^* \ar[d]_-{m^*} &  &
\mathbb B^*\tilde\otimes \mathbb B^* \ar[ll]^-{m'} \ar[d]^-{(m\otimes m)^*}
\\ \mathbb B^*\tilde\otimes \mathbb B^* &  &
\mathbb B^*\tilde\otimes \mathbb B^*\tilde\otimes  \mathbb B^*\tilde\otimes \mathbb B^*
\ar[ll]^-{m'_{13}\otimes m'_{24}}}$$
which shows that $m^*$ is a morphism of functors of $\mathcal R$-algebras.

\end{proof}

In \cite[Ch. I, \S 2, 13]{dieudonne}, Dieudonné proves the anti-equivalence between
the category of commutative $K$-bialgebras and the  category of
linearly compact  cocommutative $K$-bialgebras (where $K$ is a field).

\begin{remark} Let $R=K$ be a field and let  ${\mathcal C}_{proq-bialg.}$ be the category of
functors  of pro-quasicoherent bialgebras. Likewise, the functor ${\mathcal C}_{proq-Bialg.}\rightsquigarrow{\mathcal C}_{proq-bialg.}$, $\mathbb B \rightsquigarrow {\mathbb B}^*$ is a categorical anti-equivalence.

\end{remark} 

From now on, in this section, $R=K$ will be a field.  Recall Notation \ref{Notation}.

\begin{theorem} \label{T6.9} Let $\mathbb B\in\mathfrak F$ be a functor of   $\mathcal K$-bialgebras. Then, $\bar{\mathbb B}\in \mathfrak F$ is a functor of bialgebras and

$$\Hom_{\mathcal K-bialg}(\mathbb B,\mathcal C^*)=
\Hom_{\mathcal K-bialg}(\bar{\mathbb B},\mathcal C^*),$$
for all functor of  bialgebras  $\mathcal C^*$.
\end{theorem}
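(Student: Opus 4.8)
The plan is to transfer the bialgebra structure of $\mathbb B$ to $\bar{\mathbb B}$ by factoring each structure morphism through the natural projection $\pi\colon\mathbb B\to\bar{\mathbb B}$ (which exists because $\bar{\mathbb B}=\plim{i}\mathcal B_i$ is an inverse limit of quasi-coherent quotients of $\mathbb B$), and then to read off the universal property from Proposition \ref{a5.9}. The whole argument rests on the observation that every auxiliary target appearing below is a functor of procoherent algebras, so that Proposition \ref{a5.9} applies and supplies both the factorizations and the uniqueness needed to verify identities.

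First I would record the basic facts about $\bar{\mathbb B}$. Writing $C':=\ilim{i}B_i^*$ as in the paragraph following Definition \ref{Notation}, one has $\bar{\mathbb B}=\mathcal C'^{*}$; since $C'$ is a $K$-vector space it is free, so $\bar{\mathbb B}\in\mathfrak F$, and by Proposition \ref{5.9} it is a reflexive functor of pro-quasicoherent algebras. The algebra structure of $\bar{\mathbb B}=\plim{i}\mathcal B_i$ is the inverse limit of the algebra structures of the $\mathcal B_i$, and $\pi$ is a morphism of functors of algebras. Moreover $\bar{\mathbb B}\tilde\otimes\bar{\mathbb B}=(\mathcal C'\otimes\mathcal C')^{*}$ and $\bar{\mathbb B}\tilde\otimes\bar{\mathbb B}\tilde\otimes\bar{\mathbb B}=(\mathcal C'^{\otimes 3})^{*}$ (Notation \ref{notation4.7}(2)) are again functors of procoherent algebras (Theorem \ref{prodspec}), as is $\mathcal R$; this is exactly what makes Proposition \ref{a5.9} available with these targets.

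Next I would construct the coalgebra structure. The comultiplication $c\colon\mathbb B\to\mathbb B\tilde\otimes\mathbb B$ and the counit $e\colon\mathbb B\to\mathcal R$ of $\mathbb B$ are morphisms of functors of algebras (Definition \ref{bialgebras}); composing with $\pi\tilde\otimes\pi$ and with $\mathrm{Id}$ gives morphisms of functors of algebras $\mathbb B\to\bar{\mathbb B}\tilde\otimes\bar{\mathbb B}$ and $\mathbb B\to\mathcal R$. Since both targets are procoherent, Proposition \ref{a5.9} factors these uniquely through $\pi$, producing algebra morphisms $\bar c\colon\bar{\mathbb B}\to\bar{\mathbb B}\tilde\otimes\bar{\mathbb B}$ and $\bar e\colon\bar{\mathbb B}\to\mathcal R$ with $\bar c\circ\pi=(\pi\tilde\otimes\pi)\circ c$ and $\bar e\circ\pi=e$. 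Coassociativity of $\bar c$ and the counit identity for $\bar e$ are then checked by noting that the two composites $\bar{\mathbb B}\rightrightarrows\bar{\mathbb B}\tilde\otimes\bar{\mathbb B}\tilde\otimes\bar{\mathbb B}$ (resp. the two counit composites) are algebra morphisms which, after precomposition with $\pi$, reduce via naturality of $\tilde\otimes$ to the coassociativity (resp. counit) identity already known for $c$ and $e$; hence they coincide by the uniqueness clause of Proposition \ref{a5.9}. Thus $\bar{\mathbb B}$ is a functor of coalgebras, so $\bar{\mathbb B}^{*}$ is a functor of algebras (Proposition \ref{notation4.8}); as $\bar c,\bar e$ are algebra morphisms, $\bar{\mathbb B}$ is a functor of bialgebras, and by construction $\pi$ is a morphism of functors of bialgebras.

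Finally I would establish the universal property. Proposition \ref{a5.9} already gives the bijection $\Hom_{\mathcal K-alg}(\mathbb B,\mathcal C^{*})=\Hom_{\mathcal K-alg}(\bar{\mathbb B},\mathcal C^{*})$, $f=\bar f\circ\pi$, so it suffices to see that it restricts to bialgebra morphisms. Since $\pi$ is a bialgebra morphism, $g\mapsto g\circ\pi$ sends bialgebra morphisms to bialgebra morphisms. Conversely, if $f$ is a bialgebra morphism I would show that its algebra factorization $\bar f$ is a coalgebra morphism: the morphisms $(\bar f\tilde\otimes\bar f)\circ\bar c$ and $c_{\mathcal C^{*}}\circ\bar f$ from $\bar{\mathbb B}$ to the procoherent algebra $\mathcal C^{*}\tilde\otimes\mathcal C^{*}=(\mathcal C\otimes\mathcal C)^{*}$ are both algebra morphisms, and after precomposition with $\pi$ they become $(f\tilde\otimes f)\circ c$ and $c_{\mathcal C^{*}}\circ f$, which agree because $f$ is a coalgebra morphism; by the uniqueness in Proposition \ref{a5.9} they are equal, and the counit compatibility is verified identically with target $\mathcal R$. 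Hence $\bar f$ is a bialgebra morphism and the proof is complete. The main obstacle is the bookkeeping of the previous two paragraphs: one must ensure that at each stage the relevant morphisms really are morphisms of functors of algebras into procoherent targets, so that Proposition \ref{a5.9} legitimately supplies both the factorizations and the uniqueness; the naturality of $\tilde\otimes$ on algebra morphisms (Theorem \ref{prodspec}) is what keeps these reductions consistent, while Proposition \ref{a5.9b} identifies $\bar{\mathbb B}\tilde\otimes\bar{\mathbb B}$ with $\overline{\mathbb B\otimes\mathbb B}$ and confirms the expected compatibility.
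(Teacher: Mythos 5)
Your proposal is correct and follows essentially the same route as the paper: both arguments rest on Proposition \ref{a5.9} to factor the structure morphisms through $\pi\colon\mathbb B\to\bar{\mathbb B}$ and on the compatibility of the bar operation with $\tilde\otimes$ (which the paper packages as the identity $\overline{\mathbb A_1\tilde\otimes\cdots\tilde\otimes\mathbb A_n}=\bar{\mathbb A}_1\tilde\otimes\cdots\tilde\otimes\bar{\mathbb A}_n$, proved via \ref{a5.9}, \ref{prodspec} and \ref{a5.9b}). You merely spell out the coassociativity, counit and bialgebra-morphism verifications that the paper leaves implicit, using the uniqueness clause of Proposition \ref{a5.9} exactly as intended.
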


\begin{proof} Given any $\mathbb A_1,\cdots,\mathbb A_n\in\mathfrak F$ pro-quasicoherent algebras then $\overline{\mathbb A_1\tilde\otimes\cdots\tilde\otimes \mathbb A_n}=
\bar{\mathbb A_1}\tilde\otimes\cdots\tilde\otimes \bar{\mathbb A_n}$ because
$$\aligned \Hom & _{\mathcal K-alg}(\overline{\mathbb A_1\tilde\otimes\cdots\tilde\otimes \mathbb A_n},\mathcal C^*)\overset{\text{\ref{a5.9} }}=\Hom_{\mathcal K-alg}({\mathbb A_1\tilde\otimes\cdots\tilde\otimes \mathbb A_n},\mathcal C^*)\\ & \overset{\text{\ref{prodspec}}}=\Hom_{\mathcal K-alg}({\mathbb A_1\otimes\cdots\otimes \mathbb A_n},\mathcal C^*)\overset{\text{\ref{a5.9} }}=\Hom_{\mathcal K-alg}(\overline{\mathbb A_1\otimes\cdots\otimes \mathbb A_n},\mathcal C^*)\\ &\overset{\text{\ref{a5.9b} }}=\Hom_{\mathcal K-alg}(\bar{\mathbb A_1}\tilde\otimes\cdots\tilde\otimes \bar{\mathbb A_n},\mathcal C^*)\endaligned$$
Then, the comultiplication morphism $\mathbb B\to \mathbb B\tilde\otimes \mathbb B$ defines a comultiplication morphism $\bar{\mathbb B}\to \bar{\mathbb B}\tilde\otimes \bar{\mathbb B}$, and
$\bar{\mathbb B}$ is a bialgebra scheme.

Given a morphism of functors of bialgebras $f\colon \mathbb B\to \mathcal C^*$, that is, a morphism of functors of algebras such that the diagram
$$\xymatrix{\mathbb B \ar[r] \ar[d]^-f & \mathbb B\tilde\otimes\mathbb B\ar[d]^-{f\otimes f}\\ \mathcal C^* \ar[r] & \mathcal C^*\tilde\otimes\mathcal C^*}$$
is commutative, the induced morphism of functors algebras $\bar{\mathbb B}\to \mathcal C^*$ is a morphism of functors of bialgebras. Reciprocally, given a morphism of bialgebras $\bar{\mathbb B}\to \mathcal C^*$, the composite morphism
$\mathbb B\to \bar{\mathbb B}\to \mathcal C^*$ is a morphism of functors of bialgebras.

\end{proof}

\begin{corollary} \label{cor1.7} Let $A$ and $B$ be  $K$-bialgebras. Then,
$$\Hom_{\mathcal K-bialg}(\bar{\mathcal A},{\mathcal B^*})=\Hom_{\mathcal K-bialg}(\bar {\mathcal B},{\mathcal A^*}).$$
\end{corollary}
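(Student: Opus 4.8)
The plan is to read the identity off as a three-term chain: use Theorem \ref{T6.9} to trade each ``barred'' bialgebra for the corresponding quasi-coherent one, and use the anti-equivalence of Theorem \ref{dualbial} to flip the middle $\Hom$. Before running the chain, I would first pin down that every functor appearing as a source or target lives in the category to which these two theorems apply. Since $R=K$ is a field, the $K$-modules $A$ and $B$ are free, so $\mathcal A,\mathcal B\in\mathfrak F$ by Proposition \ref{proposition3.12} and $\mathcal A^*,\mathcal B^*\in\mathfrak F$ by Examples \ref{Examples5.4}. By Proposition \ref{5.24}, $\mathcal A$ and $\mathcal B$ are functors of $\mathcal K$-bialgebras; their duals $\mathcal A^*,\mathcal B^*$ are functors of pro-quasicoherent algebras by Proposition \ref{5.9}, and applying the anti-equivalence of Theorem \ref{dualbial} to $\mathcal A,\mathcal B\in{\mathcal C}_{\mathfrak F-Bialg.}$ shows $\mathcal A^*,\mathcal B^*$ are again functors of pro-quasicoherent bialgebras. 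In particular $\mathcal A^*=\mathcal C^*$ (with $C=A$) and $\mathcal B^*=\mathcal C^*$ (with $C=B$) are legitimate targets for Theorem \ref{T6.9}.

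Then I would run the chain. Applying Theorem \ref{T6.9} with $\mathbb B=\mathcal A$ and target $\mathcal B^*$ gives $\Hom_{\mathcal K-bialg}(\bar{\mathcal A},\mathcal B^*)=\Hom_{\mathcal K-bialg}(\mathcal A,\mathcal B^*)$. Next, the anti-equivalence $\mathbb B\rightsquigarrow\mathbb B^*$ of Theorem \ref{dualbial} sends a bialgebra morphism $\mathcal A\to\mathcal B^*$ to one $(\mathcal B^*)^*\to\mathcal A^*$, and since $(\mathcal B^*)^*=\mathcal B^{**}=\mathcal B$ by Theorem \ref{reflex}, this yields $\Hom_{\mathcal K-bialg}(\mathcal A,\mathcal B^*)=\Hom_{\mathcal K-bialg}(\mathcal B,\mathcal A^*)$. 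Finally, a second application of Theorem \ref{T6.9}, now with $\mathbb B=\mathcal B$ and target $\mathcal A^*$, gives $\Hom_{\mathcal K-bialg}(\mathcal B,\mathcal A^*)=\Hom_{\mathcal K-bialg}(\bar{\mathcal B},\mathcal A^*)$. Concatenating the three equalities produces the desired identity.

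The two invocations of Theorem \ref{T6.9} and the reflexivity identity $\mathcal B^{**}=\mathcal B$ are immediate once the setup is in place, so the only point needing genuine care---and what I expect to be the main obstacle---is precisely the bookkeeping in the first paragraph: checking that $\mathcal A,\mathcal B,\mathcal A^*,\mathcal B^*$ are all functors of pro-quasicoherent bialgebras in $\mathfrak F$, so that Theorem \ref{T6.9} (whose target must be a functor of bialgebras of the form $\mathcal C^*$) and Theorem \ref{dualbial} (whose arguments must be objects of ${\mathcal C}_{\mathfrak F-Bialg.}$) are both legitimately available. Once this membership is verified, the corollary is a formal consequence of the two preceding theorems.
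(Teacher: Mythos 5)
Your proof is correct and follows exactly the same three-step chain as the paper: Theorem \ref{T6.9} to remove the bar on $\mathcal A$, the anti-equivalence of Theorem \ref{dualbial} together with $\mathcal B^{**}=\mathcal B$ to flip the middle Hom, and Theorem \ref{T6.9} again to reinstate the bar on $\mathcal B$. The preliminary bookkeeping you add (that $\mathcal A,\mathcal B,\mathcal A^*,\mathcal B^*$ all lie in $\mathfrak F$ and are pro-quasicoherent bialgebras) is left implicit in the paper but is a sound and worthwhile check.
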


\begin{proof} It follows from the equalities
$\Hom_{\mathcal K-bialg}(\bar{\mathcal A},{\mathcal B^*})\overset{\text{\ref{T6.9}}}=\Hom_{\mathcal K-bialg}({\mathcal A},{\mathcal B^*})\overset{\text{\ref{dualbial}}}=\Hom_{\mathcal K-bialg}({\mathcal B},{\mathcal A^*})$ $\overset{\text{\ref{T6.9}}}=\Hom_{\mathcal K-bialg}(\bar {\mathcal B},{\mathcal A^*})$.

\end{proof}

\begin{note} \label{cite{E}} The bialgebra $A^\circ :=\Hom_{\mathcal K}(\bar{\mathcal A},\mathcal K)$ is sometimes known as the ``dual bialgebra" of $A$ and Corollary \ref{cor1.7}  says (dually) that the functor assigning to each bialgebra its dual bialgebra is autoadjoint (see \cite[3.5]{E}).
\end{note}

\begin{proposition} Let $k$ be a field, $L$ a Lie algebra and $A:=U(L)$ the universal enveloping algebra of $L$. Let $D_{\mathcal A}:=\{w\in A^*\colon \dim_k\langle AwA\rangle <\infty\}$
and $G:=\Spec D_{\mathcal A}$.
The category of finite dimensional linear representations of $L$ is equivalent to the category of finite dimensional linear representations of $G$.
\end{proposition}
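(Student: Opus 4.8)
The plan is to connect both representation categories to modules over the quasi-coherent algebra $\mathcal A$ and over its completion $\bar{\mathcal A}$, via the dictionary of the previous sections. Set $C:=D_{\mathcal A}$, so that $G=\Spec\mathcal C$ in the notation of Definition \ref{4.15}. First I would record the classical half: by the universal property of the enveloping algebra a finite dimensional linear representation of $L$ is exactly a finite dimensional $A$-module, and by Propositions \ref{5.14b} and \ref{invqua4} the category of finite dimensional $A$-modules is the category of finite dimensional quasi-coherent $\mathcal A$-modules, since $\mathcal A(K)=A$. As $\mathcal A$ is quasi-coherent it is a reflexive functor of $\mathcal K$-algebras (Theorem \ref{reflex}), so $\bar{\mathcal A}$ is defined (Notation \ref{Notation}), and the Corollary to Proposition \ref{a5.9} gives that finite dimensional $\mathcal A$-modules and finite dimensional $\bar{\mathcal A}$-modules form the same category.

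The heart of the argument is the identification of $C=D_{\mathcal A}$ with the finite dual of $A$. Writing $\{A_i=A/I_i\}$ for the finite dimensional algebra quotients of $A$, with $I_i$ the cofinite two-sided ideals, and viewing $A_i^*\subseteq A^*$ as the functionals vanishing on $I_i$, I claim
$$D_{\mathcal A}=\{w\in A^*\colon \dim_k\langle AwA\rangle<\infty\}=\bigcup_i A_i^*=\ilim{i}A_i^*.$$
For $\supseteq$, if $w$ vanishes on a cofinite two-sided ideal $I_i$ then every translate lies in $A_i^*$, so $\langle AwA\rangle\subseteq A_i^*$ is finite dimensional. For $\subseteq$, if $W:=\langle AwA\rangle$ is finite dimensional then its annihilator $I:=\bigcap_{w'\in W}\ker w'$ is a cofinite two-sided ideal (two-sided because $W$ is a sub-bimodule of $A^*$) on which $w$ vanishes, so $w\in(A/I)^*$. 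Thus $C=D_{\mathcal A}$ is the finite (Sweedler) dual $A^\circ$, in agreement with the usage of Note \ref{cite{E}}. Since $A=U(L)$ is a cocommutative Hopf algebra, $C$ is a commutative bialgebra, so by Proposition \ref{5.24} the functor $\mathcal C$ is a functor of commutative $\mathcal K$-bialgebras and $G=\Spec\mathcal C$ is a functor of monoids. By Example \ref{ejemplo}, $\mathbb A_G=\mathcal C$ is quasi-coherent, hence reflexive, and by the computation following Notation \ref{Notation},
$$\mathbb A_G^*=\mathcal C^*=(\ilim{i}\mathcal A_i^*)^*=\bar{\mathcal A}.$$

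Finally I would apply Theorem \ref{2.5}: as $G$ is a functor of monoids with reflexive functor of functions $\mathbb A_G$, the category of quasi-coherent $G$-modules is equivalent to the category of quasi-coherent $\mathbb A_G^*$-modules. A finite dimensional linear representation of $G$ is a finite dimensional quasi-coherent $G$-module, so restricting the equivalence to finite dimensional objects and using $\mathbb A_G^*=\bar{\mathcal A}$ identifies finite dimensional representations of $G$ with finite dimensional $\bar{\mathcal A}$-modules. Combining the identifications, finite dimensional representations of $L$ equal finite dimensional $\mathcal A$-modules, which equal finite dimensional $\bar{\mathcal A}$-modules, which equal finite dimensional quasi-coherent $G$-modules, yielding the asserted equivalence. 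The main obstacle is the middle step: one must check that $D_{\mathcal A}$ really is $\ilim{i}A_i^*$ and that its bialgebra structure is the one for which $\mathbb A_G^*=\bar{\mathcal A}$, so that Theorem \ref{2.5} and the Corollary to Proposition \ref{a5.9} speak about the same object; the remaining links are the formal dictionary and present no difficulty.
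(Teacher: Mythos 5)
Your proposal is correct and follows essentially the same route as the paper: identify $D_{\mathcal A}$ with $\ilim{i}A_i^*$ (so that $\mathcal D_{\mathcal A}={\bar{\mathcal A}}^{\,*}$), pass from finite dimensional representations of $L$ to finite dimensional $\bar{\mathcal A}$-modules via Proposition \ref{a5.9} and its Corollary, and then to representations of $G$ via Theorem \ref{2.5} (the paper invokes it through Example \ref{E2.20}). The only difference is presentational — you spell out the bimodule/annihilator argument for $D_{\mathcal A}=\bigcup_i A_i^*$ in more detail than the paper does.
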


\begin{proof} The category of finite dimensional linear representations of $L$ is equal to the category of $K$-finite dimensional $A$-modules. $A$ is a bialgebra (cocommutative).
Let $V$ be a finite dimensional vector space. We have that
$$\Hom_{K-alg}(A,End_K(V))=\Hom_{\mathcal K-alg}(\mathcal A,\mathbb End_{\mathcal K}(\mathcal V))=
\Hom_{\mathcal K-alg}({\bar{\mathcal A}},\mathbb End_{\mathcal K}(\mathcal V)),$$
by Proposition \ref{a5.9}. Then, the category of finite dimensional linear representations of $L$ is equal to the category of $\mathcal K$-quasicoherent $\bar{\mathcal A}$-modules, $\mathcal V$, such that $\dim_KV<\infty$.
This last category is equivalent to the category of finite dimensional linear representations of $\Spec{\bar{\mathcal A}}^*$, by \ref{E2.20}.

Finally, ${\bar{\mathcal A}\,}^*$ is the quasi-coherent algebra, $\mathcal D_{\mathcal A}$, associated with $D_{\mathcal A}$:
$A\to A_i$ is a $K$-algebra quotient if and only if $A_i$ is a $K$-vector space quotient and it is a right and left $A$-module.  Then, $A\to A_i$ is a $K$-algebra quotient and $\dim_K A_i<\infty$ if and only if $A_i^*$ is a finite dimensional $K$-vector subspace of $A^*$ and  it is a right and left $A$-submodule. Hence, $D_{\mathcal A}=\ilim{i} A_i^*$ 
 and  $\mathcal D_{\mathcal A}={\bar{\mathcal A}\,}^*$. 

\end{proof}

\subsection{Applications to Algebraic Geometry}

\begin{definition} An affine functor $\mathbb G=\Spec \mathbb A$ is said to be an
affine functor of monoids  if $\mathbb G$ is a functor of monoids.
\end{definition}

Let $\mathbb G$ be an affine functor of monoids.
 Let $$m\colon \mathbb G\times \mathbb G\to \mathbb G$$
 be the multiplication morphism and let $e\in\mathbb G$ the identity element. By Theorem \ref{2.5},  $\mathbb A_{\mathbb G}^*$ is a functor of $\mathcal R$-algebras
and  we have a commutative diagram
$$\xymatrix{\mathbb G\times \mathbb G \ar[r]^-m \ar[d]& \mathbb G \ar[d]\\ \mathbb A^*_{\mathbb G}\otimes \mathbb A^*_{\mathbb G} \ar[r]^-m & \mathbb A^*_{\mathbb G}}$$The dual morphisms of the multiplication morphism $m\colon \mathbb A^*_{\mathbb G}\otimes \mathbb A^*_{\mathbb G}\to \mathbb A^*_{\mathbb G}$ and the unit morphism $\mathcal R\to \mathbb A^*_{\mathbb G} $ are
the natural morphisms $\mathbb A_{\mathbb G}\to \mathbb A_{\mathbb G\times \mathbb G}$ and
$\mathbb A_{\mathbb G}\overset e\to \mathcal R$,  $f\mapsto f(e)$, which
are morphisms of $\mathcal R$-algebras. That is, $\mathbb A_{\mathbb X}$ is a functor of bialgebras.

Let $\mathbb X$ be an affine functor and assume $\mathbb A_{\mathbb X}$ is a functor of bialgebras. 
Let $m\colon \mathbb A_{\mathbb X}^*\otimes
\mathbb A_{\mathbb X}^*\to \mathbb A_{\mathbb X}^*$ and  $e\colon \mathcal R\to \mathbb A_{\mathbb X}^*$ the multiplication and unit morphisms. Given a point
$(x,x')\in \mathbb X\times \mathbb X\subset \mathbb Hom_{\mathcal R-alg}(\mathbb A_{\mathbb X\times\mathbb X},\mathcal R)$ then $(x,x')\circ m^*\in \mathbb Hom_{\mathcal R-alg}(\mathbb A_{\mathbb X},\mathcal R)=\mathbb X$ and we have the commutative diagram
$$\xymatrix{\mathbb X\times \mathbb X \ar@{-->}[r]^-m \ar[d]& \mathbb X \ar[d]\\ \mathbb A^*_{\mathbb X}\otimes \mathbb A^*_{\mathbb X} \ar[r]^-m & \mathbb A^*_{\mathbb X}}$$
Obviously $e\in \mathbb Hom_{\mathcal R-alg}(\mathbb A_{\mathbb X},\mathcal R)=\mathbb X$.
It is easy to check that $\{\mathbb X,m,e\}$ is a functor of monoids.

\begin{proposition} Let $\mathbb G$ and $\mathbb G'$ be affine functors of monoids. Then,
$$\Hom_{mon}(\mathbb G,\mathbb G')= \Hom_{\mathcal R-bialg}(\mathbb A_{\mathbb G'},\mathbb A_{\mathbb G}).$$
\end{proposition}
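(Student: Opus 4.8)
The plan is to transport the statement across the correspondence between affine functors and their functors of functions provided by Equation \ref{EqHom}, and to check that under this correspondence the monoid structure on the $\mathbb G$'s matches exactly the coalgebra part of the bialgebra structure on the $\mathbb A_{\mathbb G}$'s. Recall from the discussion preceding the proposition that, since $\mathbb G$ and $\mathbb G'$ are affine functors of monoids, $\mathbb A_{\mathbb G}$ and $\mathbb A_{\mathbb G'}$ are functors of bialgebras: the comultiplication of $\mathbb A_{\mathbb G}$ is the morphism $m_{\mathbb G}^*\colon \mathbb A_{\mathbb G}\to \mathbb A_{\mathbb G\times\mathbb G}=\mathbb A_{\mathbb G}\tilde\otimes\mathbb A_{\mathbb G}$ induced by the multiplication $m_{\mathbb G}\colon \mathbb G\times\mathbb G\to\mathbb G$ (using Proposition \ref{610} to identify $\mathbb A_{\mathbb G\times\mathbb G}$ with $\mathbb A_{\mathbb G}\tilde\otimes\mathbb A_{\mathbb G}$), and its counit is $\mathbb A_{\mathbb G}\to\mathcal R$, $g\mapsto g(e_{\mathbb G})$, induced by the identity point $e_{\mathbb G}\colon \Spec\mathcal R\to\mathbb G$ (note $\mathbb A_{\Spec\mathcal R}=\mathcal R$); likewise for $\mathbb G'$.

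First I would fix the bijection of underlying morphisms. As $\mathbb G'$ is affine, Equation \ref{EqHom} gives a bijection $\Hom(\mathbb G,\mathbb G')=\Hom_{\mathcal R-alg}(\mathbb A_{\mathbb G'},\mathbb A_{\mathbb G})$, $\phi\mapsto f:=\phi^*$, where the right-hand side refers to the function-algebra structures. Under Definition \ref{bialgebras}, a morphism of bialgebras $f\colon \mathbb A_{\mathbb G'}\to\mathbb A_{\mathbb G}$ is a morphism of $\mathcal R$-modules such that $f$ and $f^*$ are morphisms of functors of algebras; since the comultiplication of each $\mathbb A_{\mathbb G}$ is dual to the multiplication on $\mathbb A_{\mathbb G}^*$ (Proposition \ref{notation4.8}, together with the reflexivity formula of Proposition \ref{3.2}), the condition ``$f^*$ is an algebra morphism'' is equivalent to ``$f$ is a morphism of coalgebras''. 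Thus a bialgebra morphism is precisely an algebra morphism $f$ that is also a coalgebra morphism, and it suffices to prove that for $\phi\leftrightarrow f$ the functor $\phi$ respects the monoid structures if and only if $f$ respects the coalgebra structures.

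Next I would dualize the structure diagrams using the contravariant functoriality of $\mathbb A_{(-)}$. Applying $\mathbb A_{(-)}$ to the two morphisms $\phi\circ m_{\mathbb G}$ and $m_{\mathbb G'}\circ(\phi\times\phi)$ in $\Hom(\mathbb G\times\mathbb G,\mathbb G')$ gives, respectively, $m_{\mathbb G}^*\circ f$ and $(f\tilde\otimes f)\circ m_{\mathbb G'}^*$ as elements of $\Hom_{\mathcal R-alg}(\mathbb A_{\mathbb G'},\mathbb A_{\mathbb G}\tilde\otimes\mathbb A_{\mathbb G})$, where $\mathbb A_{\phi\times\phi}=f\tilde\otimes f$ by the naturality of the identification in Proposition \ref{610}. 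Because $\mathbb G'$ is affine, the bijection of Equation \ref{EqHom} is again in force on $\Hom(\mathbb G\times\mathbb G,\mathbb G')$, so the two morphisms of functors agree if and only if their images agree; hence $\phi$ is multiplicative if and only if $m_{\mathbb G}^*\circ f=(f\tilde\otimes f)\circ m_{\mathbb G'}^*$, i.e.\ if and only if $f$ commutes with the comultiplications. The same argument applied to the unit point $e$ (with target $\Spec\mathcal R$, $\mathbb A_{\Spec\mathcal R}=\mathcal R$) shows $\phi(e_{\mathbb G})=e_{\mathbb G'}$ if and only if $f$ commutes with the counits. Combining, $\phi$ is a morphism of monoids if and only if $f=\phi^*$ is simultaneously a morphism of algebras and of coalgebras, that is, a morphism of functors of bialgebras, which yields $\Hom_{mon}(\mathbb G,\mathbb G')=\Hom_{\mathcal R-alg-bialg}$; more precisely $\Hom_{mon}(\mathbb G,\mathbb G')=\Hom_{\mathcal R-bialg}(\mathbb A_{\mathbb G'},\mathbb A_{\mathbb G})$.

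The main obstacle I anticipate is the bookkeeping in the middle step: one must verify that $\mathbb A_{\phi\times\phi}$ really coincides with $f\tilde\otimes f$ under Proposition \ref{610}, so that the dualized multiplicativity square becomes exactly the comultiplication-compatibility square, and one must make sure the ``only if'' direction is legitimate, which uses that Equation \ref{EqHom} is a bijection on morphisms (full faithfulness of $\mathbb A_{(-)}$ on affine functors) and not merely that $\mathbb A_{(-)}$ is a functor. Matching the paper's definition of a bialgebra morphism (both $f$ and $f^*$ algebra morphisms) with the algebra-plus-coalgebra condition via Proposition \ref{notation4.8} is routine but should be stated explicitly.
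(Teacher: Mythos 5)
Your argument is correct, but it runs along a different track from the paper's. The paper works covariantly on the dual side: given a monoid morphism $h\colon \mathbb G\to\mathbb G'$, it composes with $\mathbb G'\hookrightarrow \mathbb A_{\mathbb G'}^*$ and invokes Theorem \ref{2.5} (the universal property of $\mathbb A_{\mathbb G}^*$ as the closure of dual functors of algebras of $\mathbb G$) to produce an algebra morphism $\mathbb A_{\mathbb G}^*\to\mathbb A_{\mathbb G'}^*$ whose dual is the pullback $h^*$; since both $h^*$ and $(h^*)^*$ are then algebra morphisms, $h^*$ is a bialgebra morphism by Definition \ref{bialgebras} with no mention of comultiplications. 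Conversely it restricts $f^*$ to $\mathbb G\subset\mathbb A_{\mathbb G}^*$, noting that points land in $\mathbb G'=\mathbb Hom_{\mathcal R-alg}(\mathbb A_{\mathbb G'},\mathcal R)$ because $f$ is an algebra morphism, and that the restriction respects multiplication because $f^*$ does. You instead work contravariantly on the function-algebra side: you use the full faithfulness of $\mathbb X\rightsquigarrow\mathbb A_{\mathbb X}$ on affine targets (Equation \ref{EqHom}, applied also with source $\mathbb G\times\mathbb G$) together with Proposition \ref{610} to match the multiplicativity square for $\phi$ with the comultiplication square for $f$. Both are sound; the paper's route is shorter because Theorem \ref{2.5} absorbs all the bookkeeping you flag (the naturality of $\mathbb A_{\mathbb X\times\mathbb Y}\cong\mathbb A_{\mathbb X}\tilde\otimes\mathbb A_{\mathbb Y}$ and the identification $\mathbb A_{\phi\times\phi}=f\tilde\otimes f$ never need to be checked), while yours makes the multiplication--comultiplication correspondence explicit in the familiar Hopf-algebraic form and clarifies why ``$f^*$ is an algebra morphism'' amounts to ``$f$ is a coalgebra morphism''. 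If you write yours up, do carry out the two verifications you postpone, since they are exactly where the $\tilde\otimes$ formalism could hide an error.
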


\begin{proof} 
Let $h\colon \mathbb G\to \mathbb G'$ be a morphism of functors of monoids. The composition
morphism of $h$ with the natural morphism $\mathbb G'\to \mathbb A_{\mathbb G'}^*$ factors through $\mathbb A_{\mathbb G}^*$, that is, we have a commutative diagram
$$\xymatrix{\mathbb G\ar[r]^-h \ar[d]& \mathbb G' \ar[d]\\ \mathbb A^*_{\mathbb G}\ar[r] & \mathbb A^*_{\mathbb G'}}$$
The dual morphism $\mathbb A_{\mathbb G'}\to \mathbb A_{\mathbb G}$ is the morphism induced by $h$ between the functors of functions. Conversely, let $f\colon \mathbb A_{\mathbb G'}\to \mathbb A_{\mathbb G}$ be a morphism of functors of $\mathcal R$-algebras, such that $f^*$ is also a morphism  of functors of $\mathcal R$-algebras. Given $g\in\mathbb G$, then $f^*(g)=g\circ f\in
\mathbb Hom_{\mathcal R-alg}(\mathbb A_{\mathbb G'},\mathcal R)=\mathbb G'$.
Hence, $f^*_{|\mathbb G}\colon \mathbb G\to \mathbb G'$ is a morphism of functors of monoids. 
\end{proof}

\begin{theorem} \label{5.3n}  The category of affine functors of  monoids  is anti-equivalent to the category of functors of commutative bialgebras.
\end{theorem}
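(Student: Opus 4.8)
The plan is to show that the anti-equivalence between affine functors and commutative pro-quasicoherent algebras already established in Corollary \ref{t1.9b} (valid over a field by the remark following it) restricts to the asserted anti-equivalence, once one checks that on both sides the extra monoid, resp. bialgebra, structure is transported faithfully. First I would recall that for an affine functor $\mathbb{X}$ the functor of functions $\mathbb{A}_{\mathbb{X}}$ is a reflexive commutative pro-quasicoherent algebra (Theorem \ref{t1.92}), so that $\mathbb{X} \rightsquigarrow \mathbb{A}_{\mathbb{X}}$ and $\mathbb{A} \rightsquigarrow \Spec \mathbb{A}$ are mutually quasi-inverse anti-equivalences between the category of affine functors and the category of commutative reflexive pro-quasicoherent $\mathcal{R}$-algebras.

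Next I would transport the structures. Given an affine functor of monoids $\mathbb{G}$, the multiplication $m \colon \mathbb{G} \times \mathbb{G} \to \mathbb{G}$ and the unit $e$ induce, after applying $\mathbb{Hom}(-,\mathcal{R})$ and using the identification $\mathbb{A}_{\mathbb{G} \times \mathbb{G}} = \mathbb{A}_{\mathbb{G}} \tilde\otimes \mathbb{A}_{\mathbb{G}}$ of Proposition \ref{610}, a comultiplication $\mathbb{A}_{\mathbb{G}} \to \mathbb{A}_{\mathbb{G}} \tilde\otimes \mathbb{A}_{\mathbb{G}}$ and a counit $\mathbb{A}_{\mathbb{G}} \to \mathcal{R}$; as exhibited in the discussion preceding the theorem, these are morphisms of functors of algebras, so $\mathbb{A}_{\mathbb{G}}$ is a functor of commutative bialgebras. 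Conversely, if $\mathbb{X}$ is affine and $\mathbb{A}_{\mathbb{X}}$ is a functor of bialgebras, the duals of its comultiplication and counit endow $\mathbb{X} = \Spec \mathbb{A}_{\mathbb{X}}$ with a monoid law, again as shown before the theorem. The monoid axioms for $\mathbb{G}$ (associativity of $m$ and the left/right unit laws for $e$) correspond under duality exactly to coassociativity and the counit axioms of $\mathbb{A}_{\mathbb{G}}$, and the two passages are mutually inverse because $\Spec$ and $\mathbb{A}_{(-)}$ already are on the underlying objects.

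Finally I would identify the morphism sets. The proposition immediately preceding the theorem gives $\Hom_{mon}(\mathbb{G},\mathbb{G}') = \Hom_{\mathcal{R}-bialg}(\mathbb{A}_{\mathbb{G}'},\mathbb{A}_{\mathbb{G}})$, naturally in $\mathbb{G}$ and $\mathbb{G}'$, which is precisely the full faithfulness, with reversal of arrows, of the restricted functor. Combined with the bijection on objects established above — every functor of commutative bialgebras $\mathbb{B}$ arises as $\mathbb{A}_{\mathbb{G}}$ for the affine functor of monoids $\mathbb{G} = \Spec \mathbb{B}$ — this yields the anti-equivalence. I expect the only genuinely delicate point to be the structure-matching of the middle paragraph: checking that the diagrammatic monoid identities dualize term by term into the coalgebra and bialgebra-compatibility identities, and that the comultiplication actually lands in $\mathbb{A}_{\mathbb{G}} \tilde\otimes \mathbb{A}_{\mathbb{G}}$ rather than merely in $\mathbb{A}_{\mathbb{G}} \bar\otimes \mathbb{A}_{\mathbb{G}}$. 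Both are furnished by Proposition \ref{610} and the computations just before the theorem, so the proof reduces to assembling these ingredients.
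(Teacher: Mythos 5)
Your proposal is correct and matches the paper's approach: the paper gives no separate proof of this theorem, since it is exactly the assembly of the preceding discussion (which transports the monoid structure on $\mathbb G$ to a bialgebra structure on $\mathbb A_{\mathbb G}$ via Theorem \ref{2.5} and Proposition \ref{610}, and conversely) with the proposition immediately before it identifying $\Hom_{mon}(\mathbb G,\mathbb G')$ with $\Hom_{\mathcal R-bialg}(\mathbb A_{\mathbb G'},\mathbb A_{\mathbb G})$. Your identification of the $\tilde\otimes$ versus $\bar\otimes$ issue as the one delicate point, resolved by Proposition \ref{610}, is also the right reading.
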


\begin{theorem} \label{5.3m} The category of cocommutative bialgebras $A$ is equivalent to the category of formal monoids $\Spec \mathcal A^*$ (we assume the $R$-modules $A$ are
free).\end{theorem}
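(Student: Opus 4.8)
The plan is to deduce this equivalence from the anti-equivalence of Theorem \ref{5.3n} by restricting it to formal monoids and then re-expressing the matching category of functions in terms of cocommutative bialgebras, using the dualities already established.

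First I would observe that a formal monoid $\Spec\mathcal A^*$ is in particular an affine functor of monoids: by Note \ref{n3.22} every formal scheme is affine with $\mathbb A_{\Spec\mathcal A^*}=\mathcal A^*$, and the monoid law is part of the data. Conversely, an affine functor of monoids $\mathbb G=\Spec\mathbb A_{\mathbb G}$ is a formal monoid exactly when its functor of functions (which is automatically commutative) is procoherent, i.e.\ of the form $\mathcal C^*$, for then $\mathbb G=\Spec\mathcal C^*$ is a formal scheme. Hence, under the anti-equivalence $\mathbb G\rightsquigarrow\mathbb A_{\mathbb G}$, $\mathbb A\rightsquigarrow\Spec\mathbb A$ of Theorem \ref{5.3n}, formal monoids correspond precisely to the functors of commutative bialgebras of the form $\mathcal A^*$; note that over the field $K$ every $A$ is free, so $\mathcal A,\mathcal A^*\in\mathfrak F$ by Examples \ref{Examples5.4}, and $\mathcal A^*$ is pro-quasicoherent by Proposition \ref{5.9}.

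It then remains to identify the functors $\mathcal A^*$ of commutative bialgebras with cocommutative bialgebras $A$. Here I would chain together Proposition \ref{5.24}, which gives an equivalence $A\rightsquigarrow\mathcal A$ between $R$-bialgebras and functors of $\mathcal R$-bialgebras and carries cocommutative bialgebras to cocommutative functors of bialgebras (cocommutativity concerns the comultiplication $\mathcal A\to\mathcal A\tilde\otimes\mathcal A=\mathcal A\otimes\mathcal A$, the last identity being Notation \ref{notation4.7}(1)), with the anti-equivalence $\mathbb B\rightsquigarrow\mathbb B^*$ of Theorem \ref{dualbial}. Since $A$ is a coalgebra, $\mathcal A^*$ is a functor of algebras and is pro-quasicoherent by Proposition \ref{5.9}, so $\mathcal A$ is a functor of pro-quasicoherent bialgebras and lies in ${\mathcal C}_{\mathfrak F-Bialg.}$; applying $(-)^*$ sends $\mathcal A$ to $\mathcal A^*$ and, by the interchange of multiplication and comultiplication under dualization, turns the cocommutative structure into a commutative one. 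Composing the covariant equivalence of Proposition \ref{5.24} with the two contravariant anti-equivalences of Theorem \ref{dualbial} and Theorem \ref{5.3n} then produces the covariant functor $A\rightsquigarrow\Spec\mathcal A^*$, with quasi-inverse $\Spec\mathcal A^*\rightsquigarrow(\mathcal A^*)^*(K)=\mathcal A(K)=A$ (using $\mathcal A^{**}=\mathcal A$ of Theorem \ref{reflex}), which is the asserted equivalence.

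The step I expect to be the main obstacle is checking that dualization $\mathbb B\rightsquigarrow\mathbb B^*$ genuinely interchanges commutativity and cocommutativity and, above all, that it preserves the bialgebra compatibility axiom: that the multiplication $A\otimes A\to A$ being a morphism of coalgebras is equivalent, after applying $(-)^*$, to the comultiplication $\mathcal A^*\to\mathcal A^*\tilde\otimes\mathcal A^*$ being a morphism of functors of algebras. This is precisely the commutative square verified in the proof of Theorem \ref{dualbial}, and I would either invoke that theorem within the bialgebra category directly or reproduce the square, using the injectivity of $\mathcal A^*\bar\otimes\mathcal A^*\to\mathcal A^*\tilde\otimes\mathcal A^*$ from Proposition \ref{harto} exactly as there. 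Once the objects are matched in this way, functoriality on morphisms and the mutual inverseness of the two constructions are formal.
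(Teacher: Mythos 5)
Your argument is correct and follows exactly the route the paper intends: Theorem \ref{5.3m} is stated without an explicit proof, as an immediate consequence of composing the anti-equivalence of Theorem \ref{5.3n} with the duality $\mathbb B\rightsquigarrow\mathbb B^*$ of Theorem \ref{dualbial} and the equivalence $A\rightsquigarrow\mathcal A$ of Proposition \ref{5.24}, which is precisely the chain you spell out. Your identification of formal monoids as the affine functors of monoids with procoherent functor of functions, and your handling of the commutative/cocommutative interchange under dualization, fill in the details the paper leaves implicit.
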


In \cite[Ch. I, \S 2, 14]{dieudonne}, it is given the Cartier Duality (formal schemes are certain functors over the category of commutative linearly compact algebras over a field).

\begin{definition} Let $\mathbb G$ be a functor of abelian monoids.
$\mathbb G^\vee := {\mathbb Hom}_{mon} (\mathbb G,{\mathcal R})$ (where we regard ${\mathcal R}$
as a monoid with the operation of multiplication) is said to be the dual monoid of $\mathbb G$.
\end{definition}

If $\mathbb G$ is a functor of groups, then $\mathbb G^\vee = {\mathbb Hom}_{grp} (\mathbb G, G_m^\cdot)$ ($G_m:=\Spec R[x,1/x]$).

\begin{theorem}\label{dual}
Let  ${\mathbb  G}$ be a functor of abelian
monoids with a reflexive functor of functions. Then,
${\mathbb  G}^\vee={\rm Spec}\, (\mathbb  A_{\mathbb G}^*)$
(in particular, this equality shows that ${\rm Spec}\,\mathbb
A_{\mathbb G}^*$ is a functor of abelian monoids).
\end{theorem}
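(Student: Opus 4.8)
The plan is to establish the functorial equality by evaluating each side on an arbitrary commutative $R$-algebra $S$ and producing a bijection natural in $S$. Unwinding the definitions, $(\Spec \mathbb{A}_{\mathbb{G}}^*)(S) = \Hom_{\mathcal{R}-alg}(\mathbb{A}_{\mathbb{G}}^*, \mathcal{S})$ by Definition \ref{4.15}, while, by the defining property of the internal $\mathbb{Hom}$, $\mathbb{G}^\vee(S) = \Hom_{mon}(\mathbb{G}_{|S}, \mathcal{R}_{|S})$, where $\mathcal{R}$ is regarded as a monoid under multiplication. Thus it suffices to produce a chain of natural bijections
$$\Hom_{mon}(\mathbb{G}_{|S}, \mathcal{R}_{|S}) = \Hom_{mon}(\mathbb{G}, \mathcal{S}) = \Hom_{\mathcal{R}-alg}(\mathbb{A}_{\mathbb{G}}^*, \mathcal{S}).$$

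First I would prove the left-hand identification, which is the monoid analogue of Corollary \ref{adj2}. The restriction–extension adjunction (Adjunction formula \ref{adj}) identifies morphisms of functors over $S$ from $\mathbb{G}_{|S} = i^*\mathbb{G}$ into $\mathcal{R}_{|S}$ with morphisms of functors over $R$ from $\mathbb{G}$ into $i_*(\mathcal{R}_{|S})$, and a direct check gives $i_*(\mathcal{R}_{|S}) = \mathcal{S}$. Since this adjunction is realized by restriction and evaluation, it carries multiplicative maps to multiplicative maps, so it restricts to a bijection on monoid morphisms; here I only use that $\mathcal{R}$ is a functor of rings, so its multiplicative structure is preserved throughout.

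Next I would invoke Theorem \ref{2.5}. The quasi-coherent algebra $\mathcal{S}$ associated with the $R$-algebra $S$ is reflexive by Theorem \ref{reflex}, hence is a dual functor of $\mathcal{R}$-algebras, so the hypotheses of Theorem \ref{2.5} are met with $\mathbb{B} = \mathcal{S}$ (recall that $\mathbb{A}_{\mathbb{G}}$ is reflexive by assumption). This yields the right-hand identification $\Hom_{mon}(\mathbb{G}, \mathcal{S}) = \Hom_{\mathcal{R}-alg}(\mathbb{A}_{\mathbb{G}}^*, \mathcal{S})$. Composing the two identifications gives $\mathbb{G}^\vee(S) = (\Spec \mathbb{A}_{\mathbb{G}}^*)(S)$, and since every map in the chain is natural in $S$, this is an isomorphism of functors.

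For the parenthetical claim, I would note that $\mathbb{G}^\vee$ is manifestly a functor of abelian monoids: the pointwise product $(\chi_1 \cdot \chi_2)(g) := \chi_1(g)\chi_2(g)$ of two monoid morphisms is again a monoid morphism and is commutative, both because $S$ is commutative. Transporting this structure across the isomorphism just obtained endows $\Spec \mathbb{A}_{\mathbb{G}}^*$ with the structure of a functor of abelian monoids. I expect the only genuinely delicate point to be the first step—verifying that the restriction adjunction matches $\mathbb{G}^\vee(S)$ with $\Hom_{mon}(\mathbb{G}, \mathcal{S})$ compatibly with the multiplicative structure—while the remaining steps are immediate from Theorem \ref{2.5} and the definitions.
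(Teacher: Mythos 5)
Your proof is correct and follows essentially the same route as the paper, which deduces $\mathbb G^\vee=\mathbb Hom_{mon}(\mathbb G,\mathcal R)=\mathbb Hom_{\mathcal R-alg}(\mathbb A_{\mathbb G}^*,\mathcal R)=\Spec \mathbb A_{\mathbb G}^*$ from Theorem \ref{2.5} and Proposition \ref{4.16}. The only difference is that you make explicit the pointwise verification (via the adjunction \ref{adj} and the observation that $\mathcal S$ is a reflexive, hence dual, functor of $\mathcal R$-algebras) that the paper leaves implicit when passing from the $\Hom$ statement of Theorem \ref{2.5} to the internal $\mathbb Hom$ equality.
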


\begin{proof}
$  \mathbb G^\vee= {\mathbb Hom}_{mon} (
{\mathbb G},{\mathcal R})\overset{\text{\ref{2.5}}}={\mathbb Hom}_{\mathcal R-alg}
(\mathbb  A_{\mathbb G}^*,{\mathcal R})={\rm Spec}\, (\mathbb  A_{\mathbb G}^*).$
\end{proof}

\begin{theorem} \label{Cartier} The category of abelian affine $R$-monoid schemes  $G={\rm Spec}\, A$ is
anti-equiva\-lent to the category of abelian formal monoids ${\rm Spec}\, {\mathcal
A}^*$ (we assume the $R$-modules $A$ are
free). The functor $\mathbb G \rightsquigarrow   \mathbb G^{\vee}$ gives the categorical anti-equivalence.

\end{theorem}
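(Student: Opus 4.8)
The plan is to deduce the duality directly from Theorem \ref{dual} together with the biduality $\mathcal{M}^{**}=\mathcal{M}$ of quasi-coherent modules (Theorem \ref{reflex}), so that the statement reduces to the observation that both categories are parametrised by the same data, namely commutative cocommutative $R$-bialgebras whose underlying module is free. First I would fix an abelian affine $R$-monoid scheme $G=\Spec A$ and pass to its functor of points $G^\cdot$. By Example \ref{ejemplo}, $\mathbb{A}_{G^\cdot}=\mathcal{A}$, which is quasi-coherent and hence reflexive by Theorem \ref{reflex}; thus $G^\cdot$ is an abelian affine functor of monoids with reflexive functor of functions, and Theorem \ref{dual} gives $\mathbb{G}^\vee=\Spec\mathbb{A}_{G^\cdot}^*=\Spec\mathcal{A}^*$. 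Since $A$ is assumed free, $\mathcal{A}^*\in\mathfrak{F}$ by Examples \ref{Examples5.4}, and because $G$ is an abelian monoid scheme the algebra $A$ is a commutative cocommutative bialgebra, so its dual $\mathcal{A}^*$ is a functor of commutative algebras (Proposition \ref{notation4.8} together with the fact that cocommutativity of $\mathcal{A}$ dualizes to commutativity of $\mathcal{A}^*$). Hence $\Spec\mathcal{A}^*$ is a formal scheme, and Theorem \ref{dual} already exhibits it as a functor of abelian monoids; so $\mathbb{G}^\vee$ is an abelian formal monoid.

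Next I would construct the quasi-inverse. Given an abelian formal monoid $\mathbb{H}=\Spec\mathcal{C}^*$, Note \ref{n3.22} gives $\mathbb{A}_{\mathbb{H}}=\mathcal{C}^*$, which lies in $\mathfrak{F}$ (as $C$ is free) and is therefore reflexive by Theorem \ref{3.6}(1). Applying the dual-monoid construction once more and using $\mathcal{C}^{**}=\mathcal{C}$ (Theorem \ref{reflex}) yields $\mathbb{H}^\vee=\Spec(\mathcal{C}^*)^*=\Spec\mathcal{C}=(\Spec C)^\cdot$, an abelian affine monoid scheme. The decisive reflexivity computation is then the biduality $(\mathbb{G}^\vee)^\vee=\Spec\mathcal{A}^{**}=\Spec\mathcal{A}=G^\cdot$, obtained by combining $\mathbb{A}_{\mathbb{G}^\vee}=\mathcal{A}^*$ (Note \ref{n3.22}) with Theorem \ref{dual} and Theorem \ref{reflex}, and symmetrically $(\mathbb{H}^\vee)^\vee=\mathbb{H}$. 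This shows that $\vee$ is an involutive correspondence, and in particular essentially surjective, between the two categories.

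Finally I would verify that $\vee$ is contravariantly functorial and fully faithful on morphisms. A morphism of abelian monoid schemes $G\to G'$ induces, via the anti-equivalence with commutative bialgebras of Theorem \ref{5.3n}, a morphism of functors of bialgebras $\mathcal{A}'\to\mathcal{A}$, whose dual $\mathcal{A}^*\to(\mathcal{A}')^*$ is again a morphism of functors of (commutative) algebras and therefore induces a morphism of the associated formal monoids ${\mathbb{G}'}^\vee\to\mathbb{G}^\vee$; biduality supplies the inverse assignment, giving $\Hom_{mon}(G,G')=\Hom_{mon}({\mathbb{G}'}^\vee,\mathbb{G}^\vee)$. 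Collecting these points, the functor $\mathbb{G}\rightsquigarrow\mathbb{G}^\vee$ is a fully faithful, essentially surjective contravariant functor, i.e. the asserted anti-equivalence.

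The main obstacle I anticipate is purely structural bookkeeping rather than any deep estimate: one must confirm that the abelian-monoid structure which Theorem \ref{dual} places on $\Spec\mathcal{A}^*$ genuinely coincides with the formal-monoid structure coming from the bialgebra $\mathcal{A}$, that is, that the multiplication on $\mathbb{G}^\vee$ is dual to the comultiplication of $A$ while its comultiplication is dual to the multiplication of $A$, and that the commutativity (abelian) and cocommutativity hypotheses are interchanged correctly under dualization. Once this dictionary between the two monoid operations and the two bialgebra structures is pinned down, every remaining step is a formal consequence of reflexivity and the already-established anti-equivalences \ref{5.3n} and \ref{5.3m}.
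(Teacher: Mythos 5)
Your argument is correct and follows exactly the route the paper intends: Theorem \ref{Cartier} is stated without proof as a direct assembly of Theorem \ref{dual} (giving $\mathbb G^\vee=\Spec\mathcal A^*$), the reflexivity $\mathcal A^{**}=\mathcal A$ of Theorem \ref{reflex}, the freeness criterion for $\mathcal C^*\in\mathfrak F$, and the anti-equivalences \ref{5.3n}, \ref{5.3m} and \ref{dualbial} for the behaviour on morphisms. Your closing caveat about matching the monoid structure of $\Spec\mathcal A^*$ with the coalgebra structure of $A$ is precisely the bookkeeping already settled in the discussion preceding Theorem \ref{5.3n}, so nothing further is needed.
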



Let $X$ and $Y$ be two $K$-schemes. By the universal property \ref{universal}, it can be checked that
$$\overline{X\times Y}=\bar X\times \bar Y.$$

\begin{theorem} \label{4.4b} Let $G$ be a $K$-scheme on groups (resp. monoids). Then $\bar G$ is a functor of groups (resp. monoids), the natural morphism $\bar G\to
G$ is a morphism of functors of monoids and
$$\Hom_{mon}(\Spec {\mathcal C^*}, G)=
\Hom_{mon}(\Spec {\mathcal C^*}, \bar G),$$ for every formal  monoid $\Spec\mathcal C^*$. If $G$ is commutative, then $\bar G$ is commutative.\end{theorem}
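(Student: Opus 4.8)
The plan is to exhibit $\bar G$ as the image of the group (resp. monoid) object $G$ under the ``completion'' functor $X\rightsquigarrow \bar X$, and to deduce everything from the universal property \ref{universal}. Throughout we identify the $K$-scheme $G$ with its functor of points $G^\cdot$, so that the natural monomorphism $\bar G\hookrightarrow G^\cdot$ (see Definition \ref{4.1}) will be denoted $j$. The first observation is that assigning to a $K$-scheme $X$ the formal scheme $\bar X$ is functorial: a morphism $f\colon X\to Y$ gives $j\circ f^\cdot\colon \bar X\to Y^\cdot$, a morphism from a formal scheme, which by Theorem \ref{universal} factors uniquely through $\bar Y$. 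Since $\overline{X\times Y}=\bar X\times \bar Y$ and $\overline{\Spec K}=\Spec K$ is terminal, this functor preserves finite products. A product-preserving functor carries monoid (resp. group, resp. commutative) objects to monoid (resp. group, resp. commutative) objects; applied to $m\colon G\times G\to G$, the unit $e$ and (in the group case) the inverse $\iota$, this produces morphisms
$$\bar m\colon \bar G\times \bar G=\overline{G\times G}\to \bar G,\qquad \bar e\colon \Spec K\to \bar G,\qquad \bar\iota\colon \bar G\to\bar G$$
making $\bar G$ a functor of monoids (resp. groups), and the inclusion $j\colon \bar G\to G^\cdot$ a morphism of functors of monoids, as desired.

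Concretely, $\bar m$ is the unique factorization of $m\circ(j\times j)\colon \bar G\times\bar G\to G^\cdot$ through $\bar G$, which exists because $\bar G\times\bar G$ is again a formal scheme (products of formal schemes are formal) and because the functor of points preserves products, so $(G\times G)^\cdot=G^\cdot\times G^\cdot$. To check the monoid axioms it suffices to recall that, by Theorem \ref{universal}, for every formal scheme $\mathbb F$ the map $\Hom(\mathbb F,\bar G)\to \Hom(\mathbb F,G^\cdot)$, $h\mapsto j\circ h$, is injective. Taking $\mathbb F$ to be the formal schemes $\bar G$, $\bar G\times\bar G$ and $\bar G\times\bar G\times\bar G$, each axiom (associativity of $\bar m$, the unit laws for $\bar e$, the inverse laws for $\bar\iota$, and, when $G$ is commutative, $\bar m\circ\tau=\bar m$ for the symmetry $\tau$) becomes an equality of two morphisms into $\bar G$ that agree after composing with $j$ --- because the corresponding identity holds in $G$ --- and hence agree. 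This is where the hypotheses are used: the reduction to $G$ only works because the relevant powers $\bar G^{\times n}$ are formal schemes, so Theorem \ref{universal} applies.

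Finally, for a formal monoid $\Spec\mathcal C^*$, post-composition with the monoid morphism $j$ sends $\Hom_{mon}(\Spec\mathcal C^*,\bar G)$ into $\Hom_{mon}(\Spec\mathcal C^*,G^\cdot)$, and by Theorem \ref{universal} this map is a bijection at the level of all morphisms of functors. It remains to see that the inverse bijection preserves the monoid condition. Given a monoid morphism $g\colon\Spec\mathcal C^*\to G^\cdot$ with unique factorization $f\colon\Spec\mathcal C^*\to\bar G$, $j\circ f=g$, the two morphisms $f\circ m_{\mathcal C}$ and $\bar m\circ(f\times f)$ from the formal scheme $\Spec\mathcal C^*\times\Spec\mathcal C^*$ to $\bar G$ satisfy
$$j\circ(f\circ m_{\mathcal C})=g\circ m_{\mathcal C}=m\circ(g\times g)=j\circ(\bar m\circ(f\times f)),$$
the middle equality being the fact that $g$ is a monoid morphism; by the injectivity of Theorem \ref{universal} they coincide, and likewise for the units, so $f$ is a morphism of functors of monoids. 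Thus the bijection restricts to the claimed equality $\Hom_{mon}(\Spec\mathcal C^*,G)=\Hom_{mon}(\Spec\mathcal C^*,\bar G)$. The only real work is the verification in the second paragraph, which is entirely formal once one knows that all the schemes $\bar G^{\times n}$ and $\Spec\mathcal C^*\times\Spec\mathcal C^*$ are formal schemes; I expect this bookkeeping, rather than any genuine difficulty, to be the main point to get right.
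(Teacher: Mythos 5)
Your proposal is correct and follows essentially the same route as the paper: the paper also defines $\mu'$ and $*'$ by factoring $\bar G\times\bar G=\overline{G\times G}\to G\times G\to G$ (resp. $\bar G\to G\to G$) through $\bar G$ via Theorem \ref{universal}, and then leaves the axiom checks and the final $\Hom_{mon}$ equality as "easy to check." You have simply written out those omitted verifications (uniqueness of factorizations from the formal schemes $\bar G^{\times n}$ and $\Spec\mathcal C^*\times\Spec\mathcal C^*$), which is exactly the intended argument.
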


\begin{proof} Let $\mu\colon G\times G\to G$ the multiplication morphism.
By Theorem \ref{universal}, the composite morphism $\overline{G\times G}=\bar G\times \bar G\to G\times G\to G$ factors through a unique morphism $\mu'\colon \bar G\times \bar G\to \bar G$, that is, we have the commutative diagram:
$$\xymatrix{\bar G\times \bar G \ar[r] \ar[d]^-{\mu'} &
G \times G \ar[d]^-\mu \\ \bar G \ar[r] & G}$$
Let $*\colon G\to G$ be the inverse morphism. The composition
$\bar G \to G \overset{*}\to G$ factors through a unique morphism
$*'\colon \bar G\to  \bar G$, that is, we have the commutative diagram:
$$\xymatrix{ \bar G \ar[r] \ar[d]^-{*'} & G \ar[d]^-{*} \\
\bar G\ar[r] & G}$$
Now it is easy to check that $(\bar G,\mu',*')$ is a functor of groups and to conclude the proof.\end{proof}

\begin{proposition} \label{3.8} Let $\Spec \mathcal C^*$ be a formal monoid and $D_{\mathcal C}=\{w\in C^*\colon w(I)=0$ for some bilateral ideal $I\subset C$ of finite codimension$\}\subset C^*$.
Then, $\Spec D_{\mathcal C}$ is an affine monoid scheme and
$$\Hom_{mon}(\Spec \mathcal C^*, \Spec A)=
\Hom_{mon}( \Spec D_{\mathcal C},\Spec A),$$ for every affine monoid scheme $\Spec A$.
\end{proposition}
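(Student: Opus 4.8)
The plan is to identify $\Spec D_{\mathcal C}$ with $\Spec\bar{\mathcal C}^*$ and then deduce both assertions from the bialgebra duality of Theorems \ref{dualbial} and \ref{T6.9}. Since $\Spec\mathcal C^*$ is a formal monoid, $\mathcal C^*\in\mathfrak F$ is a functor of \emph{commutative} algebras and a functor of monoids, so $\mathbb A_{\Spec\mathcal C^*}=\mathcal C^*$ is a functor of bialgebras (Theorem \ref{2.5} and the discussion preceding Theorem \ref{5.3n}); equivalently, by Proposition \ref{5.24}, $C$ is a bialgebra, and the algebra $C^*$ dual to its comultiplication is commutative because $\mathcal C^*$ is. First I would note that the finite-codimensional bilateral ideals $I\subset C$ are exactly the kernels of the finite-dimensional algebra quotients $C\twoheadrightarrow C_i$, and that $w(I)=0$ means $w\in (C/I)^*$; hence $D_{\mathcal C}=\ilim{i} C_i^*$. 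Exactly as in the final paragraph of the universal enveloping algebra proposition above, this yields $\mathcal D_{\mathcal C}=\ilim{i}\mathcal C_i^*=(\plim{i}\mathcal C_i)^*=\bar{\mathcal C}^*$, with $\bar{\mathcal C}$ as in Notation \ref{Notation}, so that $\Spec D_{\mathcal C}=\Spec\bar{\mathcal C}^*$.

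To see that $\Spec D_{\mathcal C}$ is an affine monoid scheme, I would apply Theorem \ref{T6.9} to $\mathbb B=\mathcal C$: it gives that $\bar{\mathcal C}\in\mathfrak F$ is a functor of bialgebras, and then the anti-equivalence of Theorem \ref{dualbial} shows that its dual $\bar{\mathcal C}^*=\mathcal D_{\mathcal C}$ is again a functor of bialgebras. By Proposition \ref{5.24} this means $D_{\mathcal C}$ is a bialgebra, so $\Spec D_{\mathcal C}$ is a functor of monoids; since $C^*$ is commutative its subalgebra $D_{\mathcal C}$ is commutative, so $\Spec D_{\mathcal C}$ is a genuine affine scheme, i.e.\ an affine monoid scheme. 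Over the field $K$ every module is free, and $\bar{\mathcal C}$, $\mathcal D_{\mathcal C}=\bar{\mathcal C}^*$ and (below) $\mathcal A^*$ are pro-quasicoherent by Proposition \ref{5.9}, so all the objects involved lie in ${\mathcal C}_{\mathfrak F-Bialg.}$ and the cited theorems apply.

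For the equality of Hom-sets, fix an affine monoid scheme $\Spec A$; then $A$ is a commutative bialgebra and $\mathcal A,\mathcal A^*$ are functors of bialgebras. Both $\Spec\mathcal C^*$ and $\Spec D_{\mathcal C}$ are affine functors of monoids with functors of functions $\mathcal C^*$ and $\mathcal D_{\mathcal C}$ (Note \ref{n3.22} and Example \ref{ejemplo}), so by the Proposition preceding Theorem \ref{5.3n} the two sides of the claimed equality become $\Hom_{\mathcal K-bialg}(\mathcal A,\mathcal C^*)$ and $\Hom_{\mathcal K-bialg}(\mathcal A,\mathcal D_{\mathcal C})$. I would then compute, using $(\mathcal C^*)^*=\mathcal C$ and $(\mathcal A^*)^*=\mathcal A$ (Theorem \ref{reflex}) at the duality steps:
\[
\begin{aligned}
\Hom_{\mathcal K-bialg}(\mathcal A,\mathcal C^*)
&\overset{\text{\ref{dualbial}}}{=}\Hom_{\mathcal K-bialg}(\mathcal C,\mathcal A^*)
\overset{\text{\ref{T6.9}}}{=}\Hom_{\mathcal K-bialg}(\bar{\mathcal C},\mathcal A^*)\\
&\overset{\text{\ref{dualbial}}}{=}\Hom_{\mathcal K-bialg}(\mathcal A,\bar{\mathcal C}^*)
=\Hom_{\mathcal K-bialg}(\mathcal A,\mathcal D_{\mathcal C}).
\end{aligned}
\]
Reading this back through the monoid--bialgebra anti-equivalence gives $\Hom_{mon}(\Spec\mathcal C^*,\Spec A)=\Hom_{mon}(\Spec D_{\mathcal C},\Spec A)$, as desired.

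The main obstacle is the bookkeeping behind the first two paragraphs: proving $\mathcal D_{\mathcal C}=\bar{\mathcal C}^*$ by matching the two-sided finiteness condition defining $D_{\mathcal C}$ with the finite-dimensional algebra quotients of $C$, and verifying that $\bar{\mathcal C}$, $\mathcal D_{\mathcal C}$ and $\mathcal A^*$ genuinely belong to ${\mathcal C}_{\mathfrak F-Bialg.}$ so that the anti-equivalence \ref{dualbial} and Theorem \ref{T6.9} may be invoked verbatim. Once these identifications and the bialgebra structure on $D_{\mathcal C}$ are secured, the Hom computation is purely formal.
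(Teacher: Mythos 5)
Your proof follows essentially the same route as the paper: both identify $\mathcal D_{\mathcal C}=\bar{\mathcal C}^*$ via $D_{\mathcal C}=\ilim{i}C_i^*$ and then run the identical chain $\Hom_{bialg}(\mathcal A,\mathcal C^*)=\Hom_{bialg}(\mathcal C,\mathcal A^*)=\Hom_{bialg}(\bar{\mathcal C},\mathcal A^*)=\Hom_{bialg}(\mathcal A,\mathcal D_{\mathcal C})$ through the anti-equivalences \ref{5.3n} and \ref{dualbial}. Your write-up is correct and in fact supplies details the paper leaves implicit (the verification that $\Spec D_{\mathcal C}$ is an affine monoid scheme, and the use of \ref{T6.9} rather than the algebra-level \ref{a5.9} for the step $\Hom_{bialg}(\mathcal C,\mathcal A^*)=\Hom_{bialg}(\bar{\mathcal C},\mathcal A^*)$).
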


\begin{proof} Observe that $D_{\mathcal C}=\ilim{I} (C/I)^*$ and $\mathcal D_{\mathcal C}^*=\bar{\mathcal C}$. Then,
$$\aligned \Hom_{mon} & (\Spec \mathcal C^*, \Spec A) \overset{\text{\ref{5.3n}}}=\Hom_{bialg}(\mathcal A,\mathcal C^*)\,\overset{\text{\ref{dualbial}}}=\,\Hom_{bialg}(\mathcal C, {\mathcal A}^*)\\ &\overset{\text{\ref{a5.9}}} =\,
\Hom_{bialg}(\bar{\mathcal C},\mathcal A^*) \overset{\text{\ref{dualbial}}}=\Hom_{bialg}({\mathcal A},\mathcal D_{\mathcal C} ) \overset{\text{\ref{5.3n}}}=
\Hom_{mon}( \Spec D_{\mathcal C},\Spec A).\endaligned$$

\end{proof}

\begin{note}  Let $X$ be a $K$-scheme and $A_X$ the ring of functions of $X$.
The set $D_X$ of distributions of $X$ of finite support is said to be
$D_X:=\{w\in A^*_X\colon w$ factors through a finite quotient algebra of $A_X\}$. Obviously, $\mathcal D_X^*=\bar{\mathcal A}_X$ and $\Spec
\mathcal D_X^*=\bar X$.\end{note}

If $\Spec\mathcal C^*$ is an abelian formal monoid then $G=\Spec C$ is an affine abelian monoid scheme and $D_{\mathcal C}=D_G$, then

\begin{equation} \label{1.9} \Hom_{mon}(G^\vee, \Spec A)=
\Hom_{mon}( \Spec D_{G},\Spec A),\end{equation}  for every affine monoid scheme $\Spec A$.

Assume $G=\Spec A$ and $G'=\Spec B$ are commutative affine monoid schemes, then

\begin{equation} \label{1.92} \aligned
\Hom_{mon}( \Spec D_{G},G') & \overset{\text{Eq.\ref{1.9}}}=\Hom_{mon}(G^\vee, G') =
\Hom_{mon}(G'^\vee, G)\\ & \overset{\text{Eq.\ref{1.9}}}=\Hom_{mon}( \Spec D_{G'},G).\endaligned\end{equation}

\end{document}